%% file: main.tex
\documentclass{article}
\usepackage{authblk}

\usepackage[english]{babel}
\usepackage[letterpaper,top=2cm,bottom=2cm,left=3cm,right=3cm,marginparwidth=1.75cm]{geometry}
\usepackage{amsmath}
\usepackage{amssymb}
\usepackage{graphicx}
\usepackage{subcaption}
\usepackage[colorlinks=true, allcolors=blue]{hyperref}
\usepackage{blindtext,alltt}
\usepackage{algorithm}
\usepackage{algpseudocode}
\usepackage{float}
\usepackage{tikz}
\usepackage{pgfplots}
\pgfplotsset{compat=1.17}
\newtheorem{theorem}{Theorem}
\newtheorem{corollary}{Corollary}
\newtheorem{proof}{Proof}

\begin{document}

\title{An Age-Structured Vaccination Strategy for Epidemic Containment: A Model Predictive Control Approach}

\author[1]{Candy Sonveaux}
\author[2]{Morgane Dumont\footnote{corresponding author : morgane.dumont@uliege.be}}
\author[3]{Mirko Fiacchini}
\author[3]{Mohamad Ajami}
\affil[1]{University of Namur, Department of Mathematics and naXys, Belgium}
\affil[2]{QuantOM, HEC - Management School of the University of Liège, Belgium\\
               }
\affil[3]{ Univ. Grenoble Alpes, CNRS, Grenoble INP, GIPSA-lab, Grenoble, France}

\maketitle

\begin{abstract}
\input{sections/00_Abstract}
\end{abstract}

\input{sections/01_Introduction}
\input{sections/02_Model_Overview}
\input{sections/03_Model_Predictive_Control}
\input{sections/04_Results}
\input{sections/05_Conclusion}

\section*{Acknowledgments}

\begin{itemize}
    \item Computational resources have been provided by the Consortium des Équipements de Calcul Intensif (CÉCI), funded by the Fonds de la Recherche Scientifique de Belgique (F.R.S.-FNRS) under Grant No. 2.5020.11 and by the Walloon Region.
    \item The "AVIQ, l'Agence Wallonne pour la Santé, la Protection sociale, le Handicap et les Familles" has given access to valuable data for the calibration of the parameters.
\end{itemize}

\section*{Data availability}
The sources of data and the parameters used are explicitly mentioned in the paper, allowing the reader to reproduce the simulations. The Python code can be made available upon request.

\section*{Funding} 
No specific funding is linked to this research.

\section*{Conflicts of interest}
The authors declare that they have no conflict of interest.

\section*{Ethics declaration}

The article process does not affect any humans, birds, or animals.

\bibliographystyle{apalike}
\bibliography{bibliography}

\end{document}

%% file: sections/00_Abstract.tex
This work presents a novel Model Predictive Control (MPC) approach to develop an optimal age-structured vaccination strategy for the containment of COVID-19 in Wallonia, Belgium. The proposed MPC framework is designed to minimize deaths, achieve early disease eradication, and adhere to operational constraints. By incorporating an age-structured Susceptible-Infected-Recovered-Deceased (SIRD) model with an additional term for vaccination, the MPC strategy dynamically adapts to the evolving epidemic state. A detailed proof of the asymptotic stability and recursive feasibility of the proposed MPC algorithm is provided. This ensures that the optimal cost at each step provides an upper bound on the minimal number obtainable of deaths at the end of the pandemic. Moreover, simulations demonstrate that the proposed MPC approach outperforms the decreasing age vaccination strategy adopted by the Belgian government during the first wave of vaccinations. The results highlight the potential of MPC-based vaccination strategies to reduce the total number of deaths, accelerate disease eradication, and optimize vaccine administration.\\
\textbf{Keywords}: Model predictive control \and Epidemic control \and Asymptotic stability \and Vaccination strategy optimization

%% file: sections/01_Introduction.tex
\section{Introduction}
\label{sec: intro}
Over the last decades, the number of major epidemics and pandemics has increased significantly. An epidemic can be defined as the widespread outbreak of an infectious disease in a community at a particular time. A pandemic is an epidemic that spreads across different regions. Throughout history, epidemics and pandemics have shown their ability to claim countless lives and exhaust healthcare systems. One can cite the Ebola virus disease that occurred in West Africa between 2014 and 2016 and more recently the COVID-19 pandemic. Until the $5^{th}$ of January, 2025, more than 7 million lives have been lost to COVID-19 according to \cite{who_covid_dashboard}, with half of these deaths occurring during the first year of the pandemic. This highlights the need for reliable and cost-effective strategies that would enable nations, regardless of their economic power, to combat spread and save as many lives as possible.

According to the objectives of the World Health Organization, \cite{world2014infection}, immediate action is required once an epidemic emerges, since the disease is already circulating by the time it is detected. Hence, scientists and decision makers must unite their efforts to develop and enforce measures to control spread without crippling the economy. In the early stages of an epidemic, scientists focus on rapidly testing and developing vaccines, while decision makers enforce regulations to limit transmission. Once vaccines become available, decision makers deploy vaccination strategies to effectively contain the outbreak.

Decision makers often use mathematical models to support their decisions. Compartmental models in epidemiology divide the population under study into disjoint compartments, each representing a certain disease status, referred to as states \cite{brauer2008compartmental}. These models describe how the population dynamically evolves between states over time. In \cite{tolles2020modeling}, the authors present a comprehensive overview of compartmental models in epidemiology. The first known compartmental model is the SIR model \cite{kermack1991contributions}, which consists of three compartments: Susceptible individuals (S) who can contract the disease, Infected individuals (I) who can transmit the disease, and Recovered individuals (R) who are assumed to be immune against the disease. Depending on the objectives of the study and the complexity of the disease, additional compartments may be added. However, increasing the complexity of the model can lead to problems of identifiability and forecasting capability \cite{FIACCHINI2021500}.

In this paper, an age-dependent vaccination strategy based on Model Predictive Control (MPC) is proposed. The strategy is applied to COVID-19 data for Wallonia, Belgium. The MPC employs an age-structured SIRD model that incorporates an additional compartment describing the total number of Deceased individuals (D). An additional input term for vaccination is included in the model to study the impact of vaccine administration on population dynamics. Although the traditional SIRD model does not consider age, the age-dependent nature of COVID-19 requires models that take into account the demographic factor. In fact, parameters such as death and recovery rates have been shown to be age-dependent \cite{davies2020age}. In addition, similar models have been adopted in the literature to develop strategies against pandemics. In~\cite{grundel2022much}, a social distancing strategy is developed to achieve the containment of COVID-19. In Belgium, several studies were aimed at developing variants of the SIRD model. In \cite{franco2021covid}, the authors present an age-structured SEIR-QD model (Susceptible-Exposed-Infectious-Recovered-Quarantined-Deceased) that includes nursing home dynamics to forecast the progression of COVID-19 and assess the effects of long-term public health strategies. In \cite{alleman2021assessing}, a social contact model integrates public mobility data to evaluate the impact of non-pharmaceutical interventions on virus transmission in different settings. A similar model is used in \cite{abrams2021modelling} to analyze the effects of early lockdown on hospitalizations and project future epidemic scenarios based on various exit strategies. To our knowledge, no study has proposed a dynamic framework to guide the COVID-19 vaccination strategy using an age-stratified SIRD model, accompanied by a stability and recursive feasibility proof.

Model predictive control for strategic epidemic planning has been extensively examined in the literature. In \cite{MORATO2020417}, an MPC is developed to adapt the social distancing policies for COVID-19. Moreover, in \cite{grundel2022much}, the authors used an SIRD model within an MPC framework to optimize mass testing and develop an age-specific social distancing strategy. Similarly, the authors in \cite{sauerteig2022model} introduced an MPC approach using an age-independent SEIR model to minimize social distancing and quarantine measures during a pandemic while maintaining a strict infection cap. On the other hand, the authors in \cite{parino2023model} developed an age-dependent MPC two-dose scheduling strategy for COVID-19 vaccine administration to balance health requirements and socioeconomic costs. Furthermore, a vaccination strategy is proposed in \cite{cartocci2021compartment}, with given objectives such as minimizing total infections, total deaths, and total quality adjusted life years (QALY). However, exhaustive full-grid simulations were performed in that study without providing any guarantee on the obtained solution. A proof of recursive feasibility and asymptotic convergence is crucial even for inherently stable systems, as the case of pandemic dynamics, since it also provides guaranteed bounds on the infinite-horizon optimal solution.

To date, there has been no consensus on the definition of an optimal vaccination strategy. For example, Belgium adopted a decreasing-age vaccination strategy, starting with the elderly. The argument was that COVID-19 posed a higher mortality risk in older age groups compared to younger ones. In contrast, \cite{rahmandad2022behavioral} argues that it is more effective to base the vaccination strategy on the level of contact, suggesting that the level of exposure is more critical than age to eradicate the disease as quickly as possible. This dilemma raises the need for a dynamic framework to guide the vaccination strategy in an optimal manner. This work proposes a novel age-structured MPC approach to design a vaccination strategy aimed at minimizing fatalities. A proof of asymptotic stability and recursive feasibility of the proposed algorithm is provided, ensuring that the final death toll minimization is upper bounded by the MPC cost.

This paper is organized as follows: Section \ref{sec: model} presents the epidemiological model used in this study, detailing the age-structured SIRD model and its parameters. Section \ref{sec: mpc} introduces the Model Predictive Control (MPC) framework, including the proof of recursive feasibility and asymptotic stability. Section \ref{sec: results} discusses the implementation of the MPC-based vaccination strategy for COVID-19 in Wallonia and compares its performance with the Belgium national vaccination strategy through simulations. Finally, Section \ref{sec: conclusion} concludes the paper by summarizing the key findings and suggesting directions for future research.

%% file: sections/02_Model_Overview.tex
\section{Epidemiological Model}
\label{sec: model}
This work focuses on an age-structured Susceptible-Infected-Recovered-Deceased (SIRD) model with an additional input term for vaccination. This model is used to analyze the impact of the administration of the COVID-19 vaccine on the dynamics of the Walloon population. Compartmental models, such as the SIRD model, are widely utilized in the literature to study the effects of various epidemic containment measures on population dynamics. This section provides a comprehensive overview of the model structure and parameters.
The continuous-time SIRD model is presented in (\ref{eq:cont_model}) as follows:
\begin{equation}
    \left\{
    \begin{aligned}
        \frac{dS_k(t)}{dt} &= -\lambda_k S_k(t) \sum_{j=1}^{n_a} C_{kj} I_j(t) - u_k(t)\\
        \frac{dI_k(t)}{dt} &= \lambda_k S_k(t) \sum_{j=1}^{n_a} C_{kj} I_j(t) - (\gamma_{R_k} + \gamma_{D_k}) I_k(t) \\
        \frac{dR_k(t)}{dt} &= \gamma_{R_k} I_k(t) + u_k(t)\\
        \frac{dD_k(t)}{dt} &= \gamma_{D_k} I_k(t)
    \end{aligned}
    \right.
    \label{eq:cont_model}
\end{equation}
where $S_k(t)$, $I_k(t)$, $R_k(t)$, and $D_k(t)$ denote the number of susceptible,  infected, recovered, and deceased individuals, respectively, in the $k^{th}$ age group, with $k \in \{1, 2, \dots, n_a\}$, at time $t$; $n_a$ is the total number of considered age groups. The term $\lambda_k$ represents the probability of disease transmission when an individual in the $k^{th}$ age group has a contact. The term $C_{ij}$ represents the probability that an individual in the $i^{th}$ age group encounters someone in the $j^{th}$ age group. The parameters $\gamma_{R_k}$ and $\gamma_{D_k}$ refer to the recovery and death rates, respectively, for the $k^{th}$ age group. The input $u_k(t)$ is defined as the vaccination rate among susceptible individuals in the $k^{th}$ age group at time $t$. \autoref{fig:sird_cont} provides a graphical representation of the continuous-time SIRD model in (\ref{eq:cont_model}) for the $k^{th}$ age group. The total population considered in the model is assumed to be constant, as derived from the model equations, where the sum of the derivatives of the compartments, representing the derivative of the total population, is zero.
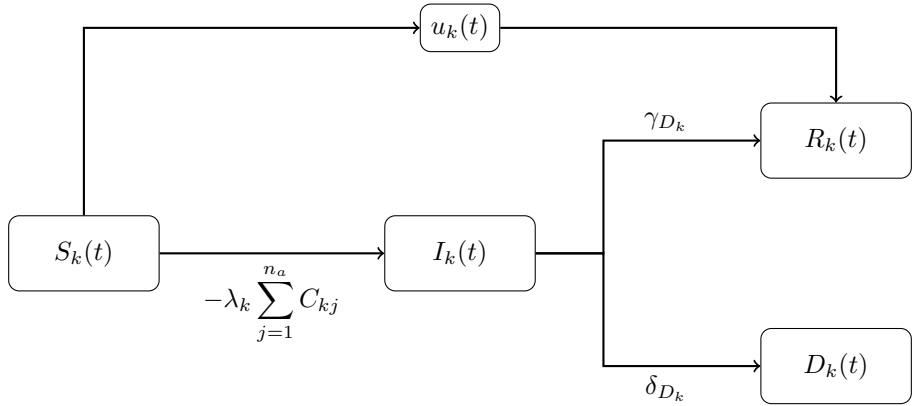
\begin{figure}[h!]
\centering
\begin{tikzpicture}[
    box/.style={rectangle,  rounded corners, draw, minimum width=2cm, minimum height=1cm, align=center},
    smallbox/.style={rectangle, rounded corners, draw, inner sep=4pt, align=center},
    arrow/.style={->, thick},
    ]
   \node[box] (S) at (0, 0) {$S_k(t)$};
   \node[box] (I) at (5, 0) {$I_k(t)$};
   \node[box] (R) at (10, 1.5) {$R_k(t)$};
   \node[box] (D) at (10, -1.5) {$D_k(t)$};
   \node[smallbox] (T) at (5, 3) {$u_k(t)$};
   \draw[arrow] (S) -- node[below] {$-\lambda_k\displaystyle\sum_{j=1}^{n_a}C_{kj}$} (I);
 \draw[arrow] (I.east) -- ++(0.9, 0) |- node[above,  pos=0.7 ] {$\gamma_{D_k}$} (R.west);
   \draw[arrow] (I.east) -- ++(0.9, 0) |- node[below, pos=0.7] {$\delta_{D_k}$} (D.west);
   \draw[arrow] (S.north) -- ++(0, 0)|- (T.west);
    \draw[arrow] (T.east)  -| (R.north);
\end{tikzpicture}
    \caption{Compartmental scheme of the age-structured SIRD model with an additional vaccination term for the $k^{th}$ age group}
    \label{fig:sird_cont}
\end{figure}

In the context of Model Predictive Control, the continuous-time model in (\ref{eq:cont_model}) is discretized using the Forward Euler Method to account for the discrete-time nature of the optimization problem. To align with the frequency of data updates and the meeting of decision makers, the model is discretized at a sampling time $ts=1\text{ (in \textit{days})}$.
The discrete-time age-structured SIRD model is represented in (\ref{eq:disc_model}) as follows:
\begin{equation}
    \left\{
    \begin{aligned}
        S_k(n+1) &= S_k(n) - \lambda_k S_k(n) \sum_{j=1}^{n_a} C_{kj} I_j(n) - u_k(n)\\
        I_k(n+1) &= I_k(n) + \lambda_k S_k(n) \sum_{j=1}^{n_a} C_{kj} I_j(n) - (\gamma_{R_k} + \gamma_{D_k}) I_k(n) \\
        R_k(n+1) &= R_k(n) + \gamma_{R_k} I_k(n) + u_k(n)\\
        D_k(n+1) &= D_k(n) + \gamma_{D_k} I_k(n)
    \end{aligned}
    \right.
    \label{eq:disc_model}
\end{equation}
Here, $n \in \mathbb{N}$ represents the discrete-time step, where each $n$ corresponds to the $n^{th}$ day since the beginning of the COVID-19 outbreak.

%% file: sections/03_Model_Predictive_Control.tex
\section{Model Predictive Control}
\label{sec: mpc}
The primary goal of this work is to prove that for complex dynamics and objectives, such as finding the best vaccination strategy for the COVID-19 pandemic, optimization-based methods might outperform alternative approaches, based on the intuition, which could fail to capture the inherent complexity of the problem. To fulfill it, an optimal age-dependent vaccination strategy, based on model predictive control, is proposed to minimize the final deaths toll and eradicate disease while adhering to operational constraints. MPC operates by minimizing a predefined cost over a finite time horizon, called the prediction horizon $N$, at every sampled instant; see the schematic illustration in \autoref{fig_MPC_illu}. The plant model is used to forecast the future trajectories of the system and select the optimal one. From the calculated solution, only the first control action is used and the process is repeated for the next time step. This iterative nature of the MPC enables it to predict, anticipate, and adapt to changing conditions.

\begin{figure}[h]
	\begin{center}
		\begin{tikzpicture}[
			thick, scale=0.6,
			>=stealth,
			dot/.style = {
				draw,
				fill = white,
				circle,
				inner sep = 0pt,
				minimum size = 4pt
			}, domain=0:25
			]
			\coordinate (O) at (0,0);
			\draw[-,gray!70] (0,0) -- (0,10); 
			\draw[-,gray!70] (-6,0) -- (10,0);
			\draw[-,black] (-6,1) -- (-3,1); 
			\draw[-,black] (-3,1) -- (-3,-1); 
			\draw[-,black] (-3,-1) -- (-2,-1); 
			\draw[-,black] (-2,-1) -- (-2,2); 
			\draw[-,black] (-2,2) -- (-1,2); 
			\draw[-,black] (-1,2) -- (-1,0.5);
			\draw[-,black] (-1,0.5) -- (0,0.5);  
			\draw[black] (-2.5,1.8) node {$u$};
			\draw[-,dashed,black,thick] (0,1.5) -- (1,1.5); 
			\draw[-,dashed,black,thick] (1,1.5) -- (1,2.5); 	
			\draw[-,dashed,black,thick] (1,2.5) -- (2,2.5); 
			\draw[-,dashed,black,thick] (2,2.5) -- (2,1); 
			\draw[-,dashed,black,thick] (2,1) -- (3,1); 
			\draw[-,dashed,black,thick] (3,1) -- (3,1.5);
			\draw[-,dashed,black,thick] (3,1.5) -- (4,1.5);  
			\draw[-,dashed,black,thick] (4,1.5) -- (4,3);
			\draw[-,dashed,black,thick] (4,3) -- (5,3);
			\draw[-,dashed,black,thick] (5,3) -- (5,2.75);
			\draw[-,dashed,black,thick] (5,2.75) -- (6,2.75);
			\draw[-,dashed,black,thick] (6,2.75) -- (6,2.5);  
			\draw[-,dashed,black,thick] (6,2.5) -- (10,2.5);  
			\draw[black] (9,2.8) node {$\textbf{u}^{\textbf{o}}$};
			\draw[<->,gray!70] (0,3.5) -- (6,3.5);  
			\draw[gray!70] (3,3.9) node {Control horizon, $N$}; 
			\draw[<->,gray!70] (0,0.2) -- (10,0.2);  
			\draw[gray!70] (5,0.5) node {Prediction horizon, $P$};
			
			\filldraw[black] (-5,4.2) circle (2pt);
			\filldraw[black] (-4,4.2) circle (2pt);
			\filldraw[black] (-3,4) circle (2pt);
			\filldraw[black] (-2,4.5) circle (2pt);
			\filldraw[black] (-1,5) circle (2pt);
			\filldraw[black] (0,6) circle (2pt);
			\draw[black] (-2.5,4.75) node {$D$};
			
			\draw[black] (1,6) circle (2pt);
			\draw[black] (2,6) circle (2pt);
			\draw[black] (3,6.3) circle (2pt);
			\draw[black] (4,6.8) circle (2pt);
			\draw[black] (5,7.5) circle (2pt);
			\draw[black] (6,7.7) circle (2pt);
			\draw[black] (7,7.8) circle (2pt);
			\draw[black] (8,7.9) circle (2pt);
			\draw[black] (9,8) circle (2pt);
			\draw[black] (10,7.9) circle (2pt);
			\draw[black] (1.5,6.5) node {$\hat{D}$};
			\draw[-,dashed,gray!70, thick] (0,8) -- (10,8);  
			\draw[gray!70] (5,8.4) node {Target};
			\draw[->,gray!70, thick] (0,9) -- (-2,9);  
			\draw[gray!70] (-1,9.4) node {Past};
			\draw[->,gray!70, thick] (0,9) -- (2,9);  
			\draw[gray!70] (1,9.4) node {Future};
			\filldraw[black] (6,6.5) circle (2pt);
			\filldraw[black] (6.4,6.5) circle (2pt);
			\filldraw[black] (6.8,6.5) circle (2pt);
			\draw[black] (9,6.5) node {Past outputs $D$};
			
			\draw[black] (6,6) circle (2pt);
			\draw[black] (6.4,6) circle (2pt);
			\draw[black] (6.8,6) circle (2pt);
			\draw[black] (9.6,6) node {Predicted outputs $\hat{D}$};
			
			\draw[-,black] (6,5.5) -- (6.8,5.5);
			\draw[black] (9.7,5.5) node {Past control action $u$};
			
			\draw[-,dashed,black,thick] (6,5) -- (6.8,5); 
			\draw[black] (9.9,5) node {Optimal control action}; \draw[black] (9.6,4.5) node {at time $k$, $\textbf{u}^{\textbf{o}}$};
			\draw[-,gray!70] (-1,0) -- (-1,-0.25);
			\draw[black] (-1,-0.5) node {$n-1$};
			\draw[-,gray!70] (0,0) -- (0,-0.25);
			\draw[black] (0,-0.5) node {$n$};
			\draw[-,gray!70] (1,0) -- (1,-0.25);
			\draw[black] (1,-0.5) node {$n+1$};
			\draw[-,gray!70] (6,0) -- (6,-0.25);
			\draw[black] (6,-0.5) node {$n+N-1$};
			\draw[-,gray!70] (10,0) -- (10,-0.25);
			\draw[black] (10,-0.5) node {$n+P$};
			
			\draw[gray!70] (5,-0.9) node {Sampling instants};
		\end{tikzpicture}		
	\end{center}
	\caption{Illustration of the MPC scheme, inspired by \cite{seborg2016process}}
	\label{fig_MPC_illu}
\end{figure}
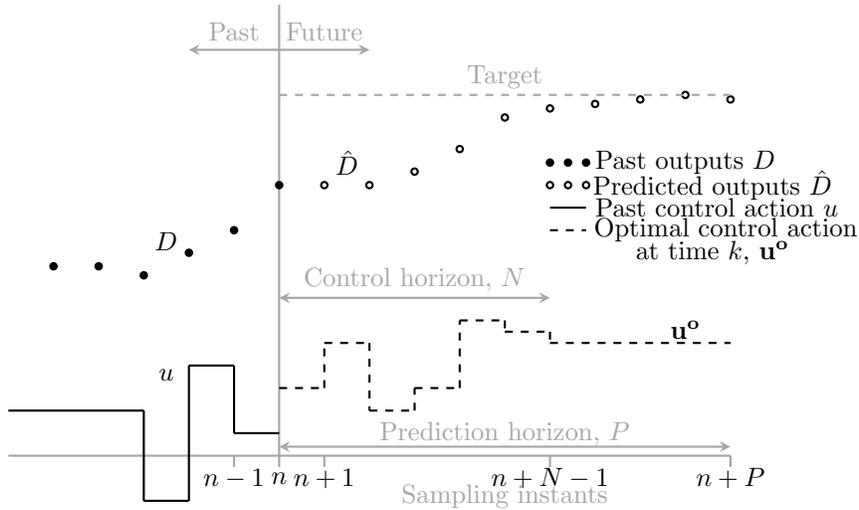

Moreover, this work provides proofs of the recursive feasibility and asymptotic stability of the problem. In general, the conditions for them to hold are already well established in the literature. However, ensuring their satisfaction for nonlinear systems is not straightforward, in general. It is important to recall that, even for inherently asymptotically stable systems, a proof of recursive feasibility and asymptotic stability is fundamental since it provides bounds on the optimal infinite-horizon solution attainable. In particular, the resulting optimal cost provides an upper bound on the minimal final number of deaths obtainable at the end of the vaccination procedure.

\subsection{Recursive feasibility and stability conditions for general MPC}

Before proceeding to design the MPC-based vaccination strategy, some general theoretical results are recalled. The proof of recursive feasibility and asymptotic stability is achieved by introducing notations from the classical MPC theory in \cite{rawlings2017model}. Consider the general dynamical system given by:
\begin{equation}
    x(n+1) = f(x(n), u(n)), \quad \text{with } \quad x(0) = x_0
\label{eq:diff_eq}
\end{equation}
where $x \in \mathbb{X}$ is the current state, $u \in \mathbb{U}$ is the current control action, and $f$ is a non-linear function that describes the evolution of the next state. The sets $\mathbb{X}$ and $\mathbb{U}$ represent the state and input constraints. In what follows, the order operators (i.e. $<, \leq$, etc.) are considered to be elementwise when applied to vectors.

The proofs of recursive feasibility, asymptotic convergence of the closed-loop solution, and the upper bound on the future death toll are based on standard results from the classical MPC theory. The general form of the MPC is structured as follows:
\begin{equation}
\begin{aligned}
    \mathbb{P}_N(x): & \quad \min_{\mathbf{u}} V_N(x, \mathbf{u}) \\
    \text{s.t.} & \quad x(n+1) = f(x(n), u(n)), \quad \forall n \in \{0, 1, \dots, N-1\}, \\
                & \quad x(0) = x, \\
                & \quad (x(n), u(n)) \in \mathbb{Z}, \quad \forall n \in \{0, 1, \dots, N-1\}, \\
                & \quad x(N) \in \mathbb{X}_f \subseteq \mathbb{X}.
\end{aligned}
\label{eq:opt_prob}
\end{equation}
with 
\begin{equation}
    V_N(x, \mathbf{u})=\sum_{n=0}^{N-1}l(x(n), u(n)) + V_f(x(N))
\label{eq:general_cost_fn}
\end{equation}
where $N \in \mathbb{N}$ is the prediction horizon, $\mathbb{Z}$ is a subset of $\mathbb{X} \times \mathbb{U}$ representing the constraints on future trajectories, and $\mathbb{X}_f$ and $V_f(\cdot)$ are additional parameters designed to ensure stability and recursive feasibility properties. The performance cost to be minimized at each step, $V_N(x, {\bf u})$, represents the total cost over the prediction horizon, where $l(x(n), u(n))$ is a function that captures the stage cost  at time $n$ and $V_f(x(N))$ is the terminal cost.

By defining $\mathcal{U}_N(x) \in \mathbb{U}^N$ as the set of feasible control sequences that satisfy the input, state and terminal constraints of problem (\ref{eq:opt_prob}) at $x$, the optimal control problem can be rewritten as:
\begin{equation}
    \mathbb{P}_N(x): \min_{\mathbf{u} \in \mathcal{U}_N(x)} V_N(x, \mathbf{u}) := V_N^0 (x).
\label{eq:opt_prob_simp}
\end{equation}
Moreover, the feasibility region of the problem, i.e. the subset of $\mathbb{X}$ where (\ref{eq:opt_prob}) admits a solution, is denoted as $\mathcal{X}_N$, that is $\mathcal{X}_N = \{ x \in \mathbb{X}: \mathcal{U}_N(x) \neq \emptyset\}$. The optimal control sequence $\mathbf{u}^0(x)$ can hence be obtained by solving the optimal control problem in (\ref{eq:opt_prob_simp}), which yields the following:
\begin{equation*}
    \mathbf{u}^0(x)=(u^0(0,x), \dots, u^0(N-1, x)) = \text{arg}\min_{\mathbf{u}\in \mathcal{U}_N(x)}V_N(x, \textbf{u})
\end{equation*}
where only the first element of the input sequence $\kappa_N(x)=u^0(0, x)$ is used to control the system. The classical proof that guarantees closed-loop recursive feasibility and asymptotic stability of the system using MPC, see Theorem $2.19$ in \cite{rawlings2017model}, is recalled hereafter.

\begin{theorem}[\cite{rawlings2017model}]
Given the dynamical system (\ref{eq:diff_eq}), the optimization problem (\ref{eq:opt_prob}) and the equilibrium set $X^* \subseteq \mathbb{X}$, suppose that:
\begin{enumerate}
    \item the functions $f: \mathbb{Z} \rightarrow \mathbb{X}$, $l: \mathbb{Z} \rightarrow \mathbb{R}_+$ and $V_{f}: \mathbb{X} \rightarrow \mathbb{R}_+$ are continuous;
    \item $f(x^*,0)=0$, $l(x^*,0)=0$ and $V_f(x^*,0)=0$ $\forall x^* \in X^*$;
    \item the set $\mathbb{Z}$ is closed, $\mathbb{X}_f \subseteq \mathbb{X}$ is compact, $X^* \subseteq \mathbb{X}_f$, and $0 \in \mathbb{U}$, where $\mathbb{U}$ is compact;
    \item for every $x \in \mathbb{X}_f$, there exists $u \in \mathbb{U}$,  such that $(x, u) \in \mathbb{Z}$ and:
    \begin{enumerate}
        \item $f(x, u) \in \mathbb{X}_f$,
        \item $V_f(f(x, u)) - V_f(x) \leq -l(x, u)$;
    \end{enumerate}
    \item there exist two $\mathcal{K}_\infty$ functions, $\alpha_1(\cdot)$ and $\alpha_f(\cdot)$, satisfying:
    \begin{enumerate}
        \item $l(x, u) \geq \alpha_1(\|x\|_{X^*}), \quad \forall x \in \mathcal{X}_N$, $\forall u$ such that $(x, u) \in \mathbb{Z}$,
        \item $V_f(x) \leq \alpha_f(\|x\|_{X^*}), \quad \forall x \in \mathbb{X}_f$, with $\|x\|_{X^*} := \min\limits_{x^* \in X^*} \|x-x^*\|$;
    \end{enumerate}
    \item there exists a $\mathcal{K}_\infty$ function $\alpha(\cdot)$ such that $V_N^0(x) \leq \alpha(\|x\|_{X^*})$ $\forall x \in \mathcal{X}_N$.
\end{enumerate}
Then, the optimal control problem in (\ref{eq:opt_prob}) is feasible $\forall x \in \mathcal{X}_N$, and there exist two $\mathcal{K}_\infty$ functions: $\alpha_d(\cdot)$ and $\alpha_u(\cdot)$, such that $\forall x \in \mathcal{X}_N$,
\begin{align*}
    &\alpha_d(\|x\|_{X^*}) \leq V_N^0(x) \leq \alpha_u(\|x\|_{X^*}), \\
    &V_N^0(f(x, \kappa_N(x)) - V_N^0(x) \leq -\alpha_d(\|x\|_{X^*}),
\end{align*}
and hence $V_N^0(\cdot)$ is a Lyapunov function defined in $\mathcal{X}_N$.
Therefore, the set $X^*$ is asymptotically stable in $\mathcal{X}_N$ for $x(n+1)=f(x(n), \kappa_N(x(n)))$.
\label{thm: asymptotic_stab}
\end{theorem}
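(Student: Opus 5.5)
The plan is the standard Lyapunov argument of Rawlings--Mayne--Diehl, with the distance to the equilibrium set $\|x\|_{X^*}$ playing the role of $\|x\|$. There are three steps, carried out in order: recursive feasibility, the lower and upper bounds on $V_N^0$, and the descent inequality. Stability then follows from a standard Lyapunov theorem for sets.

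\textbf{Step 1: recursive feasibility.} Let $x\in\mathcal{X}_N$ and let $\mathbf{u}^0(x)=(u^0(0,x),\dots,u^0(N-1,x))$ be optimal. A minimizer exists because $V_N(x,\cdot)$ is continuous (condition 1) and $\mathcal{U}_N(x)$ is compact. Compactness holds because $\mathcal{U}_N(x)$ is a closed subset of $\mathbb{U}^N$, by closedness of $\mathbb{Z}$ and $\mathbb{X}_f$ and continuity of $f$, and $\mathbb{U}$ is compact.

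Denote by $x^0(1),\dots,x^0(N)$ the predicted states, so that $x^0(N)\in\mathbb{X}_f$. By condition 4 there is a $u_f$ with $(x^0(N),u_f)\in\mathbb{Z}$ and $f(x^0(N),u_f)\in\mathbb{X}_f$. Define the shifted sequence
\[
\tilde{\mathbf{u}}=(u^0(1,x),\dots,u^0(N-1,x),u_f).
\]
This sequence is feasible for $\mathbb{P}_N(x^+)$ with $x^+=f(x,\kappa_N(x))$. Hence $x^+\in\mathcal{X}_N$, so $\mathcal{X}_N$ is positively invariant under the closed loop. This gives feasibility for all $x\in\mathcal{X}_N$ and all future times.

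\textbf{Step 2: descent inequality.} Evaluating the cost on the shifted sequence telescopes to
\[
V_N(x^+,\tilde{\mathbf{u}})=V_N^0(x)-l(x,\kappa_N(x))-V_f(x^0(N))+l(x^0(N),u_f)+V_f(f(x^0(N),u_f)).
\]
By condition 4(b) the last three terms sum to at most $0$. Optimality of $V_N^0(x^+)$ then gives
\[
V_N^0(x^+)-V_N^0(x)\le -l(x,\kappa_N(x))\le -\alpha_1(\|x\|_{X^*}),
\]
where the last step uses condition 5(a).

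\textbf{Step 3: bounds on $V_N^0$.} The lower bound follows from $V_N^0(x)\ge l(x,u^0(0,x))\ge\alpha_1(\|x\|_{X^*})$, since $l$ and $V_f$ are nonnegative. Taking $\alpha_d:=\alpha_1$ then gives the claimed lower bound and, together with Step 2, the claimed decrease. The upper bound is supplied directly by condition 6, so $\alpha_u:=\alpha$. Condition 5(b) is what one would otherwise use to build $\alpha$: it gives $V_N^0\le V_f\le\alpha_f$ on $\mathbb{X}_f$, and this local bound can be extended to $\mathcal{X}_N$ when $\mathcal{X}_N$ is bounded. Since condition 6 is assumed, I would not need that construction.

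\textbf{Conclusion.} With the sandwich bounds and the strict decrease along closed-loop trajectories, $V_N^0$ is a Lyapunov function for the set $X^*$ on the positively invariant set $\mathcal{X}_N$. The standard discrete-time Lyapunov theorem for closed sets then gives asymptotic stability. Concretely, $V_N^0(x(n))$ is nonincreasing and bounded below, so $\sum_n \alpha_1(\|x(n)\|_{X^*})<\infty$, which forces $\|x(n)\|_{X^*}\to0$. Stability itself follows from $\alpha_d(\|x(n)\|_{X^*})\le V_N^0(x(n))\le\alpha_u(\|x(0)\|_{X^*})$.

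\textbf{Expected obstacle.} The main delicate point is condition 2 as stated, since "$f(x^*,0)=0$" should really read $f(x^*,0)=x^*$. I would interpret it that way, so that $X^*$ is invariant and $V_N^0=0$ on $X^*$; the upper bound via condition 6 is consistent with this. The other point needing care is the existence of the minimizer, which rests on the compactness argument in Step 1.
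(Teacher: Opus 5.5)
Your proposal is correct and follows essentially the standard argument behind Theorem 2.19 of \cite{rawlings2017model}, which the paper cites rather than reproves. That argument has four parts: existence of a minimizer by compactness, recursive feasibility via the shifted sequence completed with the terminal input from condition 4, the cost decrease from 4(b) and 5(a), and the sandwich bounds from 5(a) and condition 6 followed by a Lyapunov argument for the set $X^*$.

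Your reading of condition 2 as $f(x^*,0)=x^*$ is right, although your argument never actually uses condition 2. Your side remark about extending the local bound from 5(b) to $\mathcal{X}_N$ would also need $\mathbb{X}_f$ to contain a neighborhood of $X^*$. That fails for the paper's terminal set in (\ref{eq:xf}), which is why condition 6 has to be checked directly there. You rightly do not rely on that remark.
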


The recursive feasibility and asymptotic convergence of the proposed MPC vaccination approach can be ensured by satisfying the conditions of Theorem~\ref{thm: asymptotic_stab}. Moreover, an important property, which proof can be found in \cite{rawlings2017model}, states that upon satisfaction of the former conditions, the optimal cost sets an upper bound on the infinite-horizon cost along the resulting trajectory.

\begin{corollary}\label{cor:VN0}
Given the dynamical system (\ref{eq:diff_eq}), the optimization problem (\ref{eq:opt_prob}) and the equilibrium set $X^* \subseteq \mathbb{X}$, if conditions 1.-6. of Theorem~\ref{thm: asymptotic_stab} are satisfied, then
\begin{equation}
V_N^0(x_0) \leq \sum_{n = 0}^{\infty} l(x(n), \kappa_N(x(n))),
\end{equation}
holds $\forall x_0 \in \mathcal{X}_N$ and $x(n+1)=f(x(n), \kappa_N(x(n)))$.
\end{corollary}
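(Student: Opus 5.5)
The plan is to reduce the infinite-horizon inequality to a one-step inequality along the closed loop and then telescope. The limit of the telescoped sum is controlled by the asymptotic stability already given by Theorem~\ref{thm: asymptotic_stab}. I will flag up front that the one-step inequality is exactly where the argument is most delicate; I do not think it follows from conditions 1.--6. alone.

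Fix $x_0 \in \mathcal{X}_N$ and let $x(n+1)=f(x(n),\kappa_N(x(n)))$. By Theorem~\ref{thm: asymptotic_stab}, $x(n)\in\mathcal{X}_N$ for every $n$, and $\|x(n)\|_{X^*}\to 0$. Write $\mathbf{u}^0(x)$ for the optimal sequence at $x$ and $x^+=f(x,\kappa_N(x))$. By the principle of optimality, the tail $(u^0(1,x),\dots,u^0(N-1,x))$ is optimal for $\mathbb{P}_{N-1}(x^+)$. Hence
\begin{equation*}
V_N^0(x) = l(x,\kappa_N(x)) + V_{N-1}^0(x^+).
\end{equation*}

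The key intermediate bound I aim for is
\begin{equation*}
V_N^0(x) \leq l(x,\kappa_N(x)) + V_N^0(x^+), \qquad \forall x\in\mathcal{X}_N .
\end{equation*}
By the identity above, this is equivalent to $V_{N-1}^0(x^+)\le V_N^0(x^+)$. I would try to prove this monotonicity $V_{N-1}^0 \le V_N^0$ by induction on the horizon. Take an optimal $N$-step sequence and drop its last input. The truncated sequence is feasible for the $(N-1)$-step problem provided $x(N-1)\in\mathbb{X}_f$. Its cost does not exceed the $N$-step cost provided $V_f(z)\le l(z,u)+V_f(f(z,u))$ for the admissible pairs concerned, i.e., provided $V_f$ under-estimates the cost-to-go.

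This is the main obstacle. Condition 4.(b) supplies the opposite inequality for \emph{some} $u$, which is why the standard result in \cite{rawlings2017model} bounds the closed-loop cost from above by $V_N^0$. So this step has to be supplied by additional structure of the vaccination problem rather than by conditions 1.--6. For the vaccination problem the natural candidate is that the cost counts deaths, which are nondecreasing along every trajectory of (\ref{eq:disc_model}). In that case I would check that $V_f(z)\le l(z,u)+V_f(f(z,u))$ holds for all admissible $u$ and that $\mathbb{X}_f$ is forward invariant under every admissible input; these are the facts I would verify there.

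Granting the one-step bound, I sum it from $n=0$ to $M-1$ along the closed loop:
\begin{equation*}
V_N^0(x_0) \leq \sum_{n=0}^{M-1} l(x(n),\kappa_N(x(n))) + V_N^0(x(M)).
\end{equation*}
By condition 6., $V_N^0(x(M))\le\alpha(\|x(M)\|_{X^*})$. This tends to $0$ as $M\to\infty$ because $\alpha\in\mathcal{K}_\infty$ and $X^*$ is asymptotically stable. The partial sums are nondecreasing since $l\ge 0$, so letting $M\to\infty$ gives $V_N^0(x_0)\le\sum_{n=0}^{\infty} l(x(n),\kappa_N(x(n)))$, whether the series is finite or equal to $+\infty$.
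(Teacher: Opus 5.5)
Your hinge step, $V_N^0(x)\le l(x,\kappa_N(x))+V_N^0(x^+)$ (equivalently $V_{N-1}^0\le V_N^0$), is not merely delicate. It is false in general, and so is the corollary as printed, so no argument from conditions 1.--6.\ can close the gap.

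\textbf{Counterexample.} Take $x^+=x/2$ on $\mathbb{X}=\mathbb{X}_f=[0,1]$, with $\mathbb{U}=\{0\}$, $X^*=\{0\}$, $l(x,u)=x$, $V_f(x)=4x$ and $N=1$. All six conditions hold; in particular 4.(b) reads $2x-4x\le -x$. Yet $V_1^0(x_0)=3x_0>2x_0=\sum_{n\ge 0}l(x(n),0)$ for every $x_0>0$.

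More generally, appending the input from condition 4 to an optimal $(N-1)$-step sequence shows that 4.(b) forces the opposite monotonicity, $V_N^0\le V_{N-1}^0$ on $\mathcal{X}_{N-1}$. So at $x^+$ your inequality could only hold with equality, which fails generically.

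\textbf{The vaccination-specific rescue also fails.} The condition $V_f(z)\le l(z,u)+V_f(f(z,u))$ means $\gamma_D^\top I^+\ge(1-\epsilon)\gamma_D^\top I$. This is the reverse of the property defining $\mathbb{X}_f$. It is strictly violated, for example, at $S=0$, $I\neq 0$: there $\gamma_D^\top I^+=\gamma_D^\top(\mathbb{I}-\Gamma_{RD})I<(1-\epsilon)\gamma_D^\top I$, because $\epsilon<\min_k\{\gamma_{R_k}+\gamma_{D_k}\}$. Monotonicity of $D$ only gives $l\ge 0$, which does not help. The paper's own simulation also contradicts the printed inequality: in Fig.~\ref{fig:mpc_cost}, $V_N^0(x(60))\approx 874.6$, while the realized closed-loop deaths are about $851.4$.

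\textbf{What the paper means.} The sentence introducing the corollary, assertion iii.\ of Theorem~\ref{thm:final_cost}, and the result cited from \cite{rawlings2017model} all state the reverse inequality,
\[
\sum_{n=0}^{\infty} l(x(n),\kappa_N(x(n)))\le V_N^0(x_0).
\]
The direction in the displayed corollary is a typo, and the paper's proof is only that citation. You already had the right ingredient when you noted that 4.(b) gives the opposite inequality; that is the direction to prove.

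\textbf{The argument.} Let $x^0(N;x)\in\mathbb{X}_f$ be the terminal state of the optimal trajectory from $x$, and let $u_f$ be the input supplied there by condition 4. At $x^+$, the sequence $(u^0(1,x),\dots,u^0(N-1,x),u_f)$ is feasible by 4.(a). By 4.(b) its cost is at most $V_N^0(x)-l(x,\kappa_N(x))$, so $V_N^0(x^+)\le V_N^0(x)-l(x,\kappa_N(x))$; this also gives recursive feasibility. Telescoping along the closed loop yields
\[
\sum_{n=0}^{M-1}l(x(n),\kappa_N(x(n)))\le V_N^0(x_0)-V_N^0(x(M))\le V_N^0(x_0),
\]
using only $V_N^0\ge 0$. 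Letting $M\to\infty$ finishes the proof. In this direction neither condition 6.\ nor asymptotic stability is needed.
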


This result implies, in the COVID-19 application case, that the resulting optimal cost is an upper bound on the minimum number of deaths that can be achieved at the end of the pandemic.

\subsection{MPC-based vaccination for COVID-19 pandemic}
The aim of the age-structured MPC vaccination strategy is to determine, at each sampled step (every day in the context of this work), the number of individuals from each age group to be vaccinated to minimize the death toll at the end of the pandemic. Here, the number of vaccines administered daily should not exceed the daily vaccination capacity, denoted $\bar{v}$. The  ability of MPC to handle constraints makes it well suited for this task. Therefore, the ideal cost to be minimized, given as the summation of the daily number of deceased individuals, is represented as follows:
\begin{equation}\label{eq:cost_infty}
\lim_{n\rightarrow + \infty} \sum_{k=1}^{n_a} D_k(n) = \sum_{n = 0}^{\infty} \sum_{k=1}^{n_a} D_k(n+1) - D_k(n) = \sum_{n = 0}^{\infty} \sum_{k=1}^{n_a} \gamma_{D_k} I_k(n) = \sum_{n = 0}^{\infty} \gamma_D^{\top} I(n)
\end{equation}

Since it is not possible to solve the infinite-horizon optimal control problem, the MPC approach is employed. The stage cost of the MPC serves as an upper bound for (\ref{eq:cost_infty}), and is defined as the daily number of deceased individuals: $l(x(n), u(n))=\gamma_{D}^\top I(n)$. Additionally, an appropriate final cost is incorporated. The final cost is derived from a local Lyapunov function, defined within the invariant set $\mathbb{X}_f$, as illustrated in \autoref{thm:final_cost}.

\begin{theorem}\label{thm:final_cost}
Given the discrete-time dynamics in (\ref{eq:disc_model}) and the optimization problem in (\ref{eq:opt_prob}) with $x = (S, \, I, \, R, \, D) \in \mathbb{R}^{4n_a}$, the population in every age group $P \in \mathbb{R}^{n_a}$, supposed to be constant in time, and 
\begin{align}
&\mathbb{X} = \{x \in \mathbb{R}^{4n_a}: 0 \leq S \leq P, \ 0 \leq I \leq P, \ 0 \leq R \leq P, \ 0 \leq D \leq P\}, \label{eq:sets1} \\
&\mathbb{U} = \{u \in \mathbb{R}^n_a: u \geq 0, \ \sum_{k=1}^{n_a} u_k \leq \bar{v} \}, \qquad \mathbb{Z}=\mathbb{X} \times \mathbb{U}, \qquad X^*=\{x \in \mathbb{X}: I=0\}. \label{eq:sets2}
\end{align}
Let $l(\cdot,\cdot), V_f(\cdot)$ and $X_f$ be given by:
\begin{align}
& l(x(n),u(n)) = \gamma_D^\top I(n), \\
& V_f(x(N)) = \frac{1}{\epsilon} \gamma_{D}^\top I(N)\label{eq:vf}\\ 
& \mathbb{X}_f = \{x \in \mathbb{X} : C^\top \Lambda S \leq \Gamma \} \cup X^*, \label{eq:xf}
\end{align}
where
\begin{equation}\label{eq:CLambdaGamma}
    C = \begin{pmatrix}
    C_{11} & \cdots & C_{1n_a} \\
    \vdots & \ddots & \vdots \\
    C_{n_a1} & \cdots & C_{n_an_a}
    \end{pmatrix}\!\!, \
    \Lambda = \begin{pmatrix}
    \gamma_{D_1}\lambda_1 & \cdots & 0 \\
     \vdots & \ddots & \vdots \\
     0 & \cdots & \gamma_{D_{n_a}}\lambda_{n_a}
    \end{pmatrix}\!\!, \
    \Gamma = \begin{pmatrix}
        \gamma_{D_1}(\gamma_{R_1} + \gamma_{D_1} - \epsilon) \\
        \vdots \\
        \gamma_{D_{n_a}}(\gamma_{R_{n_a}} + \gamma_{D_{n_a}} - \epsilon)
    \end{pmatrix}
\end{equation}
with $\epsilon \in \mathbb{R}$ such that $0 < \epsilon < \min\limits_{k=1, \dots, n_a} \{\gamma_{R_k}+\gamma_{D_k}\}$. \\
Then, the following assertions hold:
\begin{enumerate}
    \item[i.] $\forall x \in \mathcal{X}_N$, there exists a solution $\mathbf{u}^0$ to the optimal control problem in (\ref{eq:opt_prob});
    \item[ii.] the set $X^*$ is asymptotically stable in $\mathcal{X}_N$ for $x(n+1)=f(x(n), \kappa_N(x(n)))$;
    \item[iii.] the optimal value $V_N^0(x)$ is an upper bound for the total number of future deceased individuals if the MPC control $\kappa_N(x)$ is applied.
\end{enumerate}
\end{theorem}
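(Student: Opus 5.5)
The plan is to verify conditions 1.--6. of Theorem~\ref{thm: asymptotic_stab} for the data in (\ref{eq:sets1})--(\ref{eq:CLambdaGamma}). Assertions i.\ and ii.\ then follow directly from that theorem. Assertion iii.\ follows from Corollary~\ref{cor:VN0} combined with identity (\ref{eq:cost_infty}): the stage cost $\gamma_D^\top I(n)$ equals the daily increment of deaths, so $\sum_n l$ along the closed loop is exactly the total number of future deceased.

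Conditions 1.--3.\ are routine. The dynamics (\ref{eq:disc_model}), the stage cost $l$ and the terminal cost $V_f$ are polynomial, hence continuous, and $l, V_f \geq 0$ on $\mathbb{X}$. On $X^*$ we have $I=0$, so $l = V_f = 0$ there. With $u=0$, $x^*$ is a fixed point, which is the intended reading of ``$f(x^*,0)=0$''. The set $\mathbb{Z}=\mathbb{X}\times\mathbb{U}$ is a product of closed boxes and simplices, hence closed. $\mathbb{U}$ is compact and contains $0$. $\mathbb{X}_f$ is the union of two closed bounded sets, hence compact, and it contains $X^*$.

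Condition 5.\ uses the fact that $I\geq 0$. We have $\|x\|_{X^*}=\|I\|$, since the projection onto $X^*$ just zeroes $I$ and stays in $\mathbb{X}$. So $l=\gamma_D^\top I \geq \min_k\gamma_{D_k}\,\|I\|_1 \geq c\|x\|_{X^*}$ for some $c>0$, and similarly $V_f \leq \frac{1}{\epsilon}\|\gamma_D\|\,\|x\|_{X^*}$. Both bounds are linear, hence $\mathcal{K}_\infty$. Condition 6.\ follows because $\mathcal{X}_N$ is bounded. I would build a feasible (not necessarily optimal) input sequence $u\equiv 0$, or any admissible one, and bound its cost by a constant times $\|I\|$, exploiting that $I(n)$ grows at most linearly in $I(0)$ over $N$ steps. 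Local Lipschitz continuity of $f$ on the compact set $\mathbb{X}$ gives $\|I(n)\|\leq L^n\|I(0)\|$, which yields $V_N^0(x)\leq \alpha(\|x\|_{X^*})$ with $\alpha$ linear.

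The main obstacle is condition 4.: invariance of $\mathbb{X}_f$ together with the Lyapunov decrease. I would take $u=0$. On $X^*$ this is trivial, since $I=0$ stays $0$. On the other piece, the decrease reads
\[
\tfrac{1}{\epsilon}\gamma_D^\top\bigl(I^+ - I\bigr) \leq -\gamma_D^\top I ,
\]
i.e.
\[
\sum_k \gamma_{D_k}\lambda_k S_k (CI)_k - \sum_k \gamma_{D_k}(\gamma_{R_k}+\gamma_{D_k}-\epsilon)I_k \leq 0 .
\]
The first sum equals $(\Lambda S)^\top C I = (C^\top\Lambda S)^\top I$. Since $I\geq0$ and $C^\top\Lambda S\leq\Gamma$ elementwise, the left side is at most $(C^\top\Lambda S-\Gamma)^\top I \leq 0$, as required. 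For invariance: with $u=0$, $S$ is nonincreasing, because $S^+ = S - \mathrm{diag}(\lambda)S\odot CI \leq S$. Since $C^\top\Lambda\geq0$ entrywise, this gives $C^\top\Lambda S^+\leq C^\top\Lambda S\leq \Gamma$, so the constraint is preserved.

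One must also check that $x^+\in\mathbb{X}$, in particular $S^+\geq 0$ and $I^+\geq 0$. The bound $I^+\ge0$ needs $\gamma_{R_k}+\gamma_{D_k}\leq 1$. The bound $S^+\geq0$ needs $\lambda_k (CI)_k\leq 1$. I expect these to hold by the parameter ranges (rates below one per day, $C$ of probabilities, $I\leq P$ normalized). If not, I would assume them explicitly, as they are exactly what makes the Euler discretization meaningful. This positivity check is the point where I expect the argument to need care.
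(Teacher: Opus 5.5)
Your proposal follows the paper's proof essentially step for step: you verify conditions 1.--6.\ of Theorem~\ref{thm: asymptotic_stab}, reduce the decrease in 4.(b) to $C^\top\Lambda S\le\Gamma$ via $I\ge 0$, obtain invariance from the monotonicity of $S$, use linear $\mathcal{K}_\infty$ bounds in 5., bound $I(n)$ geometrically in 6.\ (your Lipschitz constant $L$ plays the role of the paper's $\eta$), and get iii.\ from Corollary~\ref{cor:VN0}. The paper never checks $f(x,u)\in\mathbb{X}$, and your concern is justified: parameter ranges cannot settle it, because $\mathbb{X}_f$ constrains only $S$; for example, a state with $S=0$, $I_k>0$, $R_k=P_k$ lies in $\mathbb{X}_f$ while $R_k^+=P_k+\gamma_{R_k}I_k+u_k>P_k$ for every $u\in\mathbb{U}$ (and $S_k^+<0$ for every $u$ whenever $S_k>0$ and $\lambda_k(CI)_k>1$, which the hypotheses do not exclude), so condition 4.(a) genuinely needs your extra hypotheses together with the conservation constraint $S+I+R+D=P$ in $\mathbb{X}$, which is a defect of the statement rather than of your route.
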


\begin{proof}
Assertions \emph{i.} and \emph{ii.} are proved by showing that the six conditions of Theorem \ref{thm: asymptotic_stab} are satisfied.
\begin{itemize}
    \item[\emph{1.-2.-3.}] By definition, the sets $\mathbb{X}$, $\mathbb{U}$, $\mathbb{Z}$, $X^*$ and $\mathbb{X}_f$ and the functions $l(\cdot, \cdot)$, $f(\cdot)$ and $V_f(\cdot)$ satisfy conditions \emph{1}, \emph{2}, and \emph{3} of Theorem $\ref{thm: asymptotic_stab}$.
    \item[\emph{4.}] First note that $X^*$ is invariant $\forall u \in \mathbb{U}$ since no one can get infected if there are no infected individuals. Consider the condition under which the daily sum of deceased individuals is decreasing $\forall u \in \mathbb{U}$, such that:
\begin{equation}\label{eq:region_dec}
	\gamma_D^\top I(n+1) - \gamma_D^\top I(n) \leq - \epsilon \gamma_D^\top I(n)
\end{equation}
with $0 < \epsilon < \min\limits_{k=1, \dots, n_a} \{\gamma_{R_k}+\gamma_{D_k}\}$.\\
From the discrete-time dynamics in (\ref{eq:disc_model}), condition (\ref{eq:region_dec}) can be represented as follows:
\begin{equation}
\begin{aligned}
\sum_{k=1}^{n_a} \gamma_{D_k}\lambda_kS_k(n) \sum_{j=1}^{n_a} C_{k,j}I_j(n) - \sum_{j=1}^{n_a} \gamma_{D_j} (\gamma_{R_j} + \gamma_{D_j} - \epsilon)I_j(n) \leq 0\\
\iff \sum_{j=1}^{n_a} \left( \sum_{k=1}^{n_a} \gamma_{D_k} \lambda_k S_k(n)C_{k,j} - \gamma_{D_j}(\gamma_{R_j}+\gamma_{D_j}-\epsilon) \right) I_j(n) \leq 0.
\end{aligned}
\label{eq:region_dec_equiv}
\end{equation}
Note that the one-step decreasing condition does not depend on the input. From the non-negativity of the number of infected individuals $I(n)$, condition (\ref{eq:region_dec_equiv}) is satisfied if (\ref{eq:region_dec_equiv_cond}) holds.
\begin{equation}
\sum_{k=1}^{n_a} \gamma_{D_k} \lambda_k S_k(n) C_{k,j} \leq \gamma_{D_j} (\gamma_{R_j} + \gamma_{D_j} - \epsilon), \quad \forall j=1, \dots, n_a
\label{eq:region_dec_equiv_cond}
\end{equation}
The term in (\ref{eq:region_dec_equiv_cond}) is given in matrix form by $C^\top \Lambda S(n) \leq \Gamma$, see (\ref{eq:xf}) and (\ref{eq:CLambdaGamma}). Moreover, from the non-increasing nature of the number of susceptible individuals $S(n)$ and the positivity of matrices $C$ and $\Lambda$, it follows that $C^\top \Lambda S(n+1) \leq C^\top \Lambda S(n) \leq \Gamma$. Hence, (\ref{eq:xf}) is an invariant set of the system $\forall u \in \mathbb{U}$, satisfying condition \emph{4.(a)}. To prove the satisfaction of condition \emph{4.(b)}, note that (\ref{eq:vf_diff}) holds $\forall x \in \mathbb{X}_f$.
\begin{equation}\label{eq:vf_diff}
V_f(f(x, 0)) - V_f(x) = \frac{1}{\epsilon} (\lambda_D^\top I^+ - \lambda_D^\top I) \leq - \lambda_D^\top I = -l(x, 0)
\end{equation}

\item[\it 5.] Consider $x \in \mathcal{X}_N$ and $u \in \mathcal{U}_N$ such that $(x, u) \in \mathbb{Z}$. By defining $\underline{\gamma_D} = \min\limits_{k=1, \dots, n_1} \gamma_{D_k}$, which is positive by construction, it follows that:
\begin{align*}
l(x,u) = \gamma_D^\top I \geq \underline{\gamma_D} \sum_{k=1}^{n_a} I_k = \underline{\gamma_D} \|I\|_1 = \underline{\gamma_D} \|x\|_{X^*}.
\end{align*}
Then, condition \emph{5.(a)} holds with $\alpha_1(y) = \underline{\gamma_D} y$ of class $\mathcal{K}_\infty$. Condition \emph{5.(b)} holds with $\alpha_f(y) = \frac{1}{\epsilon} \Bar{\gamma}_D y$ of class $\mathcal{K}_\infty$, where $\Bar{\gamma}_D=\max\limits_{k=1, \dots, n_a} \gamma_{D_k}$.

\item[\it 6.] From the discrete-time SIRD model in (\ref{eq:disc_model}), it follows that 
\begin{align*}
	\gamma_D^\top I(n+1) = \gamma_D^\top (\mathbb{I} + S_d(n) \Lambda C - \Gamma_{RD})I(n)
\end{align*}
where $\mathbb{I}$ is an identity matrix of dimensions $n_a \times n_a$, and
\begin{align*}
S_d =
\begin{pmatrix}
S_1 & & \\
& \ddots & \\
& & S_{n_a}
\end{pmatrix}\!\!,
\quad
P_d =
\begin{pmatrix}
P_1 & & \\
& \ddots & \\
& & P_{n_a}
\end{pmatrix}\!\!,
\quad
\Gamma_{RD} =
\begin{pmatrix}
\gamma_{R_1}+\gamma_{D_1} & & \\
& \ddots & \\
& & \gamma_{R_{n_a}} + \gamma_{D_{n_a}}
\end{pmatrix}\!\!.
\end{align*}
From the bounds on the number of infected and susceptible individuals at every step $n$, it follows that $0 \leq S(n) \leq P$, where $P$ is the total population in every age group. Moreover, from the positivity of $\Lambda$ and $C$, it implies that
\begin{equation}
\gamma_D^\top I(n+1) = \gamma_D^\top (\mathbb{I}+S_d(n)\Lambda C-\Gamma_{RD})I(n) \leq \gamma_D^\top (\mathbb{I}+P_d\Lambda C - \Gamma_{RD}) I(n) \leq \eta \gamma_D^\top I(n)
\label{eq:daily_deceased}
\end{equation}
for $\eta > 0$, where $\eta$ is big enough. From (\ref{eq:vf}) and (\ref{eq:daily_deceased}), it follows that:
        \begin{align*}
            &\gamma_D^\top I(n+i) \leq \eta^i \gamma_D^\top I(n), \quad \forall n \in \mathbb{N}, i \in \mathbb{N} \\
            &V_f(x(n+N)) = \frac{1}{\epsilon} \gamma_D^\top I(n+N) \leq \frac{1}{\epsilon} \eta^N \gamma_D^\top I(n), \quad \forall I(n) \in \mathbb{R}^n_+, N \in \mathbb{N},
        \end{align*}
which extends to:
\begin{equation}\label{eq:bound}
\begin{aligned}
V_N^0(x(n)) & = \sum_{i=0}^{N-1} \gamma_D^\top I(n+i) + V_f(x(n+N)) \leq \left( \epsilon^{-1}\eta^N + \sum_{i=0}^{N-1} \eta^i \right) \gamma_D^\top I(n) \\
&\leq \left( \epsilon^{-1} \eta^N + \sum_{i=0}^{N-1} \eta^i \right) \|\gamma_D\|_1 \|I(n)\|_1 = \left( \epsilon^{-1} \eta^N + \sum_{i=0}^{N-1} \eta^i \right) \|\gamma_D\|_1 \|x(n)\|_{X^*}
\end{aligned}
\end{equation}
This indicates that the bound in condition \emph{6.} holds with $\alpha(y)=\left( \epsilon^{-1}\eta^N + \sum_{i=0}^{N-1}\eta^i \right) \|\gamma_D\|_1y$.
\end{itemize}
Finally, Assertion \emph{iii.} is a direct consequence of Corollary \ref{cor:VN0}.
\label{thm: proof_stab}
\end{proof}

\subsection{MPC-based vaccination implementation}
To implement the MPC vaccination strategy, it is essential to gather data on the number of infected and deceased individuals at each sampled step. In the context of this work, the vaccination strategy requires daily data acquisition to update the initial states of the discrete-time model in (\ref{eq:disc_model}) used inside the MPC. This enables the MPC to compute the daily number of vaccines to be administered per age group over the vaccination strategy horizon, $N_v$. The MPC structure is illustrated in Algorithm \ref{alg:mpc_vaccination}.
\begin{algorithm}[h!]
\caption{MPC Vaccination Strategy}
\label{alg:mpc_vaccination}
\begin{algorithmic}[1]
\State \textbf{Define the MPC parameters:} the prediction horizon $N$, the vaccination strategy horizon $N_v$, the maximum daily vaccination capacity $\bar{v}$, and the threshold on infected population to determine pandemic eradication $I_{e} \in \mathbb{R}^{n_a}$;
\State \textbf{Define the initial states and populations:} $S_k(0)$, $I_k(0)$, $R_k(0)$, $D_k(0)$  and $P_k$ $\forall k \in \{1, 2, \dots, n_a\}$;
\For{each day $n = 0, 1, \dots, N_v$}
	\State Obtain the current state values $S_k(n)$, $I_k(n)$, $R_k(n)$ and $D_k(n)$;
	\If{$I_k(n) \leq I_e$}
        \State The disease is eradicated: stop vaccinating;
    \Else
    	\State Solve the optimization problem in (\ref{eq:opt_prob}) with (\ref{eq:sets1})-(\ref{eq:CLambdaGamma});
    	\State Extract the first element of the optimal control sequence: $\kappa_N(x_k(n)) = u_k^0(0,x_k(n))$;
	    \State Administer vaccines according to $u_k(n) = \kappa_N(x_k(n))$;
    \EndIf
	\State Update the state;
\EndFor
\end{algorithmic}
\end{algorithm}
 
Recall that the equilibrium set is defined as $X^*=\{x \in \mathbb{X}: I=0\}$, and note that neither the problem constraints nor the cost function depend on the recovered or past deceased populations. In fact, the cost in (\ref{eq:cost_infty}) representing the sum of future deaths depends only on susceptible and infected populations. This implies that the solution of the MPC problem is not affected by the dynamics of $D$ and $R$. Then, a reduced-order dynamical model can be employed inside the MPC. Hence, the state vector representing the daily number of susceptible and infected individuals in each age group can be defined as $x = \left( S, I\right) \in \mathbb{X}$, where
\begin{align*}
    &\mathbb{X}:=\{x = (S,I) \in \mathbb{R}^{2n_a}: 0 \leq S \leq P, \quad 0 \leq I \leq P\}, \\
    &X^*:=\{x = (S,I) \in \mathbb{R}^{2n_a}: I=0\}.
\end{align*}
Here, $\mathbb{X}$ and $X^*$ represent the projections of the full state sets onto the subspace of susceptible and infected populations. Consequently, the MPC problem can be equivalently expressed in their terms, simplifying the optimization problem while preserving the essential dynamics.

%% file: sections/04_Results.tex
\section{Results and Discussion}
\label{sec: results}
To demonstrate the effectiveness of the MPC vaccination strategy, it is necessary to compare its performance with both the strategy adopted by the Belgian government during the first wave of vaccination and the no vaccination approach. This analysis uses the model in (\ref{eq:disc_model}) to forecast the demographic consequences of the three proposed scenarios on the Walloon population.
\subsection{Simulation parameters}
To simulate the impact of a vaccination strategy on a given population, it is crucial to gather enough demographic information to represent it. To start, the data are categorized into $n_a=6$ different age groups with respective populations $P_k$ for $k \in \{1, 2, \dots, n_a\}$. The Walloon population per age group is obtained from the website of the Belgian statistical office \cite{statbel2018structuur}. The maximum number of vaccines administered during a single day in Wallonia is $\bar{v} = 55191$ shots, which occurred on June 16, 2021 \cite{Sciensano2020}. This number is used to set the threshold for the daily vaccination capacity. The contact matrix $C_{kj}$ is obtained using the online tool Socrates (social contact rates) \cite{willem2020socrates}. For this work, data collected from Belgium in 2010 are used. The elements of the social contact matrix are divided by the total number of individuals in each age group to adjust it to the Walloon population. The remaining simulation parameters: $\lambda_k$, $\gamma_{R_k}$, $\gamma_{D_k}$, and the initial number of infected individuals $I_{0_k}$ are obtained from \cite{Sonveaux2023}, where the author used a genetic algorithm to calibrate the model parameters in (\ref{eq:cont_model}) using data from Wallonia. No drugs are administered and no cases of deceased or recovered individuals are recorded prior to the simulation. The simulation time is set to $t_{sim}=140$ $days$. The simulation time is equivalent to the vaccination strategy horizon, $N_v$. The simulation parameters are summarized in \autoref{tab:sim_param}.
\begin{table}[h!]
    \centering
    \begin{tabular}{c|cccccc}
        $k$ & $range$ & $P_k$ & $\lambda_k$ & $\gamma_{R_{k}}$ & $\gamma_{D_{k}}$ & $I_{0_k}$ \\ \hline
        1 & 0-24 & 1058304 & 0.0769924521 & 0.9216927886 & 0.0004407167 & 4.6595088243 \\
        2 & 25-44 & 0915796 & 0.0290873349 & 0.7230105996 & 0.0018303543 & 4.3296088874 \\
        3 & 45-64 & 0983789 & 0.0136872530 & 0.5707245171 & 0.0232746601 & 4.8417769521 \\
        4 & 65-74 & 0384803 & 0.1149749309 & 0.8482912034 & 0.0397484004 & 0.1709101349 \\
        5 & 75-84 & 0203035 & 0.2326289564 & 0.8200428486 & 0.1006921381 & 1.4936938584 \\
        6 & 85+ & 0099516 & 0.3331837058 & 0.6612236351 & 0.1514435560 & 1.6144863665 \\
    \end{tabular}
    \caption{Age-structured SIRD model simulation parameters}
    \label{tab:sim_param}
\end{table}
\subsection{No vaccination strategy}
At the beginning of every epidemic, a fundamental question arises: is vaccination necessary? The primary goal of vaccination is to reduce the severe effects and limit the spread of viruses and diseases. However, some might argue that the immune system of humans is capable of adapting to and defeating new diseases. In the case of COVID-19, a notable number of deaths have been reported among individuals with weak immunity and pre-existing comorbidity. To forecast the demographic consequences of choosing not to vaccinate the Walloon population, the evolution of population dynamics is simulated using the model in (\ref{eq:disc_model}) with zero input.

Graphically, \autoref{fig:ol_states} illustrates the evolution of population dynamics over time when no vaccine is administered. \autoref{fig:ol_inf} shows that the disease does not eradicate within the specified simulation time, while \autoref{fig:ol_dec} depicts the intolerable growth in the COVID-19 death toll. When no vaccine is administered, more than $16\%$ of the Walloon population becomes infected, with a total death toll of $15497$ individuals during the simulation period. In summary, these alarming findings underscore the urgent need for an effective vaccination strategy to minimize deaths.
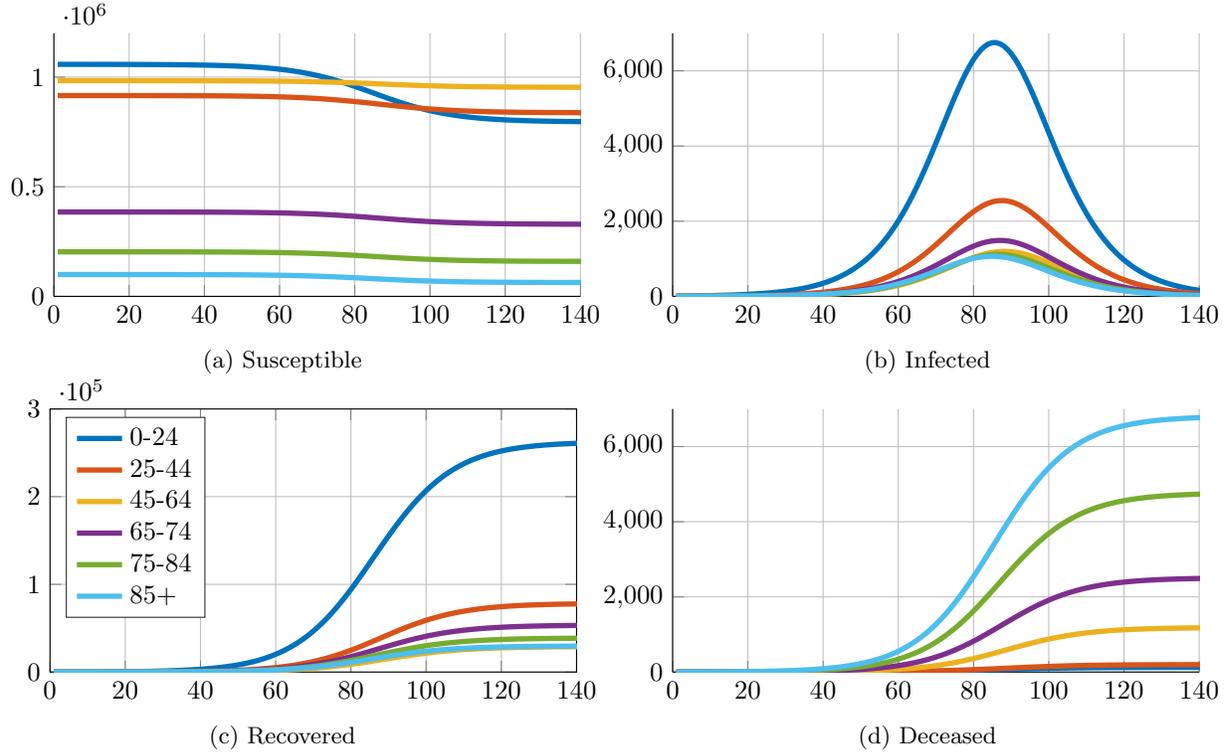
\begin{figure}[h!]
    \centering
        \centering
        \begin{subfigure}{0.45\textwidth}
            \centering
            \hspace*{-0.3cm}
            \input{Figures/OL_state_1}
            \caption{Susceptible}
            \label{fig:ol_sus}
        \end{subfigure}
        \hfill
        \begin{subfigure}{0.45\textwidth}
            \centering
            \hspace*{-1cm}
            \input{Figures/OL_state_2}
            \caption{Infected}
            \label{fig:ol_inf}
        \end{subfigure}
        \vfill
        \begin{subfigure}{0.45\textwidth}
            \centering
            \input{Figures/OL_state_3}
            \caption{Recovered}
            \label{fig:ol_rec}
        \end{subfigure}
        \hfill
        \begin{subfigure}{0.45\textwidth}
            \hspace*{-1cm}
            \centering
            \input{Figures/OL_state_4}
            \caption{Deceased}
            \label{fig:ol_dec}
        \end{subfigure}
        \caption{Populations evolution over time (in days) in the absence of vaccination}
        \label{fig:ol_states}
\end{figure}
\subsection{National vaccination strategy}
During the first wave of vaccination, decision makers in Belgium adopted a decreasing age strategy to limit virus transmission and save as many lives as possible. Given that COVID-19 has been shown to be more fatal in older age groups, the national strategy prioritizes vaccinating the elderly first, then progressively moving to younger age groups. The objective of this hierarchical approach is to first drive the number of susceptible individuals in the oldest age group to zero before moving down the age hierarchy, systematically covering the whole population until the disease is eradicated. Here, disease eradication is defined as $I_{e, k}<1$ for all $k \in \{1, 2, \dots, n_a\}$.
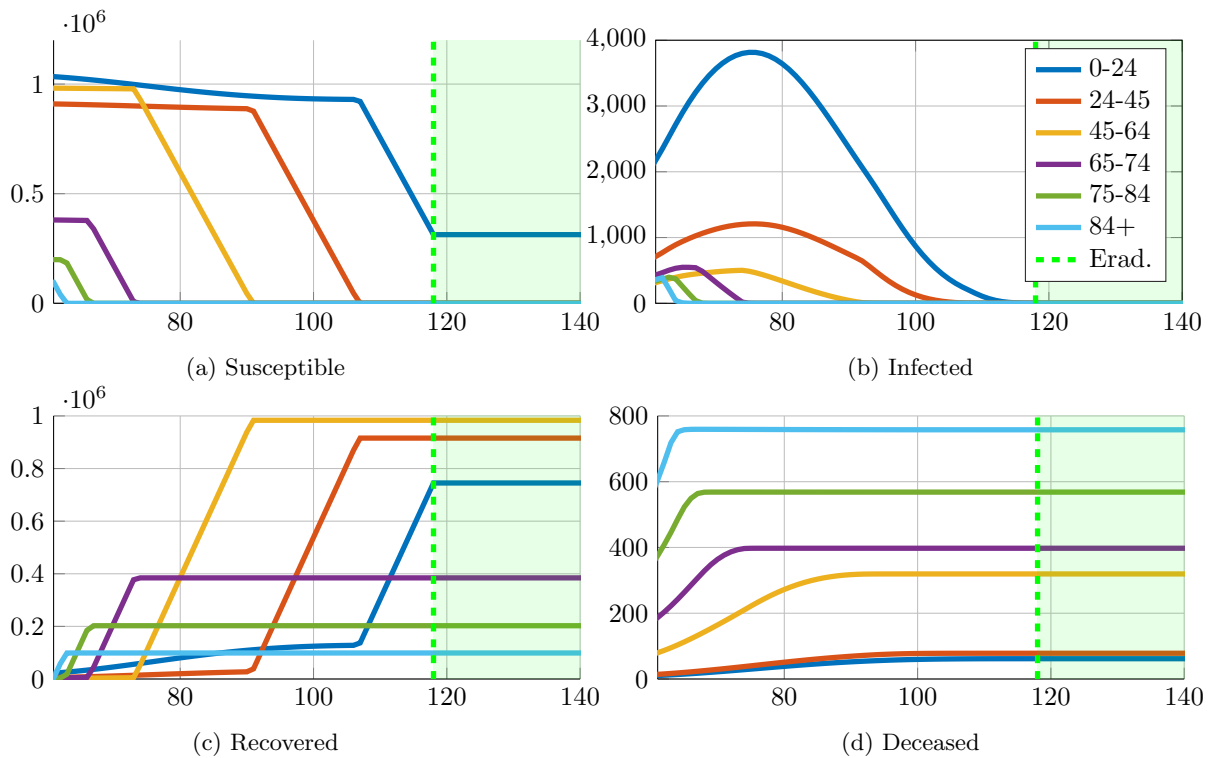
\begin{figure}[h!]
    \centering
        \centering
        \begin{subfigure}{0.45\textwidth}
            \centering
            \input{Figures/NS_state_1}
            \caption{Susceptible}
            \label{fig:ns_sus}
        \end{subfigure}
        \hfill
        \begin{subfigure}{0.45\textwidth}
            \centering
            \hspace*{-1cm}
            \input{Figures/NS_state_2}
            \caption{Infected}
            \label{fig:ns_inf}
        \end{subfigure}
        \vfill
        \begin{subfigure}{0.45\textwidth}
            \centering
            \input{Figures/NS_state_3}
            \caption{Recovered}
            \label{fig:ns_rec}
        \end{subfigure}
        \hfill
        \begin{subfigure}{0.45\textwidth}
            \centering
            \hspace*{-0.7cm}
            \input{Figures/NS_state_4}
            \caption{Deceased}
            \label{fig:ns_dec}
        \end{subfigure}
        \caption{Populations evolution over time (in days) under the national vaccination strategy}
        \label{fig:ns_states}
\end{figure}

To comply with realistic settings, the national vaccination strategy is implemented two months after the COVID-19 outbreak, assuming that this is the period required for the development and testing of the vaccine. In \autoref{fig:ns_inf}, it is shown that the disease is eradicated within 57 days from the start of vaccination. In addition, the fraction of infected individuals decreases dramatically to $5.96\%$ after vaccination. \autoref{fig:ns_dec} highlights a significant reduction in the total death toll since the beginning of vaccination, which decreases by $79.9\%$ under the national strategy compared to no administration. The green-shaded area represents the safe zone: the period of time during which the disease is no longer present. A larger safe zone indicates faster disease eradication.

Despite its benefits, the national vaccination strategy is not optimal in terms of the final death toll, as shown in the following sections. Moreover, the decreasing age approach does not account for additional factors, such as exposure levels and pandemic spreading rates. As proved in the following, taking explicitly into account the pandemics dynamics allows to develop a more advanced strategy capable of further reducing the final death toll, eradicating the disease faster while consuming fewer vaccines. 

\subsection{MPC vaccination strategy}
An MPC vaccination strategy aimed at minimizing deaths is proposed here, with the goal of developing a systematic and optimal strategy to vaccinate the Walloon population while adhering to operational constraints. The MPC algorithm is summarized in Algorithm \ref{alg:mpc_vaccination}. The values of the MPC parameters used for this simulation are $N=40$ days and $\epsilon=0.1$.
\begin{figure}[h!]
    \centering
        \centering
        \begin{subfigure}{0.45\textwidth}
            \centering
            \input{Figures/MPC_state_1}
            \caption{Susceptible}
            \label{fig:mpc_sus}
        \end{subfigure}
        \hfill
        \begin{subfigure}{0.45\textwidth}
            \centering
            \hspace*{-1cm}
            \input{Figures/MPC_state_2}
            \caption{Infected}
            \label{fig:mpc_inf}
        \end{subfigure}
        \vfill
        \begin{subfigure}{0.45\textwidth}
            \centering
            \input{Figures/MPC_state_3}
            \caption{Recovered}
            \label{fig:mpc_rec}
        \end{subfigure}
        \hfill
        \begin{subfigure}{0.45\textwidth}
            \centering
            \hspace*{-0.7cm}
            \input{Figures/MPC_state_4}
            \caption{Deceased}
            \label{fig:mpc_dec}
        \end{subfigure}
        \caption{Populations evolution over time (in days) under the MPC vaccination strategy}
        \label{fig:mpc_states}
    \hfill
\end{figure}
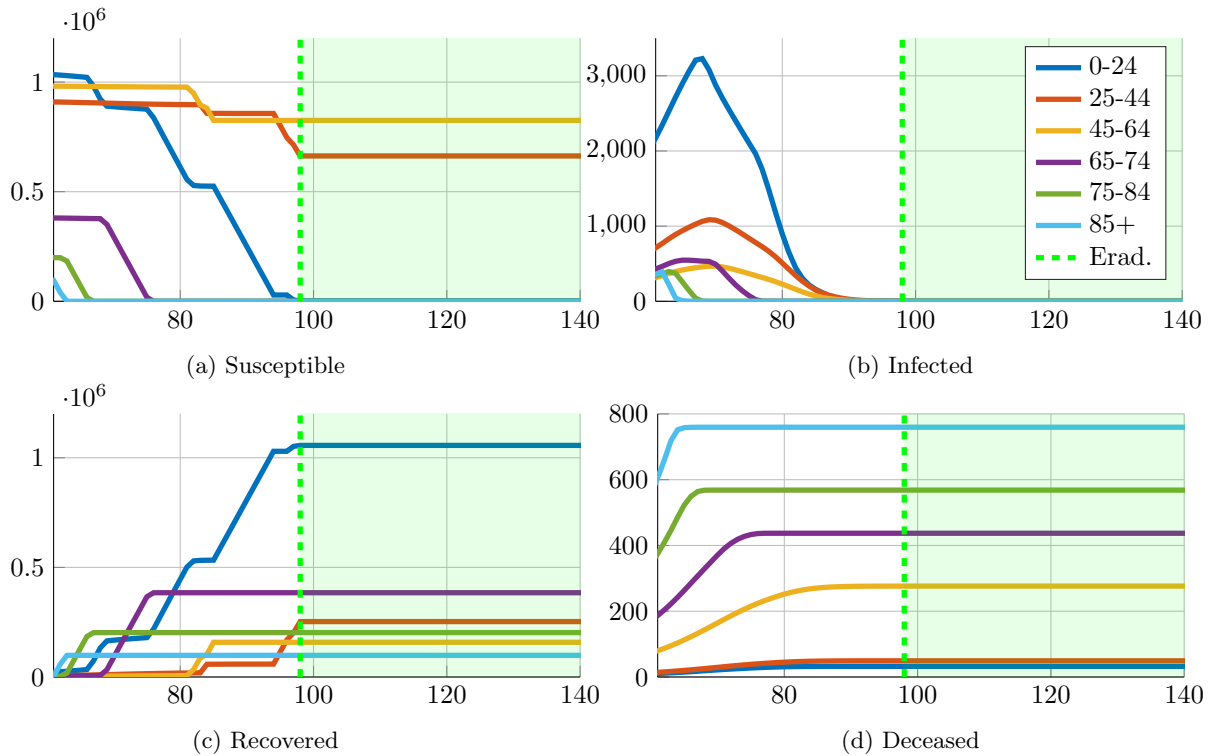

As for the former approach, the MPC is tested following the availability of vaccines: two months after the COVID-19 outbreak. \autoref{fig:mpc_states} illustrates the demographic improvements using the MPC approach. In \autoref{fig:mpc_inf}, the eradication time decreases to 37 days from the beginning of vaccination, and the fraction of infected individuals is reduced to $3.67\%$. Consequently, the total disease death toll since the beginning of vaccination in \autoref{fig:mpc_dec} decreases by $80.3\%$ under the MPC strategy compared to the absence of administration. This improvement is attributed to the model-based nature of the MPC. The MPC uses the mathematical model in (\ref{eq:disc_model}) to guide its decisions, incorporating information such as contact, death, and recovery rates for each age group. This represents an improvement over the national strategy, which solely bases its decisions on age.

\subsection{Discussion}
Comparing both vaccination strategies, \autoref{MPC_NS_ID} demonstrates that the MPC provides an improved performance in eradicating the disease and saving as many lives as possible. Graphically, the safe zone occupies a significantly larger area in \autoref{fig:mpc_ID} compared to \autoref{fig:ns_ID}. A larger safe zone is equivalent to faster disease eradication time. Numerically, it can be concluded that the MPC approach is $35\%$ faster than the national strategy in eradicating the disease. This results in the MPC decreasing the total number of infected individuals by $49\%$ compared to the national strategy. Moreover, since vaccination is carried out daily at maximum capacity, the speed of disease eradication is inversely proportional to the number of vaccines consumed. This suggests that the MPC approach is more cost effective than the national strategy, as it requires $35\%$ fewer vaccines to achieve disease eradication. Furthermore, while not graphically prominent in \autoref{MPC_NS_ID}, the MPC approach saves $6.6\%$ more lives compared to the national strategy. Ethically, this alone establishes the MPC as the preferred method. The offset in the total number of deceased individuals between the two approaches is better represented in \autoref{fig:mpc_cost}. Their respective performance is summarized in \autoref{tab:mpc_vs_nat}.
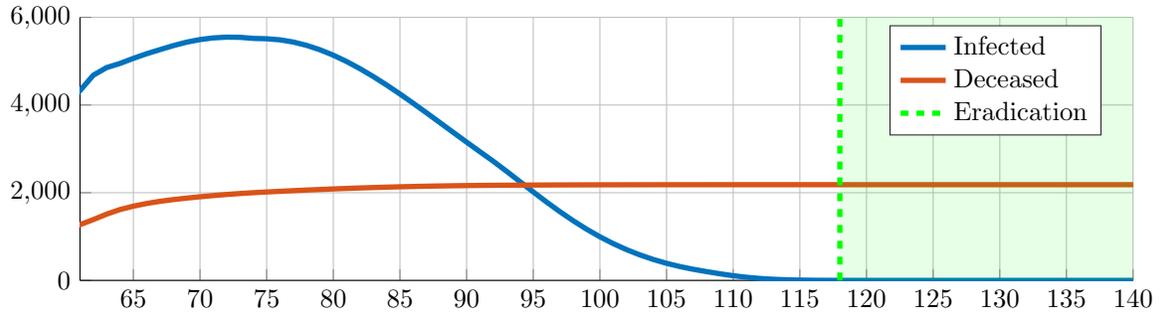
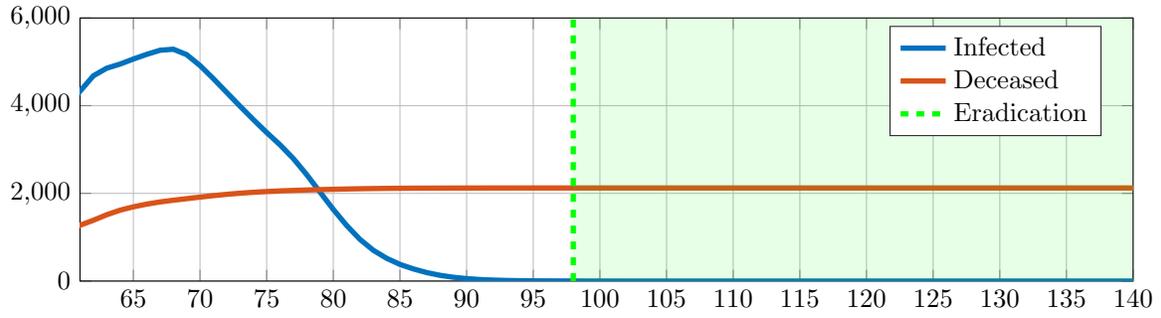
\begin{figure}
        \centering
        \begin{subfigure}{\textwidth}
            \input{Figures/NS_ID}
            \caption{National vaccination strategy}
            \label{fig:ns_ID}
        \end{subfigure}
    \vfill
        \centering
        \begin{subfigure}{\textwidth}
            \input{Figures/MPC_ID}
            \caption{MPC vaccination strategy}
            \label{fig:mpc_ID}
        \end{subfigure}
    \caption{Daily infected and total deceased evolution over time (in days)}
    \label{MPC_NS_ID}
\end{figure}
\begin{table}
    \centering
    \begin{tabular}{c|cccc}
         & Deceased & Infected & Eradication & Injected Vaccines \\ \hline
        National Strategy & 911 & 169578 & 57 days & 3145887 \\
        MPC Strategy & 851 & 86314 & 37 days & 2042017 \\  \hline
        Improvement & $6.6\%$ & $49.1\%$ & $35.1\%$ & $35.1\%$ \\
        \hline
    \end{tabular}
    \caption{Statistical comparison between the national and the MPC vaccination strategies}
    \label{tab:mpc_vs_nat}
\end{table}

In terms of vaccination, \autoref{fig:mpc_ns_input} illustrates the significant difference between the two approaches. The national strategy in \autoref{fig:ns_strat} follows a decreasing age approach. This strategy prioritizes vaccination of the older age groups. In contrast, the MPC strategy in \autoref{fig:mpc_strat} employs a more dynamic approach. The MPC strategy allows to simultaneously vaccinate multiple age groups at once. This stems from the model-based nature of the MPC that accounts for different factors to take its decisions. Consequently, the MPC is better also for minimizing deaths compared to the national strategy.
\begin{figure}
    \centering
    \begin{subfigure}{\linewidth}
        \centering
        \input{Figures/NS_input}
        \caption{National vaccination strategy}
        \label{fig:ns_strat}
    \end{subfigure}
    \hfill
    \begin{subfigure}{\linewidth}
        \centering
        \input{Figures/MPC_input}
        \caption{MPC vaccination strategy}
        \label{fig:mpc_strat}
    \end{subfigure}
    \caption{Vaccine administration evolution over time (in days)}
    \label{fig:mpc_ns_input}
\end{figure}
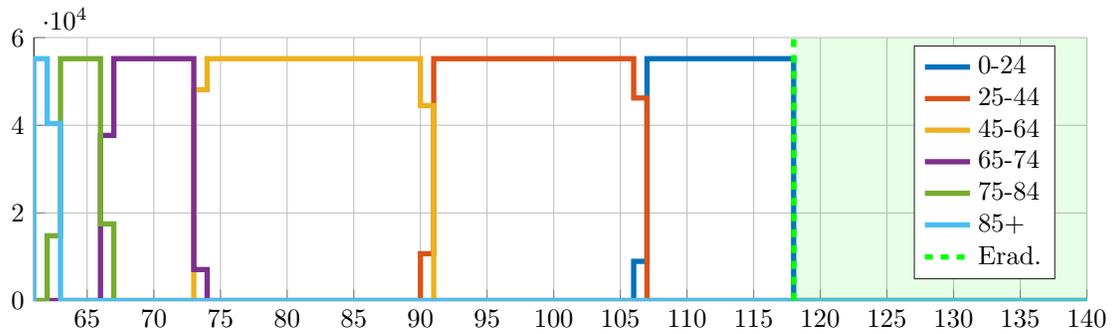
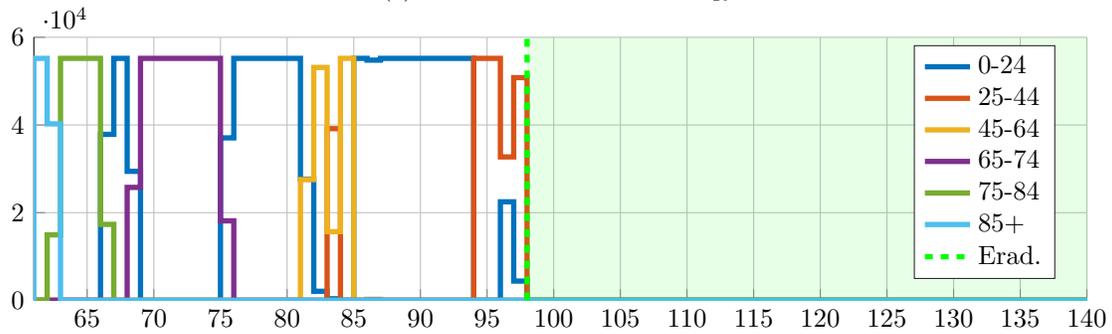

As previously mentioned, the MPC approach requires $35\%$ fewer vaccines compared to the national strategy to achieve disease eradication. This gives the MPC its cost-effective characteristics, making it suitable for implementation in nations with weak economies. Moreover, \autoref{fig:mpc_cost} highlights the performance improvement of the proposed MPC approach, providing a comparison of the evolution of the total death toll after vaccine administration using both strategies. Note moreover that, as expected from Theorem \ref{thm:final_cost}, the MPC results in a final death toll that is upper-bounded by the MPC cost, whereas the national strategy violates such upper bound. Indeed, the MPC cost in \autoref{fig:mpc_cost} is an upper bound for the minimum overall number of deaths achievable at the end of the simulation. In addition, it is interesting to note that the MPC-based strategy temporarily generates a higher number of deceased individuals, see values around day 75, although the final death toll is lower than that obtained under the national strategy. This counterintuitive observation confirms that the design of the optimal vaccination strategy is not a trivial problem to be addressed.

\begin{figure}
    \centering
    \input{Figures/MPC_cost}
    \caption{Evolution of the MPC cost versus the total number of deceased individuals under the MPC and national strategies since the start of vaccination until disease eradication}
    \label{fig:mpc_cost}
\end{figure}
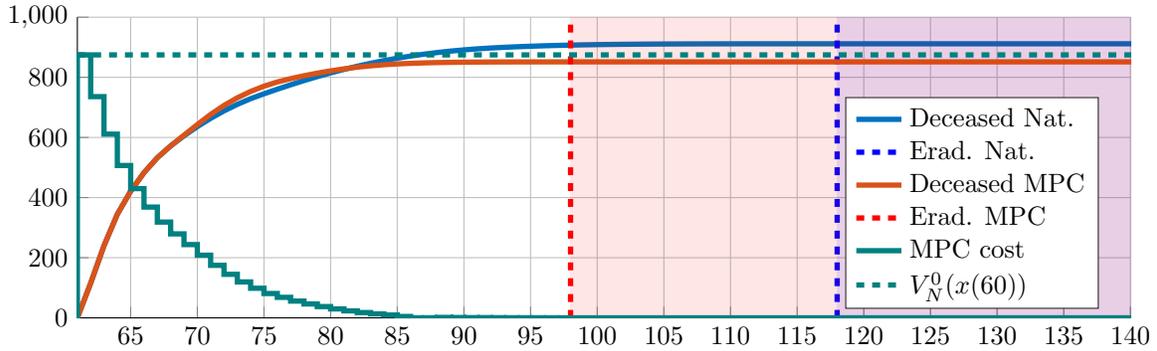

%% file: Figures/OL_state_1.tex
%
%
\definecolor{mycolor1}{rgb}{0.00000,0.44700,0.74100}%
\definecolor{mycolor2}{rgb}{0.85000,0.32500,0.09800}%
\definecolor{mycolor3}{rgb}{0.92900,0.69400,0.12500}%
\definecolor{mycolor4}{rgb}{0.49400,0.18400,0.55600}%
\definecolor{mycolor5}{rgb}{0.46600,0.67400,0.18800}%
\definecolor{mycolor6}{rgb}{0.30100,0.74500,0.93300}%
\begin{tikzpicture}

\begin{axis}[%
width=7cm,
height=3.5cm,
at={(0cm,0cm)},
scale only axis,
xmin=0,
xmax=140,
ymin=0,
ymax=1200000,
xmajorgrids,
ymajorgrids,
axis background/.style={fill=white},
axis x line*=bottom,
axis y line*=left
]
\addplot [color=mycolor1, line width=2.0pt, forget plot]
  table[row sep=crcr]{%
1	1058299.34049118\\
2	1058292.44626517\\
3	1058283.78077337\\
4	1058273.61241241\\
5	1058262.04982195\\
6	1058249.10573796\\
7	1058234.73290958\\
8	1058218.84387911\\
9	1058201.32163147\\
10	1058182.02508278\\
11	1058160.79161252\\
12	1058137.43785495\\
13	1058111.75941957\\
14	1058083.52991054\\
15	1058052.49944864\\
16	1058018.39280639\\
17	1057980.9072135\\
18	1057939.70985877\\
19	1057894.43509538\\
20	1057844.68134481\\
21	1057790.00768631\\
22	1057729.9301135\\
23	1057663.91743528\\
24	1057591.38679507\\
25	1057511.69877958\\
26	1057424.1520859\\
27	1057327.97771375\\
28	1057222.332648\\
29	1057106.29299513\\
30	1056978.8465363\\
31	1056838.88465923\\
32	1056685.19363109\\
33	1056516.44517546\\
34	1056331.18631846\\
35	1056127.8284717\\
36	1055904.63572501\\
37	1055659.71232743\\
38	1055390.98934441\\
39	1055096.21048994\\
40	1054772.91714741\\
41	1054418.43261123\\
42	1054029.8456047\\
43	1053603.99315784\\
44	1053137.44296392\\
45	1052626.47537521\\
46	1052067.06524862\\
47	1051454.86391135\\
48	1050785.18158549\\
49	1050052.97069169\\
50	1049252.81054321\\
51	1048378.89404652\\
52	1047425.01714036\\
53	1046384.57183296\\
54	1045250.54383415\\
55	1044015.515924\\
56	1042671.67834603\\
57	1041210.84765812\\
58	1039624.49560582\\
59	1037903.78969438\\
60	1036039.64720974\\
61	1034022.80446188\\
62	1031843.90297417\\
63	1029493.59419949\\
64	1026962.66408259\\
65	1024242.178385\\
66	1021323.64912073\\
67	1018199.22169971\\
68	1014861.88143083\\
69	1011305.6768999\\
70	1007525.95642771\\
71	1003519.61237032\\
72	999285.326511916\\
73	994823.808313339\\
74	990138.016433998\\
75	985233.352883275\\
76	980117.818532393\\
77	974802.118682044\\
78	969299.708067791\\
79	963626.766186715\\
80	957802.096174977\\
81	951846.943606579\\
82	945784.735376779\\
83	939640.743046923\\
84	933441.679351163\\
85	927215.240640888\\
86	920989.611500949\\
87	914792.950279623\\
88	908652.875578678\\
89	902595.973713577\\
90	896647.34577617\\
91	890830.210349501\\
92	885165.574391614\\
93	879671.980660265\\
94	874365.335669193\\
95	869258.817915713\\
96	864362.862315347\\
97	859685.213658447\\
98	855231.039607785\\
99	851003.092330887\\
100	847001.907268583\\
101	843226.027679553\\
102	839672.244326334\\
103	836335.840819775\\
104	833210.836555919\\
105	830290.220717435\\
106	827566.172351282\\
107	825030.262984619\\
108	822673.639540345\\
109	820487.18642628\\
110	818461.666583911\\
111	816587.841996273\\
112	814856.574684003\\
113	813258.909585047\\
114	811786.140941637\\
115	810429.863933506\\
116	809182.013323264\\
117	808034.890840422\\
118	806981.182943931\\
119	806013.970485174\\
120	805126.731656978\\
121	804313.339469344\\
122	803568.054846839\\
123	802885.516301113\\
124	802260.726998707\\
125	801689.039921231\\
126	801166.141703526\\
127	800688.035635924\\
128	800251.024229028\\
129	799851.691663159\\
130	799486.88637886\\
131	799153.704008756\\
132	798849.470803727\\
133	798571.727666696\\
134	798318.214874442\\
135	798086.857540895\\
136	797875.751853439\\
137	797683.152096142\\
138	797507.458459936\\
139	797347.205628871\\
140	797201.052123294\\
141	797067.77037465\\
};
\addplot [color=mycolor2, line width=2.0pt, forget plot]
  table[row sep=crcr]{%
1	915791.670391113\\
2	915788.901498256\\
3	915786.091690194\\
4	915783.132068456\\
5	915779.950175584\\
6	915776.492532322\\
7	915772.714557096\\
8	915768.574937074\\
9	915764.032453362\\
10	915759.04411254\\
11	915753.563958054\\
12	915747.542225712\\
13	915740.924663755\\
14	915733.651920725\\
15	915725.658947598\\
16	915716.874383396\\
17	915707.219905194\\
18	915696.609529594\\
19	915684.948855782\\
20	915672.13424177\\
21	915658.051906044\\
22	915642.576946937\\
23	915625.572271926\\
24	915606.887428743\\
25	915586.357329784\\
26	915563.800860863\\
27	915539.019364923\\
28	915511.794990813\\
29	915481.888896859\\
30	915449.039298545\\
31	915412.959349352\\
32	915373.334843598\\
33	915329.821730101\\
34	915282.043425655\\
35	915229.587917726\\
36	915172.00464652\\
37	915108.801157736\\
38	915039.439518927\\
39	914963.33249471\\
40	914879.839478979\\
41	914788.262186238\\
42	914687.840109055\\
43	914577.745754883\\
44	914457.079683137\\
45	914324.865372849\\
46	914180.043962563\\
47	914021.468917787\\
48	913847.900697485\\
49	913658.001510099\\
50	913450.330271679\\
51	913223.337904108\\
52	912975.363140264\\
53	912704.629035312\\
54	912409.240419031\\
55	912087.182562778\\
56	911736.321375737\\
57	911354.405487427\\
58	910939.070615502\\
59	910487.846657563\\
60	909998.167980195\\
61	909467.387404114\\
62	908892.794396792\\
63	908271.637977925\\
64	907601.154812585\\
65	906878.602905263\\
66	906101.301208428\\
67	905266.675315064\\
68	904372.309210307\\
69	903416.002809082\\
70	902395.834703814\\
71	901310.229192402\\
72	900158.026260838\\
73	898938.552772743\\
74	897651.692692815\\
75	896297.953773094\\
76	894878.527797046\\
77	893395.341248036\\
78	891851.093188294\\
79	890249.277240698\\
80	888594.184888541\\
81	886890.887862634\\
82	885145.198165721\\
83	883363.605262136\\
84	881553.191082796\\
85	879721.524686019\\
86	877876.539581636\\
87	876026.397770429\\
88	874179.345378715\\
89	872343.565300644\\
90	870527.032447006\\
91	868737.377020988\\
92	866981.760716719\\
93	865266.769917185\\
94	863598.328932465\\
95	861981.635161281\\
96	860421.116876896\\
97	858920.413223708\\
98	857482.375038305\\
99	856109.08433118\\
100	854801.889710976\\
101	853561.454707138\\
102	852387.815834328\\
103	851280.447313611\\
104	850238.329583018\\
105	849260.019052128\\
106	848343.71694115\\
107	847487.335458649\\
108	846688.559983598\\
109	845944.906304455\\
110	845253.772314923\\
111	844612.483864049\\
112	844018.33470369\\
113	843468.620669646\\
114	842960.66837744\\
115	842491.858814964\\
116	842059.646278509\\
117	841661.573132348\\
118	841295.280881599\\
119	840958.518039209\\
120	840649.145245717\\
121	840365.138069289\\
122	840104.587876791\\
123	839865.701127266\\
124	839646.797399069\\
125	839446.306422791\\
126	839262.764354877\\
127	839094.809492351\\
128	838941.177597605\\
129	838800.696973985\\
130	838672.283407954\\
131	838554.935071754\\
132	838447.727461623\\
133	838349.808430434\\
134	838260.393359924\\
135	838178.76050614\\
136	838104.246542173\\
137	838036.242314337\\
138	837974.188821532\\
139	837917.573422329\\
140	837865.926270219\\
141	837818.816974232\\
};
\addplot [color=mycolor3, line width=2.0pt, forget plot]
  table[row sep=crcr]{%
1	983784.158223048\\
2	983783.023485627\\
3	983781.909278624\\
4	983780.76542737\\
5	983779.556847025\\
6	983778.257717261\\
7	983776.847487018\\
8	983775.308245556\\
9	983773.623068632\\
10	983771.774986174\\
11	983769.746318778\\
12	983767.518223245\\
13	983765.070351849\\
14	983762.380569837\\
15	983759.42469881\\
16	983756.176266809\\
17	983752.606253255\\
18	983748.682820979\\
19	983744.371029843\\
20	983739.632527662\\
21	983734.425214798\\
22	983728.70287909\\
23	983722.414797912\\
24	983715.505304104\\
25	983707.913312493\\
26	983699.571803527\\
27	983690.40726046\\
28	983680.339056318\\
29	983669.278786734\\
30	983657.129544611\\
31	983643.785132405\\
32	983629.129207786\\
33	983613.034358352\\
34	983595.361101145\\
35	983575.956802793\\
36	983554.654516383\\
37	983531.2717315\\
38	983505.609034454\\
39	983477.448676443\\
40	983446.553048471\\
41	983412.663063139\\
42	983375.496445196\\
43	983334.745934862\\
44	983290.077410699\\
45	983241.12794206\\
46	983187.503785267\\
47	983128.778342466\\
48	983064.490107939\\
49	982994.140633484\\
50	982917.192552434\\
51	982833.067711126\\
52	982741.145467132\\
53	982640.761225429\\
54	982531.205296857\\
55	982411.72217759\\
56	982281.510363753\\
57	982139.722831353\\
58	981985.468327959\\
59	981817.813638172\\
60	981635.786999115\\
61	981438.3828535\\
62	981224.568134846\\
63	980993.29028022\\
64	980743.487158281\\
65	980474.099081869\\
66	980184.083042559\\
67	979872.429256577\\
68	979538.180045261\\
69	979180.450986864\\
70	978798.454169303\\
71	978391.523246022\\
72	977959.139852166\\
73	977500.960780475\\
74	977016.845153584\\
75	976506.88067198\\
76	975971.407877976\\
77	975411.041270753\\
78	974826.686051914\\
79	974219.549290572\\
80	973591.144384624\\
81	972943.287868766\\
82	972278.087881253\\
83	971597.923943327\\
84	970905.418111201\\
85	970203.398005154\\
86	969494.852671359\\
87	968782.882652624\\
88	968070.645997722\\
89	967361.302192663\\
90	966657.956127097\\
91	965963.604202607\\
92	965281.084547824\\
93	964613.033042294\\
94	963961.846492142\\
95	963329.653879042\\
96	962718.296156561\\
97	962129.314630493\\
98	961563.947563664\\
99	961023.134314723\\
100	960507.526069608\\
101	960017.502059641\\
102	959553.190079389\\
103	959114.490111885\\
104	958701.099925663\\
105	958312.54161205\\
106	957948.188166799\\
107	957607.289372833\\
108	957288.996398279\\
109	956992.384676082\\
110	956716.474771297\\
111	956460.251064904\\
112	956222.678186502\\
113	956002.715211703\\
114	955799.327704475\\
115	955611.497731498\\
116	955438.232007202\\
117	955278.568346811\\
118	955131.580612993\\
119	954996.382341865\\
120	954872.129228172\\
121	954758.020639305\\
122	954653.300314832\\
123	954557.256393713\\
124	954469.220896149\\
125	954388.568771862\\
126	954314.716612017\\
127	954247.121108243\\
128	954185.27732962\\
129	954128.716877007\\
130	954077.005963978\\
131	954029.743464578\\
132	953986.558960355\\
133	953947.110812387\\
134	953911.084278275\\
135	953878.189689248\\
136	953848.160698461\\
137	953820.752608223\\
138	953795.74078113\\
139	953772.919137863\\
140	953752.098742589\\
141	953733.106475547\\
};
\addplot [color=mycolor4, line width=2.0pt, forget plot]
  table[row sep=crcr]{%
1	384802.829089865\\
2	384800.972397342\\
3	384798.818823749\\
4	384796.517541259\\
5	384794.076555305\\
6	384791.46606567\\
7	384788.648804423\\
8	384785.587690053\\
9	384782.246722872\\
10	384778.590145894\\
11	384774.581362579\\
12	384770.181981572\\
13	384765.351027502\\
14	384760.044276442\\
15	384754.213667157\\
16	384747.806748983\\
17	384740.766137903\\
18	384733.028960518\\
19	384724.526271146\\
20	384715.182430756\\
21	384704.91443859\\
22	384693.631208598\\
23	384681.232783445\\
24	384667.609479182\\
25	384652.640953732\\
26	384636.195192244\\
27	384618.127402216\\
28	384598.278811047\\
29	384576.47535847\\
30	384552.526276079\\
31	384526.222546029\\
32	384497.335230855\\
33	384465.613666422\\
34	384430.783510139\\
35	384392.544636933\\
36	384350.568876091\\
37	384304.497582932\\
38	384253.93904056\\
39	384198.46568865\\
40	384137.611178493\\
41	384070.867256428\\
42	383997.680481524\\
43	383917.448788002\\
44	383829.517908621\\
45	383733.177682233\\
46	383627.658277205\\
47	383512.126372463\\
48	383385.681349934\\
49	383247.351566175\\
50	383096.090787269\\
51	382930.774889761\\
52	382750.198951581\\
53	382553.07488056\\
54	382338.029754211\\
55	382103.60507253\\
56	381848.25715528\\
57	381570.358945583\\
58	381268.203511641\\
59	380940.009566259\\
60	380583.929347586\\
61	380198.059221277\\
62	379780.453370947\\
63	379329.140936356\\
64	378842.146932837\\
65	378317.517236281\\
66	377753.347840547\\
67	377147.818483865\\
68	376499.230593798\\
69	375806.0493143\\
70	375066.949153361\\
71	374280.862528932\\
72	373447.030201669\\
73	372565.052277817\\
74	371634.938161905\\
75	370657.153559775\\
76	369632.662405116\\
77	368562.961437407\\
78	367450.105126198\\
79	366296.718743156\\
80	365105.99764891\\
81	363881.691294634\\
82	362628.071031579\\
83	361349.881551044\\
84	360052.27660057\\
85	358740.740481891\\
86	357420.997664506\\
87	356098.913573955\\
88	354780.390169602\\
89	353471.260259605\\
90	352177.18457844\\
91	350903.555467121\\
92	349655.410566951\\
93	348437.359306225\\
94	347253.524185755\\
95	346107.498022998\\
96	345002.317466771\\
97	343940.452309069\\
98	342923.809448344\\
99	341953.749833988\\
100	341031.116361189\\
101	340156.270488691\\
102	339329.135305645\\
103	338549.242854351\\
104	337815.783694716\\
105	337127.656943203\\
106	336483.519305292\\
107	335881.831920688\\
108	335320.90413455\\
109	334798.933580757\\
110	334314.042204849\\
111	333864.308059423\\
112	333447.792871613\\
113	333062.565511975\\
114	332706.721589537\\
115	332378.399463059\\
116	332075.792998366\\
117	331797.161420625\\
118	331540.836613283\\
119	331305.228205907\\
120	331088.826775147\\
121	330890.205459179\\
122	330708.020258879\\
123	330541.009270251\\
124	330387.991063938\\
125	330247.862399733\\
126	330119.595437792\\
127	330002.234583994\\
128	329894.89308493\\
129	329796.749468383\\
130	329707.043907841\\
131	329625.074574508\\
132	329550.194027284\\
133	329481.805680062\\
134	329419.360376311\\
135	329362.353093057\\
136	329310.319789832\\
137	329262.834412831\\
138	329219.506060135\\
139	329179.976310396\\
140	329143.916714581\\
141	329111.026448263\\
};
\addplot [color=mycolor5, line width=2.0pt, forget plot]
  table[row sep=crcr]{%
1	203033.506306142\\
2	203031.847499558\\
3	203030.016281485\\
4	203028.080497469\\
5	203026.038662809\\
6	203023.865270194\\
7	203021.528986979\\
8	203018.998252081\\
9	203016.242259281\\
10	203013.230499123\\
11	203009.931973184\\
12	203006.314425502\\
13	203002.343669331\\
14	202997.983003483\\
15	202993.192691894\\
16	202987.929479424\\
17	202982.146121576\\
18	202975.790910739\\
19	202968.807185581\\
20	202961.132813118\\
21	202952.699634934\\
22	202943.432870321\\
23	202933.250469891\\
24	202922.062413597\\
25	202909.769947357\\
26	202896.264752467\\
27	202881.428041965\\
28	202865.129577968\\
29	202847.226603908\\
30	202827.56268546\\
31	202805.966453865\\
32	202782.250245363\\
33	202756.208630506\\
34	202727.616827347\\
35	202696.228992873\\
36	202661.776387624\\
37	202623.965409302\\
38	202582.475492339\\
39	202536.95687195\\
40	202487.028213236\\
41	202432.274108527\\
42	202372.242449391\\
43	202306.441683852\\
44	202234.337974333\\
45	202155.352277903\\
46	202068.857377718\\
47	201974.174903194\\
48	201870.572386691\\
49	201757.260416385\\
50	201633.389958757\\
51	201498.049939787\\
52	201350.265191577\\
53	201188.994890666\\
54	201013.131635627\\
55	200821.501334219\\
56	200612.86409408\\
57	200385.916334657\\
58	200139.294360874\\
59	199871.579659393\\
60	199581.30619426\\
61	199266.969988031\\
62	198927.041274139\\
63	198559.979493129\\
64	198164.251375695\\
65	197738.352305432\\
66	197280.831079943\\
67	196790.318087173\\
68	196265.556781946\\
69	195705.43818487\\
70	195109.03793319\\
71	194475.655195102\\
72	193804.852523301\\
73	193096.495482026\\
74	192350.790650803\\
75	191568.320407594\\
76	190750.072747109\\
77	189897.46432101\\
78	189012.354918516\\
79	188097.051757356\\
80	187154.302237104\\
81	186187.27422003\\
82	185199.523435859\\
83	184194.948229061\\
84	183177.732540195\\
85	182152.278685343\\
86	181123.132113481\\
87	180094.900825091\\
88	179072.172478632\\
89	178059.432360946\\
90	177060.98533833\\
91	176080.884642879\\
92	175122.869909821\\
93	174190.316308427\\
94	173286.195954933\\
95	172413.052117686\\
96	171572.986076662\\
97	170767.655926851\\
98	169998.286150934\\
99	169265.686449748\\
100	168570.278114251\\
101	167912.126142838\\
102	167290.975336851\\
103	166706.288723242\\
104	166157.28683244\\
105	165642.986577831\\
106	165162.238719509\\
107	164713.763131456\\
108	164296.181314495\\
109	163908.045798069\\
110	163547.86624659\\
111	163214.132228526\\
112	162905.332718785\\
113	162619.972489147\\
114	162356.585600653\\
115	162113.746249525\\
116	161890.077238134\\
117	161684.256348558\\
118	161495.020891558\\
119	161321.170691435\\
120	161161.569749646\\
121	161015.146809255\\
122	160880.895019923\\
123	160757.870880366\\
124	160645.192612969\\
125	160542.038104089\\
126	160447.642523962\\
127	160361.295722274\\
128	160282.339479406\\
129	160210.16467917\\
130	160144.208456487\\
131	160083.951362692\\
132	160028.91458199\\
133	159978.657224798\\
134	159932.773717179\\
135	159890.891300113\\
136	159852.66764789\\
137	159817.788611255\\
138	159785.966087991\\
139	159756.936021277\\
140	159730.45652435\\
141	159706.306128573\\
};
\addplot [color=mycolor6, line width=2.0pt, forget plot]
  table[row sep=crcr]{%
1	99514.3855136335\\
2	99512.7730761402\\
3	99511.0192276974\\
4	99509.1475744595\\
5	99507.1581711548\\
6	99505.0347677528\\
7	99502.7530011467\\
8	99500.2851476226\\
9	99497.6022066046\\
10	99494.6744795985\\
11	99491.4714279749\\
12	99487.9612375715\\
13	99484.1102949229\\
14	99479.8826594836\\
15	99475.2395574488\\
16	99470.1388969544\\
17	99464.5347947996\\
18	99458.377102492\\
19	99451.6109199752\\
20	99444.1760868441\\
21	99436.0066423135\\
22	99427.0302464047\\
23	99417.1675556773\\
24	99406.3315474109\\
25	99394.4267864822\\
26	99381.3486293714\\
27	99366.9823597983\\
28	99351.2022505128\\
29	99333.87054575\\
30	99314.8363588744\\
31	99293.9344797919\\
32	99270.9840868399\\
33	99245.7873581139\\
34	99218.1279775855\\
35	99187.769531951\\
36	99154.45379498\\
37	99117.8988972545\\
38	99077.7973806708\\
39	99033.8141389948\\
40	98985.584248193\\
41	98932.7106933104\\
42	98874.7620024355\\
43	98811.2698028964\\
44	98741.7263204084\\
45	98665.5818485642\\
46	98582.2422239751\\
47	98491.0663516659\\
48	98391.3638361262\\
49	98282.3927858354\\
50	98163.3578731762\\
51	98033.4087474469\\
52	97891.6389161039\\
53	97737.0852282407\\
54	97568.7281142971\\
55	97385.4927565749\\
56	97186.2513855491\\
57	96969.8269161364\\
58	96734.9981546325\\
59	96480.5068191435\\
60	96205.0666218251\\
61	95907.37465747\\
62	95586.1253269382\\
63	95240.0269922999\\
64	94867.821509928\\
65	94468.3067148293\\
66	94040.361831431\\
67	93582.9756610013\\
68	93095.2772435565\\
69	92576.5685143771\\
70	92026.3582768119\\
71	91444.3966020133\\
72	90830.7085545253\\
73	90185.6259459633\\
74	89509.8156564854\\
75	88804.3029567212\\
76	88070.4882331647\\
77	87310.1555876556\\
78	86525.4719614664\\
79	85718.9757336789\\
80	84893.5541581798\\
81	84052.4095172455\\
82	83199.0144527066\\
83	82337.0575467732\\
84	81470.3808141673\\
85	80602.911282632\\
86	79738.5892308409\\
87	78881.2958817345\\
88	78034.7833912855\\
89	77202.6098225311\\
90	76388.08146718\\
91	75594.2044047714\\
92	74823.6466181272\\
93	74078.7113665752\\
94	73361.3219083111\\
95	72673.0171074795\\
96	72014.9569966253\\
97	71387.9370143169\\
98	70792.4094103443\\
99	70228.5102038581\\
100	69696.0900801508\\
101	69194.7476999504\\
102	68723.8640483123\\
103	68282.6366453566\\
104	67870.1126570492\\
105	67485.2201631839\\
106	67126.7970479946\\
107	66793.6171668995\\
108	66484.4136050712\\
109	66197.8989773616\\
110	65932.7828246161\\
111	65687.7862403714\\
112	65461.653917239\\
113	65253.163837408\\
114	65061.1348502874\\
115	64884.4323858595\\
116	64721.9725480235\\
117	64572.7248208579\\
118	64435.7136046443\\
119	64310.0187795654\\
120	64194.7754746814\\
121	64089.1731992117\\
122	63992.4544731008\\
123	63903.9130748787\\
124	63822.8920072785\\
125	63748.7812651407\\
126	63681.0154758845\\
127	63619.0714702606\\
128	63562.4658301406\\
129	63510.7524506471\\
130	63463.520145858\\
131	63420.3903204922\\
132	63381.0147242644\\
133	63345.073300849\\
134	63312.2721394885\\
135	63282.3415341084\\
136	63255.0341522482\\
137	63230.1233140901\\
138	63207.401380285\\
139	63186.6782460643\\
140	63167.779938215\\
141	63150.5473108462\\
};
\end{axis}
\end{tikzpicture}%

%% file: Figures/OL_state_2.tex
%
%
\definecolor{mycolor1}{rgb}{0.00000,0.44700,0.74100}%
\definecolor{mycolor2}{rgb}{0.85000,0.32500,0.09800}%
\definecolor{mycolor3}{rgb}{0.92900,0.69400,0.12500}%
\definecolor{mycolor4}{rgb}{0.49400,0.18400,0.55600}%
\definecolor{mycolor5}{rgb}{0.46600,0.67400,0.18800}%
\definecolor{mycolor6}{rgb}{0.30100,0.74500,0.93300}%
\begin{tikzpicture}

\begin{axis}[%
width=7cm,
height=3.5cm,
at={(0cm,0cm)},
scale only axis,
xmin=0,
xmax=140,
ymin=0,
ymax=7000,
xmajorgrids,
ymajorgrids,
axis background/.style={fill=white},
axis x line*=bottom,
axis y line*=left
]
\addplot [color=mycolor1, line width=2.0pt, forget plot]
  table[row sep=crcr]{%
1	4.6595088243\\
2	7.25704562732303\\
3	9.23057250575148\\
4	10.8871132834846\\
5	12.410331803941\\
6	13.910433025333\\
7	15.4559850395323\\
8	17.0925338455949\\
9	18.8531833362379\\
10	20.7645799987799\\
11	22.8503353158311\\
12	25.1330330798772\\
13	27.6354565648139\\
14	30.3813851687548\\
15	33.3961538701621\\
16	36.7070836882444\\
17	40.3438448207566\\
18	44.3387885132268\\
19	48.7272694325119\\
20	53.5479722374286\\
21	58.843251390541\\
22	64.6594905314611\\
23	71.0474861021338\\
24	78.0628589110542\\
25	85.7664966763911\\
26	94.2250301363437\\
27	103.511344957687\\
28	113.705131342684\\
29	124.893472880895\\
30	137.171475775716\\
31	150.642939059574\\
32	165.421065760796\\
33	181.629214161851\\
34	199.401687251714\\
35	218.884557178936\\
36	240.236519904921\\
37	263.629773281045\\
38	289.250909363816\\
39	317.301808867322\\
40	348.00052215358\\
41	381.582116994941\\
42	418.299468420906\\
43	458.423960196214\\
44	502.246060783532\\
45	550.075728948676\\
46	602.242595414716\\
47	659.095857141163\\
48	721.003809921073\\
49	788.352933145968\\
50	861.546427973644\\
51	941.002097059782\\
52	1027.14944095545\\
53	1120.42583390773\\
54	1221.27163105874\\
55	1330.12405114667\\
56	1447.40967533991\\
57	1573.53540573172\\
58	1708.87773862994\\
59	1853.77023081836\\
60	2008.48907448498\\
61	2173.23675173455\\
62	2348.12381572611\\
63	2533.14894533317\\
64	2728.17754582503\\
65	2932.91932000419\\
66	3146.9054109523\\
67	3369.46591452163\\
68	3599.70876865432\\
69	3836.50123469055\\
70	4078.45537524345\\
71	4323.91908125341\\
72	4570.97428062728\\
73	4817.44394317343\\
74	5060.90935261002\\
75	5298.73882200518\\
76	5528.12856928234\\
77	5746.15584429022\\
78	5949.84362784707\\
79	6136.23534839004\\
80	6302.4771489711\\
81	6445.90437191505\\
82	6564.1282084128\\
83	6655.11798420671\\
84	6717.27440500506\\
85	6749.4893221301\\
86	6751.18821446884\\
87	6722.35258264643\\
88	6663.52073269331\\
89	6575.76686691621\\
90	6460.65985329842\\
91	6320.20436289426\\
92	6156.76811741331\\
93	5972.99968333269\\
94	5771.74153925748\\
95	5555.9430354567\\
96	5328.57740928992\\
97	5092.56630149847\\
98	4850.7143375873\\
99	4605.65539915707\\
100	4359.811304032\\
101	4115.36281282898\\
102	3874.23222987233\\
103	3638.07638995283\\
104	3408.28851979293\\
105	3186.00731844618\\
106	2972.1315881291\\
107	2767.33883521813\\
108	2572.10641901975\\
109	2386.73402490882\\
110	2211.36645466967\\
111	2046.01594196089\\
112	1890.58340177086\\
113	1744.87820138925\\
114	1608.63619263103\\
115	1481.53586969887\\
116	1363.21261518751\\
117	1253.27107071753\\
118	1151.29572167717\\
119	1056.85982096629\\
120	969.532797844545\\
121	888.886308098513\\
122	814.499083503906\\
123	745.960734294488\\
124	682.874649968929\\
125	624.860132789181\\
126	571.553885922452\\
127	522.610965230844\\
128	477.705290851015\\
129	436.529802366683\\
130	398.796329841097\\
131	364.235242408667\\
132	332.594926601396\\
133	303.641138120411\\
134	277.156263326247\\
135	252.938520255725\\
136	230.801123403524\\
137	210.571431749239\\
138	192.090095479778\\
139	175.210213467445\\
140	159.79651073541\\
141	145.724542799472\\
};
\addplot [color=mycolor2, line width=2.0pt, forget plot]
  table[row sep=crcr]{%
1	4.3296088874\\
2	3.96022390797655\\
3	3.89949949462591\\
4	4.03260429886922\\
5	4.29150042493362\\
6	4.63848842450323\\
7	5.05429727656978\\
8	5.53035563963166\\
9	6.06421109384361\\
10	6.6569633628338\\
11	7.31187817389316\\
12	8.03366176631637\\
13	8.82809666452643\\
14	9.70187368772757\\
15	10.6625314356469\\
16	11.7184561807865\\
17	12.8789174266314\\
18	14.1541262340792\\
19	15.5553096856015\\
20	17.0947981864076\\
21	18.7861240879687\\
22	20.644131091658\\
23	22.6850944297544\\
24	24.9268521261098\\
25	27.388947812695\\
26	30.092785674487\\
27	33.0617981407625\\
28	36.3216269485436\\
29	39.9003181783317\\
30	43.8285318031401\\
31	48.1397661957305\\
32	52.8705978999381\\
33	58.060936781986\\
34	63.7542964266499\\
35	69.998079318849\\
36	76.8438759393414\\
37	84.3477763869278\\
38	92.5706924996921\\
39	101.578687662451\\
40	111.44331053226\\
41	122.24192776142\\
42	134.058049418619\\
43	146.981639172394\\
44	161.109399375062\\
45	176.545018937896\\
46	193.39936929064\\
47	211.790630746314\\
48	231.844328230881\\
49	253.6932515857\\
50	277.477231528359\\
51	303.342737912812\\
52	331.442262249623\\
53	361.933441669627\\
54	394.977876842139\\
55	430.739592075495\\
56	469.383082313906\\
57	511.070889494892\\
58	555.960650368529\\
59	604.201560163545\\
60	655.930202314117\\
61	711.265704857928\\
62	770.304200194665\\
63	833.112587799084\\
64	899.721630292754\\
65	970.118452869533\\
66	1044.23856493084\\
67	1121.95758079115\\
68	1203.08288245224\\
69	1287.34553953937\\
70	1374.39287592886\\
71	1463.78214411911\\
72	1554.97583003811\\
73	1647.33915419669\\
74	1740.14035019998\\
75	1832.55427856162\\
76	1923.66986326378\\
77	2012.50171359681\\
78	2098.0061115306\\
79	2179.10130795587\\
80	2254.69178940993\\
81	2323.6958679299\\
82	2385.07563535984\\
83	2437.86804028699\\
84	2481.21562382258\\
85	2514.3953209961\\
86	2536.84372242717\\
87	2548.17730997456\\
88	2548.20642962017\\
89	2536.94212851083\\
90	2514.59542973069\\
91	2481.56910578991\\
92	2438.44249224883\\
93	2385.95030967114\\
94	2324.95679597097\\
95	2256.42666538724\\
96	2181.39449322718\\
97	2100.93408111277\\
98	2016.12920308106\\
99	1928.04689545878\\
100	1837.71416479428\\
101	1746.09868042778\\
102	1654.09372011292\\
103	1562.50737090349\\
104	1472.05576829423\\
105	1383.35999190011\\
106	1296.94612676214\\
107	1213.24794158402\\
108	1132.61162133995\\
109	1055.3020124729\\
110	981.50988463161\\
111	911.359774466403\\
112	844.918046555824\\
113	782.200877766332\\
114	723.181939590971\\
115	667.799615130218\\
116	615.963641540294\\
117	567.561114200008\\
118	522.461825535642\\
119	480.522939928244\\
120	441.593027271302\\
121	405.515492576585\\
122	372.13144861386\\
123	341.282083949941\\
124	312.810580867449\\
125	286.563637319025\\
126	262.392645005747\\
127	240.154572429646\\
128	219.71259781289\\
129	200.936532450122\\
130	183.703070626428\\
131	167.895897879204\\
132	153.40568523604\\
133	140.129993204537\\
134	127.973105770126\\
135	116.845811494264\\
136	106.665145998786\\
137	97.354107660514\\
138	88.841356202673\\
139	81.0609020304304\\
140	73.9517925887642\\
141	67.4578006927778\\
};
\addplot [color=mycolor3, line width=2.0pt, forget plot]
  table[row sep=crcr]{%
1	4.8417769521\\
2	3.10050284714218\\
3	2.3730137105326\\
4	2.10729677210785\\
5	2.06414456909919\\
6	2.13717415725919\\
7	2.27792470886025\\
8	2.46408076791808\\
9	2.68559574351201\\
10	2.93843653988166\\
11	3.22167504867669\\
12	3.5360982541545\\
13	3.88353019648157\\
14	4.26649846715358\\
15	4.68807291507581\\
16	5.15179346169021\\
17	5.66164593878754\\
18	6.22206518560832\\
19	6.83795472076256\\
20	7.51471742356909\\
21	8.25829432131283\\
22	9.07520999675987\\
23	9.97262390449867\\
24	10.9583873181901\\
25	12.0411058790925\\
26	13.2302078603086\\
27	14.5360183437703\\
28	15.9698395505314\\
29	17.544037581053\\
30	19.2721358162863\\
31	21.1689152042642\\
32	23.2505216100989\\
33	25.5345803375343\\
34	28.0403178340428\\
35	30.7886904642024\\
36	33.8025200719628\\
37	37.1066358444767\\
38	40.7280217303485\\
39	44.6959683441148\\
40	49.0422278959669\\
41	53.8011702093673\\
42	59.0099373163549\\
43	64.7085934370322\\
44	70.9402663413825\\
45	77.7512751433425\\
46	85.1912384747057\\
47	93.3131557172517\\
48	102.17345252588\\
49	111.831980248838\\
50	122.351957046345\\
51	133.799836539859\\
52	146.245087719713\\
53	159.759867647665\\
54	174.41856628775\\
55	190.297200690996\\
56	207.472633894585\\
57	226.021592468926\\
58	246.019455907087\\
59	267.538791310761\\
60	290.647608459805\\
61	315.407313793891\\
62	341.870347572331\\
63	370.077497030387\\
64	400.054890233951\\
65	431.810691011836\\
66	465.331535154352\\
67	500.578772129522\\
68	537.484604677328\\
69	575.94825013766\\
70	615.832281007736\\
71	656.959336076356\\
72	699.109424849225\\
73	742.018073407167\\
74	785.3755752271\\
75	828.827611353285\\
76	871.977486172342\\
77	914.390184071966\\
78	955.598385932377\\
79	995.110492297049\\
80	1032.4205845972\\
81	1067.02012268008\\
82	1098.41103526604\\
83	1126.11972201572\\
84	1149.71136583636\\
85	1168.80386655893\\
86	1183.08066530977\\
87	1192.30174228982\\
88	1196.31214329742\\
89	1195.04751956353\\
90	1188.5363417935\\
91	1176.89865718581\\
92	1160.34147795279\\
93	1139.15110030764\\
94	1113.68283417053\\
95	1084.34876011129\\
96	1051.60421128797\\
97	1015.93370111104\\
98	977.836985390661\\
99	937.81586957381\\
100	896.362259796939\\
101	853.947824971048\\
102	811.015499818628\\
103	767.972927733909\\
104	725.187826770512\\
105	682.985167965838\\
106	641.645985405443\\
107	601.407591986832\\
108	562.464951739399\\
109	524.972955398699\\
110	489.049356625029\\
111	454.778147571908\\
112	422.213180507799\\
113	391.381873481982\\
114	362.28886989146\\
115	334.91955224436\\
116	309.243338078292\\
117	285.216710095962\\
118	262.785952793254\\
119	241.889584182878\\
120	222.460483897349\\
121	204.427728369907\\
122	187.718150394825\\
123	172.257644633631\\
124	157.97224301895\\
125	144.788984931992\\
126	132.636606860455\\
127	121.446075291852\\
128	111.15098511829\\
129	101.687844025797\\
130	92.9962613716412\\
131	85.0190580344331\\
132	77.7023117388202\\
133	70.9953504676362\\
134	64.8507048166296\\
135	59.2240285416405\\
136	54.0739951045427\\
137	49.3621767430298\\
138	45.0529114650049\\
139	41.1131623916495\\
140	37.5123730325605\\
141	34.2223213588535\\
};
\addplot [color=mycolor4, line width=2.0pt, forget plot]
  table[row sep=crcr]{%
1	0.1709101349\\
2	1.87582768989844\\
3	2.36359200427599\\
4	2.56591118709034\\
5	2.72826638743079\\
6	2.91594742008822\\
7	3.14373187537152\\
8	3.41308783615855\\
9	3.72309784759522\\
10	4.07341648887533\\
11	4.46484463839883\\
12	4.89926678179909\\
13	5.37947791961844\\
14	5.90903953918148\\
15	6.49218769372962\\
16	7.13378608028685\\
17	7.83931259621999\\
18	8.61486992904243\\
19	9.46721362184796\\
20	10.4037933786278\\
21	11.4328049944354\\
22	12.5632513683921\\
23	13.8050117538086\\
24	15.1689188483258\\
25	16.6668436144795\\
26	18.3117879025588\\
27	20.1179850571237\\
28	22.101008746521\\
29	24.2778902728701\\
30	26.6672446041426\\
31	29.2894053220658\\
32	32.1665685986642\\
33	35.3229461971487\\
34	38.7849273344956\\
35	42.5812490366565\\
36	46.7431743545237\\
37	51.30467747951\\
38	56.302634390243\\
39	61.7770171631173\\
40	67.7710894743917\\
41	74.3316000937311\\
42	81.5089703008872\\
43	89.357470129972\\
44	97.9353771406443\\
45	107.305110014271\\
46	117.533327659531\\
47	128.690982673636\\
48	140.853315936287\\
49	154.09977681734\\
50	168.513850973232\\
51	184.18277502818\\
52	201.197114645812\\
53	219.650179690573\\
54	239.637247492694\\
55	261.254562854263\\
56	284.598081616699\\
57	309.761923672349\\
58	336.836501644304\\
59	365.906293560417\\
60	397.047232271967\\
61	430.323691744562\\
62	465.785061351877\\
63	503.461914604186\\
64	543.361798949777\\
65	585.464698845974\\
66	629.718255377511\\
67	676.032862048584\\
68	724.276797146527\\
69	774.271596665068\\
70	825.787915667955\\
71	878.542166644303\\
72	932.194256318718\\
73	986.346762125286\\
74	1040.54589018987\\
75	1094.28453226037\\
76	1147.00768444625\\
77	1198.12040250338\\
78	1246.99834616855\\
79	1293.00081194041\\
80	1335.48597743798\\
81	1373.8278934288\\
82	1407.43457831412\\
83	1435.76641354831\\
84	1458.35392698588\\
85	1474.81400214385\\
86	1484.86357738699\\
87	1488.33000497766\\
88	1485.157421387\\
89	1475.40872331483\\
90	1459.26302638468\\
91	1437.00877791282\\
92	1409.03297228774\\
93	1375.80715056262\\
94	1337.87103414171\\
95	1295.81473380331\\
96	1250.26048722562\\
97	1201.84481720557\\
98	1151.20188263014\\
99	1098.94863324104\\
100	1045.67219718057\\
101	991.919745988871\\
102	938.190910805559\\
103	884.93267737903\\
104	832.536572805281\\
105	781.337876054762\\
106	731.616536080174\\
107	683.599461850508\\
108	637.463852728643\\
109	593.341259308007\\
110	551.322098381984\\
111	511.460385994282\\
112	473.778495266431\\
113	438.271787679255\\
114	404.913005430391\\
115	373.656346991667\\
116	344.441177345515\\
117	317.195348424114\\
118	291.838124224686\\
119	268.28271938982\\
120	246.438470317087\\
121	226.212664743024\\
122	207.512059870433\\
123	190.244121066972\\
124	174.318013482351\\
125	159.64537805923\\
126	146.140921720021\\
127	133.722849295405\\
128	122.313162252171\\
129	111.837846652729\\
130	102.226970163692\\
131	93.414705415059\\
132	85.3392946529991\\
133	77.9429684627352\\
134	71.1718293805746\\
135	64.9757094699495\\
136	59.3080094005608\\
137	54.1255252315881\\
138	49.388267944939\\
139	45.0592797861994\\
140	41.1044506321143\\
141	37.4923368970139\\
};
\addplot [color=mycolor5, line width=2.0pt, forget plot]
  table[row sep=crcr]{%
1	1.4936938584\\
2	1.77720424756331\\
3	1.97208819133525\\
4	2.09210161269225\\
5	2.20766512192823\\
6	2.34838321957669\\
7	2.52242784280443\\
8	2.73067517455431\\
9	2.97243980366291\\
10	3.24737063818024\\
11	3.5559288165295\\
12	3.89940842664589\\
13	4.27984283233873\\
14	4.699907646182\\
15	5.16284983107652\\
16	5.67244583062366\\
17	6.23298434241356\\
18	6.84926842428813\\
19	7.52663251013344\\
20	8.27097108868828\\
21	9.08877681832409\\
22	9.98718662761832\\
23	10.9740349108023\\
24	12.0579133172633\\
25	13.2482368999631\\
26	14.555316563485\\
27	15.9904378630369\\
28	17.5659462676596\\
29	19.2953390245193\\
30	21.1933637524246\\
31	23.2761238541208\\
32	25.5611907684891\\
33	28.0677229834616\\
34	30.8165915945138\\
35	33.8305120167778\\
36	37.134181234456\\
37	40.7544196913252\\
38	44.720316581341\\
39	49.0633768780799\\
40	53.8176679341207\\
41	59.0199628735419\\
42	64.7098772787124\\
43	70.9299948222382\\
44	77.7259765022092\\
45	85.1466469906876\\
46	93.244050290844\\
47	102.073465410331\\
48	111.693371096545\\
49	122.165346851506\\
50	133.553895470843\\
51	145.926170270688\\
52	159.351588038229\\
53	173.901306655308\\
54	189.647544424266\\
55	206.662716539351\\
56	225.018363113808\\
57	244.783842968599\\
58	266.024768351151\\
59	288.801158282396\\
60	313.165292785111\\
61	339.159257326972\\
62	366.8121769348\\
63	396.13715309379\\
64	427.127934142099\\
65	459.755371643776\\
66	493.963741136824\\
67	529.667035281161\\
68	566.745369823296\\
69	605.041676352497\\
70	644.358888203262\\
71	684.457853931489\\
72	725.056232695867\\
73	765.828633203289\\
74	806.408248019053\\
75	846.390203713443\\
76	885.336791238941\\
77	922.784658631823\\
78	958.253940733758\\
79	991.259172516618\\
80	1021.32169174582\\
81	1047.98309455359\\
82	1070.81917809918\\
83	1089.45370319199\\
84	1103.57125113924\\
85	1112.92844475074\\
86	1117.36285983674\\
87	1116.799070336\\
88	1111.2514396225\\
89	1100.82347782729\\
90	1085.70381022726\\
91	1066.15902240852\\
92	1042.52384214958\\
93	1015.18926760693\\
94	984.589344293385\\
95	951.187324717253\\
96	915.461916968678\\
97	877.89425083463\\
98	838.956075385334\\
99	799.099565660082\\
100	758.748973197213\\
101	718.294218864756\\
102	678.086406798392\\
103	638.435141662229\\
104	599.607460796671\\
105	561.828147964443\\
106	525.281173942827\\
107	490.11200729187\\
108	456.430551737682\\
109	424.314490179688\\
110	393.812845186767\\
111	364.949598474862\\
112	337.727244517603\\
113	312.130184167313\\
114	288.127891692932\\
115	265.677812295442\\
116	244.727966716218\\
117	225.219255112153\\
118	207.087464251859\\
119	190.264990731274\\
120	174.682298809924\\
121	160.269135129223\\
122	146.955524460079\\
123	134.67257115799\\
124	123.353090540313\\
125	112.932093242523\\
126	103.347144000229\\
127	94.5386144309741\\
128	86.4498473987163\\
129	79.0272485400472\\
130	72.2203185891839\\
131	65.9816383082595\\
132	60.2668161407425\\
133	55.0344071747858\\
134	50.2458106351049\\
135	45.8651519142828\\
136	41.8591540997929\\
137	38.1970030409501\\
138	34.8502092180919\\
139	31.7924690114731\\
140	28.9995274065096\\
141	26.4490437024923\\
};
\addplot [color=mycolor6, line width=2.0pt, forget plot]
  table[row sep=crcr]{%
1	1.6144863665\\
2	1.91488375921671\\
3	2.11256899616501\\
4	2.26740672199893\\
5	2.41416297476468\\
6	2.57565533323225\\
7	2.76427135446697\\
8	2.98569224151827\\
9	3.24225913212804\\
10	3.53510851648303\\
11	3.8652934316825\\
12	4.23428667919582\\
13	4.64416346595059\\
14	5.09763962630998\\
15	5.59805718477238\\
16	6.14936027127337\\
17	6.75607908726744\\
18	7.42332758020236\\
19	8.15681532377509\\
20	8.96287225740088\\
21	9.84848456635727\\
22	10.8213401860683\\
23	11.8898827804771\\
24	13.0633734052453\\
25	14.3519593623189\\
26	15.7667499714428\\
27	17.3198991324327\\
28	19.0246946398069\\
29	20.8956542482165\\
30	22.9486284797096\\
31	25.200910115944\\
32	27.6713502308905\\
33	30.3804804907944\\
34	33.3506412744961\\
35	36.6061149431006\\
36	40.1732633061732\\
37	44.0806679833478\\
38	48.3592719352008\\
39	53.0425199239504\\
40	58.1664950503123\\
41	63.7700477842033\\
42	69.894913050048\\
43	76.5858099285775\\
44	83.8905173837548\\
45	91.8599181057806\\
46	100.548001073197\\
47	110.011811779509\\
48	120.311337252525\\
49	131.509311040888\\
50	143.670921292971\\
51	156.863402972371\\
52	171.155493235427\\
53	186.61672716962\\
54	203.316549632068\\
55	221.323218060548\\
56	240.70247113991\\
57	261.515939440555\\
58	283.819277011407\\
59	307.659997871502\\
60	333.075008905866\\
61	360.087841347813\\
62	388.705597302223\\
63	418.915646016088\\
64	450.682127032193\\
65	483.942343876723\\
66	518.603162022364\\
67	554.537557475773\\
68	591.581495727301\\
69	629.53135246724\\
70	668.142114113514\\
71	707.126613779833\\
72	746.156062295334\\
73	784.862119589545\\
74	822.840714939756\\
75	859.657762171156\\
76	894.856826836771\\
77	927.968688443754\\
78	958.522607166598\\
79	986.05896018221\\
80	1010.142770251\\
81	1030.37752347546\\
82	1046.41858023897\\
83	1057.98543785473\\
84	1064.87211645452\\
85	1066.95501622996\\
86	1064.19773195145\\
87	1056.65249945785\\
88	1044.4581712037\\
89	1027.83485174446\\
90	1007.07554521378\\
91	982.535353067976\\
92	954.618894177974\\
93	923.766690427398\\
94	890.441267150057\\
95	855.113664567315\\
96	818.250955566307\\
97	780.305232199777\\
98	741.704374920017\\
99	702.844770413368\\
100	664.086008769538\\
101	625.747477574374\\
102	588.106684274128\\
103	551.399080053655\\
104	515.819126798801\\
105	481.522339772859\\
106	448.628047647063\\
107	417.222633412126\\
108	387.36304968197\\
109	359.080435870584\\
110	332.383699418216\\
111	307.262956289328\\
112	283.692755804974\\
113	261.635040640496\\
114	241.041814190434\\
115	221.857504542621\\
116	204.021027337512\\
117	187.467559291344\\
118	172.130040673269\\
119	157.940429094352\\
120	144.830729105101\\
121	132.73382276794\\
122	121.584125966053\\
123	111.318094057063\\
124	101.87459884128\\
125	93.1951968942721\\
126	85.2243072664142\\
127	77.9093144907061\\
128	71.2006108429987\\
129	65.0515899180457\\
130	59.4186018518448\\
131	54.2608789516938\\
132	49.540439095191\\
133	45.2219730251795\\
134	41.2727205913337\\
135	37.6623400594213\\
136	34.3627738132486\\
137	31.3481130981984\\
138	28.5944638854568\\
139	26.0798154593984\\
140	23.7839129349137\\
141	21.6881345855107\\
};
\end{axis}
\end{tikzpicture}%

%% file: Figures/OL_state_3.tex
%
%
\definecolor{mycolor1}{rgb}{0.00000,0.44700,0.74100}%
\definecolor{mycolor2}{rgb}{0.85000,0.32500,0.09800}%
\definecolor{mycolor3}{rgb}{0.92900,0.69400,0.12500}%
\definecolor{mycolor4}{rgb}{0.49400,0.18400,0.55600}%
\definecolor{mycolor5}{rgb}{0.46600,0.67400,0.18800}%
\definecolor{mycolor6}{rgb}{0.30100,0.74500,0.93300}%
\begin{tikzpicture}

\begin{axis}[%
width=7cm,
height=3.5cm,
at={(0cm,0cm)},
scale only axis,
xmin=0,
xmax=140,
ymin=0,
ymax=300000,
xmajorgrids,
ymajorgrids,
axis background/.style={fill=white},
xmajorgrids,
ymajorgrids,
legend pos=north west,
legend style={legend cell align=left, align=left, draw=white!15!black}
]

\addplot [color=mycolor1, line width=2.0pt, mark options={solid, mycolor1}]
  table[row sep=crcr]{%
1	0\\
2	4.29463568177537\\
3	10.9834023030202\\
4	19.4911544162207\\
5	29.5257282182798\\
6	40.9642415461055\\
7	53.7853873518582\\
8	68.0310573035046\\
9	83.7851224878908\\
10	101.161965611055\\
11	120.300529254238\\
12	141.361518531931\\
13	164.5264538773\\
14	189.997854902757\\
15	218.000158520477\\
16	248.781152709582\\
17	282.613807035573\\
18	319.798437871262\\
19	360.665179499164\\
20	405.576752343279\\
21	454.93153219867\\
22	509.166932663109\\
23	568.763118800506\\
24	634.247074389002\\
25	706.19704850482\\
26	785.247409994935\\
27	872.093940777221\\
28	967.499600963008\\
29	1072.30080054838\\
30	1187.41421384591\\
31	1313.84417387\\
32	1452.69068445472\\
33	1605.15808784897\\
34	1772.56442474104\\
35	1956.35152191561\\
36	2158.09583980334\\
37	2379.52010775807\\
38	2622.50576865146\\
39	2889.10624590808\\
40	3181.56103495083\\
41	3502.31060664882\\
42	3854.01209214178\\
43	4239.55569566054\\
44	4662.08175389484\\
45	5124.99832622178\\
46	5631.99915877767\\
47	6187.08181595916\\
48	6794.5657144823\\
49	7459.10972663968\\
50	8185.72893999198\\
51	8979.81006969937\\
52	9847.12491661685\\
53	10793.84114916\\
54	11826.5295604339\\
55	12952.1668157025\\
56	14178.1325615878\\
57	15512.1996214985\\
58	16962.5158575682\\
59	18537.5761458625\\
60	20246.1827993291\\
61	22097.3926952638\\
62	24100.449337258\\
63	26264.6981249527\\
64	28599.483240316\\
65	31114.0248103233\\
66	33817.2753971168\\
67	36717.7554207979\\
68	39823.367855646\\
69	43141.1934687748\\
70	46677.2689902441\\
71	50436.3518982329\\
72	54421.6769339141\\
73	58634.7109652443\\
74	63074.914307152\\
75	67739.517961211\\
76	72623.327322128\\
77	77718.5635588892\\
78	83014.7539627432\\
79	88498.6819278275\\
80	94154.4057975911\\
81	99963.353536114\\
82	105904.497111713\\
83	111954.606744853\\
84	118088.580998179\\
85	124279.844376319\\
86	130500.800011259\\
87	136723.321503017\\
88	142919.265400868\\
89	149060.984406878\\
90	155121.82130763\\
91	161076.564904013\\
92	166901.85168777\\
93	172576.500462673\\
94	178081.771197111\\
95	183401.543751507\\
96	188522.41638116\\
97	193433.727752799\\
98	198127.509388358\\
99	202598.377812871\\
100	206843.377181051\\
101	210861.783819634\\
102	214654.884046691\\
103	218225.735954326\\
104	221578.924727321\\
105	224720.319677482\\
106	227656.839647321\\
107	230396.23189887\\
108	232946.868146903\\
109	235317.560084825\\
110	237517.39562389\\
111	239555.596138111\\
112	241441.394277177\\
113	243183.931364836\\
114	244792.173020042\\
115	246274.841398271\\
116	247640.362325425\\
117	248896.825562172\\
118	250051.956470213\\
119	251113.097434429\\
120	252087.197509975\\
121	252980.808898059\\
122	253800.088998119\\
123	254550.806929706\\
124	255238.353559084\\
125	255867.754199478\\
126	256443.683277753\\
127	256970.480372704\\
128	257452.167130601\\
129	257892.464652254\\
130	258294.811023105\\
131	258662.378724439\\
132	258998.091720721\\
133	259304.642066095\\
134	259584.505913423\\
135	259839.958842646\\
136	260073.090452725\\
137	260285.818183766\\
138	260479.900353895\\
139	260656.94840966\\
140	260818.438399902\\
141	260965.72169149\\
};
\addlegendentry{0-24}

\addplot [color=mycolor2, line width=2.0pt, mark options={solid, mycolor2}]
  table[row sep=crcr]{%
1	0\\
2	3.13035311771256\\
3	5.99363697996894\\
4	8.81301644771832\\
5	11.7286320997933\\
6	14.8314323952082\\
7	18.1851086922459\\
8	21.8394191967353\\
9	25.8379249437466\\
10	30.2224138428075\\
11	35.0354689152852\\
12	40.3220343379938\\
13	46.1304569486418\\
14	52.5132644113878\\
15	59.5278219235952\\
16	67.2369451701362\\
17	75.7095131997929\\
18	85.0211070106206\\
19	95.2546903059363\\
20	106.501344088687\\
21	118.861064375482\\
22	132.443631216484\\
23	147.369556815285\\
24	163.771120540924\\
25	181.793498842764\\
26	201.595998423233\\
27	223.353401437379\\
28	247.257431934985\\
29	273.518353213499\\
30	302.366706183846\\
31	334.055199242422\\
32	368.860760464201\\
33	407.086763153045\\
34	449.065435869127\\
35	495.160467955635\\
36	545.769821254805\\
37	601.328758073296\\
38	662.313094453735\\
39	729.242686343325\\
40	802.685154216735\\
41	883.259848986073\\
42	971.642058473118\\
43	1068.56744916448\\
44	1174.8367322327\\
45	1291.32053567606\\
46	1418.96445567475\\
47	1558.79424962783\\
48	1711.92112055339\\
49	1879.54702732146\\
50	2062.96993726491\\
51	2263.58891680757\\
52	2482.90893163022\\
53	2722.5452003921\\
54	2984.22691506895\\
55	3269.80010663332\\
56	3581.22939737128\\
57	3920.59834115716\\
58	4290.10801140896\\
59	4692.07345458592\\
60	5128.91758687902\\
61	5603.1620757499\\
62	6117.41471949415\\
63	6674.35282115129\\
64	7276.70205279021\\
65	7927.21032818127\\
66	8628.61625247349\\
67	9383.61180342958\\
68	10194.7990266432\\
69	11064.6407028534\\
70	11995.4051732882\\
71	12989.1057905995\\
72	14047.4357963028\\
73	15171.6998035422\\
74	16362.7434731625\\
75	17620.8833911487\\
76	18945.8395588911\\
77	20336.6732601619\\
78	21791.7333308056\\
79	23308.6139874678\\
80	24884.1273307221\\
81	26514.2933932965\\
82	28194.3501360566\\
83	29918.7851012695\\
84	31681.389534823\\
85	33475.3347307399\\
86	35293.2691994047\\
87	37127.4341002483\\
88	38969.7933050201\\
89	40812.1735636043\\
90	42646.4096130895\\
91	44464.4887624905\\
92	46258.6895296165\\
93	48021.7092980274\\
94	49746.7766620386\\
95	51427.7450691376\\
96	53059.1654654327\\
97	54636.336805945\\
98	56155.3344156504\\
99	57613.0171996411\\
100	59007.0155415837\\
101	60335.702361765\\
102	61598.1502156619\\
103	62794.0775080353\\
104	63923.7868991517\\
105	64988.0988228307\\
106	65988.2827600371\\
107	66925.9885567962\\
108	67803.1796785044\\
109	68622.0698859633\\
110	69385.0644267604\\
111	70094.7064769613\\
112	70753.6292539495\\
113	71364.5139574027\\
114	71930.0534830442\\
115	72452.9216908078\\
116	72935.7478909557\\
117	73381.0961327576\\
118	73791.448834245\\
119	74169.1942719936\\
120	74516.6174509127\\
121	74835.8938903393\\
122	75129.0858897741\\
123	75398.1408715665\\
124	75644.8914357158\\
125	75871.05680135\\
126	76078.2453485916\\
127	76267.9580121879\\
128	76441.5923135969\\
129	76600.4468506813\\
130	76745.7260934896\\
131	76878.5453607315\\
132	76999.9358745276\\
133	77110.8498109921\\
134	77212.1652814009\\
135	77304.6911933364\\
136	77389.1719535656\\
137	77466.2919847306\\
138	77536.6800364838\\
139	77600.9132787011\\
140	77659.5211700823\\
141	77712.9890999834\\
};
\addlegendentry{25-44}

\addplot [color=mycolor3, line width=2.0pt, mark options={solid, mycolor3}]
  table[row sep=crcr]{%
1	0\\
2	2.76332081289318\\
3	4.53285380309558\\
4	5.88719090711098\\
5	7.08987683975862\\
6	8.26793475218234\\
7	9.48767244104269\\
8	10.7877399204971\\
9	12.1940512268626\\
10	13.7267865607043\\
11	15.4038243359572\\
12	17.2425132723664\\
13	19.2606512408868\\
14	21.4770771369171\\
15	23.9120724142912\\
16	26.5876705648774\\
17	29.5279254004995\\
18	32.7591655449052\\
19	36.3102506933262\\
20	40.2128390992851\\
21	44.5016725719945\\
22	49.2148836105955\\
23	54.3943284535773\\
24	60.0859494156922\\
25	66.3401697260611\\
26	73.2123240642561\\
27	80.7631280564633\\
28	89.0591901062684\\
29	98.1735690719099\\
30	108.186381448341\\
31	119.185461855576\\
32	131.267080763061\\
33	144.536723481308\\
34	159.109934513798\\
35	175.113231368963\\
36	192.685091866286\\
37	211.97701881112\\
38	233.154685634665\\
39	256.399166169156\\
40	281.908251118668\\
41	309.897852952102\\
42	340.603499839258\\
43	374.281917818236\\
44	411.212698559806\\
45	451.700047810437\\
46	496.07460677053\\
47	544.695335210158\\
48	597.951440945963\\
49	656.264335299236\\
50	720.08958822309\\
51	789.918849824606\\
52	866.281696921875\\
53	949.747353988956\\
54	1040.92622730413\\
55	1140.47117932198\\
56	1249.07845729183\\
57	1367.48817608278\\
58	1496.48424029878\\
59	1636.89357546856\\
60	1789.58452294491\\
61	1955.4642389294\\
62	2135.47492578423\\
63	2330.58871481326\\
64	2541.8010155955\\
65	2770.12214963777\\
66	3016.56709774412\\
67	3282.14321343649\\
68	3567.83579143062\\
69	3874.59143288377\\
70	4203.29921981818\\
71	4554.76980101091\\
72	4929.71260084742\\
73	5328.71148974456\\
74	5752.19939636933\\
75	6200.43249228295\\
76	6673.4647305317\\
77	7171.12366024949\\
78	7692.98855649494\\
79	8238.37198384774\\
80	8806.30593902511\\
81	9395.53367861345\\
82	10004.508222866\\
83	10631.3983305455\\
84	11274.1024650897\\
85	11930.2709291611\\
86	12597.3359514875\\
87	13272.5490928868\\
88	13953.0249289927\\
89	14635.7895992769\\
90	15317.8325177914\\
91	15996.1593475173\\
92	16667.8442653153\\
93	17330.079594991\\
94	17980.221056618\\
95	18615.8271543525\\
96	19234.691576835\\
97	19834.8678825027\\
98	20414.6861534749\\
99	20972.7616947645\\
100	21507.9962040558\\
101	22019.572121925\\
102	22506.9410819602\\
103	22969.8075114548\\
104	23408.1084897816\\
105	23821.990962022\\
106	24211.7873421958\\
107	24577.9904373655\\
108	24921.2284948824\\
109	25242.2410328496\\
110	25541.8559693101\\
111	25820.9684272079\\
112	26080.5214658686\\
113	26321.4888794271\\
114	26544.8601101718\\
115	26751.6272504913\\
116	26942.7740502133\\
117	27119.2668050045\\
118	27282.0469741428\\
119	27432.0253601514\\
120	27570.0776762757\\
121	27697.0413285219\\
122	27813.7132450776\\
123	27920.8485958126\\
124	28019.1602568629\\
125	28109.3188889751\\
126	28191.9535124818\\
127	28267.6524758821\\
128	28336.9647285567\\
129	28400.4013208635\\
130	28458.4370665401\\
131	28511.5123129035\\
132	28560.0347737445\\
133	28604.3813880892\\
134	28644.9001752012\\
135	28681.9120623913\\
136	28715.7126674814\\
137	28746.5740222251\\
138	28774.7462267098\\
139	28800.4590278496\\
140	28823.923317602\\
141	28845.3325485863\\
};
\addlegendentry{45-64}

\addplot [color=mycolor4, line width=2.0pt, mark options={solid, mycolor4}]
  table[row sep=crcr]{%
1	0\\
2	0.144981564007577\\
3	1.73622969244256\\
4	3.74124399809646\\
5	5.91788388681085\\
6	8.23224826380028\\
7	10.705820809838\\
8	13.3726209055639\\
9	16.2679132934087\\
10	19.4261844469212\\
11	22.8816278222187\\
12	26.6691162535201\\
13	30.8251211676301\\
14	35.3884849657269\\
15	40.4010712273574\\
16	45.9083369387699\\
17	51.9598649176146\\
18	58.6098848336909\\
19	65.9178032129327\\
20	73.948757249055\\
21	82.7742036541361\\
22	92.4725515611032\\
23	103.129847183013\\
24	114.840517216603\\
25	127.708177640726\\
26	141.846514467332\\
27	157.380243063599\\
28	174.44615281769\\
29	193.19424412363\\
30	213.788964879216\\
31	236.410553895827\\
32	261.256498783352\\
33	288.543115969162\\
34	318.507260506374\\
35	351.408173188735\\
36	387.529472176316\\
37	427.181295800251\\
38	470.702602399393\\
39	518.463631880883\\
40	570.868532112646\\
41	628.358151158607\\
42	691.412993652765\\
43	760.5563361572\\
44	836.357492026533\\
45	919.435210956603\\
46	1010.46119186158\\
47	1110.16367982149\\
48	1219.33110838043\\
49	1338.81573725891\\
50	1469.53722237896\\
51	1612.48603981061\\
52	1768.72666768482\\
53	1939.40041018832\\
54	2125.72772544507\\
55	2329.00989450011\\
56	2550.62984201749\\
57	2792.05189115745\\
58	3054.82020615697\\
59	3340.55564748586\\
60	3650.95073758186\\
61	3987.76241205249\\
62	4352.80221437401\\
63	4747.92358459394\\
64	5175.00589799959\\
65	5635.93493231229\\
66	6132.57948624456\\
67	6666.76394290169\\
68	7240.23667298683\\
69	7854.63430883296\\
70	8511.44209332641\\
71	9211.95071806155\\
72	9957.21030984189\\
73	10747.9824973371\\
74	11584.69177915\\
75	12467.3777045321\\
76	13395.6496472653\\
77	14368.6461762132\\
78	15385.0011742709\\
79	16442.81890198\\
80	17539.6601167381\\
81	18672.5411236628\\
82	19837.947240644\\
83	21031.8616127889\\
84	22249.8096315391\\
85	23486.9184392451\\
86	24737.9901839148\\
87	25997.5868948613\\
88	27260.1241458401\\
89	28519.9701220669\\
90	29771.5463634745\\
91	31009.4263522035\\
92	32228.4282577155\\
93	33423.6985334078\\
94	34590.7836368049\\
95	35725.6878663509\\
96	36824.9161062724\\
97	37885.5010795445\\
98	38905.0154658318\\
99	39881.5698962045\\
100	40813.7983547713\\
101	41700.8328812796\\
102	42542.2696762807\\
103	43338.1287730269\\
104	44088.8093788487\\
105	44795.0428300682\\
106	45457.8448772087\\
107	46078.4687490275\\
108	46658.3601591643\\
109	47199.1151379195\\
110	47702.4413088047\\
111	48170.1229951022\\
112	48603.9903414287\\
113	49005.8924713233\\
114	49377.67457351\\
115	49721.1587141589\\
116	50038.1281064065\\
117	50330.3145272374\\
118	50599.388551065\\
119	50846.9522646615\\
120	51074.5341355442\\
121	51283.5857220935\\
122	51475.4799356927\\
123	51651.5105906802\\
124	51812.8930050799\\
125	51960.7654425111\\
126	52096.1912123822\\
127	52220.1612707341\\
128	52333.5971874849\\
129	52437.3543670835\\
130	52532.2254286062\\
131	52618.9436681463\\
132	52698.1865410181\\
133	52770.5791139766\\
134	52836.6974484904\\
135	52897.0718852839\\
136	52952.1902080619\\
137	53002.5006707276\\
138	53048.4148776609\\
139	53090.3105109098\\
140	53128.5339015839\\
141	53163.4024454758\\
};
\addlegendentry{65-74}

\addplot [color=mycolor5, line width=2.0pt, mark options={solid, mycolor5}]
  table[row sep=crcr]{%
1	0\\
2	1.22489296657866\\
3	2.68227660029449\\
4	4.29947341840747\\
5	6.01508638444028\\
6	7.82546637978117\\
7	9.75124124476728\\
8	11.8197401583686\\
9	14.0590108071114\\
10	16.4965388109992\\
11	19.1595218795925\\
12	22.0755358757182\\
13	25.2732178697597\\
14	28.7828723775511\\
15	32.6369980318831\\
16	36.8707561142531\\
17	41.5224047517269\\
18	46.6337189871589\\
19	52.2504125766382\\
20	58.4225737406134\\
21	65.2051244328696\\
22	72.6583108652577\\
23	80.8482318368696\\
24	89.8474106857598\\
25	99.7354162706203\\
26	110.599538196994\\
27	122.535521453989\\
28	135.648365669555\\
29	150.053194285241\\
30	165.87619906361\\
31	183.255665446565\\
32	202.343084356264\\
33	223.304356047664\\
34	246.321091556738\\
35	271.592017112046\\
36	299.334486555881\\
37	329.786106315813\\
38	363.206476732527\\
39	399.879052532184\\
40	440.11312386922\\
41	484.245917586925\\
42	532.64481606601\\
43	585.709688162202\\
44	643.875323167414\\
45	707.613954348502\\
46	777.437853295484\\
47	853.901969910989\\
48	937.60658525255\\
49	1029.1999354563\\
50	1129.38075448861\\
51	1238.90067137215\\
52	1358.56638372621\\
53	1489.24151391002\\
54	1631.8480367949\\
55	1787.36714935457\\
56	1956.83943212491\\
57	2141.36413160007\\
58	2342.09737147929\\
59	2560.24908031613\\
60	2797.078404833\\
61	3053.88736361116\\
62	3332.01248711863\\
63	3632.81418959341\\
64	3957.66362905273\\
65	4307.92683688325\\
66	4684.94594150517\\
67	5090.01737489212\\
68	5524.3670393136\\
69	5989.12252681436\\
70	6485.28262661218\\
71	7013.68452481511\\
72	7574.96929309973\\
73	8169.54647155483\\
74	8797.5587654663\\
75	9458.84808230638\\
76	10152.9243159867\\
77	10878.9384202447\\
78	11635.6613803535\\
79	12421.470671595\\
80	13234.3456671264\\
81	14071.8732165626\\
82	14931.2642587049\\
83	15809.3818678489\\
84	16702.7805860323\\
85	17607.7562984496\\
86	18520.4053105709\\
87	19436.6907330713\\
88	20352.5138240235\\
89	21263.7876200823\\
90	22166.5100406456\\
91	23056.8336859202\\
92	23931.1297677167\\
93	24786.0439889665\\
94	25618.542687843\\
95	26425.9481384386\\
96	27205.9625017519\\
97	27956.6804999277\\
98	28676.5914021517\\
99	29364.571332061\\
100	30019.8672161999\\
101	30642.0738855529\\
102	31231.1059229236\\
103	31787.1658315515\\
104	32310.7100037666\\
105	32802.4138139601\\
106	33263.1369688405\\
107	33693.8900390365\\
108	34095.8028856292\\
109	34470.0954954643\\
110	34818.0515586935\\
111	35140.9949660757\\
112	35440.2692744044\\
113	35717.2200860485\\
114	35973.1802114071\\
115	36209.4574284721\\
116	36427.3246184766\\
117	36628.0120374347\\
118	36812.7014769564\\
119	36982.5220710509\\
120	37138.547516039\\
121	37281.7944859551\\
122	37413.2220440691\\
123	37533.7318709649\\
124	37644.1691498456\\
125	37745.3239695958\\
126	37837.9331250368\\
127	37922.6822113974\\
128	38000.2079260781\\
129	38071.1005052\\
130	38135.9062352098\\
131	38195.1299909925\\
132	38249.2377616261\\
133	38298.6591332102\\
134	38343.7897052408\\
135	38384.9934229242\\
136	38422.6048127515\\
137	38456.9311127195\\
138	38488.2542919011\\
139	38516.8329567426\\
140	38542.9041435948\\
141	38566.6849986573\\
};
\addlegendentry{75-84}

\addplot [color=mycolor6, line width=2.0pt, mark options={solid, mycolor6}]
  table[row sep=crcr]{%
1	0\\
2	1.06753654407652\\
3	2.33370294413975\\
4	3.73058349518353\\
5	5.22984641015384\\
6	6.82614802805157\\
7	8.5292322102561\\
8	10.3570337636596\\
9	12.3312440408861\\
10	14.475102410168\\
11	16.8125997139099\\
12	19.3684230875351\\
13	22.1682335176085\\
14	25.239064166563\\
15	28.6097439707015\\
16	32.3113116919143\\
17	36.3774140440252\\
18	40.8446932171313\\
19	45.7531728642508\\
20	51.1466519434767\\
21	57.0731149184523\\
22	63.5851656836453\\
23	70.7404915781311\\
24	78.6023630911511\\
25	87.240174340836\\
26	96.730029081196\\
27	107.155376811026\\
28	118.607703474939\\
29	131.187281221339\\
30	145.003981681138\\
31	160.178157225051\\
32	176.841594619744\\
33	195.138545407538\\
34	215.226837153746\\
35	237.279069410184\\
36	261.48389779975\\
37	288.047408996887\\
38	317.194588518473\\
39	349.170882098255\\
40	384.243849937234\\
41	422.704911235428\\
42	464.871174041799\\
43	511.08734252375\\
44	561.727690161802\\
45	617.198083016708\\
46	677.938031986601\\
47	744.422746758259\\
48	817.165156847042\\
49	896.717856608899\\
50	983.674921304851\\
51	1078.67353014036\\
52	1182.3953196679\\
53	1295.56737707236\\
54	1418.96276778193\\
55	1553.40047580563\\
56	1699.74461858366\\
57	1858.90278152834\\
58	2031.82330164182\\
59	2219.49131569875\\
60	2422.92337786621\\
61	2643.16044601591\\
62	2881.25903742722\\
63	3138.28036545911\\
64	3415.27729171814\\
65	3713.27896602896\\
66	4033.27308182594\\
67	4376.18574979272\\
68	4742.85908934633\\
69	5134.02675640903\\
70	5550.28776569684\\
71	5992.07912315438\\
72	6459.64795319383\\
73	6953.02397705665\\
74	7471.99336082394\\
75	8016.07508946469\\
76	8584.50111990944\\
77	9176.2016038445\\
78	9789.79643327625\\
79	10423.5942359125\\
80	11075.5997259871\\
81	11743.5300005024\\
82	12424.8399721002\\
83	13116.756669562\\
84	13816.3216466632\\
85	14520.4402584219\\
86	15225.9361327416\\
87	15929.6088255277\\
88	16628.2924322568\\
89	17318.91286093\\
90	17998.5415578829\\
91	18664.4437107095\\
92	19314.1193084794\\
93	19945.3358838229\\
94	20556.1522528516\\
95	21144.9330643596\\
96	21710.3544300685\\
97	22251.4013013321\\
98	22767.3575634547\\
99	23257.7900264089\\
100	23722.5276004127\\
101	24161.6369651503\\
102	24575.3959869267\\
103	24964.2660265291\\
104	25328.8641306329\\
105	25669.9359287089\\
106	25988.3298805954\\
107	26284.9733490684\\
108	26560.8508153792\\
109	26816.9844191933\\
110	27054.416890293\\
111	27274.1968482703\\
112	27477.3663771595\\
113	27664.9507324044\\
114	27837.9500050462\\
115	27997.3325496363\\
116	28144.0299752642\\
117	28278.9335005971\\
118	28402.8914816151\\
119	28516.707932819\\
120	28621.141877474\\
121	28716.907378647\\
122	28804.6741194384\\
123	28885.0684171801\\
124	28958.6745719849\\
125	29026.0364645551\\
126	29087.6593314194\\
127	29144.011657669\\
128	29195.5271378047\\
129	29242.6066645276\\
130	29285.6203132823\\
131	29324.9092971913\\
132	29360.7878728155\\
133	29393.5451820384\\
134	29423.4470194285\\
135	29450.7375177684\\
136	29475.6407471689\\
137	29498.3622253818\\
138	29519.0903386781\\
139	29537.9976740322\\
140	29555.242264413\\
141	29570.9687497807\\
};
\addlegendentry{$85+$}

\end{axis}
\end{tikzpicture}%

%% file: Figures/OL_state_4.tex
%
%
\definecolor{mycolor1}{rgb}{0.00000,0.44700,0.74100}%
\definecolor{mycolor2}{rgb}{0.85000,0.32500,0.09800}%
\definecolor{mycolor3}{rgb}{0.92900,0.69400,0.12500}%
\definecolor{mycolor4}{rgb}{0.49400,0.18400,0.55600}%
\definecolor{mycolor5}{rgb}{0.46600,0.67400,0.18800}%
\definecolor{mycolor6}{rgb}{0.30100,0.74500,0.93300}%
\begin{tikzpicture}

\begin{axis}[%
width=7cm,
height=3.5cm,
at={(0cm,0cm)},
scale only axis,
xmin=0,
xmax=140,
ymin=0,
ymax=7000,
xmajorgrids,
ymajorgrids,
axis background/.style={fill=white},
axis x line*=bottom,
axis y line*=left
]
\addplot [color=mycolor1, line width=2.0pt, forget plot]
  table[row sep=crcr]{%
1	0\\
2	0.00205352335266638\\
3	0.00525182455328961\\
4	0.00931989200713513\\
5	0.0141180246459586\\
6	0.0195874651244966\\
7	0.0257180252629923\\
8	0.0325297359848644\\
9	0.0400627010959333\\
10	0.048371613840375\\
11	0.0575229110143233\\
12	0.0675934353886098\\
13	0.0786699827885641\\
14	0.0908493900088023\\
15	0.104238973821805\\
16	0.118957216548155\\
17	0.135134641337862\\
18	0.152914847492578\\
19	0.172455692048125\\
20	0.193930613432432\\
21	0.217530099048604\\
22	0.243463302618713\\
23	0.27195981990942\\
24	0.303271633527648\\
25	0.337675239099494\\
26	0.375473966485274\\
27	0.417000510824364\\
28	0.462619689186677\\
29	0.512731439445091\\
30	0.567774078664699\\
31	0.628227838802702\\
32	0.694618697783339\\
33	0.76752252399592\\
34	0.847569551884924\\
35	0.935449205464931\\
36	1.03191528518579\\
37	1.13779153145777\\
38	1.25397757515994\\
39	1.38145528140676\\
40	1.5212954875148\\
41	1.6746651292366\\
42	1.84283474061763\\
43	2.02718630195185\\
44	2.22922139689045\\
45	2.45056962338697\\
46	2.69299718339932\\
47	2.95841555264993\\
48	3.24889010379286\\
49	3.5666485235887\\
50	3.91408882672011\\
51	4.29378672535344\\
52	4.70850206426271\\
53	5.16118397628744\\
54	5.654974352402\\
55	6.19320915544583\\
56	6.77941703785782\\
57	7.41731465352169\\
58	8.11079798486894\\
59	8.86392894254139\\
60	9.6809164412259\\
61	10.566091118119\\
62	11.5238728476621\\
63	12.5587302269204\\
64	13.6751312707161\\
65	14.8774846757262\\
66	16.1700711998047\\
67	17.5569649677317\\
68	19.0419448663422\\
69	20.6283966358246\\
70	22.3192067995233\\
71	24.1166501935979\\
72	26.0222735421549\\
73	28.0367782428978\\
74	30.1599062399682\\
75	32.3903335088496\\
76	34.7255761966456\\
77	37.1619147768755\\
78	39.6943416182568\\
79	42.3165370674376\\
80	45.0208784606034\\
81	47.7984853915233\\
82	50.6393030948293\\
83	53.5322240172179\\
84	56.4652456533281\\
85	59.4256606620964\\
86	62.4002733228308\\
87	65.3756347137904\\
88	68.3382877602509\\
89	71.275012627945\\
90	74.1730629015017\\
91	77.0203835918699\\
92	79.8058032020102\\
93	82.5191937293818\\
94	85.1515944389213\\
95	87.6952973233557\\
96	90.1438942033302\\
97	92.492287254847\\
98	94.7366662697746\\
99	96.8744570852788\\
100	98.9042463341325\\
101	100.825687984668\\
102	102.639397102841\\
103	104.346835946224\\
104	105.950196967152\\
105	107.452286636243\\
106	108.856413267804\\
107	110.16628129329\\
108	111.385893732529\\
109	112.519463985569\\
110	113.571337528804\\
111	114.545923655197\\
112	115.447637049285\\
113	116.280848727188\\
114	117.049845690007\\
115	117.758798524324\\
116	118.411736123749\\
117	119.012526688913\\
118	119.564864179405\\
119	120.072259430586\\
120	120.538035203245\\
121	120.965324498453\\
122	121.357071538834\\
123	121.716034887068\\
124	122.044792240216\\
125	122.345746502464\\
126	122.621132798149\\
127	122.873026140625\\
128	123.103349520605\\
129	123.313882219961\\
130	123.506268193912\\
131	123.682024396372\\
132	123.84254895043\\
133	123.989129088918\\
134	124.122948809295\\
135	124.245096203052\\
136	124.356570433002\\
137	124.458288342465\\
138	124.55109068898\\
139	124.635748001962\\
140	124.712966069048\\
141	124.783391059931\\
};
\addplot [color=mycolor2, line width=2.0pt, forget plot]
  table[row sep=crcr]{%
1	0\\
2	0.00792471824437081\\
3	0.0151733311032985\\
4	0.0223107967711348\\
5	0.0296918913897686\\
6	0.0375468576459977\\
7	0.0460369348792874\\
8	0.0552880896329352\\
9	0.0654105998584643\\
10	0.0765102547101886\\
11	0.0886948562262939\\
12	0.102078183882955\\
13	0.116782631241678\\
14	0.13294117593241\\
15	0.150699042154799\\
16	0.17021525241692\\
17	0.191664179076784\\
18	0.215237160967964\\
19	0.241144226783254\\
20	0.269615954754126\\
21	0.300905492122249\\
22	0.335290755126997\\
23	0.373076829240376\\
24	0.414598589375783\\
25	0.460223560350273\\
26	0.510355038751714\\
27	0.56543549840999\\
28	0.625950302802667\\
29	0.69243174887093\\
30	0.765463467820007\\
31	0.845685209468571\\
32	0.933798037525921\\
33	1.03056996373564\\
34	1.13684204903658\\
35	1.25353499964457\\
36	1.38165628511757\\
37	1.52230780387181\\
38	1.67669411907706\\
39	1.84613128414785\\
40	2.03205627189918\\
41	2.23603701453813\\
42	2.45978305265654\\
43	2.70515677985952\\
44	2.97418525513976\\
45	3.26907253705632\\
46	3.59221247161288\\
47	3.94620183881129\\
48	4.33385373049752\\
49	4.75821099360553\\
50	5.2225595275264\\
51	5.73044117140642\\
52	6.28566585611891\\
53	6.89232262602924\\
54	7.55478905730304\\
55	8.27773851258592\\
56	9.06614457712155\\
57	9.92528192018206\\
58	10.8607227203739\\
59	11.8783276874067\\
60	12.9842306111187\\
61	14.1848152774243\\
62	15.4866835187535\\
63	16.8966131238879\\
64	18.42150433135\\
65	20.0683136861594\\
66	21.8439741678785\\
67	23.7553007154255\\
68	25.8088805978442\\
69	28.010948524997\\
70	30.3672469688787\\
71	32.8828728792245\\
72	35.5621128209761\\
73	38.4082695178824\\
74	41.4234838223247\\
75	44.6085571949167\\
76	47.9627807986654\\
77	51.4837782046707\\
78	55.16736936991\\
79	59.0074638775763\\
80	62.9959913267289\\
81	67.1228761386501\\
82	71.3760628624078\\
83	75.7415963074139\\
84	80.2037585577858\\
85	84.7452622440766\\
86	89.3474965317617\\
87	93.9908193475343\\
88	98.6548866440087\\
89	103.319007239752\\
90	107.962510173523\\
91	112.56511073109\\
92	117.10726141462\\
93	121.570475115611\\
94	125.937609524503\\
95	130.193104193323\\
96	134.323164442949\\
97	138.315889233624\\
98	142.161342963005\\
99	145.85157371922\\
100	149.380582644925\\
101	152.744250668627\\
102	155.940229896572\\
103	158.967807449784\\
104	161.827749534899\\
105	164.522133140236\\
106	167.054172049858\\
107	169.428042969846\\
108	171.64871655669\\
109	173.72179710804\\
110	175.653373684368\\
111	177.449884522196\\
112	179.117995804238\\
113	180.664495183899\\
114	182.096199923982\\
115	183.419879096795\\
116	184.642188993887\\
117	185.769620693824\\
118	186.808458619713\\
119	187.764748868668\\
120	188.644276098014\\
121	189.45254779433\\
122	190.194784819884\\
123	190.87591721702\\
124	191.500584346891\\
125	192.073138538667\\
126	192.597651524457\\
127	193.077923030532\\
128	193.517490984843\\
129	193.919642883014\\
130	194.287427929211\\
131	194.623669634456\\
132	194.930978613091\\
133	195.211765368707\\
134	195.468252904328\\
135	195.702489028759\\
136	195.916358262265\\
137	196.111593270904\\
138	196.289785780483\\
139	196.452396938826\\
140	196.600767109419\\
141	196.736125090977\\
};
\addplot [color=mycolor3, line width=2.0pt, forget plot]
  table[row sep=crcr]{%
1	0\\
2	0.112690712840141\\
3	0.184853862746458\\
4	0.240084950271744\\
5	0.289131566372381\\
6	0.337173829615426\\
7	0.386915831700138\\
8	0.439933755032252\\
9	0.497284397364492\\
10	0.559790725460741\\
11	0.628181837151906\\
12	0.703165228862507\\
13	0.785466713808157\\
14	0.875854559119352\\
15	0.975155860759522\\
16	1.08426916438193\\
17	1.20417540610817\\
18	1.33594829094\\
19	1.48076474325507\\
20	1.63991581516001\\
21	1.81481830894113\\
22	2.00702730227545\\
23	2.21824973028615\\
24	2.4503591619685\\
25	2.70541190204352\\
26	2.98566454860751\\
27	3.29359313960854\\
28	3.63191402576716\\
29	4.00360661315731\\
30	4.41193812463795\\
31	4.86049053516305\\
32	5.35318984122802\\
33	5.89433782885078\\
34	6.48864650700303\\
35	7.14127537368635\\
36	7.85787167916477\\
37	8.64461384436313\\
38	9.5082581810978\\
39	10.4561890434171\\
40	11.4964725144667\\
41	12.6379136992921\\
42	13.8901176488973\\
43	15.2635538824578\\
44	16.7696244002538\\
45	18.420734986753\\
46	20.2303694880559\\
47	22.2131666070527\\
48	24.3849985892301\\
49	26.7630509680134\\
50	29.365902296615\\
51	32.2136025094385\\
52	35.3277482263393\\
53	38.7315529343103\\
54	42.4499095514307\\
55	46.5094423969074\\
56	50.9385450609718\\
57	55.76740009492\\
58	61.0279758348949\\
59	66.7539950491193\\
60	72.9808694804421\\
61	79.7455937762219\\
62	87.0865917978288\\
63	95.0435079358437\\
64	103.656935889885\\
65	112.968077481423\\
66	123.018324542269\\
67	133.848757856798\\
68	145.499558631388\\
69	158.009330114236\\
70	171.41432987138\\
71	185.747616890442\\
72	201.038122137141\\
73	217.309656373213\\
74	234.579874819822\\
75	252.859224384075\\
76	272.149905319817\\
77	292.444884925331\\
78	313.727005658383\\
79	335.968233283067\\
80	359.129091753225\\
81	383.158329939968\\
82	407.992860615207\\
83	433.558004111113\\
84	459.768057872936\\
85	486.527199125884\\
86	513.730711843608\\
87	541.266512199575\\
88	569.016929988009\\
89	596.860688496759\\
90	624.675013317948\\
91	652.337792689689\\
92	679.729708907835\\
93	706.736262407118\\
94	733.249617069319\\
95	759.170206493843\\
96	784.40805531529\\
97	808.883785892746\\
98	832.52929747024\\
99	855.288120938417\\
100	877.115466539133\\
101	897.977993462375\\
102	917.85333883171\\
103	936.729448925821\\
104	954.603757784829\\
105	971.48225796157\\
106	987.378505599317\\
107	1002.31259781416\\
108	1016.31015509921\\
109	1029.40133566911\\
110	1041.61990276771\\
111	1053.00236031528\\
112	1063.58716712092\\
113	1073.41403538698\\
114	1082.52331546157\\
115	1090.95546576631\\
116	1098.75060450564\\
117	1105.9481380876\\
118	1112.58646006993\\
119	1118.70271380025\\
120	1124.33261165383\\
121	1129.51030380222\\
122	1134.26828969505\\
123	1138.63736584009\\
124	1142.64660396856\\
125	1146.32335423006\\
126	1149.69326864058\\
127	1152.78034058208\\
128	1155.60695670497\\
129	1158.19395810338\\
130	1160.56070810938\\
131	1162.72516448338\\
132	1164.70395416115\\
133	1166.51244905586\\
134	1168.16484170667\\
135	1169.67421981852\\
136	1171.05263895258\\
137	1172.31119280889\\
138	1173.46008069438\\
139	1174.50867189525\\
140	1175.46556677555\\
141	1176.33865450742\\
};
\addplot [color=mycolor4, line width=2.0pt, forget plot]
  table[row sep=crcr]{%
1	0\\
2	0.00679340447442321\\
3	0.0813545545739133\\
4	0.175303555942114\\
5	0.27729442119742\\
6	0.38573864596288\\
7	0.501642891561894\\
8	0.626601204894404\\
9	0.762265986806404\\
10	0.910253170780997\\
11	1.07216496037678\\
12	1.24963539278765\\
13	1.44437341049702\\
14	1.65819905278897\\
15	1.89307392237178\\
16	2.1511279982941\\
17	2.43468458378129\\
18	2.74628471971661\\
19	3.0887120190501\\
20	3.46501861676365\\
21	3.87855276165622\\
22	4.33298847227015\\
23	4.83235761798685\\
24	5.38108475270394\\
25	5.9840250127223\\
26	6.64650538611481\\
27	7.37436966370559\\
28	8.17402738899736\\
29	9.05250713389798\\
30	10.0175144373313\\
31	11.0774947534215\\
32	12.2417017636408\\
33	13.5202714117946\\
34	14.9243020205465\\
35	16.465940841723\\
36	18.1584773779641\\
37	20.0164437881747\\
38	22.0557226510232\\
39	24.2936623063414\\
40	26.7491999200586\\
41	29.442992320031\\
42	32.3975545229293\\
43	35.6374057106406\\
44	39.1892222120978\\
45	43.0819967960091\\
46	47.3472032738224\\
47	52.0189650419779\\
48	57.134225749159\\
49	62.7329197486622\\
50	68.8581393791485\\
51	75.5562954005784\\
52	82.8772660891817\\
53	90.8745295614481\\
54	99.605272851721\\
55	109.130470115814\\
56	119.514921086473\\
57	130.827239587645\\
58	143.139780558448\\
59	156.528492695141\\
60	171.07268256046\\
61	186.854674926518\\
62	203.959353327587\\
63	222.47356444654\\
64	242.485370214378\\
65	264.083132561098\\
66	287.354417830893\\
67	312.384711184828\\
68	339.255936069093\\
69	368.044780202503\\
70	398.820837645093\\
71	431.644586362545\\
72	466.565232170606\\
73	503.618462721343\\
74	542.824168755542\\
75	584.184203433383\\
76	627.680263173195\\
77	673.271983876442\\
78	720.895353362555\\
79	770.461542924201\\
80	821.856256914733\\
81	874.939688274523\\
82	929.54714946322\\
83	985.490422618855\\
84	1042.55984090544\\
85	1100.52707672019\\
86	1159.14857419293\\
87	1218.16952620629\\
88	1277.32826317147\\
89	1336.36089501379\\
90	1395.00603170177\\
91	1453.00940276342\\
92	1510.12820304621\\
93	1566.13500980551\\
94	1620.82114329925\\
95	1673.99937684788\\
96	1725.50593973131\\
97	1775.20179418186\\
98	1822.97320319481\\
99	1868.73163656683\\
100	1912.41308685992\\
101	1953.9768840406\\
102	1993.40410726884\\
103	2030.69569524318\\
104	2065.87035363068\\
105	2098.96235067419\\
106	2130.0192814193\\
107	2159.09986843468\\
108	2186.27185355753\\
109	2211.61002201632\\
110	2235.19438796513\\
111	2257.10855948099\\
112	2277.43829169223\\
113	2296.27022902299\\
114	2313.69083152369\\
115	2329.7854757907\\
116	2344.63771788293\\
117	2358.3287037143\\
118	2370.93671142848\\
119	2382.53681004215\\
120	2393.20061899286\\
121	2402.99615398499\\
122	2411.98774555874\\
123	2420.2360180023\\
124	2427.79791750022\\
125	2434.72677969705\\
126	2441.07242810615\\
127	2446.88129597751\\
128	2452.19656533393\\
129	2457.05831788132\\
130	2461.50369338995\\
131	2465.56705193169\\
132	2469.28013704578\\
133	2472.6722374995\\
134	2475.77034581832\\
135	2478.59931218974\\
136	2481.18199270602\\
137	2483.5393912106\\
138	2485.69079425937\\
139	2487.65389890871\\
140	2489.44493320339\\
141	2491.07876936533\\
};
\addplot [color=mycolor5, line width=2.0pt, forget plot]
  table[row sep=crcr]{%
1	0\\
2	0.150403228269135\\
3	0.329353723796686\\
4	0.527927500303994\\
5	0.738585684808434\\
6	0.960880206144185\\
7	1.19734393360152\\
8	1.45133258629647\\
9	1.72629010807894\\
10	2.0255914272833\\
11	2.35257612004483\\
12	2.71063019551259\\
13	3.10326996731672\\
14	3.53421649283687\\
15	4.00746024260347\\
16	4.52731863078379\\
17	5.09848932972572\\
18	5.72610184990716\\
19	6.41576933196955\\
20	7.17364205210785\\
21	8.00646381519116\\
22	8.92163218574193\\
23	9.92726336088055\\
24	11.0322623996333\\
25	12.246399472573\\
26	13.5803927720856\\
27	15.0459987175852\\
28	16.6561100951696\\
29	18.42486278261\\
30	20.3677517243532\\
31	22.5017568341159\\
32	24.8454795116677\\
33	27.4192904625288\\
34	30.2454895013321\\
35	33.3484779979382\\
36	36.7549445859253\\
37	40.4940646910156\\
38	44.5977143467598\\
39	49.1006986398439\\
40	54.0409949601039\\
41	59.4600110119463\\
42	65.4028572642659\\
43	71.9186331636481\\
44	79.0607259977211\\
45	86.887120757639\\
46	95.4607186951772\\
47	104.849661484066\\
48	115.127656959509\\
49	126.374301306817\\
50	138.675391283023\\
51	152.123218569566\\
52	166.816836658866\\
53	182.862288768066\\
54	200.372783153573\\
55	219.468799887067\\
56	240.278110680968\\
57	262.93569077466\\
58	287.583499295503\\
59	314.370102008337\\
60	343.450108121548\\
61	374.983391030793\\
62	409.134061807454\\
63	446.069164184135\\
64	485.957061109996\\
65	528.965486040999\\
66	575.259237414771\\
67	624.997502653713\\
68	678.330808917261\\
69	735.397611963044\\
70	796.320551994585\\
71	861.202426151511\\
72	930.12195090321\\
73	1003.12941321609\\
74	1080.24233571153\\
75	1161.44130638604\\
76	1246.66614566484\\
77	1335.81260011328\\
78	1428.7297603968\\
79	1525.21839853203\\
80	1625.03040402397\\
81	1727.86946885376\\
82	1833.39312733702\\
83	1941.21619989831\\
84	2050.91562263368\\
85	2162.03657145658\\
86	2274.09971611084\\
87	2386.60937150133\\
88	2499.06225772155\\
89	2610.95654114385\\
90	2721.80081079695\\
91	2831.12264879205\\
92	2938.47648031297\\
93	3043.45043499924\\
94	3145.67201293076\\
95	3244.81241915813\\
96	3340.58950461753\\
97	3432.76932238623\\
98	3521.16637152847\\
99	3605.642652531\\
100	3686.1056963521\\
101	3762.50575274451\\
102	3834.83233342687\\
103	3903.11030354394\\
104	3967.39570299609\\
105	4027.77146024442\\
106	4084.34313770772\\
107	4137.2348222157\\
108	4186.5852481384\\
109	4232.54421628703\\
110	4275.26934953004\\
111	4314.92320692314\\
112	4351.67076229231\\
113	4385.67724063741\\
114	4417.10629624676\\
115	4446.11850970757\\
116	4472.87017667333\\
117	4497.51235889485\\
118	4520.19016723338\\
119	4541.04224678261\\
120	4560.20043550491\\
121	4577.78956966031\\
122	4593.92741154791\\
123	4608.7246775114\\
124	4622.28514664472\\
125	4634.70583307247\\
126	4646.07720700117\\
127	4656.48345189708\\
128	4666.00274711715\\
129	4674.70756709014\\
130	4682.6649897138\\
131	4689.93700800681\\
132	4696.58084024341\\
133	4702.6492348171\\
134	4708.19076694459\\
135	4713.25012504801\\
136	4717.86838525854\\
137	4722.0832729839\\
138	4725.92941088911\\
139	4729.43855296851\\
140	4732.63980464875\\
141	4735.55982906721\\
};
\addplot [color=mycolor6, line width=2.0pt, forget plot]
  table[row sep=crcr]{%
1	0\\
2	0.244503556456279\\
3	0.534500362278705\\
4	0.854435323353285\\
5	1.19781946023111\\
6	1.56342888589301\\
7	1.95349528858806\\
8	2.37212637225748\\
9	2.82429022243462\\
10	3.31530947487756\\
11	3.85067887945964\\
12	4.43605266173708\\
13	5.07730809355792\\
14	5.78063672348677\\
15	6.55264139570166\\
16	7.40043108245494\\
17	8.3317120690617\\
18	9.35487671065472\\
19	10.4790918367534\\
20	11.7143889550212\\
21	13.0717582016558\\
22	14.563247725596\\
23	16.2020699640599\\
24	18.0027160927585\\
25	19.9810798146047\\
26	22.1545915760018\\
27	24.54236425824\\
28	27.1653513724169\\
29	30.0465187804834\\
30	33.2110309647798\\
31	36.6864528670699\\
32	40.5029683094648\\
33	44.6936159877523\\
34	49.2945439862668\\
35	54.3452836957569\\
36	59.8890439140848\\
37	65.973025765296\\
38	72.6487588755495\\
39	79.9724589829873\\
40	88.0054068194712\\
41	96.8143476699469\\
42	106.471910472677\\
43	117.057044651287\\
44	128.655472046011\\
45	141.360150313286\\
46	155.271742965094\\
47	170.499089796311\\
48	187.159669774203\\
49	205.38004651484\\
50	225.296284225982\\
51	247.054319440386\\
52	270.810270992783\\
53	296.73066751729\\
54	324.992568288939\\
55	355.78354955887\\
56	389.301524727323\\
57	425.754362894738\\
58	465.359266714296\\
59	508.341867286253\\
60	554.934991402866\\
61	605.377055166302\\
62	659.910038332378\\
63	718.776996224931\\
64	782.219071321644\\
65	850.471975265043\\
66	923.761924720709\\
67	1002.30103173022\\
68	1086.28217136991\\
69	1175.87337674665\\
70	1271.21184337778\\
71	1372.39766105248\\
72	1479.48742998554\\
73	1592.4879573905\\
74	1711.35026775084\\
75	1835.9641916429\\
76	1966.1538200891\\
77	2101.67412005614\\
78	2242.20899809072\\
79	2387.37107022642\\
80	2536.70334558207\\
81	2689.68295877658\\
82	2845.72699495417\\
83	3004.20034581003\\
84	3164.42542271497\\
85	3325.69344271609\\
86	3487.27690446599\\
87	3648.44279327986\\
88	3808.46600525404\\
89	3966.64246479439\\
90	4122.3014297233\\
91	4274.81653145111\\
92	4423.61517921544\\
93	4568.18605917454\\
94	4708.08457168722\\
95	4842.93616359357\\
96	4972.43761773984\\
97	5096.3564521512\\
98	5214.52865128094\\
99	5326.85499931958\\
100	5433.29631066698\\
101	5533.86785732489\\
102	5628.63328048678\\
103	5717.69824806063\\
104	5801.20408551908\\
105	5879.32156833431\\
106	5952.24502376295\\
107	6020.18685061996\\
108	6083.37252986757\\
109	6142.03616757442\\
110	6196.41658567269\\
111	6246.75395506902\\
112	6293.28694979655\\
113	6336.25038954709\\
114	6375.87333047589\\
115	6412.37755996158\\
116	6445.9764493748\\
117	6476.87411925357\\
118	6505.26487306729\\
119	6531.33285852128\\
120	6555.25191873949\\
121	6577.18559937324\\
122	6597.28728149469\\
123	6615.70041388414\\
124	6632.55882189529\\
125	6647.98707340988\\
126	6662.10088542967\\
127	6675.00755757973\\
128	6686.80642121173\\
129	6697.58929490716\\
130	6707.44093900781\\
131	6716.4395033648\\
132	6724.65696382493\\
133	6732.15954408731\\
134	6739.00812049157\\
135	6745.25860806372\\
136	6750.9623267696\\
137	6756.1663474299\\
138	6760.91381715138\\
139	6765.24426444411\\
140	6769.19388443711\\
141	6772.79580478756\\
};
\end{axis}
\end{tikzpicture}%

%% file: Figures/NS_state_1.tex
%
%
\definecolor{mycolor1}{rgb}{0.00000,0.44700,0.74100}%
\definecolor{mycolor2}{rgb}{0.85000,0.32500,0.09800}%
\definecolor{mycolor3}{rgb}{0.92900,0.69400,0.12500}%
\definecolor{mycolor4}{rgb}{0.49400,0.18400,0.55600}%
\definecolor{mycolor5}{rgb}{0.46600,0.67400,0.18800}%
\definecolor{mycolor6}{rgb}{0.30100,0.74500,0.93300}%
\begin{tikzpicture}

\begin{axis}[%
width=7cm,
height=3.5cm,
at={(0cm,0cm)},
scale only axis,
xmin=61,
xmax=140,
ymin=0,
ymax=1200000,
xmajorgrids,
ymajorgrids,
axis background/.style={fill=white},
axis x line*=bottom,
axis y line*=left
]
\addplot [color=mycolor1, line width=2.0pt, forget plot]
  table[row sep=crcr]{%
1	1058299.34049118\\
2	1058292.44626517\\
3	1058283.78077337\\
4	1058273.61241241\\
5	1058262.04982195\\
6	1058249.10573796\\
7	1058234.73290958\\
8	1058218.84387911\\
9	1058201.32163147\\
10	1058182.02508278\\
11	1058160.79161252\\
12	1058137.43785495\\
13	1058111.75941957\\
14	1058083.52991054\\
15	1058052.49944864\\
16	1058018.39280639\\
17	1057980.9072135\\
18	1057939.70985877\\
19	1057894.43509538\\
20	1057844.68134481\\
21	1057790.00768631\\
22	1057729.9301135\\
23	1057663.91743528\\
24	1057591.38679507\\
25	1057511.69877958\\
26	1057424.1520859\\
27	1057327.97771375\\
28	1057222.332648\\
29	1057106.29299513\\
30	1056978.8465363\\
31	1056838.88465923\\
32	1056685.19363109\\
33	1056516.44517546\\
34	1056331.18631846\\
35	1056127.8284717\\
36	1055904.63572501\\
37	1055659.71232743\\
38	1055390.98934441\\
39	1055096.21048994\\
40	1054772.91714741\\
41	1054418.43261123\\
42	1054029.8456047\\
43	1053603.99315784\\
44	1053137.44296392\\
45	1052626.47537521\\
46	1052067.06524862\\
47	1051454.86391135\\
48	1050785.18158549\\
49	1050052.97069169\\
50	1049252.81054321\\
51	1048378.89404652\\
52	1047425.01714036\\
53	1046384.57183296\\
54	1045250.54383415\\
55	1044015.515924\\
56	1042671.67834603\\
57	1041210.84765812\\
58	1039624.49560582\\
59	1037903.78969438\\
60	1036039.64720974\\
61	1034022.80446188\\
62	1031843.90297417\\
63	1029493.59419949\\
64	1026972.5319605\\
65	1024289.31115379\\
66	1021455.54306075\\
67	1018484.59290657\\
68	1015390.89121291\\
69	1012189.29988798\\
70	1008895.00441435\\
71	1005523.55943901\\
72	1002090.85635053\\
73	998613.009988928\\
74	995106.197770652\\
75	991586.473729026\\
76	988068.875872394\\
77	984568.907581556\\
78	981102.383565846\\
79	977684.953968029\\
80	974331.688906844\\
81	971056.744885781\\
82	967873.1044519\\
83	964792.378951379\\
84	961824.667132328\\
85	958978.464575123\\
86	956260.619852606\\
87	953676.33331559\\
88	951229.193933963\\
89	948921.249050559\\
90	946753.101437955\\
91	944724.027795713\\
92	942832.112823087\\
93	941076.100883799\\
94	939460.517515102\\
95	937989.067842585\\
96	936662.850706548\\
97	935479.957456023\\
98	934435.67359292\\
99	933522.937889787\\
100	932732.896234377\\
101	932055.466043615\\
102	931479.86388914\\
103	930995.069053062\\
104	930590.208717803\\
105	930254.859852261\\
106	929979.269751788\\
107	920790.865782142\\
108	865419.636213644\\
109	810093.496427201\\
110	754807.204174514\\
111	699553.189201182\\
112	644323.266764254\\
113	589109.921107032\\
114	533907.066434076\\
115	478710.295090786\\
116	423516.738102831\\
117	368324.7176416\\
118	313133.35549472\\
119	313133.243108248\\
120	313133.20691973\\
121	313133.195267024\\
122	313133.191514858\\
123	313133.190306664\\
124	313133.189917627\\
125	313133.189792358\\
126	313133.189752022\\
127	313133.189739034\\
128	313133.189734851\\
129	313133.189733505\\
130	313133.189733071\\
131	313133.189732931\\
132	313133.189732887\\
133	313133.189732872\\
134	313133.189732867\\
135	313133.189732866\\
136	313133.189732865\\
137	313133.189732865\\
138	313133.189732865\\
139	313133.189732865\\
140	313133.189732865\\
141	313133.189732865\\
};
\addplot [color=mycolor2, line width=2.0pt, forget plot]
  table[row sep=crcr]{%
1	915791.670391113\\
2	915788.901498256\\
3	915786.091690194\\
4	915783.132068456\\
5	915779.950175584\\
6	915776.492532322\\
7	915772.714557096\\
8	915768.574937074\\
9	915764.032453362\\
10	915759.04411254\\
11	915753.563958054\\
12	915747.542225712\\
13	915740.924663755\\
14	915733.651920725\\
15	915725.658947598\\
16	915716.874383396\\
17	915707.219905194\\
18	915696.609529594\\
19	915684.948855782\\
20	915672.13424177\\
21	915658.051906044\\
22	915642.576946937\\
23	915625.572271926\\
24	915606.887428743\\
25	915586.357329784\\
26	915563.800860863\\
27	915539.019364923\\
28	915511.794990813\\
29	915481.888896859\\
30	915449.039298545\\
31	915412.959349352\\
32	915373.334843598\\
33	915329.821730101\\
34	915282.043425655\\
35	915229.587917726\\
36	915172.00464652\\
37	915108.801157736\\
38	915039.439518927\\
39	914963.33249471\\
40	914879.839478979\\
41	914788.262186238\\
42	914687.840109055\\
43	914577.745754883\\
44	914457.079683137\\
45	914324.865372849\\
46	914180.043962563\\
47	914021.468917787\\
48	913847.900697485\\
49	913658.001510099\\
50	913450.330271679\\
51	913223.337904108\\
52	912975.363140264\\
53	912704.629035312\\
54	912409.240419031\\
55	912087.182562778\\
56	911736.321375737\\
57	911354.405487427\\
58	910939.070615502\\
59	910487.846657563\\
60	909998.167980195\\
61	909467.387404114\\
62	908892.794396792\\
63	908271.637977925\\
64	907610.878716508\\
65	906916.220540593\\
66	906189.711243162\\
67	905434.242440559\\
68	904652.857521702\\
69	903848.117324095\\
70	903022.862499565\\
71	902180.328555471\\
72	901323.951001265\\
73	900457.292968736\\
74	899583.98792225\\
75	898707.686463238\\
76	897831.863898719\\
77	896960.727345113\\
78	896098.569114819\\
79	895249.513135695\\
80	894417.440952547\\
81	893605.944601714\\
82	892818.285620079\\
83	892057.359315071\\
84	891325.666256916\\
85	890625.292688972\\
86	889957.90075332\\
87	889324.728661203\\
88	888726.600320017\\
89	888163.943469647\\
90	887636.815066952\\
91	876437.695515476\\
92	820794.990323058\\
93	765213.12914676\\
94	709691.280080083\\
95	654225.589106038\\
96	598810.717277817\\
97	543440.653385478\\
98	488109.206709527\\
99	432810.336517221\\
100	377538.378399363\\
101	322288.192619412\\
102	267055.248670368\\
103	211835.657678309\\
104	156626.163994834\\
105	101424.107055946\\
106	46227.3637431258\\
107	-2.08365899891942\\
108	-2.08358478881295\\
109	-2.0835268688188\\
110	-2.08348356194019\\
111	-2.08345290771825\\
112	-2.08343249836751\\
113	-2.08341978189477\\
114	-2.08341240460086\\
115	-2.08340844199114\\
116	-2.08340648400852\\
117	-2.08340560078874\\
118	-2.08340524038486\\
119	-2.08340510882801\\
120	-2.08340506646737\\
121	-2.08340505282741\\
122	-2.08340504843539\\
123	-2.08340504702118\\
124	-2.08340504656581\\
125	-2.08340504641918\\
126	-2.08340504637197\\
127	-2.08340504635676\\
128	-2.08340504635187\\
129	-2.08340504635029\\
130	-2.08340504634978\\
131	-2.08340504634962\\
132	-2.08340504634957\\
133	-2.08340504634955\\
134	-2.08340504634955\\
135	-2.08340504634955\\
136	-2.08340504634954\\
137	-2.08340504634954\\
138	-2.08340504634954\\
139	-2.08340504634954\\
140	-2.08340504634954\\
141	-2.08340504634954\\
};
\addplot [color=mycolor3, line width=2.0pt, forget plot]
  table[row sep=crcr]{%
1	983784.158223048\\
2	983783.023485627\\
3	983781.909278624\\
4	983780.76542737\\
5	983779.556847025\\
6	983778.257717261\\
7	983776.847487018\\
8	983775.308245556\\
9	983773.623068632\\
10	983771.774986174\\
11	983769.746318778\\
12	983767.518223245\\
13	983765.070351849\\
14	983762.380569837\\
15	983759.42469881\\
16	983756.176266809\\
17	983752.606253255\\
18	983748.682820979\\
19	983744.371029843\\
20	983739.632527662\\
21	983734.425214798\\
22	983728.70287909\\
23	983722.414797912\\
24	983715.505304104\\
25	983707.913312493\\
26	983699.571803527\\
27	983690.40726046\\
28	983680.339056318\\
29	983669.278786734\\
30	983657.129544611\\
31	983643.785132405\\
32	983629.129207786\\
33	983613.034358352\\
34	983595.361101145\\
35	983575.956802793\\
36	983554.654516383\\
37	983531.2717315\\
38	983505.609034454\\
39	983477.448676443\\
40	983446.553048471\\
41	983412.663063139\\
42	983375.496445196\\
43	983334.745934862\\
44	983290.077410699\\
45	983241.12794206\\
46	983187.503785267\\
47	983128.778342466\\
48	983064.490107939\\
49	982994.140633484\\
50	982917.192552434\\
51	982833.067711126\\
52	982741.145467132\\
53	982640.761225429\\
54	982531.205296857\\
55	982411.72217759\\
56	982281.510363753\\
57	982139.722831353\\
58	981985.468327959\\
59	981817.813638172\\
60	981635.786999115\\
61	981438.3828535\\
62	981224.568134846\\
63	980993.29028022\\
64	980747.981849749\\
65	980492.189966108\\
66	980228.273961777\\
67	979957.400382687\\
68	979680.482152174\\
69	979397.618211536\\
70	979109.361865037\\
71	978816.715967409\\
72	978520.772380803\\
73	978222.664991339\\
74	929819.989385937\\
75	874344.724899122\\
76	818887.035887545\\
77	763448.028750405\\
78	708028.656943608\\
79	652629.544805287\\
80	597250.996662698\\
81	541893.019281155\\
82	486555.344569724\\
83	431237.453963921\\
84	375938.605709311\\
85	320657.865164995\\
86	265394.137514242\\
87	210146.201887666\\
88	154912.745760829\\
89	99692.3985003648\\
90	44483.7630450699\\
91	-7.31816322966915\\
92	-7.31704800863728\\
93	-7.31602185544571\\
94	-7.31509377239636\\
95	-7.3142661302596\\
96	-7.31353728129609\\
97	-7.31290292809672\\
98	-7.31235697240584\\
99	-7.31189213622592\\
100	-7.31150046310892\\
101	-7.3111737336268\\
102	-7.31090380311996\\
103	-7.31068286375556\\
104	-7.31050363413484\\
105	-7.31035948272128\\
106	-7.31024449401688\\
107	-7.31015348796402\\
108	-7.31008200343685\\
109	-7.31002650969596\\
110	-7.30998509341952\\
111	-7.30995579650446\\
112	-7.30993629550013\\
113	-7.30992414611566\\
114	-7.30991709808291\\
115	-7.30991331240067\\
116	-7.3099114418606\\
117	-7.30991059809199\\
118	-7.30991025378926\\
119	-7.30991012811098\\
120	-7.30991008764375\\
121	-7.30991007461357\\
122	-7.30991007041793\\
123	-7.30991006906696\\
124	-7.30991006863195\\
125	-7.30991006849188\\
126	-7.30991006844678\\
127	-7.30991006843226\\
128	-7.30991006842758\\
129	-7.30991006842608\\
130	-7.30991006842559\\
131	-7.30991006842544\\
132	-7.30991006842539\\
133	-7.30991006842537\\
134	-7.30991006842537\\
135	-7.30991006842536\\
136	-7.30991006842536\\
137	-7.30991006842536\\
138	-7.30991006842536\\
139	-7.30991006842536\\
140	-7.30991006842536\\
141	-7.30991006842536\\
};
\addplot [color=mycolor4, line width=2.0pt, forget plot]
  table[row sep=crcr]{%
1	384802.829089865\\
2	384800.972397342\\
3	384798.818823749\\
4	384796.517541259\\
5	384794.076555305\\
6	384791.46606567\\
7	384788.648804423\\
8	384785.587690053\\
9	384782.246722872\\
10	384778.590145894\\
11	384774.581362579\\
12	384770.181981572\\
13	384765.351027502\\
14	384760.044276442\\
15	384754.213667157\\
16	384747.806748983\\
17	384740.766137903\\
18	384733.028960518\\
19	384724.526271146\\
20	384715.182430756\\
21	384704.91443859\\
22	384693.631208598\\
23	384681.232783445\\
24	384667.609479182\\
25	384652.640953732\\
26	384636.195192244\\
27	384618.127402216\\
28	384598.278811047\\
29	384576.47535847\\
30	384552.526276079\\
31	384526.222546029\\
32	384497.335230855\\
33	384465.613666422\\
34	384430.783510139\\
35	384392.544636933\\
36	384350.568876091\\
37	384304.497582932\\
38	384253.93904056\\
39	384198.46568865\\
40	384137.611178493\\
41	384070.867256428\\
42	383997.680481524\\
43	383917.448788002\\
44	383829.517908621\\
45	383733.177682233\\
46	383627.658277205\\
47	383512.126372463\\
48	383385.681349934\\
49	383247.351566175\\
50	383096.090787269\\
51	382930.774889761\\
52	382750.198951581\\
53	382553.07488056\\
54	382338.029754211\\
55	382103.60507253\\
56	381848.25715528\\
57	381570.358945583\\
58	381268.203511641\\
59	380940.009566259\\
60	380583.929347586\\
61	380198.059221277\\
62	379780.453370947\\
63	379329.140936356\\
64	378852.994263984\\
65	378365.367848106\\
66	377879.724716303\\
67	339705.703008444\\
68	284088.570287575\\
69	228545.891909977\\
70	173076.363368675\\
71	117678.349517435\\
72	62349.6253718041\\
73	7087.44142730308\\
74	-7.86941506211133\\
75	-7.86094352338231\\
76	-7.8525656876457\\
77	-7.84427240154382\\
78	-7.8361016937026\\
79	-7.82809306542723\\
80	-7.82028284202391\\
81	-7.8127037129082\\
82	-7.80538456736236\\
83	-7.79835031918997\\
84	-7.79162175551249\\
85	-7.78521544913104\\
86	-7.77914375421632\\
87	-7.77341489062284\\
88	-7.76803311285821\\
89	-7.76299895395265\\
90	-7.75830953104675\\
91	-7.75395889778565\\
92	-7.74993842815692\\
93	-7.74624526222614\\
94	-7.74290239840924\\
95	-7.73991547609565\\
96	-7.73727834749294\\
97	-7.73497639233397\\
98	-7.7329888790416\\
99	-7.73129088897919\\
100	-7.72985497416504\\
101	-7.72865258239223\\
102	-7.72765524244196\\
103	-7.72683550055785\\
104	-7.72616761144266\\
105	-7.72562800094529\\
106	-7.72519552849077\\
107	-7.72485158372091\\
108	-7.7245800539267\\
109	-7.72436828881092\\
110	-7.72420999324442\\
111	-7.72409795657661\\
112	-7.72402336638251\\
113	-7.72397689234223\\
114	-7.72394993141071\\
115	-7.72393544990398\\
116	-7.72392829447819\\
117	-7.72392506681289\\
118	-7.72392374976874\\
119	-7.72392326902616\\
120	-7.72392311423541\\
121	-7.72392306439443\\
122	-7.72392304834603\\
123	-7.72392304317856\\
124	-7.72392304151467\\
125	-7.72392304097891\\
126	-7.7239230408064\\
127	-7.72392304075085\\
128	-7.72392304073297\\
129	-7.72392304072721\\
130	-7.72392304072536\\
131	-7.72392304072476\\
132	-7.72392304072457\\
133	-7.7239230407245\\
134	-7.72392304072449\\
135	-7.72392304072448\\
136	-7.72392304072448\\
137	-7.72392304072448\\
138	-7.72392304072448\\
139	-7.72392304072448\\
140	-7.72392304072448\\
141	-7.72392304072448\\
};
\addplot [color=mycolor5, line width=2.0pt, forget plot]
  table[row sep=crcr]{%
1	203033.506306142\\
2	203031.847499558\\
3	203030.016281485\\
4	203028.080497469\\
5	203026.038662809\\
6	203023.865270194\\
7	203021.528986979\\
8	203018.998252081\\
9	203016.242259281\\
10	203013.230499123\\
11	203009.931973184\\
12	203006.314425502\\
13	203002.343669331\\
14	202997.983003483\\
15	202993.192691894\\
16	202987.929479424\\
17	202982.146121576\\
18	202975.790910739\\
19	202968.807185581\\
20	202961.132813118\\
21	202952.699634934\\
22	202943.432870321\\
23	202933.250469891\\
24	202922.062413597\\
25	202909.769947357\\
26	202896.264752467\\
27	202881.428041965\\
28	202865.129577968\\
29	202847.226603908\\
30	202827.56268546\\
31	202805.966453865\\
32	202782.250245363\\
33	202756.208630506\\
34	202727.616827347\\
35	202696.228992873\\
36	202661.776387624\\
37	202623.965409302\\
38	202582.475492339\\
39	202536.95687195\\
40	202487.028213236\\
41	202432.274108527\\
42	202372.242449391\\
43	202306.441683852\\
44	202234.337974333\\
45	202155.352277903\\
46	202068.857377718\\
47	201974.174903194\\
48	201870.572386691\\
49	201757.260416385\\
50	201633.389958757\\
51	201498.049939787\\
52	201350.265191577\\
53	201188.994890666\\
54	201013.131635627\\
55	200821.501334219\\
56	200612.86409408\\
57	200385.916334657\\
58	200139.294360874\\
59	199871.579659393\\
60	199581.30619426\\
61	199266.969988031\\
62	198927.041274139\\
63	183764.104820067\\
64	128232.821070933\\
65	72812.7878723457\\
66	17497.0123937901\\
67	-28.7714570688404\\
68	-28.7260490798381\\
69	-28.6816299376207\\
70	-28.6377868738725\\
71	-28.5946000185351\\
72	-28.5521809554908\\
73	-28.5106390164638\\
74	-28.4700778019116\\
75	-28.4305934285599\\
76	-28.3915554444541\\
77	-28.3529874275776\\
78	-28.3150794586928\\
79	-28.2780176839289\\
80	-28.2419693527586\\
81	-28.207082419158\\
82	-28.1734855331108\\
83	-28.1412876700123\\
84	-28.1105777448458\\
85	-28.0814244634667\\
86	-28.0538765311283\\
87	-28.0279632532468\\
88	-28.0036955122299\\
89	-27.9810670739003\\
90	-27.9600561600725\\
91	-27.94062721593\\
92	-27.9227327994308\\
93	-27.9063496807182\\
94	-27.8915648328992\\
95	-27.8783926043254\\
96	-27.8667969819633\\
97	-27.8567054870716\\
98	-27.848019445069\\
99	-27.8406225162847\\
100	-27.8343880846144\\
101	-27.829185594278\\
102	-27.8248857835016\\
103	-27.8213647757567\\
104	-27.8185070510125\\
105	-27.8162073840981\\
106	-27.8143718856614\\
107	-27.8129183082225\\
108	-27.8117757870213\\
109	-27.8108883049459\\
110	-27.8102258322044\\
111	-27.8097571938878\\
112	-27.8094452545349\\
113	-27.8092509170663\\
114	-27.8091381830171\\
115	-27.8090776332052\\
116	-27.8090477165608\\
117	-27.8090342225647\\
118	-27.8090287167645\\
119	-27.8090267072553\\
120	-27.8090260603181\\
121	-27.8090258520277\\
122	-27.8090257849629\\
123	-27.8090257633692\\
124	-27.8090257564163\\
125	-27.8090257541776\\
126	-27.8090257534567\\
127	-27.8090257532246\\
128	-27.8090257531499\\
129	-27.8090257531258\\
130	-27.8090257531181\\
131	-27.8090257531156\\
132	-27.8090257531148\\
133	-27.8090257531145\\
134	-27.8090257531144\\
135	-27.8090257531144\\
136	-27.8090257531144\\
137	-27.8090257531144\\
138	-27.8090257531144\\
139	-27.8090257531144\\
140	-27.8090257531144\\
141	-27.8090257531144\\
};
\addplot [color=mycolor6, line width=2.0pt, forget plot]
  table[row sep=crcr]{%
1	99514.3855136335\\
2	99512.7730761402\\
3	99511.0192276974\\
4	99509.1475744595\\
5	99507.1581711548\\
6	99505.0347677528\\
7	99502.7530011467\\
8	99500.2851476226\\
9	99497.6022066046\\
10	99494.6744795985\\
11	99491.4714279749\\
12	99487.9612375715\\
13	99484.1102949229\\
14	99479.8826594836\\
15	99475.2395574488\\
16	99470.1388969544\\
17	99464.5347947996\\
18	99458.377102492\\
19	99451.6109199752\\
20	99444.1760868441\\
21	99436.0066423135\\
22	99427.0302464047\\
23	99417.1675556773\\
24	99406.3315474109\\
25	99394.4267864822\\
26	99381.3486293714\\
27	99366.9823597983\\
28	99351.2022505128\\
29	99333.87054575\\
30	99314.8363588744\\
31	99293.9344797919\\
32	99270.9840868399\\
33	99245.7873581139\\
34	99218.1279775855\\
35	99187.769531951\\
36	99154.45379498\\
37	99117.8988972545\\
38	99077.7973806708\\
39	99033.8141389948\\
40	98985.584248193\\
41	98932.7106933104\\
42	98874.7620024355\\
43	98811.2698028964\\
44	98741.7263204084\\
45	98665.5818485642\\
46	98582.2422239751\\
47	98491.0663516659\\
48	98391.3638361262\\
49	98282.3927858354\\
50	98163.3578731762\\
51	98033.4087474469\\
52	97891.6389161039\\
53	97737.0852282407\\
54	97568.7281142971\\
55	97385.4927565749\\
56	97186.2513855491\\
57	96969.8269161364\\
58	96734.9981546325\\
59	96480.5068191435\\
60	96205.0666218251\\
61	95907.37465747\\
62	40395.1253269382\\
63	-146.262708686438\\
64	-145.801109592756\\
65	-145.426498247357\\
66	-145.079982203848\\
67	-144.748770007528\\
68	-144.42983334187\\
69	-144.112399466173\\
70	-143.791805892655\\
71	-143.468715825862\\
72	-143.144249653445\\
73	-142.819582863036\\
74	-142.495895876126\\
75	-142.174352797879\\
76	-141.854512734905\\
77	-141.537594890171\\
78	-141.225186126732\\
79	-140.91879352948\\
80	-140.61979731687\\
81	-140.329442757679\\
82	-140.048833352165\\
83	-139.778923662088\\
84	-139.520513417548\\
85	-139.274244133383\\
86	-139.040598855675\\
87	-138.819905197278\\
88	-138.612341507122\\
89	-138.417945814265\\
90	-138.236627064519\\
91	-138.068178104711\\
92	-137.912289853049\\
93	-137.769022452567\\
94	-137.639881726321\\
95	-137.525217860885\\
96	-137.424731866865\\
97	-137.337726547134\\
98	-137.263250852577\\
99	-137.200197903511\\
100	-137.147379735753\\
101	-137.103586865706\\
102	-137.067635288239\\
103	-137.038401926315\\
104	-137.014849410799\\
105	-136.996041350205\\
106	-136.981149540109\\
107	-136.969454720739\\
108	-136.960342498131\\
109	-136.953321438138\\
110	-136.948095394571\\
111	-136.944402359202\\
112	-136.941945243107\\
113	-136.940414801491\\
114	-136.939527129587\\
115	-136.939050418472\\
116	-136.938814915204\\
117	-136.938708707751\\
118	-136.938665382004\\
119	-136.938649573297\\
120	-136.938644485851\\
121	-136.938642848246\\
122	-136.938642321043\\
123	-136.938642151305\\
124	-136.938642096654\\
125	-136.938642079057\\
126	-136.938642073391\\
127	-136.938642071567\\
128	-136.93864207098\\
129	-136.93864207079\\
130	-136.93864207073\\
131	-136.93864207071\\
132	-136.938642070704\\
133	-136.938642070702\\
134	-136.938642070701\\
135	-136.938642070701\\
136	-136.938642070701\\
137	-136.938642070701\\
138	-136.938642070701\\
139	-136.938642070701\\
140	-136.938642070701\\
141	-136.938642070701\\
};

\addplot[area legend, draw=none, fill=green, fill opacity=0.1, line width=2.0pt, forget plot]
table[row sep=crcr] {%
x	y\\
118	0\\
150	0\\
150	1200000\\
118	1200000\\
}--cycle;
\addplot [color=green, dashed, line width=2.0pt, forget plot]
  table[row sep=crcr]{%
118	0\\
118	1200000\\
};
\end{axis}
\end{tikzpicture}%

%% file: Figures/NS_state_2.tex
%
%
\definecolor{mycolor1}{rgb}{0.00000,0.44700,0.74100}%
\definecolor{mycolor2}{rgb}{0.85000,0.32500,0.09800}%
\definecolor{mycolor3}{rgb}{0.92900,0.69400,0.12500}%
\definecolor{mycolor4}{rgb}{0.49400,0.18400,0.55600}%
\definecolor{mycolor5}{rgb}{0.46600,0.67400,0.18800}%
\definecolor{mycolor6}{rgb}{0.30100,0.74500,0.93300}%
\begin{tikzpicture}

\begin{axis}[%
width=7cm,
height=3.5cm,
at={(0cm,0cm)},
scale only axis,
xmin=61,
xmax=140,
ymin=0,
ymax=4000,
xmajorgrids,
ymajorgrids,
axis background/.style={fill=white},
xmajorgrids,
ymajorgrids,
legend pos=north east,
legend style={legend cell align=left, align=left, draw=white!15!black}
]

\addplot [color=mycolor1, line width=2.0pt, mark options={solid, mycolor1}]
  table[row sep=crcr]{%
1	4.6595088243\\
2	7.25704562732303\\
3	9.23057250575148\\
4	10.8871132834846\\
5	12.410331803941\\
6	13.910433025333\\
7	15.4559850395323\\
8	17.0925338455949\\
9	18.8531833362379\\
10	20.7645799987799\\
11	22.8503353158311\\
12	25.1330330798772\\
13	27.6354565648139\\
14	30.3813851687548\\
15	33.3961538701621\\
16	36.7070836882444\\
17	40.3438448207566\\
18	44.3387885132268\\
19	48.7272694325119\\
20	53.5479722374286\\
21	58.843251390541\\
22	64.6594905314611\\
23	71.0474861021338\\
24	78.0628589110542\\
25	85.7664966763911\\
26	94.2250301363437\\
27	103.511344957687\\
28	113.705131342684\\
29	124.893472880895\\
30	137.171475775716\\
31	150.642939059574\\
32	165.421065760796\\
33	181.629214161851\\
34	199.401687251714\\
35	218.884557178936\\
36	240.236519904921\\
37	263.629773281045\\
38	289.250909363816\\
39	317.301808867322\\
40	348.00052215358\\
41	381.582116994941\\
42	418.299468420906\\
43	458.423960196214\\
44	502.246060783532\\
45	550.075728948676\\
46	602.242595414716\\
47	659.095857141163\\
48	721.003809921073\\
49	788.352933145968\\
50	861.546427973644\\
51	941.002097059782\\
52	1027.14944095545\\
53	1120.42583390773\\
54	1221.27163105874\\
55	1330.12405114667\\
56	1447.40967533991\\
57	1573.53540573172\\
58	1708.87773862994\\
59	1853.77023081836\\
60	2008.48907448498\\
61	2173.23675173455\\
62	2348.12381572611\\
63	2533.14894533317\\
64	2718.30966791458\\
65	2894.88605205364\\
66	3059.18272247767\\
67	3209.15798941828\\
68	3343.58757723721\\
69	3461.94476929507\\
70	3563.86497765878\\
71	3648.95064872982\\
72	3716.83408483396\\
73	3767.26320316939\\
74	3800.15579851952\\
75	3815.62885297036\\
76	3814.7075004887\\
77	3797.00619220726\\
78	3762.18357825176\\
79	3710.37764547338\\
80	3642.17916245086\\
81	3558.54774551288\\
82	3460.73207304629\\
83	3350.20057614526\\
84	3228.58019445719\\
85	3097.60077980546\\
86	2959.04403722056\\
87	2814.69692385678\\
88	2666.30996473061\\
89	2515.56109414092\\
90	2364.02553720877\\
91	2213.15202420575\\
92	2064.24536298947\\
93	1916.7474899041\\
94	1764.83377696153\\
95	1608.87109245649\\
96	1451.49428843147\\
97	1295.91602284219\\
98	1145.19230122715\\
99	1001.90781338632\\
100	868.056704851594\\
101	745.022723569764\\
102	633.614462430706\\
103	534.132173258688\\
104	446.451335296963\\
105	370.112466075639\\
106	304.409460851591\\
107	248.471010441887\\
108	199.577135115928\\
109	150.680158375991\\
110	107.025188440828\\
111	71.3486496001655\\
112	44.478106174458\\
113	25.8090114402987\\
114	13.8643302091198\\
115	6.85091008392979\\
116	3.09044430891534\\
117	1.26110329698239\\
118	0.460344572490373\\
119	0.148231890500189\\
120	0.0477308161961883\\
121	0.0153693367313214\\
122	0.00494892247414634\\
123	0.00159354934906025\\
124	0.000513120932734478\\
125	0.000165224043416223\\
126	5.32017555740423e-05\\
127	1.71308037934233e-05\\
128	5.51605239476668e-06\\
129	1.7761417864848e-06\\
130	5.71906663525096e-07\\
131	1.84149495559414e-07\\
132	5.92943378275439e-08\\
133	1.9092049135848e-08\\
134	6.14734559709217e-09\\
135	1.97932627223417e-09\\
136	6.3729490109012e-10\\
137	2.05189461558217e-10\\
138	6.60631101827879e-11\\
139	2.12691220334283e-11\\
140	6.84736050742336e-12\\
141	2.20432431731452e-12\\
};
\addlegendentry{0-24}

\addplot [color=mycolor2, line width=2.0pt, mark options={solid, mycolor2}]
  table[row sep=crcr]{%
1	4.3296088874\\
2	3.96022390797655\\
3	3.89949949462591\\
4	4.03260429886922\\
5	4.29150042493362\\
6	4.63848842450323\\
7	5.05429727656978\\
8	5.53035563963166\\
9	6.06421109384361\\
10	6.6569633628338\\
11	7.31187817389316\\
12	8.03366176631637\\
13	8.82809666452643\\
14	9.70187368772757\\
15	10.6625314356469\\
16	11.7184561807865\\
17	12.8789174266314\\
18	14.1541262340792\\
19	15.5553096856015\\
20	17.0947981864076\\
21	18.7861240879687\\
22	20.644131091658\\
23	22.6850944297544\\
24	24.9268521261098\\
25	27.388947812695\\
26	30.092785674487\\
27	33.0617981407625\\
28	36.3216269485436\\
29	39.9003181783317\\
30	43.8285318031401\\
31	48.1397661957305\\
32	52.8705978999381\\
33	58.060936781986\\
34	63.7542964266499\\
35	69.998079318849\\
36	76.8438759393414\\
37	84.3477763869278\\
38	92.5706924996921\\
39	101.578687662451\\
40	111.44331053226\\
41	122.24192776142\\
42	134.058049418619\\
43	146.981639172394\\
44	161.109399375062\\
45	176.545018937896\\
46	193.39936929064\\
47	211.790630746314\\
48	231.844328230881\\
49	253.6932515857\\
50	277.477231528359\\
51	303.342737912812\\
52	331.442262249623\\
53	361.933441669627\\
54	394.977876842139\\
55	430.739592075495\\
56	469.383082313906\\
57	511.070889494892\\
58	555.960650368529\\
59	604.201560163545\\
60	655.930202314117\\
61	711.265704857928\\
62	770.304200194665\\
63	833.112587799084\\
64	889.997726369785\\
65	939.549101334924\\
66	985.034731917636\\
67	1026.51001981295\\
68	1063.83843672069\\
69	1097.4649670605\\
70	1127.23223799456\\
71	1152.70209143317\\
72	1173.55396212255\\
73	1189.57202129322\\
74	1200.62654913274\\
75	1206.66471499322\\
76	1207.84727645942\\
77	1203.48665803095\\
78	1193.30847111174\\
79	1177.40559973778\\
80	1156.04598484531\\
81	1129.59286127082\\
82	1098.47667582355\\
83	1063.1820992903\\
84	1024.23723042684\\
85	982.201707247966\\
86	937.653620496569\\
87	891.175967904537\\
88	843.343470422333\\
89	794.710435226312\\
90	745.80016797729\\
91	697.096259347708\\
92	643.517534179478\\
93	567.930847151909\\
94	487.120376830005\\
95	408.726552269882\\
96	336.336636458845\\
97	271.609960395855\\
98	215.182613564809\\
99	167.079634991492\\
100	126.931590845187\\
101	94.1121554077817\\
102	67.8397599523775\\
103	47.2577156948946\\
104	31.4970714466605\\
105	19.7236430219275\\
106	11.1704516200004\\
107	5.15730981118458\\
108	1.41900623798125\\
109	0.390394482858723\\
110	0.107377266627494\\
111	0.0295151720361097\\
112	0.00810095723219599\\
113	0.00221633519176265\\
114	0.000602467383299615\\
115	0.000161811740770117\\
116	4.25659816188567e-05\\
117	1.08291951192013e-05\\
118	2.61934712342784e-06\\
119	5.89180196530904e-07\\
120	1.19757625471263e-07\\
121	1.93124352859557e-08\\
122	9.21972490440989e-10\\
123	-1.16052520659308e-09\\
124	-7.74700334246518e-10\\
125	-3.59793221718935e-10\\
126	-1.4621354920358e-10\\
127	-5.54342956018017e-11\\
128	-2.01482606514789e-11\\
129	-7.12011706943747e-12\\
130	-2.4666605709471e-12\\
131	-8.42129030328846e-13\\
132	-2.84332363224909e-13\\
133	-9.51765797557371e-14\\
134	-3.16427930382884e-14\\
135	-1.0462788805965e-14\\
136	-3.44426560612037e-15\\
137	-1.12972067848164e-15\\
138	-3.69441444352877e-16\\
139	-1.20514445851241e-16\\
140	-3.92308034682983e-17\\
141	-1.27482774889161e-17\\
};
\addlegendentry{24-45}

\addplot [color=mycolor3, line width=2.0pt, mark options={solid, mycolor3}]
  table[row sep=crcr]{%
1	4.8417769521\\
2	3.10050284714218\\
3	2.3730137105326\\
4	2.10729677210785\\
5	2.06414456909919\\
6	2.13717415725919\\
7	2.27792470886025\\
8	2.46408076791808\\
9	2.68559574351201\\
10	2.93843653988166\\
11	3.22167504867669\\
12	3.5360982541545\\
13	3.88353019648157\\
14	4.26649846715358\\
15	4.68807291507581\\
16	5.15179346169021\\
17	5.66164593878754\\
18	6.22206518560832\\
19	6.83795472076256\\
20	7.51471742356909\\
21	8.25829432131283\\
22	9.07520999675987\\
23	9.97262390449867\\
24	10.9583873181901\\
25	12.0411058790925\\
26	13.2302078603086\\
27	14.5360183437703\\
28	15.9698395505314\\
29	17.544037581053\\
30	19.2721358162863\\
31	21.1689152042642\\
32	23.2505216100989\\
33	25.5345803375343\\
34	28.0403178340428\\
35	30.7886904642024\\
36	33.8025200719628\\
37	37.1066358444767\\
38	40.7280217303485\\
39	44.6959683441148\\
40	49.0422278959669\\
41	53.8011702093673\\
42	59.0099373163549\\
43	64.7085934370322\\
44	70.9402663413825\\
45	77.7512751433425\\
46	85.1912384747057\\
47	93.3131557172517\\
48	102.17345252588\\
49	111.831980248838\\
50	122.351957046345\\
51	133.799836539859\\
52	146.245087719713\\
53	159.759867647665\\
54	174.41856628775\\
55	190.297200690996\\
56	207.472633894585\\
57	226.021592468926\\
58	246.019455907087\\
59	267.538791310761\\
60	290.647608459805\\
61	315.407313793891\\
62	341.870347572331\\
63	370.077497030387\\
64	395.560198765489\\
65	416.389649807208\\
66	432.970544757251\\
67	446.659976510588\\
68	458.26254848772\\
69	468.91891238262\\
70	478.637810752773\\
71	486.973242616871\\
72	493.655123789712\\
73	498.531775901942\\
74	501.521343913035\\
75	487.882565096159\\
76	464.769734435626\\
77	436.70403173317\\
78	405.674003000374\\
79	372.816117328384\\
80	338.911792977278\\
81	304.575848348354\\
82	270.33275646546\\
83	236.645927357851\\
84	203.926695829586\\
85	172.534950613585\\
86	142.776982664389\\
87	114.903199014534\\
88	89.1069201788487\\
89	65.5247433740299\\
90	44.2385550185843\\
91	25.279052966697\\
92	10.2622010830519\\
93	4.16543593026655\\
94	1.69024233195955\\
95	0.685412135370207\\
96	0.277549041953904\\
97	0.112050786201266\\
98	0.0449467557022209\\
99	0.0177835836173756\\
100	0.00682847646397782\\
101	0.00244563758073501\\
102	0.000723000363203692\\
103	7.25993779467981e-05\\
104	-0.000149754213539224\\
105	-0.00020495174747545\\
106	-0.0001981992825114\\
107	-0.000171475124632302\\
108	-0.000141103568867853\\
109	-0.000112781905944145\\
110	-8.72058230508395e-05\\
111	-6.47025509716883e-05\\
112	-4.57702932654308e-05\\
113	-3.07321611894327e-05\\
114	-1.95253154867588e-05\\
115	-1.17129763942876e-05\\
116	-6.62601811800945e-06\\
117	-3.53393742379132e-06\\
118	-1.77908422662191e-06\\
119	-8.47987944606818e-07\\
120	-3.8475103006881e-07\\
121	-1.69239410431824e-07\\
122	-7.29069820315174e-08\\
123	-3.09512678477651e-08\\
124	-1.30012452590378e-08\\
125	-5.41858478338824e-09\\
126	-2.245050732406e-09\\
127	-9.26014441044807e-10\\
128	-3.80638516999983e-10\\
129	-1.5604511167453e-10\\
130	-6.3839203361298e-11\\
131	-2.60748492245185e-11\\
132	-1.06366630021684e-11\\
133	-4.33467328068411e-12\\
134	-1.7650898432513e-12\\
135	-7.18304867771904e-13\\
136	-2.92172206067556e-13\\
137	-1.18795929126299e-13\\
138	-4.82871775861786e-14\\
139	-1.96226350818678e-14\\
140	-7.97259871921315e-15\\
141	-3.23874540881022e-15\\
};
\addlegendentry{45-64}

\addplot [color=mycolor4, line width=2.0pt, mark options={solid, mycolor4}]
  table[row sep=crcr]{%
1	0.1709101349\\
2	1.87582768989844\\
3	2.36359200427599\\
4	2.56591118709034\\
5	2.72826638743079\\
6	2.91594742008822\\
7	3.14373187537152\\
8	3.41308783615855\\
9	3.72309784759522\\
10	4.07341648887533\\
11	4.46484463839883\\
12	4.89926678179909\\
13	5.37947791961844\\
14	5.90903953918148\\
15	6.49218769372962\\
16	7.13378608028685\\
17	7.83931259621999\\
18	8.61486992904243\\
19	9.46721362184796\\
20	10.4037933786278\\
21	11.4328049944354\\
22	12.5632513683921\\
23	13.8050117538086\\
24	15.1689188483258\\
25	16.6668436144795\\
26	18.3117879025588\\
27	20.1179850571237\\
28	22.101008746521\\
29	24.2778902728701\\
30	26.6672446041426\\
31	29.2894053220658\\
32	32.1665685986642\\
33	35.3229461971487\\
34	38.7849273344956\\
35	42.5812490366565\\
36	46.7431743545237\\
37	51.30467747951\\
38	56.302634390243\\
39	61.7770171631173\\
40	67.7710894743917\\
41	74.3316000937311\\
42	81.5089703008872\\
43	89.357470129972\\
44	97.9353771406443\\
45	107.305110014271\\
46	117.533327659531\\
47	128.690982673636\\
48	140.853315936287\\
49	154.09977681734\\
50	168.513850973232\\
51	184.18277502818\\
52	201.197114645812\\
53	219.650179690573\\
54	239.637247492694\\
55	261.254562854263\\
56	284.598081616699\\
57	309.761923672349\\
58	336.836501644304\\
59	365.906293560417\\
60	397.047232271967\\
61	430.323691744562\\
62	465.785061351877\\
63	503.461914604186\\
64	532.514467802375\\
65	547.246946675751\\
66	546.913116771375\\
67	541.266710890486\\
68	486.733156270109\\
69	406.173214617472\\
70	324.003855336784\\
71	243.289451253577\\
72	164.962928984428\\
73	89.6532593884011\\
74	17.9070295038582\\
75	1.99640657928803\\
76	0.215140635856772\\
77	0.0157939447273566\\
78	-0.00640241153198442\\
79	-0.00872544480712123\\
80	-0.00878712766095071\\
81	-0.00856293941008936\\
82	-0.00827785563483224\\
83	-0.00796104016894593\\
84	-0.00761988488896591\\
85	-0.00725943171261681\\
86	-0.00688446376545026\\
87	-0.00649965088427829\\
88	-0.00610948125279775\\
89	-0.00571817884719913\\
90	-0.00532963247517456\\
91	-0.00494734102462779\\
92	-0.0045743758899818\\
93	-0.00420531486778973\\
94	-0.00381369253564634\\
95	-0.003413904840866\\
96	-0.00301935074127389\\
97	-0.00264000286423193\\
98	-0.00228308905902374\\
99	-0.00195360561801674\\
100	-0.00165464127316238\\
101	-0.00138764606531857\\
102	-0.00115270135353088\\
103	-0.000948798784347901\\
104	-0.000774117003002644\\
105	-0.000626280943734612\\
106	-0.000502591117110872\\
107	-0.000400215070458406\\
108	-0.000316338032058016\\
109	-0.000247182447189044\\
110	-0.00018597021121946\\
111	-0.000132857966334203\\
112	-8.94650246510441e-05\\
113	-5.64905798848949e-05\\
114	-3.32856392261313e-05\\
115	-1.82081800861233e-05\\
116	-9.19402084730677e-06\\
117	-4.25703152001732e-06\\
118	-1.79366308363192e-06\\
119	-6.81561813395638e-07\\
120	-2.31098674850928e-07\\
121	-7.57148773809901e-08\\
122	-2.45254708999302e-08\\
123	-7.9133554351079e-09\\
124	-2.54987223255809e-09\\
125	-8.21243922587876e-10\\
126	-2.64456185280508e-10\\
127	-8.51546492015109e-11\\
128	-2.74189755276611e-11\\
129	-8.82849926749288e-12\\
130	-2.84260225194849e-12\\
131	-9.15246476951508e-13\\
132	-2.94680369825918e-13\\
133	-9.48753324554926e-14\\
134	-3.05450984331835e-14\\
135	-9.83359331221644e-15\\
136	-3.1656353115184e-15\\
137	-1.01901757189562e-15\\
138	-3.27995081773721e-16\\
139	-1.05562254762257e-16\\
140	-3.39698815193003e-17\\
141	-1.09297132879195e-17\\
};
\addlegendentry{65-74}

\addplot [color=mycolor5, line width=2.0pt, mark options={solid, mycolor5}]
  table[row sep=crcr]{%
1	1.4936938584\\
2	1.77720424756331\\
3	1.97208819133525\\
4	2.09210161269225\\
5	2.20766512192823\\
6	2.34838321957669\\
7	2.52242784280443\\
8	2.73067517455431\\
9	2.97243980366291\\
10	3.24737063818024\\
11	3.5559288165295\\
12	3.89940842664589\\
13	4.27984283233873\\
14	4.699907646182\\
15	5.16284983107652\\
16	5.67244583062366\\
17	6.23298434241356\\
18	6.84926842428813\\
19	7.52663251013344\\
20	8.27097108868828\\
21	9.08877681832409\\
22	9.98718662761832\\
23	10.9740349108023\\
24	12.0579133172633\\
25	13.2482368999631\\
26	14.555316563485\\
27	15.9904378630369\\
28	17.5659462676596\\
29	19.2953390245193\\
30	21.1933637524246\\
31	23.2761238541208\\
32	25.5611907684891\\
33	28.0677229834616\\
34	30.8165915945138\\
35	33.8305120167778\\
36	37.134181234456\\
37	40.7544196913252\\
38	44.720316581341\\
39	49.0633768780799\\
40	53.8176679341207\\
41	59.0199628735419\\
42	64.7098772787124\\
43	70.9299948222382\\
44	77.7259765022092\\
45	85.1466469906876\\
46	93.244050290844\\
47	102.073465410331\\
48	111.693371096545\\
49	122.165346851506\\
50	133.553895470843\\
51	145.926170270688\\
52	159.351588038229\\
53	173.901306655308\\
54	189.647544424266\\
55	206.662716539351\\
56	225.018363113808\\
57	244.783842968599\\
58	266.024768351151\\
59	288.801158282396\\
60	313.165292785111\\
61	339.159257326972\\
62	366.8121769348\\
63	396.13715309379\\
64	371.683565842721\\
65	258.494701376881\\
66	145.265064498161\\
67	40.2858943383112\\
68	3.14785396152633\\
69	0.205095543909438\\
70	-0.0275861627323931\\
71	-0.0453734728932767\\
72	-0.0460155919766466\\
73	-0.0451893655371029\\
74	-0.0441431502125044\\
75	-0.0429833807403971\\
76	-0.0424450623518582\\
77	-0.0419324253082961\\
78	-0.0412317431346358\\
79	-0.0403300094318175\\
80	-0.0392450899042682\\
81	-0.0379976961738364\\
82	-0.0366087739397762\\
83	-0.0350996580517246\\
84	-0.0334921000288782\\
85	-0.0318080331332898\\
86	-0.0300691965077666\\
87	-0.0282967131426223\\
88	-0.0265106803605133\\
89	-0.0247298077609606\\
90	-0.0229711223688402\\
91	-0.0212497504625883\\
92	-0.0195787782522143\\
93	-0.0179350308311381\\
94	-0.0162064682764303\\
95	-0.0144568344972616\\
96	-0.0127415435408351\\
97	-0.0111014535099161\\
98	-0.00956599886263507\\
99	-0.00815517781140869\\
100	-0.00688085194795411\\
101	-0.00574790115760497\\
102	-0.00475541823814401\\
103	-0.00389794603475433\\
104	-0.00316669548851421\\
105	-0.00255067507437978\\
106	-0.00203767773042937\\
107	-0.00161509399129916\\
108	-0.00127054164785811\\
109	-0.000988191576004691\\
110	-0.000740801759984048\\
111	-0.000527357977934811\\
112	-0.000353740390050547\\
113	-0.000222376705258916\\
114	-0.000130360741703842\\
115	-7.08828578899487e-05\\
116	-3.55351750764062e-05\\
117	-1.63106922250265e-05\\
118	-6.79866742304007e-06\\
119	-2.54840563507313e-06\\
120	-8.48936662867433e-07\\
121	-2.75581380157027e-07\\
122	-8.89086728162316e-08\\
123	-2.86410837986258e-08\\
124	-9.22312917591194e-09\\
125	-2.96981480415895e-09\\
126	-9.56247511941767e-10\\
127	-3.07897890631434e-10\\
128	-9.9137691133251e-11\\
129	-3.19201899061487e-11\\
130	-1.0277443642559e-11\\
131	-3.30899198985465e-12\\
132	-1.0653562044946e-12\\
133	-3.42988382349877e-13\\
134	-1.10419451831924e-13\\
135	-3.55458202236456e-14\\
136	-1.14420063217751e-14\\
137	-3.68280597332551e-15\\
138	-1.18524686289871e-15\\
139	-3.81399279745086e-16\\
140	-1.22709032020218e-16\\
141	-3.94711004667069e-17\\
};
\addlegendentry{75-84}

\addplot [color=mycolor6, line width=2.0pt, mark options={solid, mycolor6}]
  table[row sep=crcr]{%
1	1.6144863665\\
2	1.91488375921671\\
3	2.11256899616501\\
4	2.26740672199893\\
5	2.41416297476468\\
6	2.57565533323225\\
7	2.76427135446697\\
8	2.98569224151827\\
9	3.24225913212804\\
10	3.53510851648303\\
11	3.8652934316825\\
12	4.23428667919582\\
13	4.64416346595059\\
14	5.09763962630998\\
15	5.59805718477238\\
16	6.14936027127337\\
17	6.75607908726744\\
18	7.42332758020236\\
19	8.15681532377509\\
20	8.96287225740088\\
21	9.84848456635727\\
22	10.8213401860683\\
23	11.8898827804771\\
24	13.0633734052453\\
25	14.3519593623189\\
26	15.7667499714428\\
27	17.3198991324327\\
28	19.0246946398069\\
29	20.8956542482165\\
30	22.9486284797096\\
31	25.200910115944\\
32	27.6713502308905\\
33	30.3804804907944\\
34	33.3506412744961\\
35	36.6061149431006\\
36	40.1732633061732\\
37	44.0806679833478\\
38	48.3592719352008\\
39	53.0425199239504\\
40	58.1664950503123\\
41	63.7700477842033\\
42	69.894913050048\\
43	76.5858099285775\\
44	83.8905173837548\\
45	91.8599181057806\\
46	100.548001073197\\
47	110.011811779509\\
48	120.311337252525\\
49	131.509311040888\\
50	143.670921292971\\
51	156.863402972371\\
52	171.155493235427\\
53	186.61672716962\\
54	203.316549632068\\
55	221.323218060548\\
56	240.70247113991\\
57	261.515939440555\\
58	283.819277011407\\
59	307.659997871502\\
60	333.075008905866\\
61	360.087841347813\\
62	388.705597302223\\
63	219.080020064214\\
64	40.5792764388159\\
65	7.22721849301416\\
66	1.00737909732136\\
67	-0.142497040392368\\
68	-0.345631036494566\\
69	-0.382181908606014\\
70	-0.392188783968245\\
71	-0.396559893312626\\
72	-0.398754851128675\\
73	-0.399366656733443\\
74	-0.398501464496706\\
75	-0.39619547694192\\
76	-0.394060474542489\\
77	-0.390738300306981\\
78	-0.385606866780462\\
79	-0.378629414737117\\
80	-0.369925924404229\\
81	-0.359653821694707\\
82	-0.347984366163532\\
83	-0.33509857884436\\
84	-0.321185202572998\\
85	-0.306437810340645\\
86	-0.291051133471953\\
87	-0.275217084763543\\
88	-0.259120879702289\\
89	-0.242937535096092\\
90	-0.226828920583288\\
91	-0.210941458640338\\
92	-0.19540450762271\\
93	-0.179873075766009\\
94	-0.162836854774896\\
95	-0.145168550833608\\
96	-0.127680826412039\\
97	-0.110924127584749\\
98	-0.0952554229523057\\
99	-0.080897415010493\\
100	-0.0679729077454926\\
101	-0.0565264257840368\\
102	-0.046540831586195\\
103	-0.0379519866331686\\
104	-0.0306621677753493\\
105	-0.0245520906103981\\
106	-0.0194912221943837\\
107	-0.0153461647728997\\
108	-0.0119870627602578\\
109	-0.00926663013027556\\
110	-0.00696198741805348\\
111	-0.00499724402823361\\
112	-0.00339326385517521\\
113	-0.00216611126548475\\
114	-0.00129345561134145\\
115	-0.000719017788494923\\
116	-0.000370198889344597\\
117	-0.000175557851451101\\
118	-7.62134917409845e-05\\
119	-3.00859944340828e-05\\
120	-1.07235406516105e-05\\
121	-3.64647578108546e-06\\
122	-1.21030724655504e-06\\
123	-3.96468444768285e-07\\
124	-1.28922787984073e-07\\
125	-4.17481273885174e-08\\
126	-1.34866408648872e-08\\
127	-4.35080782648042e-09\\
128	-1.40244995211135e-09\\
129	-4.51857087042445e-10\\
130	-1.4554400543615e-10\\
131	-4.68721198476541e-11\\
132	-1.50934548728695e-11\\
133	-4.85994107247039e-12\\
134	-1.5647636110533e-12\\
135	-5.03783997428172e-13\\
136	-1.62187470107796e-13\\
137	-5.22113117033205e-14\\
138	-1.68066564702918e-14\\
139	-5.40954158000872e-15\\
140	-1.74097593618667e-15\\
141	-5.60229918694676e-16\\
};
\addlegendentry{84+}

\addplot[area legend, draw=none, fill=green, fill opacity=0.1, line width=2.0pt, forget plot]
table[row sep=crcr] {%
x	y\\
118	0\\
150	0\\
150	4000\\
118	4000\\
}--cycle;

\addplot [color=green, dashed, line width=2.0pt, mark options={solid, green}]
  table[row sep=crcr]{%
118	0\\
118	4000\\
};
\addlegendentry{Erad.}
\end{axis}
\end{tikzpicture}%

%% file: Figures/NS_state_3.tex
%
%
\definecolor{mycolor1}{rgb}{0.00000,0.44700,0.74100}%
\definecolor{mycolor2}{rgb}{0.85000,0.32500,0.09800}%
\definecolor{mycolor3}{rgb}{0.92900,0.69400,0.12500}%
\definecolor{mycolor4}{rgb}{0.49400,0.18400,0.55600}%
\definecolor{mycolor5}{rgb}{0.46600,0.67400,0.18800}%
\definecolor{mycolor6}{rgb}{0.30100,0.74500,0.93300}%
\begin{tikzpicture}

\begin{axis}[%
width=7cm,
height=3.5cm,
at={(0cm,0cm)},
scale only axis,
xmin=61,
xmax=140,
ymin=0,
ymax=1000000,
xmajorgrids,
ymajorgrids,
axis background/.style={fill=white},
axis x line*=bottom,
axis y line*=left
]
\addplot [color=mycolor1, line width=2.0pt, forget plot]
  table[row sep=crcr]{%
1	0\\
2	4.29463568177537\\
3	10.9834023030202\\
4	19.4911544162207\\
5	29.5257282182798\\
6	40.9642415461055\\
7	53.7853873518582\\
8	68.0310573035046\\
9	83.7851224878908\\
10	101.161965611055\\
11	120.300529254238\\
12	141.361518531931\\
13	164.5264538773\\
14	189.997854902757\\
15	218.000158520477\\
16	248.781152709582\\
17	282.613807035573\\
18	319.798437871262\\
19	360.665179499164\\
20	405.576752343279\\
21	454.93153219867\\
22	509.166932663109\\
23	568.763118800506\\
24	634.247074389002\\
25	706.19704850482\\
26	785.247409994935\\
27	872.093940777221\\
28	967.499600963008\\
29	1072.30080054838\\
30	1187.41421384591\\
31	1313.84417387\\
32	1452.69068445472\\
33	1605.15808784897\\
34	1772.56442474104\\
35	1956.35152191561\\
36	2158.09583980334\\
37	2379.52010775807\\
38	2622.50576865146\\
39	2889.10624590808\\
40	3181.56103495083\\
41	3502.31060664882\\
42	3854.01209214178\\
43	4239.55569566054\\
44	4662.08175389484\\
45	5124.99832622178\\
46	5631.99915877767\\
47	6187.08181595916\\
48	6794.5657144823\\
49	7459.10972663968\\
50	8185.72893999198\\
51	8979.81006969937\\
52	9847.12491661685\\
53	10793.84114916\\
54	11826.5295604339\\
55	12952.1668157025\\
56	14178.1325615878\\
57	15512.1996214985\\
58	16962.5158575682\\
59	18537.5761458625\\
60	20246.1827993291\\
61	22097.3926952638\\
62	24100.449337258\\
63	26264.6981249527\\
64	28599.483240316\\
65	31104.9296584145\\
66	33773.125256411\\
67	36592.7519107284\\
68	39550.6096870533\\
69	42632.3702450454\\
70	45823.2197734362\\
71	49108.0084228884\\
72	52471.2199217799\\
73	55896.9990941941\\
74	59369.2584213134\\
75	62871.8346163654\\
76	66388.6722141222\\
77	69904.660607941\\
78	73404.333833568\\
79	76871.911307032\\
80	80291.7396258474\\
81	83648.7098946676\\
82	86928.5976895956\\
83	90118.3294845991\\
84	93206.1851959957\\
85	96181.9442786437\\
86	99036.9805793522\\
87	101764.310129608\\
88	104358.595986422\\
89	106816.114653086\\
90	109134.689172839\\
91	111313.59446255\\
92	113353.440723336\\
93	115256.040788304\\
94	117022.693127316\\
95	118649.327692619\\
96	120132.212576323\\
97	121470.044394665\\
98	122664.48084755\\
99	123719.996333151\\
100	124643.447539591\\
101	125443.529144549\\
102	126130.211216206\\
103	126714.209096981\\
104	127206.514869233\\
105	127618.005845437\\
106	127959.13583639\\
107	137203.344098113\\
108	192623.358036613\\
109	247998.306842819\\
110	303328.187658179\\
111	358617.832002564\\
112	413874.593538376\\
113	469106.588688088\\
114	524321.376667813\\
115	579525.155320986\\
116	634722.469755406\\
117	689916.318195639\\
118	745108.480545453\\
119	745108.904841726\\
120	745109.041465991\\
121	745109.08545914\\
122	745109.099624946\\
123	745109.104186333\\
124	745109.105655096\\
125	745109.106128035\\
126	745109.106280321\\
127	745109.106329357\\
128	745109.106345146\\
129	745109.10635023\\
130	745109.106351867\\
131	745109.106352394\\
132	745109.106352564\\
133	745109.106352619\\
134	745109.106352636\\
135	745109.106352642\\
136	745109.106352644\\
137	745109.106352645\\
138	745109.106352645\\
139	745109.106352645\\
140	745109.106352645\\
141	745109.106352645\\
};
\addplot [color=mycolor2, line width=2.0pt, forget plot]
  table[row sep=crcr]{%
1	0\\
2	3.13035311771256\\
3	5.99363697996894\\
4	8.81301644771832\\
5	11.7286320997933\\
6	14.8314323952082\\
7	18.1851086922459\\
8	21.8394191967353\\
9	25.8379249437466\\
10	30.2224138428075\\
11	35.0354689152852\\
12	40.3220343379938\\
13	46.1304569486418\\
14	52.5132644113878\\
15	59.5278219235952\\
16	67.2369451701362\\
17	75.7095131997929\\
18	85.0211070106206\\
19	95.2546903059363\\
20	106.501344088687\\
21	118.861064375482\\
22	132.443631216484\\
23	147.369556815285\\
24	163.771120540924\\
25	181.793498842764\\
26	201.595998423233\\
27	223.353401437379\\
28	247.257431934985\\
29	273.518353213499\\
30	302.366706183846\\
31	334.055199242422\\
32	368.860760464201\\
33	407.086763153045\\
34	449.065435869127\\
35	495.160467955635\\
36	545.769821254805\\
37	601.328758073296\\
38	662.313094453735\\
39	729.242686343325\\
40	802.685154216735\\
41	883.259848986073\\
42	971.642058473118\\
43	1068.56744916448\\
44	1174.8367322327\\
45	1291.32053567606\\
46	1418.96445567475\\
47	1558.79424962783\\
48	1711.92112055339\\
49	1879.54702732146\\
50	2062.96993726491\\
51	2263.58891680757\\
52	2482.90893163022\\
53	2722.5452003921\\
54	2984.22691506895\\
55	3269.80010663332\\
56	3581.22939737128\\
57	3920.59834115716\\
58	4290.10801140896\\
59	4692.07345458592\\
60	5128.91758687902\\
61	5603.1620757499\\
62	6117.41471949415\\
63	6674.35282115129\\
64	7276.70205279021\\
65	7920.17984257547\\
66	8599.48380168527\\
67	9311.67435383587\\
68	10053.8519787562\\
69	10823.0184447672\\
70	11616.4972486416\\
71	12431.4981049225\\
72	13264.9139352098\\
73	14113.4058890269\\
74	14973.4790694095\\
75	15841.5447905937\\
76	16713.9761696971\\
77	17587.2625532752\\
78	18457.3961635088\\
79	19320.1708367151\\
80	20171.4475653539\\
81	21007.2810660221\\
82	21823.9886779534\\
83	22618.1989579872\\
84	23386.890885079\\
85	24127.4252591826\\
86	24837.5675044681\\
87	25515.5010108404\\
88	26159.8306817442\\
89	26769.576949963\\
90	27344.1610182443\\
91	38590.6193998055\\
92	94285.6273842554\\
93	149941.897382496\\
94	205543.517404826\\
95	261086.710600555\\
96	316573.224230185\\
97	372007.399183378\\
98	427394.776063701\\
99	482741.355374158\\
100	538053.155721234\\
101	593335.92860684\\
102	648594.972692751\\
103	703835.02155827\\
104	759060.189387631\\
105	814273.962104143\\
106	869479.222507111\\
107	915714.66260516\\
108	915718.391394819\\
109	915719.41735137\\
110	915719.699610719\\
111	915719.777245621\\
112	915719.798585403\\
113	915719.804442481\\
114	915719.806044915\\
115	915719.806480505\\
116	915719.806597497\\
117	915719.806628273\\
118	915719.806636102\\
119	915719.806637996\\
120	915719.806638422\\
121	915719.806638509\\
122	915719.806638523\\
123	915719.806638523\\
124	915719.806638522\\
125	915719.806638522\\
126	915719.806638522\\
127	915719.806638521\\
128	915719.806638521\\
129	915719.806638521\\
130	915719.806638521\\
131	915719.806638521\\
132	915719.806638521\\
133	915719.806638521\\
134	915719.806638521\\
135	915719.806638521\\
136	915719.806638521\\
137	915719.806638521\\
138	915719.806638521\\
139	915719.806638521\\
140	915719.806638521\\
141	915719.806638521\\
};
\addplot [color=mycolor3, line width=2.0pt, forget plot]
  table[row sep=crcr]{%
1	0\\
2	2.76332081289318\\
3	4.53285380309558\\
4	5.88719090711098\\
5	7.08987683975862\\
6	8.26793475218234\\
7	9.48767244104269\\
8	10.7877399204971\\
9	12.1940512268626\\
10	13.7267865607043\\
11	15.4038243359572\\
12	17.2425132723664\\
13	19.2606512408868\\
14	21.4770771369171\\
15	23.9120724142912\\
16	26.5876705648774\\
17	29.5279254004995\\
18	32.7591655449052\\
19	36.3102506933262\\
20	40.2128390992851\\
21	44.5016725719945\\
22	49.2148836105955\\
23	54.3943284535773\\
24	60.0859494156922\\
25	66.3401697260611\\
26	73.2123240642561\\
27	80.7631280564633\\
28	89.0591901062684\\
29	98.1735690719099\\
30	108.186381448341\\
31	119.185461855576\\
32	131.267080763061\\
33	144.536723481308\\
34	159.109934513798\\
35	175.113231368963\\
36	192.685091866286\\
37	211.97701881112\\
38	233.154685634665\\
39	256.399166169156\\
40	281.908251118668\\
41	309.897852952102\\
42	340.603499839258\\
43	374.281917818236\\
44	411.212698559806\\
45	451.700047810437\\
46	496.07460677053\\
47	544.695335210158\\
48	597.951440945963\\
49	656.264335299236\\
50	720.08958822309\\
51	789.918849824606\\
52	866.281696921875\\
53	949.747353988956\\
54	1040.92622730413\\
55	1140.47117932198\\
56	1249.07845729183\\
57	1367.48817608278\\
58	1496.48424029878\\
59	1636.89357546856\\
60	1789.58452294491\\
61	1955.4642389294\\
62	2135.47492578423\\
63	2330.58871481326\\
64	2541.8010155955\\
65	2767.55691901992\\
66	3005.20070083157\\
67	3252.30760590668\\
68	3507.22740530858\\
69	3768.76907699925\\
70	4036.39259682788\\
71	4309.56293023556\\
72	4587.49049896869\\
73	4869.23158110752\\
74	53257.3144608651\\
75	108734.544987685\\
76	164203.991529051\\
77	219660.2470113\\
78	275100.484708926\\
79	330523.012808389\\
80	385926.788106918\\
81	441311.213376304\\
82	496676.042280273\\
83	552021.327812163\\
84	607347.387444778\\
85	662654.77340978\\
86	717944.243336151\\
87	773216.729660635\\
88	828473.307733406\\
89	883715.163237396\\
90	938943.559814916\\
91	983452.570887936\\
92	983466.998263233\\
93	983472.855152991\\
94	983475.232469401\\
95	983476.197132139\\
96	983476.588313649\\
97	983476.746717692\\
98	983476.810667823\\
99	983476.836320039\\
100	983476.846469566\\
101	983476.850366745\\
102	983476.85176253\\
103	983476.852175164\\
104	983476.852216598\\
105	983476.85213113\\
106	983476.852014159\\
107	983476.851901042\\
108	983476.851803177\\
109	983476.851722645\\
110	983476.851658278\\
111	983476.851608508\\
112	983476.85157158\\
113	983476.851545458\\
114	983476.851527918\\
115	983476.851516775\\
116	983476.85151009\\
117	983476.851506308\\
118	983476.851504291\\
119	983476.851503276\\
120	983476.851502792\\
121	983476.851502573\\
122	983476.851502476\\
123	983476.851502434\\
124	983476.851502417\\
125	983476.851502409\\
126	983476.851502406\\
127	983476.851502405\\
128	983476.851502404\\
129	983476.851502404\\
130	983476.851502404\\
131	983476.851502404\\
132	983476.851502404\\
133	983476.851502404\\
134	983476.851502404\\
135	983476.851502404\\
136	983476.851502404\\
137	983476.851502404\\
138	983476.851502404\\
139	983476.851502404\\
140	983476.851502404\\
141	983476.851502404\\
};
\addplot [color=mycolor4, line width=2.0pt, forget plot]
  table[row sep=crcr]{%
1	0\\
2	0.144981564007577\\
3	1.73622969244256\\
4	3.74124399809646\\
5	5.91788388681085\\
6	8.23224826380028\\
7	10.705820809838\\
8	13.3726209055639\\
9	16.2679132934087\\
10	19.4261844469212\\
11	22.8816278222187\\
12	26.6691162535201\\
13	30.8251211676301\\
14	35.3884849657269\\
15	40.4010712273574\\
16	45.9083369387699\\
17	51.9598649176146\\
18	58.6098848336909\\
19	65.9178032129327\\
20	73.948757249055\\
21	82.7742036541361\\
22	92.4725515611032\\
23	103.129847183013\\
24	114.840517216603\\
25	127.708177640726\\
26	141.846514467332\\
27	157.380243063599\\
28	174.44615281769\\
29	193.19424412363\\
30	213.788964879216\\
31	236.410553895827\\
32	261.256498783352\\
33	288.543115969162\\
34	318.507260506374\\
35	351.408173188735\\
36	387.529472176316\\
37	427.181295800251\\
38	470.702602399393\\
39	518.463631880883\\
40	570.868532112646\\
41	628.358151158607\\
42	691.412993652765\\
43	760.5563361572\\
44	836.357492026533\\
45	919.435210956603\\
46	1010.46119186158\\
47	1110.16367982149\\
48	1219.33110838043\\
49	1338.81573725891\\
50	1469.53722237896\\
51	1612.48603981061\\
52	1768.72666768482\\
53	1939.40041018832\\
54	2125.72772544507\\
55	2329.00989450011\\
56	2550.62984201749\\
57	2792.05189115745\\
58	3054.82020615697\\
59	3340.55564748586\\
60	3650.95073758186\\
61	3987.76241205249\\
62	4352.80221437401\\
63	4747.92358459394\\
64	5175.00589799959\\
65	5626.73323671958\\
66	6090.95800767213\\
67	44248.8871998632\\
68	99899.0389894049\\
69	155502.930444272\\
70	211038.483609289\\
71	266504.333229639\\
72	321901.713531017\\
73	377232.650132562\\
74	384396.14363116\\
75	384411.334006767\\
76	384413.027540907\\
77	384413.210042816\\
78	384413.22344068\\
79	384413.218009571\\
80	384413.210607853\\
81	384413.203153809\\
82	384413.195889943\\
83	384413.188867911\\
84	384413.182114631\\
85	384413.175650749\\
86	384413.169492637\\
87	384413.163652607\\
88	384413.158139011\\
89	384413.152956391\\
90	384413.148105711\\
91	384413.14358463\\
92	384413.139387844\\
93	384413.135507442\\
94	384413.13194011\\
95	384413.128704988\\
96	384413.125809003\\
97	384413.123247714\\
98	384413.121008223\\
99	384413.119071499\\
100	384413.117414272\\
101	384413.116010654\\
102	384413.114833526\\
103	384413.1138557\\
104	384413.113050842\\
105	384413.112394166\\
106	384413.111862897\\
107	384413.111436554\\
108	384413.111097055\\
109	384413.110828708\\
110	384413.110619025\\
111	384413.110461268\\
112	384413.110348566\\
113	384413.110272674\\
114	384413.110224753\\
115	384413.110196517\\
116	384413.110181072\\
117	384413.110173272\\
118	384413.110169661\\
119	384413.11016814\\
120	384413.110167561\\
121	384413.110167365\\
122	384413.110167301\\
123	384413.11016728\\
124	384413.110167274\\
125	384413.110167272\\
126	384413.110167271\\
127	384413.110167271\\
128	384413.110167271\\
129	384413.110167271\\
130	384413.110167271\\
131	384413.110167271\\
132	384413.110167271\\
133	384413.110167271\\
134	384413.110167271\\
135	384413.110167271\\
136	384413.110167271\\
137	384413.110167271\\
138	384413.110167271\\
139	384413.110167271\\
140	384413.110167271\\
141	384413.110167271\\
};
\addplot [color=mycolor5, line width=2.0pt, forget plot]
  table[row sep=crcr]{%
1	0\\
2	1.22489296657866\\
3	2.68227660029449\\
4	4.29947341840747\\
5	6.01508638444028\\
6	7.82546637978117\\
7	9.75124124476728\\
8	11.8197401583686\\
9	14.0590108071114\\
10	16.4965388109992\\
11	19.1595218795925\\
12	22.0755358757182\\
13	25.2732178697597\\
14	28.7828723775511\\
15	32.6369980318831\\
16	36.8707561142531\\
17	41.5224047517269\\
18	46.6337189871589\\
19	52.2504125766382\\
20	58.4225737406134\\
21	65.2051244328696\\
22	72.6583108652577\\
23	80.8482318368696\\
24	89.8474106857598\\
25	99.7354162706203\\
26	110.599538196994\\
27	122.535521453989\\
28	135.648365669555\\
29	150.053194285241\\
30	165.87619906361\\
31	183.255665446565\\
32	202.343084356264\\
33	223.304356047664\\
34	246.321091556738\\
35	271.592017112046\\
36	299.334486555881\\
37	329.786106315813\\
38	363.206476732527\\
39	399.879052532184\\
40	440.11312386922\\
41	484.245917586925\\
42	532.64481606601\\
43	585.709688162202\\
44	643.875323167414\\
45	707.613954348502\\
46	777.437853295484\\
47	853.901969910989\\
48	937.60658525255\\
49	1029.1999354563\\
50	1129.38075448861\\
51	1238.90067137215\\
52	1358.56638372621\\
53	1489.24151391002\\
54	1631.8480367949\\
55	1787.36714935457\\
56	1956.83943212491\\
57	2141.36413160007\\
58	2342.09737147929\\
59	2560.24908031613\\
60	2797.078404833\\
61	3053.88736361116\\
62	3332.01248711863\\
63	18428.6888626552\\
64	73944.5383021146\\
65	129440.334752226\\
66	184843.311483491\\
67	202459.447454574\\
68	202492.483614126\\
69	202495.064989256\\
70	202495.23317639\\
71	202495.210554554\\
72	202495.173346362\\
73	202495.135611605\\
74	202495.098554389\\
75	202495.062355114\\
76	202495.0271069\\
77	202494.992300131\\
78	202494.957913745\\
79	202494.924101949\\
80	202494.891029613\\
81	202494.858846958\\
82	202494.827687219\\
83	202494.797666456\\
84	202494.768883232\\
85	202494.741418275\\
86	202494.715334325\\
87	202494.690676295\\
88	202494.667471778\\
89	202494.645731884\\
90	202494.625452382\\
91	202494.606615078\\
92	202494.589189372\\
93	202494.573133934\\
94	202494.558426441\\
95	202494.545136442\\
96	202494.533281219\\
97	202494.522832607\\
98	202494.513728939\\
99	202494.50588441\\
100	202494.499196815\\
101	202494.493554222\\
102	202494.488840696\\
103	202494.48494105\\
104	202494.481744567\\
105	202494.479147741\\
106	202494.477056078\\
107	202494.475385095\\
108	202494.474060649\\
109	202494.47301875\\
110	202494.472208391\\
111	202494.471600902\\
112	202494.471168445\\
113	202494.470878363\\
114	202494.470696005\\
115	202494.470589103\\
116	202494.470530976\\
117	202494.470501836\\
118	202494.470488461\\
119	202494.470482885\\
120	202494.470480796\\
121	202494.470480099\\
122	202494.470479873\\
123	202494.4704798\\
124	202494.470479777\\
125	202494.470479769\\
126	202494.470479767\\
127	202494.470479766\\
128	202494.470479766\\
129	202494.470479766\\
130	202494.470479766\\
131	202494.470479766\\
132	202494.470479766\\
133	202494.470479766\\
134	202494.470479766\\
135	202494.470479766\\
136	202494.470479766\\
137	202494.470479766\\
138	202494.470479766\\
139	202494.470479766\\
140	202494.470479766\\
141	202494.470479766\\
};
\addplot [color=mycolor6, line width=2.0pt, forget plot]
  table[row sep=crcr]{%
1	0\\
2	1.06753654407652\\
3	2.33370294413975\\
4	3.73058349518353\\
5	5.22984641015384\\
6	6.82614802805157\\
7	8.5292322102561\\
8	10.3570337636596\\
9	12.3312440408861\\
10	14.475102410168\\
11	16.8125997139099\\
12	19.3684230875351\\
13	22.1682335176085\\
14	25.239064166563\\
15	28.6097439707015\\
16	32.3113116919143\\
17	36.3774140440252\\
18	40.8446932171313\\
19	45.7531728642508\\
20	51.1466519434767\\
21	57.0731149184523\\
22	63.5851656836453\\
23	70.7404915781311\\
24	78.6023630911511\\
25	87.240174340836\\
26	96.730029081196\\
27	107.155376811026\\
28	118.607703474939\\
29	131.187281221339\\
30	145.003981681138\\
31	160.178157225051\\
32	176.841594619744\\
33	195.138545407538\\
34	215.226837153746\\
35	237.279069410184\\
36	261.48389779975\\
37	288.047408996887\\
38	317.194588518473\\
39	349.170882098255\\
40	384.243849937234\\
41	422.704911235428\\
42	464.871174041799\\
43	511.08734252375\\
44	561.727690161802\\
45	617.198083016708\\
46	677.938031986601\\
47	744.422746758259\\
48	817.165156847042\\
49	896.717856608899\\
50	983.674921304851\\
51	1078.67353014036\\
52	1182.3953196679\\
53	1295.56737707236\\
54	1418.96276778193\\
55	1553.40047580563\\
56	1699.74461858366\\
57	1858.90278152834\\
58	2031.82330164182\\
59	2219.49131569875\\
60	2422.92337786621\\
61	2643.16044601591\\
62	58072.2590374272\\
63	98724.4056923973\\
64	98869.2665796419\\
65	98896.0985563185\\
66	98900.8773640021\\
67	98901.5434668708\\
68	98901.4492444598\\
69	98901.2207050494\\
70	98900.9679973385\\
71	98900.7086728451\\
72	98900.4464580709\\
73	98900.1827919388\\
74	98899.9187212663\\
75	98899.6552226793\\
76	98899.3932488658\\
77	98899.1326867664\\
78	98898.8743213671\\
79	98898.6193489929\\
80	98898.368990275\\
81	98898.1243865105\\
82	98897.8865749032\\
83	98897.6564794156\\
84	98897.4349043152\\
85	98897.222529068\\
86	98897.0199051451\\
87	98896.8274552566\\
88	98896.6454752154\\
89	98896.4741383654\\
90	98896.3135023253\\
91	98896.1635176819\\
92	98896.0240382038\\
93	98895.894832125\\
94	98895.775895796\\
95	98895.6682242189\\
96	98895.5722353421\\
97	98895.4878097619\\
98	98895.414464107\\
99	98895.35147897\\
100	98895.2979876872\\
101	98895.253042394\\
102	98895.2156657853\\
103	98895.1848918874\\
104	98895.1597971369\\
105	98895.1395225868\\
106	98895.1232881642\\
107	98895.1104001074\\
108	98895.1002528606\\
109	98895.0923267314\\
110	98895.0861994165\\
111	98895.0815959859\\
112	98895.07829169\\
113	98895.0760479837\\
114	98895.0746156998\\
115	98895.0737604364\\
116	98895.0732850048\\
117	98895.0730402205\\
118	98895.0729241375\\
119	98895.0728737434\\
120	98895.0728538498\\
121	98895.0728467592\\
122	98895.072844348\\
123	98895.0728435477\\
124	98895.0728432856\\
125	98895.0728432003\\
126	98895.0728431727\\
127	98895.0728431638\\
128	98895.0728431609\\
129	98895.07284316\\
130	98895.0728431597\\
131	98895.0728431596\\
132	98895.0728431596\\
133	98895.0728431595\\
134	98895.0728431595\\
135	98895.0728431595\\
136	98895.0728431595\\
137	98895.0728431595\\
138	98895.0728431595\\
139	98895.0728431595\\
140	98895.0728431595\\
141	98895.0728431595\\
};

\addplot[area legend, draw=none, fill=green, fill opacity=0.1, line width=2.0pt, forget plot]
table[row sep=crcr] {%
x	y\\
118	0\\
150	0\\
150	1000000\\
118	1000000\\
}--cycle;
\addplot [color=green, dashed, line width=2.0pt, forget plot]
  table[row sep=crcr]{%
118	0\\
118	1000000\\
};
\end{axis}
\end{tikzpicture}%

%% file: Figures/NS_state_4.tex
%
%
\definecolor{mycolor1}{rgb}{0.00000,0.44700,0.74100}%
\definecolor{mycolor2}{rgb}{0.85000,0.32500,0.09800}%
\definecolor{mycolor3}{rgb}{0.92900,0.69400,0.12500}%
\definecolor{mycolor4}{rgb}{0.49400,0.18400,0.55600}%
\definecolor{mycolor5}{rgb}{0.46600,0.67400,0.18800}%
\definecolor{mycolor6}{rgb}{0.30100,0.74500,0.93300}%
\begin{tikzpicture}

\begin{axis}[%
width=7cm,
height=3.5cm,
at={(0cm,0cm)},
scale only axis,
xmin=61,
xmax=140,
ymin=0,
ymax=800,
xmajorgrids,
ymajorgrids,
axis background/.style={fill=white},
axis x line*=bottom,
axis y line*=left
]
\addplot [color=mycolor1, line width=2.0pt, forget plot]
  table[row sep=crcr]{%
1	0\\
2	0.00205352335266638\\
3	0.00525182455328961\\
4	0.00931989200713513\\
5	0.0141180246459586\\
6	0.0195874651244966\\
7	0.0257180252629923\\
8	0.0325297359848644\\
9	0.0400627010959333\\
10	0.048371613840375\\
11	0.0575229110143233\\
12	0.0675934353886098\\
13	0.0786699827885641\\
14	0.0908493900088023\\
15	0.104238973821805\\
16	0.118957216548155\\
17	0.135134641337862\\
18	0.152914847492578\\
19	0.172455692048125\\
20	0.193930613432432\\
21	0.217530099048604\\
22	0.243463302618713\\
23	0.27195981990942\\
24	0.303271633527648\\
25	0.337675239099494\\
26	0.375473966485274\\
27	0.417000510824364\\
28	0.462619689186677\\
29	0.512731439445091\\
30	0.567774078664699\\
31	0.628227838802702\\
32	0.694618697783339\\
33	0.76752252399592\\
34	0.847569551884924\\
35	0.935449205464931\\
36	1.03191528518579\\
37	1.13779153145777\\
38	1.25397757515994\\
39	1.38145528140676\\
40	1.5212954875148\\
41	1.6746651292366\\
42	1.84283474061763\\
43	2.02718630195185\\
44	2.22922139689045\\
45	2.45056962338697\\
46	2.69299718339932\\
47	2.95841555264993\\
48	3.24889010379286\\
49	3.5666485235887\\
50	3.91408882672011\\
51	4.29378672535344\\
52	4.70850206426271\\
53	5.16118397628744\\
54	5.654974352402\\
55	6.19320915544583\\
56	6.77941703785782\\
57	7.41731465352169\\
58	8.11079798486894\\
59	8.86392894254139\\
60	9.6809164412259\\
61	10.566091118119\\
62	11.5238728476621\\
63	12.5587302269204\\
64	13.6751312707161\\
65	14.8731357371375\\
66	16.1489603648746\\
67	17.497193279022\\
68	18.911522797897\\
69	20.385097681098\\
70	21.910834555404\\
71	23.4814893676033\\
72	25.0896428559744\\
73	26.7277137082899\\
74	28.3880095152222\\
75	30.0628016382316\\
76	31.7444129947375\\
77	33.4256182958181\\
78	35.0990223347272\\
79	36.7570794661286\\
80	38.3923048577953\\
81	39.9974740390795\\
82	41.5657854582743\\
83	43.0909878770915\\
84	44.5674772193483\\
85	45.9903664283348\\
86	47.3555308219281\\
87	48.6596309451666\\
88	49.9001148849489\\
89	51.0752022137821\\
90	52.1838519978403\\
91	53.2257175313147\\
92	54.201090588021\\
93	55.110837992388\\
94	55.9555806208718\\
95	56.7333723391028\\
96	57.4424286976957\\
97	58.082126470562\\
98	58.6532583036262\\
99	59.1579636754884\\
100	59.5995211807082\\
101	59.9820882670833\\
102	60.31043222324\\
103	60.5896766981947\\
104	60.8250776669571\\
105	61.0218362261598\\
106	61.1849509708375\\
107	61.3191093038728\\
108	61.4286146276404\\
109	61.5165716040242\\
110	61.5829788661791\\
111	61.6301466540456\\
112	61.6615911954469\\
113	61.6811934396223\\
114	61.6925679019746\\
115	61.698678143832\\
116	61.7016974543162\\
117	61.7030594647336\\
118	61.703615254017\\
119	61.7038181355578\\
120	61.7038834638274\\
121	61.7039044995952\\
122	61.7039112731186\\
123	61.7039134541914\\
124	61.7039141564952\\
125	61.7039143826362\\
126	61.7039144554532\\
127	61.7039144789001\\
128	61.7039144864499\\
129	61.7039144888809\\
130	61.7039144896637\\
131	61.7039144899157\\
132	61.7039144899969\\
133	61.703914490023\\
134	61.7039144900314\\
135	61.7039144900341\\
136	61.703914490035\\
137	61.7039144900353\\
138	61.7039144900354\\
139	61.7039144900354\\
140	61.7039144900354\\
141	61.7039144900354\\
};
\addplot [color=mycolor2, line width=2.0pt, forget plot]
  table[row sep=crcr]{%
1	0\\
2	0.00792471824437081\\
3	0.0151733311032985\\
4	0.0223107967711348\\
5	0.0296918913897686\\
6	0.0375468576459977\\
7	0.0460369348792874\\
8	0.0552880896329352\\
9	0.0654105998584643\\
10	0.0765102547101886\\
11	0.0886948562262939\\
12	0.102078183882955\\
13	0.116782631241678\\
14	0.13294117593241\\
15	0.150699042154799\\
16	0.17021525241692\\
17	0.191664179076784\\
18	0.215237160967964\\
19	0.241144226783254\\
20	0.269615954754126\\
21	0.300905492122249\\
22	0.335290755126997\\
23	0.373076829240376\\
24	0.414598589375783\\
25	0.460223560350273\\
26	0.510355038751714\\
27	0.56543549840999\\
28	0.625950302802667\\
29	0.69243174887093\\
30	0.765463467820007\\
31	0.845685209468571\\
32	0.933798037525921\\
33	1.03056996373564\\
34	1.13684204903658\\
35	1.25353499964457\\
36	1.38165628511757\\
37	1.52230780387181\\
38	1.67669411907706\\
39	1.84613128414785\\
40	2.03205627189918\\
41	2.23603701453813\\
42	2.45978305265654\\
43	2.70515677985952\\
44	2.97418525513976\\
45	3.26907253705632\\
46	3.59221247161288\\
47	3.94620183881129\\
48	4.33385373049752\\
49	4.75821099360553\\
50	5.2225595275264\\
51	5.73044117140642\\
52	6.28566585611891\\
53	6.89232262602924\\
54	7.55478905730304\\
55	8.27773851258592\\
56	9.06614457712155\\
57	9.92528192018206\\
58	10.8607227203739\\
59	11.8783276874067\\
60	12.9842306111187\\
61	14.1848152774243\\
62	15.4866835187535\\
63	16.8966131238879\\
64	18.42150433135\\
65	20.0505154968012\\
66	21.7702232344907\\
67	23.5731857917055\\
68	25.4520628204632\\
69	27.3992640776202\\
70	29.4080137991788\\
71	31.4712481730907\\
72	33.5811014027644\\
73	35.7291209436175\\
74	37.9064592079512\\
75	40.1040311748505\\
76	42.3126551245966\\
77	44.5234435808074\\
78	46.726250560327\\
79	48.9104278516528\\
80	51.0654972539769\\
81	53.1814709933363\\
82	55.2490261442126\\
83	57.2596276512559\\
84	59.205627578375\\
85	61.0803445973068\\
86	62.8781217156355\\
87	64.594360051822\\
88	66.2255278167327\\
89	67.7691451641971\\
90	69.2237468265685\\
91	70.5888253709664\\
92	71.8647585067774\\
93	73.0426235925882\\
94	74.0821382607754\\
95	74.9737411371238\\
96	75.7218555395951\\
97	76.3374707483851\\
98	76.8346132073185\\
99	77.2284736293421\\
100	77.5342885576912\\
101	77.7666183408005\\
102	77.9388769291334\\
103	78.0630477254732\\
104	78.1495460886036\\
105	78.2071968887634\\
106	78.2432981435802\\
107	78.2637440277358\\
108	78.2731837319252\\
109	78.2757810160946\\
110	78.276495576315\\
111	78.2766921147567\\
112	78.2767461379787\\
113	78.2767609656006\\
114	78.2767650222793\\
115	78.276766125008\\
116	78.2767664211809\\
117	78.2767664990917\\
118	78.2767665189129\\
119	78.2767665237073\\
120	78.2767665247857\\
121	78.2767665250049\\
122	78.2767665250402\\
123	78.2767665250419\\
124	78.2767665250398\\
125	78.2767665250384\\
126	78.2767665250377\\
127	78.2767665250374\\
128	78.2767665250373\\
129	78.2767665250373\\
130	78.2767665250373\\
131	78.2767665250373\\
132	78.2767665250373\\
133	78.2767665250373\\
134	78.2767665250373\\
135	78.2767665250373\\
136	78.2767665250373\\
137	78.2767665250373\\
138	78.2767665250373\\
139	78.2767665250373\\
140	78.2767665250373\\
141	78.2767665250373\\
};
\addplot [color=mycolor3, line width=2.0pt, forget plot]
  table[row sep=crcr]{%
1	0\\
2	0.112690712840141\\
3	0.184853862746458\\
4	0.240084950271744\\
5	0.289131566372381\\
6	0.337173829615426\\
7	0.386915831700138\\
8	0.439933755032252\\
9	0.497284397364492\\
10	0.559790725460741\\
11	0.628181837151906\\
12	0.703165228862507\\
13	0.785466713808157\\
14	0.875854559119352\\
15	0.975155860759522\\
16	1.08426916438193\\
17	1.20417540610817\\
18	1.33594829094\\
19	1.48076474325507\\
20	1.63991581516001\\
21	1.81481830894113\\
22	2.00702730227545\\
23	2.21824973028615\\
24	2.4503591619685\\
25	2.70541190204352\\
26	2.98566454860751\\
27	3.29359313960854\\
28	3.63191402576716\\
29	4.00360661315731\\
30	4.41193812463795\\
31	4.86049053516305\\
32	5.35318984122802\\
33	5.89433782885078\\
34	6.48864650700303\\
35	7.14127537368635\\
36	7.85787167916477\\
37	8.64461384436313\\
38	9.5082581810978\\
39	10.4561890434171\\
40	11.4964725144667\\
41	12.6379136992921\\
42	13.8901176488973\\
43	15.2635538824578\\
44	16.7696244002538\\
45	18.420734986753\\
46	20.2303694880559\\
47	22.2131666070527\\
48	24.3849985892301\\
49	26.7630509680134\\
50	29.365902296615\\
51	32.2136025094385\\
52	35.3277482263393\\
53	38.7315529343103\\
54	42.4499095514307\\
55	46.5094423969074\\
56	50.9385450609718\\
57	55.76740009492\\
58	61.0279758348949\\
59	66.7539950491193\\
60	72.9808694804421\\
61	79.7455937762219\\
62	87.0865917978288\\
63	95.0435079358437\\
64	103.656935889885\\
65	112.86346506524\\
66	122.554792633661\\
67	132.632034896198\\
68	143.027894029755\\
69	153.693799082367\\
70	164.607727382534\\
71	175.747859738813\\
72	187.081996438515\\
73	198.571651651344\\
74	210.174809284511\\
75	221.847548096983\\
76	233.202848968312\\
77	244.020206562068\\
78	254.184344464957\\
79	263.626268996198\\
80	272.303437406817\\
81	280.191494192245\\
82	287.280393537222\\
83	293.572296557852\\
84	299.080150081155\\
85	303.826474611905\\
86	307.842166942806\\
87	311.165252684424\\
88	313.839585585889\\
89	315.91351886561\\
90	317.43858499578\\
91	318.468222327153\\
92	319.056583692603\\
93	319.295432934689\\
94	319.392382040134\\
95	319.431721855897\\
96	319.447674590376\\
97	319.454134449989\\
98	319.456742393951\\
99	319.457788514413\\
100	319.458202421277\\
101	319.458361351746\\
102	319.458418273129\\
103	319.458435100717\\
104	319.458436790443\\
105	319.458433304964\\
106	319.458428534782\\
107	319.458423921761\\
108	319.458419930736\\
109	319.458416646598\\
110	319.458414021638\\
111	319.458411991952\\
112	319.458410486022\\
113	319.458409420734\\
114	319.458408705453\\
115	319.458408251008\\
116	319.458407978393\\
117	319.458407824174\\
118	319.458407741923\\
119	319.458407700516\\
120	319.458407680779\\
121	319.458407671824\\
122	319.458407667885\\
123	319.458407666188\\
124	319.458407665468\\
125	319.458407665165\\
126	319.458407665039\\
127	319.458407664987\\
128	319.458407664965\\
129	319.458407664956\\
130	319.458407664953\\
131	319.458407664951\\
132	319.458407664951\\
133	319.458407664951\\
134	319.45840766495\\
135	319.45840766495\\
136	319.45840766495\\
137	319.45840766495\\
138	319.45840766495\\
139	319.45840766495\\
140	319.45840766495\\
141	319.45840766495\\
};
\addplot [color=mycolor4, line width=2.0pt, forget plot]
  table[row sep=crcr]{%
1	0\\
2	0.00679340447442321\\
3	0.0813545545739133\\
4	0.175303555942114\\
5	0.27729442119742\\
6	0.38573864596288\\
7	0.501642891561894\\
8	0.626601204894404\\
9	0.762265986806404\\
10	0.910253170780997\\
11	1.07216496037678\\
12	1.24963539278765\\
13	1.44437341049702\\
14	1.65819905278897\\
15	1.89307392237178\\
16	2.1511279982941\\
17	2.43468458378129\\
18	2.74628471971661\\
19	3.0887120190501\\
20	3.46501861676365\\
21	3.87855276165622\\
22	4.33298847227015\\
23	4.83235761798685\\
24	5.38108475270394\\
25	5.9840250127223\\
26	6.64650538611481\\
27	7.37436966370559\\
28	8.17402738899736\\
29	9.05250713389798\\
30	10.0175144373313\\
31	11.0774947534215\\
32	12.2417017636408\\
33	13.5202714117946\\
34	14.9243020205465\\
35	16.465940841723\\
36	18.1584773779641\\
37	20.0164437881747\\
38	22.0557226510232\\
39	24.2936623063414\\
40	26.7491999200586\\
41	29.442992320031\\
42	32.3975545229293\\
43	35.6374057106406\\
44	39.1892222120978\\
45	43.0819967960091\\
46	47.3472032738224\\
47	52.0189650419779\\
48	57.134225749159\\
49	62.7329197486622\\
50	68.8581393791485\\
51	75.5562954005784\\
52	82.8772660891817\\
53	90.8745295614481\\
54	99.605272851721\\
55	109.130470115814\\
56	119.514921086473\\
57	130.827239587645\\
58	143.139780558448\\
59	156.528492695141\\
60	171.07268256046\\
61	186.854674926518\\
62	203.959353327587\\
63	222.47356444654\\
64	242.485370214378\\
65	263.65196849938\\
66	285.404159253525\\
67	307.143080802966\\
68	328.657566750632\\
69	348.004431134012\\
70	364.149166700382\\
71	377.027801673452\\
72	386.698168194976\\
73	393.255180747406\\
74	396.818754398741\\
75	397.530530177435\\
76	397.609884145509\\
77	397.618435641646\\
78	397.619063425685\\
79	397.618808940068\\
80	397.618462117594\\
81	397.618112843325\\
82	397.617772480181\\
83	397.617443448661\\
84	397.617127010048\\
85	397.616824131813\\
86	397.616535581014\\
87	397.616261934592\\
88	397.616003583866\\
89	397.615760741759\\
90	397.615533453297\\
91	397.615321608931\\
92	397.615124960039\\
93	397.614943135915\\
94	397.614775981376\\
95	397.614624393198\\
96	397.614488695941\\
97	397.614368681579\\
98	397.614263745688\\
99	397.61417299655\\
100	397.614095343852\\
101	397.614029574508\\
102	397.613974417796\\
103	397.613928599761\\
104	397.613890886527\\
105	397.613860116615\\
106	397.613835222949\\
107	397.613815245756\\
108	397.613799337847\\
109	397.613786763917\\
110	397.61377693881\\
111	397.613769546791\\
112	397.6137642659\\
113	397.613760709808\\
114	397.613758464398\\
115	397.613757141347\\
116	397.613756417601\\
117	397.613756052153\\
118	397.613755882943\\
119	397.613755811648\\
120	397.613755784557\\
121	397.613755775371\\
122	397.613755772362\\
123	397.613755771387\\
124	397.613755771072\\
125	397.613755770971\\
126	397.613755770938\\
127	397.613755770928\\
128	397.613755770924\\
129	397.613755770923\\
130	397.613755770923\\
131	397.613755770923\\
132	397.613755770923\\
133	397.613755770923\\
134	397.613755770923\\
135	397.613755770923\\
136	397.613755770923\\
137	397.613755770923\\
138	397.613755770923\\
139	397.613755770923\\
140	397.613755770923\\
141	397.613755770923\\
};
\addplot [color=mycolor5, line width=2.0pt, forget plot]
  table[row sep=crcr]{%
1	0\\
2	0.150403228269135\\
3	0.329353723796686\\
4	0.527927500303994\\
5	0.738585684808434\\
6	0.960880206144185\\
7	1.19734393360152\\
8	1.45133258629647\\
9	1.72629010807894\\
10	2.0255914272833\\
11	2.35257612004483\\
12	2.71063019551259\\
13	3.10326996731672\\
14	3.53421649283687\\
15	4.00746024260347\\
16	4.52731863078379\\
17	5.09848932972572\\
18	5.72610184990716\\
19	6.41576933196955\\
20	7.17364205210785\\
21	8.00646381519116\\
22	8.92163218574193\\
23	9.92726336088055\\
24	11.0322623996333\\
25	12.246399472573\\
26	13.5803927720856\\
27	15.0459987175852\\
28	16.6561100951696\\
29	18.42486278261\\
30	20.3677517243532\\
31	22.5017568341159\\
32	24.8454795116677\\
33	27.4192904625288\\
34	30.2454895013321\\
35	33.3484779979382\\
36	36.7549445859253\\
37	40.4940646910156\\
38	44.5977143467598\\
39	49.1006986398439\\
40	54.0409949601039\\
41	59.4600110119463\\
42	65.4028572642659\\
43	71.9186331636481\\
44	79.0607259977211\\
45	86.887120757639\\
46	95.4607186951772\\
47	104.849661484066\\
48	115.127656959509\\
49	126.374301306817\\
50	138.675391283023\\
51	152.123218569566\\
52	166.816836658866\\
53	182.862288768066\\
54	200.372783153573\\
55	219.468799887067\\
56	240.278110680968\\
57	262.93569077466\\
58	287.583499295503\\
59	314.370102008337\\
60	343.450108121548\\
61	374.983391030793\\
62	409.134061807454\\
63	446.069164184135\\
64	485.957061109996\\
65	523.382674051331\\
66	549.41105822049\\
67	564.038108156045\\
68	568.09458099224\\
69	568.411545138053\\
70	568.432196646884\\
71	568.429418937176\\
72	568.424850185178\\
73	568.420216776835\\
74	568.415666563\\
75	568.411221694823\\
76	568.406893606313\\
77	568.402619722233\\
78	568.398397456673\\
79	568.394245744299\\
80	568.39018482942\\
81	568.386233157408\\
82	568.382407088137\\
83	568.378720872416\\
84	568.3751866128\\
85	568.371814221639\\
86	568.368611402774\\
87	568.365583671086\\
88	568.362734414539\\
89	568.360064997451\\
90	568.357574900233\\
91	568.355261888807\\
92	568.353122205999\\
93	568.351150776955\\
94	568.349344860354\\
95	568.347712996412\\
96	568.346257306836\\
97	568.344974333575\\
98	568.343856504485\\
99	568.342893283606\\
100	568.342072121316\\
101	568.341379273621\\
102	568.340800505164\\
103	568.340321671934\\
104	568.339929179414\\
105	568.339610318074\\
106	568.339353485147\\
107	568.33914830702\\
108	568.338985679753\\
109	568.338857746198\\
110	568.338758243075\\
111	568.338683650162\\
112	568.338630549359\\
113	568.338594930483\\
114	568.338572538897\\
115	568.338559412595\\
116	568.338552275249\\
117	568.338548697136\\
118	568.338547054778\\
119	568.338546370205\\
120	568.338546113601\\
121	568.33854602812\\
122	568.338546000371\\
123	568.338545991418\\
124	568.338545988534\\
125	568.338545987606\\
126	568.338545987307\\
127	568.33854598721\\
128	568.338545987179\\
129	568.338545987169\\
130	568.338545987166\\
131	568.338545987165\\
132	568.338545987165\\
133	568.338545987165\\
134	568.338545987165\\
135	568.338545987165\\
136	568.338545987165\\
137	568.338545987165\\
138	568.338545987165\\
139	568.338545987165\\
140	568.338545987165\\
141	568.338545987165\\
};
\addplot [color=mycolor6, line width=2.0pt, forget plot]
  table[row sep=crcr]{%
1	0\\
2	0.244503556456279\\
3	0.534500362278705\\
4	0.854435323353285\\
5	1.19781946023111\\
6	1.56342888589301\\
7	1.95349528858806\\
8	2.37212637225748\\
9	2.82429022243462\\
10	3.31530947487756\\
11	3.85067887945964\\
12	4.43605266173708\\
13	5.07730809355792\\
14	5.78063672348677\\
15	6.55264139570166\\
16	7.40043108245494\\
17	8.3317120690617\\
18	9.35487671065472\\
19	10.4790918367534\\
20	11.7143889550212\\
21	13.0717582016558\\
22	14.563247725596\\
23	16.2020699640599\\
24	18.0027160927585\\
25	19.9810798146047\\
26	22.1545915760018\\
27	24.54236425824\\
28	27.1653513724169\\
29	30.0465187804834\\
30	33.2110309647798\\
31	36.6864528670699\\
32	40.5029683094648\\
33	44.6936159877523\\
34	49.2945439862668\\
35	54.3452836957569\\
36	59.8890439140848\\
37	65.973025765296\\
38	72.6487588755495\\
39	79.9724589829873\\
40	88.0054068194712\\
41	96.8143476699469\\
42	106.471910472677\\
43	117.057044651287\\
44	128.655472046011\\
45	141.360150313286\\
46	155.271742965094\\
47	170.499089796311\\
48	187.159669774203\\
49	205.38004651484\\
50	225.296284225982\\
51	247.054319440386\\
52	270.810270992783\\
53	296.73066751729\\
54	324.992568288939\\
55	355.78354955887\\
56	389.301524727323\\
57	425.754362894738\\
58	465.359266714296\\
59	508.341867286253\\
60	554.934991402866\\
61	605.377055166302\\
62	659.910038332378\\
63	718.776996224931\\
64	751.955253512007\\
65	758.100723435808\\
66	759.195239104379\\
67	759.347800177118\\
68	759.326219918601\\
69	759.27387632537\\
70	759.215997338092\\
71	759.156602874025\\
72	759.096546433614\\
73	759.036157580987\\
74	758.975676074344\\
75	758.915325595489\\
76	758.85532434359\\
77	758.795646424046\\
78	758.736471626382\\
79	758.678073951259\\
80	758.620732966285\\
81	758.564710068837\\
82	758.51024281515\\
83	758.457542825306\\
84	758.406794304915\\
85	758.358152875703\\
86	758.311744844012\\
87	758.267667025381\\
88	758.225987171393\\
89	758.186744983937\\
90	758.149953659736\\
91	758.115601881399\\
92	758.083656156795\\
93	758.054063403302\\
94	758.02682278508\\
95	758.002162192745\\
96	757.980177351187\\
97	757.960840912802\\
98	757.944042168475\\
99	757.929616348494\\
100	757.917364956294\\
101	757.907070897433\\
102	757.898510334505\\
103	757.89146202547\\
104	757.885714441657\\
105	757.881070853935\\
106	757.877352598025\\
107	757.874400778025\\
108	757.872076700261\\
109	757.870261336851\\
110	757.868857965432\\
111	757.8678036173\\
112	757.867046816895\\
113	757.86653292895\\
114	757.866204885357\\
115	757.86600899984\\
116	757.865900109229\\
117	757.865844044993\\
118	757.865817457888\\
119	757.865805915845\\
120	757.865801359516\\
121	757.865799735504\\
122	757.865799183269\\
123	757.865798999976\\
124	757.865798939933\\
125	757.865798920409\\
126	757.865798914086\\
127	757.865798912044\\
128	757.865798911385\\
129	757.865798911173\\
130	757.865798911104\\
131	757.865798911082\\
132	757.865798911075\\
133	757.865798911073\\
134	757.865798911072\\
135	757.865798911072\\
136	757.865798911072\\
137	757.865798911072\\
138	757.865798911072\\
139	757.865798911072\\
140	757.865798911072\\
141	757.865798911072\\
};

\addplot[area legend, draw=none, fill=green, fill opacity=0.1, line width=2.0pt, forget plot]
table[row sep=crcr] {%
x	y\\
118	0\\
150	0\\
150	800\\
118	800\\
}--cycle;
\addplot [color=green, dashed, line width=2.0pt, forget plot]
  table[row sep=crcr]{%
118	0\\
118	800\\
};
\end{axis}
\end{tikzpicture}%

%% file: Figures/MPC_state_1.tex
%
%
\definecolor{mycolor1}{rgb}{0.00000,0.44700,0.74100}%
\definecolor{mycolor2}{rgb}{0.85000,0.32500,0.09800}%
\definecolor{mycolor3}{rgb}{0.92900,0.69400,0.12500}%
\definecolor{mycolor4}{rgb}{0.49400,0.18400,0.55600}%
\definecolor{mycolor5}{rgb}{0.46600,0.67400,0.18800}%
\definecolor{mycolor6}{rgb}{0.30100,0.74500,0.93300}%
\begin{tikzpicture}

\begin{axis}[%
width=7cm,
height=3.5cm,
at={(0cm,0cm)},
scale only axis,
xmin=61,
xmax=140,
ymin=0,
ymax=1200000,
xmajorgrids,
ymajorgrids,
axis background/.style={fill=white},
axis x line*=bottom,
axis y line*=left
]
\addplot [color=mycolor1, line width=2.0pt, forget plot]
  table[row sep=crcr]{%
1	1058299.34049118\\
2	1058292.44626517\\
3	1058283.78077337\\
4	1058273.61241241\\
5	1058262.04982195\\
6	1058249.10573796\\
7	1058234.73290958\\
8	1058218.84387911\\
9	1058201.32163147\\
10	1058182.02508278\\
11	1058160.79161252\\
12	1058137.43785495\\
13	1058111.75941957\\
14	1058083.52991054\\
15	1058052.49944864\\
16	1058018.39280639\\
17	1057980.9072135\\
18	1057939.70985877\\
19	1057894.43509538\\
20	1057844.68134481\\
21	1057790.00768631\\
22	1057729.9301135\\
23	1057663.91743528\\
24	1057591.38679507\\
25	1057511.69877958\\
26	1057424.1520859\\
27	1057327.97771375\\
28	1057222.332648\\
29	1057106.29299513\\
30	1056978.8465363\\
31	1056838.88465923\\
32	1056685.19363109\\
33	1056516.44517546\\
34	1056331.18631846\\
35	1056127.8284717\\
36	1055904.63572501\\
37	1055659.71232743\\
38	1055390.98934441\\
39	1055096.21048994\\
40	1054772.91714741\\
41	1054418.43261123\\
42	1054029.8456047\\
43	1053603.99315784\\
44	1053137.44296392\\
45	1052626.47537521\\
46	1052067.06524862\\
47	1051454.86391135\\
48	1050785.18158549\\
49	1050052.97069169\\
50	1049252.81054321\\
51	1048378.89404652\\
52	1047425.01714036\\
53	1046384.57183296\\
54	1045250.54383415\\
55	1044015.515924\\
56	1042671.67834603\\
57	1041210.84765812\\
58	1039624.49560582\\
59	1037903.78969438\\
60	1036039.64720974\\
61	1034022.80446188\\
62	1031843.90297417\\
63	1029493.59419949\\
64	1026972.5317817\\
65	1024289.30484934\\
66	1021455.52443808\\
67	980621.556554458\\
68	922452.84408468\\
69	890197.789663687\\
70	887561.737415302\\
71	885079.528834044\\
72	882739.612503438\\
73	880537.689754827\\
74	878471.961812384\\
75	876540.931533684\\
76	837686.475903961\\
77	780899.239677591\\
78	724371.457367311\\
79	668118.096856923\\
80	612124.525056308\\
81	556356.803407586\\
82	528314.566029051\\
83	525971.065268427\\
84	525413.074157078\\
85	525270.945069597\\
86	469976.238567733\\
87	415122.545951564\\
88	359891.51470458\\
89	304678.243460234\\
90	249475.846392192\\
91	194279.651742886\\
92	139086.775451786\\
93	83895.5679596034\\
94	28705.1455977138\\
95	28705.0623849758\\
96	28705.014675689\\
97	6227.98612020965\\
98	1834.98246282285\\
99	1834.98185267156\\
100	1834.98150921565\\
101	1834.98131320688\\
102	1834.98120076709\\
103	1834.98113608171\\
104	1834.98109880155\\
105	1834.98107729053\\
106	1834.98106486889\\
107	1834.98105769233\\
108	1834.98105354473\\
109	1834.98105114717\\
110	1834.98104976103\\
111	1834.98104895957\\
112	1834.98104849614\\
113	1834.98104822816\\
114	1834.9810480732\\
115	1834.98104798359\\
116	1834.98104793176\\
117	1834.9810479018\\
118	1834.98104788447\\
119	1834.98104787445\\
120	1834.98104786865\\
121	1834.9810478653\\
122	1834.98104786336\\
123	1834.98104786224\\
124	1834.98104786159\\
125	1834.98104786122\\
126	1834.981047861\\
127	1834.98104786087\\
128	1834.9810478608\\
129	1834.98104786076\\
130	1834.98104786074\\
131	1834.98104786072\\
132	1834.98104786071\\
133	1834.98104786071\\
134	1834.98104786071\\
135	1834.9810478607\\
136	1834.9810478607\\
137	1834.9810478607\\
138	1834.9810478607\\
139	1834.9810478607\\
140	1834.9810478607\\
141	1834.9810478607\\
};
\addplot [color=mycolor2, line width=2.0pt, forget plot]
  table[row sep=crcr]{%
1	915791.670391113\\
2	915788.901498256\\
3	915786.091690194\\
4	915783.132068456\\
5	915779.950175584\\
6	915776.492532322\\
7	915772.714557096\\
8	915768.574937074\\
9	915764.032453362\\
10	915759.04411254\\
11	915753.563958054\\
12	915747.542225712\\
13	915740.924663755\\
14	915733.651920725\\
15	915725.658947598\\
16	915716.874383396\\
17	915707.219905194\\
18	915696.609529594\\
19	915684.948855782\\
20	915672.13424177\\
21	915658.051906044\\
22	915642.576946937\\
23	915625.572271926\\
24	915606.887428743\\
25	915586.357329784\\
26	915563.800860863\\
27	915539.019364923\\
28	915511.794990813\\
29	915481.888896859\\
30	915449.039298545\\
31	915412.959349352\\
32	915373.334843598\\
33	915329.821730101\\
34	915282.043425655\\
35	915229.587917726\\
36	915172.00464652\\
37	915108.801157736\\
38	915039.439518927\\
39	914963.33249471\\
40	914879.839478979\\
41	914788.262186238\\
42	914687.840109055\\
43	914577.745754883\\
44	914457.079683137\\
45	914324.865372849\\
46	914180.043962563\\
47	914021.468917787\\
48	913847.900697485\\
49	913658.001510099\\
50	913450.330271679\\
51	913223.337904108\\
52	912975.363140264\\
53	912704.629035312\\
54	912409.240419031\\
55	912087.182562778\\
56	911736.321375737\\
57	911354.405487427\\
58	910939.070615502\\
59	910487.846657563\\
60	909998.167980195\\
61	909467.387404114\\
62	908892.794396792\\
63	908271.637977925\\
64	907610.878540322\\
65	906916.211086578\\
66	906189.690890474\\
67	905434.210652977\\
68	904652.813703764\\
69	903859.88135736\\
70	903079.05120521\\
71	902327.761569162\\
72	901612.619873813\\
73	900937.15934677\\
74	900303.037131489\\
75	899710.710712141\\
76	899159.831009093\\
77	898649.483538798\\
78	898185.841578045\\
79	897777.64721776\\
80	897430.422361326\\
81	897145.21637535\\
82	896918.862088981\\
83	896745.030181943\\
84	857452.144027167\\
85	857358.66654186\\
86	857290.17108192\\
87	857026.215908169\\
88	856990.74452539\\
89	856966.227212327\\
90	856949.707257338\\
91	856938.823086276\\
92	856931.786966222\\
93	856927.308903684\\
94	856924.494658223\\
95	801732.743971016\\
96	746541.733464506\\
97	713828.162961444\\
98	663030.839821429\\
99	663030.665429249\\
100	663030.56617733\\
101	663030.509351481\\
102	663030.476702339\\
103	663030.457901669\\
104	663030.447059568\\
105	663030.440801041\\
106	663030.437186068\\
107	663030.435097169\\
108	663030.433889781\\
109	663030.433191784\\
110	663030.432788223\\
111	663030.432554878\\
112	663030.432419948\\
113	663030.432341923\\
114	663030.432296803\\
115	663030.432270711\\
116	663030.432255623\\
117	663030.432246897\\
118	663030.432241851\\
119	663030.432238933\\
120	663030.432237246\\
121	663030.43223627\\
122	663030.432235706\\
123	663030.432235379\\
124	663030.432235191\\
125	663030.432235081\\
126	663030.432235018\\
127	663030.432234982\\
128	663030.432234961\\
129	663030.432234948\\
130	663030.432234941\\
131	663030.432234937\\
132	663030.432234935\\
133	663030.432234933\\
134	663030.432234933\\
135	663030.432234932\\
136	663030.432234932\\
137	663030.432234932\\
138	663030.432234932\\
139	663030.432234932\\
140	663030.432234932\\
141	663030.432234932\\
};
\addplot [color=mycolor3, line width=2.0pt, forget plot]
  table[row sep=crcr]{%
1	983784.158223048\\
2	983783.023485627\\
3	983781.909278624\\
4	983780.76542737\\
5	983779.556847025\\
6	983778.257717261\\
7	983776.847487018\\
8	983775.308245556\\
9	983773.623068632\\
10	983771.774986174\\
11	983769.746318778\\
12	983767.518223245\\
13	983765.070351849\\
14	983762.380569837\\
15	983759.42469881\\
16	983756.176266809\\
17	983752.606253255\\
18	983748.682820979\\
19	983744.371029843\\
20	983739.632527662\\
21	983734.425214798\\
22	983728.70287909\\
23	983722.414797912\\
24	983715.505304104\\
25	983707.913312493\\
26	983699.571803527\\
27	983690.40726046\\
28	983680.339056318\\
29	983669.278786734\\
30	983657.129544611\\
31	983643.785132405\\
32	983629.129207786\\
33	983613.034358352\\
34	983595.361101145\\
35	983575.956802793\\
36	983554.654516383\\
37	983531.2717315\\
38	983505.609034454\\
39	983477.448676443\\
40	983446.553048471\\
41	983412.663063139\\
42	983375.496445196\\
43	983334.745934862\\
44	983290.077410699\\
45	983241.12794206\\
46	983187.503785267\\
47	983128.778342466\\
48	983064.490107939\\
49	982994.140633484\\
50	982917.192552434\\
51	982833.067711126\\
52	982741.145467132\\
53	982640.761225429\\
54	982531.205296857\\
55	982411.72217759\\
56	982281.510363753\\
57	982139.722831353\\
58	981985.468327959\\
59	981817.813638172\\
60	981635.786999115\\
61	981438.3828535\\
62	981224.568134846\\
63	980993.29028022\\
64	980747.98176831\\
65	980492.190461627\\
66	980228.275036728\\
67	979957.401982764\\
68	979680.483995134\\
69	979400.116966079\\
70	979122.658030932\\
71	978854.668119349\\
72	978599.944279809\\
73	978360.250550593\\
74	978136.259690018\\
75	977928.041355454\\
76	977735.315607735\\
77	977557.589232416\\
78	977395.95982036\\
79	977252.810095536\\
80	977130.02599094\\
81	977028.181333353\\
82	949404.47392098\\
83	896224.787301154\\
84	880517.573928816\\
85	825296.034631666\\
86	825273.643902311\\
87	825199.331685991\\
88	825187.694059419\\
89	825179.586478089\\
90	825174.069791784\\
91	825170.396398103\\
92	825167.996254251\\
93	825166.452992148\\
94	825165.473859332\\
95	825164.859470122\\
96	825164.477474745\\
97	825164.244511842\\
98	825164.105572098\\
99	825164.024163009\\
100	825163.977424015\\
101	825163.950515279\\
102	825163.934998816\\
103	825163.926042548\\
104	825163.920869525\\
105	825163.917880383\\
106	825163.916152678\\
107	825163.915153895\\
108	825163.914576431\\
109	825163.914242535\\
110	825163.914049462\\
111	825163.913937815\\
112	825163.913873253\\
113	825163.913835918\\
114	825163.913814328\\
115	825163.913801843\\
116	825163.913794623\\
117	825163.913790447\\
118	825163.913788032\\
119	825163.913786636\\
120	825163.913785829\\
121	825163.913785362\\
122	825163.913785092\\
123	825163.913784935\\
124	825163.913784845\\
125	825163.913784793\\
126	825163.913784763\\
127	825163.913784745\\
128	825163.913784735\\
129	825163.913784729\\
130	825163.913784726\\
131	825163.913784724\\
132	825163.913784723\\
133	825163.913784722\\
134	825163.913784722\\
135	825163.913784722\\
136	825163.913784721\\
137	825163.913784721\\
138	825163.913784721\\
139	825163.913784721\\
140	825163.913784721\\
141	825163.913784721\\
};
\addplot [color=mycolor4, line width=2.0pt, forget plot]
  table[row sep=crcr]{%
1	384802.829089865\\
2	384800.972397342\\
3	384798.818823749\\
4	384796.517541259\\
5	384794.076555305\\
6	384791.46606567\\
7	384788.648804423\\
8	384785.587690053\\
9	384782.246722872\\
10	384778.590145894\\
11	384774.581362579\\
12	384770.181981572\\
13	384765.351027502\\
14	384760.044276442\\
15	384754.213667157\\
16	384747.806748983\\
17	384740.766137903\\
18	384733.028960518\\
19	384724.526271146\\
20	384715.182430756\\
21	384704.91443859\\
22	384693.631208598\\
23	384681.232783445\\
24	384667.609479182\\
25	384652.640953732\\
26	384636.195192244\\
27	384618.127402216\\
28	384598.278811047\\
29	384576.47535847\\
30	384552.526276079\\
31	384526.222546029\\
32	384497.335230855\\
33	384465.613666422\\
34	384430.783510139\\
35	384392.544636933\\
36	384350.568876091\\
37	384304.497582932\\
38	384253.93904056\\
39	384198.46568865\\
40	384137.611178493\\
41	384070.867256428\\
42	383997.680481524\\
43	383917.448788002\\
44	383829.517908621\\
45	383733.177682233\\
46	383627.658277205\\
47	383512.126372463\\
48	383385.681349934\\
49	383247.351566175\\
50	383096.090787269\\
51	382930.774889761\\
52	382750.198951581\\
53	382553.07488056\\
54	382338.029754211\\
55	382103.60507253\\
56	381848.25715528\\
57	381570.358945583\\
58	381268.203511641\\
59	380940.009566259\\
60	380583.929347586\\
61	380198.059221277\\
62	379780.453370947\\
63	379329.140936356\\
64	378852.994067442\\
65	378365.391110671\\
66	377879.77618993\\
67	377399.771129674\\
68	376926.382477675\\
69	350692.004743713\\
70	295068.155116167\\
71	239528.460475908\\
72	184075.796071919\\
73	128702.108870536\\
74	73396.2643168607\\
75	18146.9895307484\\
76	0.888178040535422\\
77	0.887606440371901\\
78	0.887094325626106\\
79	0.886642264750246\\
80	0.886255254084516\\
81	0.885934771215335\\
82	0.885678037216655\\
83	0.885478847931469\\
84	0.885327796810208\\
85	0.885214803446181\\
86	0.885131797950294\\
87	-28.1149286214945\\
88	-28.1135613176163\\
89	-28.1126102226891\\
90	-28.1119638682101\\
91	-28.1115338608477\\
92	-28.1112530457808\\
93	-28.1110725195356\\
94	-28.1109579746972\\
95	-28.1108860802872\\
96	-28.1108413623538\\
97	-28.1108139989496\\
98	-28.1107976031914\\
99	-28.1107879549726\\
100	-28.1107823853727\\
101	-28.1107791670626\\
102	-28.1107773068019\\
103	-28.1107762313434\\
104	-28.1107756095311\\
105	-28.1107752499852\\
106	-28.1107750420784\\
107	-28.1107749218532\\
108	-28.1107748523298\\
109	-28.1107748121257\\
110	-28.1107747888761\\
111	-28.1107747754311\\
112	-28.110774767656\\
113	-28.1107747631597\\
114	-28.1107747605595\\
115	-28.1107747590558\\
116	-28.1107747581862\\
117	-28.1107747576834\\
118	-28.1107747573925\\
119	-28.1107747572244\\
120	-28.1107747571271\\
121	-28.1107747570709\\
122	-28.1107747570384\\
123	-28.1107747570195\\
124	-28.1107747570087\\
125	-28.1107747570024\\
126	-28.1107747569987\\
127	-28.1107747569966\\
128	-28.1107747569954\\
129	-28.1107747569947\\
130	-28.1107747569943\\
131	-28.1107747569941\\
132	-28.1107747569939\\
133	-28.1107747569939\\
134	-28.1107747569938\\
135	-28.1107747569938\\
136	-28.1107747569938\\
137	-28.1107747569938\\
138	-28.1107747569938\\
139	-28.1107747569938\\
140	-28.1107747569938\\
141	-28.1107747569938\\
};
\addplot [color=mycolor5, line width=2.0pt, forget plot]
  table[row sep=crcr]{%
1	203033.506306142\\
2	203031.847499558\\
3	203030.016281485\\
4	203028.080497469\\
5	203026.038662809\\
6	203023.865270194\\
7	203021.528986979\\
8	203018.998252081\\
9	203016.242259281\\
10	203013.230499123\\
11	203009.931973184\\
12	203006.314425502\\
13	203002.343669331\\
14	202997.983003483\\
15	202993.192691894\\
16	202987.929479424\\
17	202982.146121576\\
18	202975.790910739\\
19	202968.807185581\\
20	202961.132813118\\
21	202952.699634934\\
22	202943.432870321\\
23	202933.250469891\\
24	202922.062413597\\
25	202909.769947357\\
26	202896.264752467\\
27	202881.428041965\\
28	202865.129577968\\
29	202847.226603908\\
30	202827.56268546\\
31	202805.966453865\\
32	202782.250245363\\
33	202756.208630506\\
34	202727.616827347\\
35	202696.228992873\\
36	202661.776387624\\
37	202623.965409302\\
38	202582.475492339\\
39	202536.95687195\\
40	202487.028213236\\
41	202432.274108527\\
42	202372.242449391\\
43	202306.441683852\\
44	202234.337974333\\
45	202155.352277903\\
46	202068.857377718\\
47	201974.174903194\\
48	201870.572386691\\
49	201757.260416385\\
50	201633.389958757\\
51	201498.049939787\\
52	201350.265191577\\
53	201188.994890666\\
54	201013.131635627\\
55	200821.501334219\\
56	200612.86409408\\
57	200385.916334657\\
58	200139.294360874\\
59	199871.579659393\\
60	199581.30619426\\
61	199266.969988031\\
62	198927.041274139\\
63	183618.979493129\\
64	128088.964008759\\
65	72670.1815839733\\
66	17355.6481493852\\
67	0.108825659899594\\
68	0.108653906256459\\
69	0.108484448507589\\
70	0.108316844010182\\
71	0.108155937556789\\
72	0.108006388468854\\
73	0.107869592854743\\
74	0.107745534520759\\
75	0.10763361454413\\
76	0.107533010632207\\
77	0.10744283176447\\
78	0.107361695368073\\
79	0.107289784918022\\
80	0.107227904827955\\
81	0.10717635230891\\
82	0.107134779499648\\
83	0.107102297422069\\
84	0.1070775422473\\
85	0.107058991586308\\
86	0.107045350215617\\
87	-25.8929645723944\\
88	-25.8912497676433\\
89	-25.8900513415357\\
90	-25.8892321459957\\
91	-25.8886837492988\\
92	-25.8883234097451\\
93	-25.8880904128009\\
94	-25.8879417908982\\
95	-25.8878480659054\\
96	-25.8877895254699\\
97	-25.8877536021892\\
98	-25.8877320090787\\
99	-25.8877192517819\\
100	-25.8877118582887\\
101	-25.8877075756568\\
102	-25.8877050963064\\
103	-25.8877036614741\\
104	-25.8877028313255\\
105	-25.8877023511062\\
106	-25.8877020733416\\
107	-25.8877019126903\\
108	-25.8877018197784\\
109	-25.8877017660447\\
110	-25.8877017349696\\
111	-25.8877017169986\\
112	-25.8877017066059\\
113	-25.8877017005957\\
114	-25.8877016971201\\
115	-25.8877016951101\\
116	-25.8877016939477\\
117	-25.8877016932755\\
118	-25.8877016928868\\
119	-25.887701692662\\
120	-25.887701692532\\
121	-25.8877016924568\\
122	-25.8877016924134\\
123	-25.8877016923882\\
124	-25.8877016923737\\
125	-25.8877016923653\\
126	-25.8877016923604\\
127	-25.8877016923576\\
128	-25.887701692356\\
129	-25.887701692355\\
130	-25.8877016923545\\
131	-25.8877016923542\\
132	-25.887701692354\\
133	-25.8877016923539\\
134	-25.8877016923538\\
135	-25.8877016923538\\
136	-25.8877016923538\\
137	-25.8877016923537\\
138	-25.8877016923537\\
139	-25.8877016923537\\
140	-25.8877016923537\\
141	-25.8877016923537\\
};
\addplot [color=mycolor6, line width=2.0pt, forget plot]
  table[row sep=crcr]{%
1	99514.3855136335\\
2	99512.7730761402\\
3	99511.0192276974\\
4	99509.1475744595\\
5	99507.1581711548\\
6	99505.0347677528\\
7	99502.7530011467\\
8	99500.2851476226\\
9	99497.6022066046\\
10	99494.6744795985\\
11	99491.4714279749\\
12	99487.9612375715\\
13	99484.1102949229\\
14	99479.8826594836\\
15	99475.2395574488\\
16	99470.1388969544\\
17	99464.5347947996\\
18	99458.377102492\\
19	99451.6109199752\\
20	99444.1760868441\\
21	99436.0066423135\\
22	99427.0302464047\\
23	99417.1675556773\\
24	99406.3315474109\\
25	99394.4267864822\\
26	99381.3486293714\\
27	99366.9823597983\\
28	99351.2022505128\\
29	99333.87054575\\
30	99314.8363588744\\
31	99293.9344797919\\
32	99270.9840868399\\
33	99245.7873581139\\
34	99218.1279775855\\
35	99187.769531951\\
36	99154.45379498\\
37	99117.8988972545\\
38	99077.7973806708\\
39	99033.8141389948\\
40	98985.584248193\\
41	98932.7106933104\\
42	98874.7620024355\\
43	98811.2698028964\\
44	98741.7263204084\\
45	98665.5818485642\\
46	98582.2422239751\\
47	98491.0663516659\\
48	98391.3638361262\\
49	98282.3927858354\\
50	98163.3578731762\\
51	98033.4087474469\\
52	97891.6389161039\\
53	97737.0852282407\\
54	97568.7281142971\\
55	97385.4927565749\\
56	97186.2513855491\\
57	96969.8269161364\\
58	96734.9981546325\\
59	96480.5068191435\\
60	96205.0666218251\\
61	95907.37465747\\
62	40396.1253269382\\
63	0.858997450770403\\
64	0.856286478435279\\
65	0.854085230756245\\
66	0.852049001019343\\
67	0.850102734923929\\
68	0.848228618736618\\
69	0.846371351352031\\
70	0.844535001905584\\
71	0.842763181409799\\
72	0.841087465354648\\
73	0.839520597413433\\
74	0.838066043308633\\
75	0.836722649256093\\
76	0.835486882298154\\
77	0.834353937642347\\
78	0.833325108169336\\
79	0.832412261798189\\
80	0.831626984967211\\
81	0.830973333940534\\
82	0.830446894279605\\
83	0.8300362529095\\
84	0.829723528650132\\
85	0.829488880932234\\
86	0.829316684956199\\
87	-75.1708087909724\\
88	-75.1626717291323\\
89	-75.1569845791482\\
90	-75.1530999016667\\
91	-75.1505032973371\\
92	-75.1488007530997\\
93	-75.1477026819119\\
94	-75.1470041968277\\
95	-75.1465649657176\\
96	-75.1462913895423\\
97	-75.1461247913201\\
98	-75.1460256711739\\
99	-75.1459676913561\\
100	-75.1459344936579\\
101	-75.1459154176452\\
102	-75.145904431873\\
103	-75.1458980961096\\
104	-75.145894438676\\
105	-75.1458923260545\\
106	-75.1458911052613\\
107	-75.1458903996314\\
108	-75.1458899917004\\
109	-75.1458897558454\\
110	-75.1458896194701\\
111	-75.1458895406119\\
112	-75.1458894950112\\
113	-75.1458894686415\\
114	-75.1458894533923\\
115	-75.145889444574\\
116	-75.1458894394744\\
117	-75.1458894365254\\
118	-75.1458894348199\\
119	-75.1458894338337\\
120	-75.1458894332634\\
121	-75.1458894329335\\
122	-75.1458894327428\\
123	-75.1458894326325\\
124	-75.1458894325687\\
125	-75.1458894325318\\
126	-75.1458894325105\\
127	-75.1458894324982\\
128	-75.145889432491\\
129	-75.1458894324869\\
130	-75.1458894324845\\
131	-75.1458894324831\\
132	-75.1458894324823\\
133	-75.1458894324819\\
134	-75.1458894324816\\
135	-75.1458894324815\\
136	-75.1458894324814\\
137	-75.1458894324813\\
138	-75.1458894324813\\
139	-75.1458894324813\\
140	-75.1458894324813\\
141	-75.1458894324813\\
};

\addplot[area legend, draw=none, fill=green, fill opacity=0.1, line width=2.0pt, forget plot]
table[row sep=crcr] {%
x	y\\
98	0\\
150	0\\
150	1200000\\
98	1200000\\
}--cycle;
\addplot [color=green, dashed, line width=2.0pt, forget plot]
  table[row sep=crcr]{%
98	0\\
98	1200000\\
};
\end{axis}
\end{tikzpicture}%

%% file: Figures/MPC_state_2.tex
%
%
\definecolor{mycolor1}{rgb}{0.00000,0.44700,0.74100}%
\definecolor{mycolor2}{rgb}{0.85000,0.32500,0.09800}%
\definecolor{mycolor3}{rgb}{0.92900,0.69400,0.12500}%
\definecolor{mycolor4}{rgb}{0.49400,0.18400,0.55600}%
\definecolor{mycolor5}{rgb}{0.46600,0.67400,0.18800}%
\definecolor{mycolor6}{rgb}{0.30100,0.74500,0.93300}%
\begin{tikzpicture}

\begin{axis}[%
width=7cm,
height=3.5cm,
at={(0cm,0cm)},
scale only axis,
xmin=61,
xmax=140,
ymin=0,
ymax=3500,
xmajorgrids,
ymajorgrids,
axis background/.style={fill=white},
axis x line*=bottom,
axis y line*=left,
legend pos=north east,
legend style={legend cell align=left, align=left, draw=white!15!black}
]
\addplot [color=mycolor1, solid, line width=2pt, mark options={solid, mycolor1}]
  table[row sep=crcr]{%
1	4.6595088243\\
2	7.25704562732303\\
3	9.23057250575148\\
4	10.8871132834846\\
5	12.410331803941\\
6	13.910433025333\\
7	15.4559850395323\\
8	17.0925338455949\\
9	18.8531833362379\\
10	20.7645799987799\\
11	22.8503353158311\\
12	25.1330330798772\\
13	27.6354565648139\\
14	30.3813851687548\\
15	33.3961538701621\\
16	36.7070836882444\\
17	40.3438448207566\\
18	44.3387885132268\\
19	48.7272694325119\\
20	53.5479722374286\\
21	58.843251390541\\
22	64.6594905314611\\
23	71.0474861021338\\
24	78.0628589110542\\
25	85.7664966763911\\
26	94.2250301363437\\
27	103.511344957687\\
28	113.705131342684\\
29	124.893472880895\\
30	137.171475775716\\
31	150.642939059574\\
32	165.421065760796\\
33	181.629214161851\\
34	199.401687251714\\
35	218.884557178936\\
36	240.236519904921\\
37	263.629773281045\\
38	289.250909363816\\
39	317.301808867322\\
40	348.00052215358\\
41	381.582116994941\\
42	418.299468420906\\
43	458.423960196214\\
44	502.246060783532\\
45	550.075728948676\\
46	602.242595414716\\
47	659.095857141163\\
48	721.003809921073\\
49	788.352933145968\\
50	861.546427973644\\
51	941.002097059782\\
52	1027.14944095545\\
53	1120.42583390773\\
54	1221.27163105874\\
55	1330.12405114667\\
56	1447.40967533991\\
57	1573.53540573172\\
58	1708.87773862994\\
59	1853.77023081836\\
60	2008.48907448498\\
61	2173.23675173455\\
62	2348.12381572611\\
63	2533.14894533317\\
64	2718.30984670963\\
65	2894.89219163544\\
66	3059.19551876016\\
67	3209.17671526629\\
68	3228.59981146918\\
69	3080.45417110083\\
70	2875.91641677293\\
71	2706.14611168202\\
72	2550.63444246941\\
73	2400.53171190691\\
74	2252.64893226511\\
75	2106.43615484574\\
76	1962.47642941023\\
77	1751.0473868593\\
78	1475.13023234829\\
79	1179.22373080691\\
80	896.39381900029\\
81	648.520693277566\\
82	446.735411661379\\
83	307.286481188712\\
84	218.918432507843\\
85	159.175498445665\\
86	116.100939969507\\
87	76.7329893960127\\
88	47.0061758963329\\
89	26.9314504920725\\
90	14.4941256894508\\
91	7.32325606679446\\
92	3.44652737949352\\
93	1.47586118873349\\
94	0.537282027053003\\
95	0.125049006054413\\
96	0.0574464146131343\\
97	0.0330286302646088\\
98	0.00622921046245438\\
99	0.00109519807443467\\
100	0.000428735141363181\\
101	0.000229392879018666\\
102	0.000130301809606528\\
103	7.48315276283553e-05\\
104	4.31070242889658e-05\\
105	2.48676093175402e-05\\
106	1.43579956127805e-05\\
107	8.29456515561576e-06\\
108	4.79346739523827e-06\\
109	2.77081820195441e-06\\
110	1.60189104336241e-06\\
111	9.2619293449325e-07\\
112	5.35547983649758e-07\\
113	3.09680512087021e-07\\
114	1.79077689342885e-07\\
115	1.0355641260639e-07\\
116	5.988494778763e-08\\
117	3.46307361113285e-08\\
118	2.00266344304231e-08\\
119	1.15812557669042e-08\\
120	6.69736969354176e-09\\
121	3.87305356806152e-09\\
122	2.23976850810832e-09\\
123	1.29524828168927e-09\\
124	7.49036680319226e-10\\
125	4.33164899541293e-10\\
126	2.50497560953758e-10\\
127	1.44861771902504e-10\\
128	8.37730090846608e-11\\
129	4.84456132512633e-11\\
130	2.80159151492411e-11\\
131	1.62014982260733e-11\\
132	9.36926553192307e-12\\
133	5.41821108895677e-12\\
134	3.13333115233745e-12\\
135	1.81199366067599e-12\\
136	1.04786914449199e-12\\
137	6.05978800855772e-13\\
138	3.50435270837672e-13\\
139	2.02655404699816e-13\\
140	1.17194861616831e-13\\
141	6.77733496105747e-14\\
};
\addlegendentry{0-24}

\addplot [color=mycolor2, solid, line width=2pt, mark options={solid, mycolor2}]
  table[row sep=crcr]{%
1	4.3296088874\\
2	3.96022390797655\\
3	3.89949949462591\\
4	4.03260429886922\\
5	4.29150042493362\\
6	4.63848842450323\\
7	5.05429727656978\\
8	5.53035563963166\\
9	6.06421109384361\\
10	6.6569633628338\\
11	7.31187817389316\\
12	8.03366176631637\\
13	8.82809666452643\\
14	9.70187368772757\\
15	10.6625314356469\\
16	11.7184561807865\\
17	12.8789174266314\\
18	14.1541262340792\\
19	15.5553096856015\\
20	17.0947981864076\\
21	18.7861240879687\\
22	20.644131091658\\
23	22.6850944297544\\
24	24.9268521261098\\
25	27.388947812695\\
26	30.092785674487\\
27	33.0617981407625\\
28	36.3216269485436\\
29	39.9003181783317\\
30	43.8285318031401\\
31	48.1397661957305\\
32	52.8705978999381\\
33	58.060936781986\\
34	63.7542964266499\\
35	69.998079318849\\
36	76.8438759393414\\
37	84.3477763869278\\
38	92.5706924996921\\
39	101.578687662451\\
40	111.44331053226\\
41	122.24192776142\\
42	134.058049418619\\
43	146.981639172394\\
44	161.109399375062\\
45	176.545018937896\\
46	193.39936929064\\
47	211.790630746314\\
48	231.844328230881\\
49	253.6932515857\\
50	277.477231528359\\
51	303.342737912812\\
52	331.442262249623\\
53	361.933441669627\\
54	394.977876842139\\
55	430.739592075495\\
56	469.383082313906\\
57	511.070889494892\\
58	555.960650368529\\
59	604.201560163545\\
60	655.930202314117\\
61	711.265704857928\\
62	770.304200194665\\
63	833.112587799084\\
64	889.99790255619\\
65	939.558427642101\\
66	985.048196809699\\
67	1026.5251596933\\
68	1063.85463295235\\
69	1085.66157239533\\
70	1079.55975479851\\
71	1048.34026838607\\
72	1003.60200358592\\
73	951.610697013778\\
74	895.966506929711\\
75	838.859708732887\\
76	781.699540314414\\
77	725.439170144253\\
78	663.253110813508\\
79	590.694453579565\\
80	509.75977881774\\
81	425.471000455357\\
82	343.426480997955\\
83	268.328809954908\\
84	204.719254164593\\
85	149.80784000073\\
86	109.716442292592\\
87	80.1446453542447\\
88	57.5239069447738\\
89	40.3455364253497\\
90	27.6213943063104\\
91	18.4844475713195\\
92	12.1222830155227\\
93	7.81361836918725\\
94	4.96423323856809\\
95	3.11664088929939\\
96	1.8680784438086\\
97	1.08452174421988\\
98	0.621555983931183\\
99	0.345418931381893\\
100	0.194297062590131\\
101	0.110288444048834\\
102	0.0629960045250067\\
103	0.0361345909102967\\
104	0.0207848601118446\\
105	0.0119776696638601\\
106	0.00691073733984269\\
107	0.00399045068797529\\
108	0.00230539699344581\\
109	0.0013323472895572\\
110	0.000770168679434047\\
111	0.000445263838261795\\
112	0.000257448429008143\\
113	0.000148864107396223\\
114	8.60810054314208e-05\\
115	4.97778507264933e-05\\
116	2.87854115130525e-05\\
117	1.66461434789599e-05\\
118	9.62626950484704e-06\\
119	5.56678543374551e-06\\
120	3.21923223360766e-06\\
121	1.86166279849575e-06\\
122	1.07658992758271e-06\\
123	6.22586905611781e-07\\
124	3.60039291941642e-07\\
125	2.08209229324306e-07\\
126	1.20406562941431e-07\\
127	6.96306438299466e-08\\
128	4.02671328001575e-08\\
129	2.32863291770953e-08\\
130	1.34663958580489e-08\\
131	7.78756588141854e-09\\
132	4.50352003098249e-09\\
133	2.60436871108416e-09\\
134	1.50609665121018e-09\\
135	8.70970045419952e-10\\
136	5.036787127733e-10\\
137	2.91275512397982e-10\\
138	1.68443537703301e-10\\
139	9.74102669571888e-11\\
140	5.63319925708803e-11\\
141	3.25765803483119e-11\\
};
\addlegendentry{25-44}

\addplot [color=mycolor3, solid, line width=2pt, mark options={solid, mycolor3}]
  table[row sep=crcr]{%
1	4.8417769521\\
2	3.10050284714218\\
3	2.3730137105326\\
4	2.10729677210785\\
5	2.06414456909919\\
6	2.13717415725919\\
7	2.27792470886025\\
8	2.46408076791808\\
9	2.68559574351201\\
10	2.93843653988166\\
11	3.22167504867669\\
12	3.5360982541545\\
13	3.88353019648157\\
14	4.26649846715358\\
15	4.68807291507581\\
16	5.15179346169021\\
17	5.66164593878754\\
18	6.22206518560832\\
19	6.83795472076256\\
20	7.51471742356909\\
21	8.25829432131283\\
22	9.07520999675987\\
23	9.97262390449867\\
24	10.9583873181901\\
25	12.0411058790925\\
26	13.2302078603086\\
27	14.5360183437703\\
28	15.9698395505314\\
29	17.544037581053\\
30	19.2721358162863\\
31	21.1689152042642\\
32	23.2505216100989\\
33	25.5345803375343\\
34	28.0403178340428\\
35	30.7886904642024\\
36	33.8025200719628\\
37	37.1066358444767\\
38	40.7280217303485\\
39	44.6959683441148\\
40	49.0422278959669\\
41	53.8011702093673\\
42	59.0099373163549\\
43	64.7085934370322\\
44	70.9402663413825\\
45	77.7512751433425\\
46	85.1912384747057\\
47	93.3131557172517\\
48	102.17345252588\\
49	111.831980248838\\
50	122.351957046345\\
51	133.799836539859\\
52	146.245087719713\\
53	159.759867647665\\
54	174.41856628775\\
55	190.297200690996\\
56	207.472633894585\\
57	226.021592468926\\
58	246.019455907087\\
59	267.538791310761\\
60	290.647608459805\\
61	315.407313793891\\
62	341.870347572331\\
63	370.077497030387\\
64	395.560280204337\\
65	416.3891059128\\
66	432.969744504746\\
67	446.659126480851\\
68	458.261960491744\\
69	466.421762072395\\
70	466.826554321105\\
71	457.521876742307\\
72	440.478097946012\\
73	418.528199407373\\
74	393.91365389933\\
75	368.147602159905\\
76	342.193977107564\\
77	316.657411582042\\
78	290.192581703419\\
79	260.968151766312\\
80	228.737388938124\\
81	194.712225700622\\
82	160.760736217097\\
83	126.955611003171\\
84	95.7574548651108\\
85	71.4169026143045\\
86	51.3860505779375\\
87	37.1749951353273\\
88	26.7307051848915\\
89	18.9602696289359\\
90	13.214571374434\\
91	9.03852053250604\\
92	6.06979062414116\\
93	4.00760209110192\\
94	2.60622256231039\\
95	1.67251771497748\\
96	1.06103894567824\\
97	0.663745587185904\\
98	0.408420998694932\\
99	0.247228351125963\\
100	0.147113907325265\\
101	0.0866371040695015\\
102	0.0506911982192772\\
103	0.0295369364397587\\
104	0.0171650432715384\\
105	0.00995816360948585\\
106	0.00577072720903801\\
107	0.00334170351538428\\
108	0.00193419822370897\\
109	0.00111918259027773\\
110	0.00064746179350998\\
111	0.000374516331946463\\
112	0.000216615966108403\\
113	0.000125281251501944\\
114	7.24546039699177e-05\\
115	4.1902085179512e-05\\
116	2.4232520617857e-05\\
117	1.40138407683012e-05\\
118	8.10425159996762e-06\\
119	4.68669608217049e-06\\
120	2.71031303517122e-06\\
121	1.56736925432198e-06\\
122	9.06405799470804e-07\\
123	5.24171845532676e-07\\
124	3.03126904541089e-07\\
125	1.75297262936589e-07\\
126	1.01373791938401e-07\\
127	5.86240998304315e-08\\
128	3.39021032834339e-08\\
129	1.96054615986329e-08\\
130	1.13377658668564e-08\\
131	6.55658786276296e-09\\
132	3.79165032942227e-09\\
133	2.19269723288635e-09\\
134	1.26802861634813e-09\\
135	7.33296207198306e-10\\
136	4.24062453267031e-10\\
137	2.45233729766767e-10\\
138	1.41817748009375e-10\\
139	8.20126727780403e-11\\
140	4.74276216264092e-11\\
141	2.74272160159689e-11\\
};
\addlegendentry{45-64}

\addplot [color=mycolor4, solid, line width=2pt, mark options={solid, mycolor4}]
  table[row sep=crcr]{%
1	0.1709101349\\
2	1.87582768989844\\
3	2.36359200427599\\
4	2.56591118709034\\
5	2.72826638743079\\
6	2.91594742008822\\
7	3.14373187537152\\
8	3.41308783615855\\
9	3.72309784759522\\
10	4.07341648887533\\
11	4.46484463839883\\
12	4.89926678179909\\
13	5.37947791961844\\
14	5.90903953918148\\
15	6.49218769372962\\
16	7.13378608028685\\
17	7.83931259621999\\
18	8.61486992904243\\
19	9.46721362184796\\
20	10.4037933786278\\
21	11.4328049944354\\
22	12.5632513683921\\
23	13.8050117538086\\
24	15.1689188483258\\
25	16.6668436144795\\
26	18.3117879025588\\
27	20.1179850571237\\
28	22.101008746521\\
29	24.2778902728701\\
30	26.6672446041426\\
31	29.2894053220658\\
32	32.1665685986642\\
33	35.3229461971487\\
34	38.7849273344956\\
35	42.5812490366565\\
36	46.7431743545237\\
37	51.30467747951\\
38	56.302634390243\\
39	61.7770171631173\\
40	67.7710894743917\\
41	74.3316000937311\\
42	81.5089703008872\\
43	89.357470129972\\
44	97.9353771406443\\
45	107.305110014271\\
46	117.533327659531\\
47	128.690982673636\\
48	140.853315936287\\
49	154.09977681734\\
50	168.513850973232\\
51	184.18277502818\\
52	201.197114645812\\
53	219.650179690573\\
54	239.637247492694\\
55	261.254562854263\\
56	284.598081616699\\
57	309.761923672349\\
58	336.836501644304\\
59	365.906293560417\\
60	397.047232271967\\
61	430.323691744562\\
62	465.785061351877\\
63	503.461914604186\\
64	532.514664344042\\
65	547.223509573944\\
66	546.882281682163\\
67	541.234217188441\\
68	533.985449392348\\
69	531.16295644056\\
70	493.318842596308\\
71	404.926813328385\\
72	308.000170441259\\
73	218.171022495664\\
74	140.271067793364\\
75	74.9795904378307\\
76	21.4960973601816\\
77	2.40728317736323\\
78	0.270032493048976\\
79	0.0306850057844976\\
80	0.00382251607076189\\
81	0.000748453282943668\\
82	0.000340531124775608\\
83	0.000237315284834958\\
84	0.000177621034575398\\
85	0.000132879885431457\\
86	9.78827805061308e-05\\
87	7.13784397100311e-05\\
88	-0.00135931231984898\\
89	-0.00110328407308508\\
90	-0.000769878600908513\\
91	-0.000516203275656614\\
92	-0.000338609390144497\\
93	-0.000218437086612736\\
94	-0.000139001141180419\\
95	-8.74570328416638e-05\\
96	-5.45096574390003e-05\\
97	-3.34663270836752e-05\\
98	-2.01426613844328e-05\\
99	-1.19033991601088e-05\\
100	-6.90230921821564e-06\\
101	-3.99109539516806e-06\\
102	-2.30710528279306e-06\\
103	-1.33376298449923e-06\\
104	-7.71140895811414e-07\\
105	-4.45883173112468e-07\\
106	-2.57828051982412e-07\\
107	-1.49091733584779e-07\\
108	-8.62156938654424e-08\\
109	-4.98568927780344e-08\\
110	-2.88315561225815e-08\\
111	-1.66729928761945e-08\\
112	-9.64185836437224e-09\\
113	-5.57582379655038e-09\\
114	-3.22446791089537e-09\\
115	-1.86469390231063e-09\\
116	-1.07834406090892e-09\\
117	-6.23601788914411e-10\\
118	-3.60626374275627e-10\\
119	-2.0854880444657e-10\\
120	-1.20602963480503e-10\\
121	-6.97442311575129e-11\\
122	-4.03328235615602e-11\\
123	-2.33243192704986e-11\\
124	-1.34883659046796e-11\\
125	-7.80027127258225e-12\\
126	-4.51086758489295e-12\\
127	-2.60861780184363e-12\\
128	-1.50855389499589e-12\\
129	-8.72391064267097e-13\\
130	-5.04500484148938e-13\\
131	-2.91750740272565e-13\\
132	-1.68718360417012e-13\\
133	-9.75691959839805e-14\\
134	-5.64239006623622e-14\\
135	-3.26297304649418e-14\\
136	-1.88696509433967e-14\\
137	-1.09122484827357e-14\\
138	-6.31051243678318e-15\\
139	-3.64934571269589e-15\\
140	-2.11040296085664e-15\\
141	-1.2204381299836e-15\\
};
\addlegendentry{65-74}

\addplot [color=mycolor5, solid, line width=2pt, mark options={solid, mycolor5}]
  table[row sep=crcr]{%
1	1.4936938584\\
2	1.77720424756331\\
3	1.97208819133525\\
4	2.09210161269225\\
5	2.20766512192823\\
6	2.34838321957669\\
7	2.52242784280443\\
8	2.73067517455431\\
9	2.97243980366291\\
10	3.24737063818024\\
11	3.5559288165295\\
12	3.89940842664589\\
13	4.27984283233873\\
14	4.699907646182\\
15	5.16284983107652\\
16	5.67244583062366\\
17	6.23298434241356\\
18	6.84926842428813\\
19	7.52663251013344\\
20	8.27097108868828\\
21	9.08877681832409\\
22	9.98718662761832\\
23	10.9740349108023\\
24	12.0579133172633\\
25	13.2482368999631\\
26	14.555316563485\\
27	15.9904378630369\\
28	17.5659462676596\\
29	19.2953390245193\\
30	21.1933637524246\\
31	23.2761238541208\\
32	25.5611907684891\\
33	28.0677229834616\\
34	30.8165915945138\\
35	33.8305120167778\\
36	37.134181234456\\
37	40.7544196913252\\
38	44.720316581341\\
39	49.0633768780799\\
40	53.8176679341207\\
41	59.0199628735419\\
42	64.7098772787124\\
43	70.9299948222382\\
44	77.7259765022092\\
45	85.1466469906876\\
46	93.244050290844\\
47	102.073465410331\\
48	111.693371096545\\
49	122.165346851506\\
50	133.553895470843\\
51	145.926170270688\\
52	159.351588038229\\
53	173.901306655308\\
54	189.647544424266\\
55	206.662716539351\\
56	225.018363113808\\
57	244.783842968599\\
58	266.024768351151\\
59	288.801158282396\\
60	313.165292785111\\
61	339.159257326972\\
62	366.8121769348\\
63	396.13715309379\\
64	371.415301078135\\
65	258.222663565509\\
66	145.001457450006\\
67	40.0328661785858\\
68	3.17337742372586\\
69	0.251707261446421\\
70	0.0201191839236639\\
71	0.00175565383468763\\
72	0.000288711012491524\\
73	0.000159680296355414\\
74	0.000136715394798773\\
75	0.00012275672421647\\
76	0.000110334225300258\\
77	9.89245115725615e-05\\
78	8.89776491223763e-05\\
79	7.8963264592927e-05\\
80	6.81391142850473e-05\\
81	5.69535668451658e-05\\
82	4.60872344952844e-05\\
83	3.61351828342302e-05\\
84	2.76194305167703e-05\\
85	2.07399155191691e-05\\
86	1.52853203707517e-05\\
87	1.11342011318031e-05\\
88	-0.00171392219850533\\
89	-0.00133428017342557\\
90	-0.000924957275688319\\
91	-0.000621713447697825\\
92	-0.000409619678383617\\
93	-0.000265465453412232\\
94	-0.000169664025486913\\
95	-0.000107173414034885\\
96	-6.70355375883544e-05\\
97	-4.12368533970501e-05\\
98	-2.48617502978992e-05\\
99	-1.47279637768708e-05\\
100	-8.56090542217553e-06\\
101	-4.96121215650353e-06\\
102	-2.87260099821923e-06\\
103	-1.66252899564148e-06\\
104	-9.61928991059578e-07\\
105	-5.56466585389555e-07\\
106	-3.21872992583666e-07\\
107	-1.86164537679407e-07\\
108	-1.07668291831224e-07\\
109	-6.22679583136923e-08\\
110	-3.60107569951439e-08\\
111	-2.08254256167637e-08\\
112	-1.20434693536181e-08\\
113	-6.96477011513693e-09\\
114	-4.02772949744624e-09\\
115	-2.32923183970093e-09\\
116	-1.34699018529477e-09\\
117	-7.78960940793621e-10\\
118	-4.50470784232833e-10\\
119	-2.60505790053622e-10\\
120	-1.50649606842877e-10\\
121	-8.71201352079293e-11\\
122	-5.03812599282515e-11\\
123	-2.91352984789456e-11\\
124	-1.6848835718182e-11\\
125	-9.74361924931458e-12\\
126	-5.63469877020916e-12\\
127	-3.25852529166811e-12\\
128	-1.88439300942949e-12\\
129	-1.08973743443072e-12\\
130	-6.301910828159e-13\\
131	-3.64437145348353e-13\\
132	-2.10752637249341e-13\\
133	-1.21877461238923e-13\\
134	-7.04812795735806e-14\\
135	-4.07590601033712e-14\\
136	-2.35708118528417e-14\\
137	-1.36309122407755e-14\\
138	-7.88270551096398e-15\\
139	-4.55853908157035e-15\\
140	-2.63618608206148e-15\\
141	-1.52449654036885e-15\\
};
\addlegendentry{75-84}

\addplot [color=mycolor6, solid, line width=2pt, mark options={solid, mycolor6}]
  table[row sep=crcr]{%
1	1.6144863665\\
2	1.91488375921671\\
3	2.11256899616501\\
4	2.26740672199893\\
5	2.41416297476468\\
6	2.57565533323225\\
7	2.76427135446697\\
8	2.98569224151827\\
9	3.24225913212804\\
10	3.53510851648303\\
11	3.8652934316825\\
12	4.23428667919582\\
13	4.64416346595059\\
14	5.09763962630998\\
15	5.59805718477238\\
16	6.14936027127337\\
17	6.75607908726744\\
18	7.42332758020236\\
19	8.15681532377509\\
20	8.96287225740088\\
21	9.84848456635727\\
22	10.8213401860683\\
23	11.8898827804771\\
24	13.0633734052453\\
25	14.3519593623189\\
26	15.7667499714428\\
27	17.3198991324327\\
28	19.0246946398069\\
29	20.8956542482165\\
30	22.9486284797096\\
31	25.200910115944\\
32	27.6713502308905\\
33	30.3804804907944\\
34	33.3506412744961\\
35	36.6061149431006\\
36	40.1732633061732\\
37	44.0806679833478\\
38	48.3592719352008\\
39	53.0425199239504\\
40	58.1664950503123\\
41	63.7700477842033\\
42	69.894913050048\\
43	76.5858099285775\\
44	83.8905173837548\\
45	91.8599181057806\\
46	100.548001073197\\
47	110.011811779509\\
48	120.311337252525\\
49	131.509311040888\\
50	143.670921292971\\
51	156.863402972371\\
52	171.155493235427\\
53	186.61672716962\\
54	203.316549632068\\
55	221.323218060548\\
56	240.70247113991\\
57	261.515939440555\\
58	283.819277011407\\
59	307.659997871502\\
60	333.075008905866\\
61	360.087841347813\\
62	388.705597302223\\
63	219.083640865177\\
64	41.0442647996475\\
65	7.6911386618324\\
66	1.44283883889735\\
67	0.27223731857607\\
68	0.05287309776357\\
69	0.0117621333038809\\
70	0.00403978291691938\\
71	0.002528604376958\\
72	0.00214940661568291\\
73	0.00196952231999959\\
74	0.00182351025319701\\
75	0.0016849973503293\\
76	0.00155142224456496\\
77	0.00142357694267162\\
78	0.00129551214036657\\
79	0.00115553829936618\\
80	0.00100174706638911\\
81	0.000841311118431611\\
82	0.000684044835903704\\
83	0.000538785410628347\\
84	0.000413656443735292\\
85	0.000312139141422511\\
86	0.000230669878165164\\
87	0.000168687964804624\\
88	-0.00810546104978984\\
89	-0.00720556877006097\\
90	-0.00523451691883828\\
91	-0.00357720108727936\\
92	-0.0023726713651214\\
93	-0.00154255037914557\\
94	-0.000987455379671298\\
95	-0.000624213900027382\\
96	-0.000390511918504393\\
97	-0.000239753916769072\\
98	-0.000144033920890525\\
99	-8.49620968258729e-05\\
100	-4.91138864454671e-05\\
101	-2.82766549728705e-05\\
102	-1.62829173842122e-05\\
103	-9.38608812361305e-06\\
104	-5.41575578116437e-06\\
105	-3.12717027541516e-06\\
106	-1.80661480370005e-06\\
107	-1.0440681503326e-06\\
108	-6.03519178533008e-07\\
109	-3.48913998409757e-07\\
110	-2.01738304255904e-07\\
111	-1.16650400886999e-07\\
112	-6.74531651323095e-08\\
113	-3.90059027917165e-08\\
114	-2.25562088301346e-08\\
115	-1.30438847377478e-08\\
116	-7.54312161448261e-09\\
117	-4.36211818852306e-09\\
118	-2.52258121665124e-09\\
119	-1.45879346013455e-09\\
120	-8.4361260204896e-10\\
121	-4.87857180973438e-10\\
122	-2.821256685145e-10\\
123	-1.63152099755417e-10\\
124	-9.43502038598574e-11\\
125	-5.45623528022427e-11\\
126	-3.15531979789882e-11\\
127	-1.82470938490122e-11\\
128	-1.0552225144567e-11\\
129	-6.10231194810132e-12\\
130	-3.52894402124053e-12\\
131	-2.04077506504434e-12\\
132	-1.18017256025404e-12\\
133	-6.82489365608863e-13\\
134	-3.94681041990523e-13\\
135	-2.28242567744378e-13\\
136	-1.31991821917696e-13\\
137	-7.63303762413144e-14\\
138	-4.41415707141098e-14\\
139	-2.55269050392401e-14\\
140	-1.47621135931053e-14\\
141	-8.53687501244184e-15\\
};
\addlegendentry{85+}

\addplot[area legend, draw=none, fill=green, fill opacity=0.1, line width=2.0pt, forget plot]
table[row sep=crcr] {%
x	y\\
98	0\\
150	0\\
150	3500\\
98	3500\\
}--cycle;

\addplot [color=green, dashed, line width=2.0pt, mark options={solid, green}]
  table[row sep=crcr]{%
98	0\\
98	3500\\
};
\addlegendentry{Erad.}
\end{axis}
\end{tikzpicture}%

%% file: Figures/MPC_state_3.tex
%
%
\definecolor{mycolor1}{rgb}{0.00000,0.44700,0.74100}%
\definecolor{mycolor2}{rgb}{0.85000,0.32500,0.09800}%
\definecolor{mycolor3}{rgb}{0.92900,0.69400,0.12500}%
\definecolor{mycolor4}{rgb}{0.49400,0.18400,0.55600}%
\definecolor{mycolor5}{rgb}{0.46600,0.67400,0.18800}%
\definecolor{mycolor6}{rgb}{0.30100,0.74500,0.93300}%
\begin{tikzpicture}

\begin{axis}[%
width=7cm,
height=3.5cm,
at={(0cm,0cm)},
scale only axis,
xmin=61,
xmax=140,
ymin=0,
ymax=1200000,
xmajorgrids,
ymajorgrids,
axis background/.style={fill=white},
axis x line*=bottom,
axis y line*=left
]
\addplot [color=mycolor1, line width=2.0pt, forget plot]
  table[row sep=crcr]{%
1	0\\
2	4.29463568177537\\
3	10.9834023030202\\
4	19.4911544162207\\
5	29.5257282182798\\
6	40.9642415461055\\
7	53.7853873518582\\
8	68.0310573035046\\
9	83.7851224878908\\
10	101.161965611055\\
11	120.300529254238\\
12	141.361518531931\\
13	164.5264538773\\
14	189.997854902757\\
15	218.000158520477\\
16	248.781152709582\\
17	282.613807035573\\
18	319.798437871262\\
19	360.665179499164\\
20	405.576752343279\\
21	454.93153219867\\
22	509.166932663109\\
23	568.763118800506\\
24	634.247074389002\\
25	706.19704850482\\
26	785.247409994935\\
27	872.093940777221\\
28	967.499600963008\\
29	1072.30080054838\\
30	1187.41421384591\\
31	1313.84417387\\
32	1452.69068445472\\
33	1605.15808784897\\
34	1772.56442474104\\
35	1956.35152191561\\
36	2158.09583980334\\
37	2379.52010775807\\
38	2622.50576865146\\
39	2889.10624590808\\
40	3181.56103495083\\
41	3502.31060664882\\
42	3854.01209214178\\
43	4239.55569566054\\
44	4662.08175389484\\
45	5124.99832622178\\
46	5631.99915877767\\
47	6187.08181595916\\
48	6794.5657144823\\
49	7459.10972663968\\
50	8185.72893999198\\
51	8979.81006969937\\
52	9847.12491661685\\
53	10793.84114916\\
54	11826.5295604339\\
55	12952.1668157025\\
56	14178.1325615878\\
57	15512.1996214985\\
58	16962.5158575682\\
59	18537.5761458625\\
60	20246.1827993291\\
61	22097.3926952638\\
62	24100.449337258\\
63	26264.6981249527\\
64	28599.483240316\\
65	31104.9298232086\\
66	33773.1310800134\\
67	74455.7695285721\\
68	132603.644564376\\
69	165005.421727883\\
70	167844.654122999\\
71	170495.365544955\\
72	172989.60090099\\
73	175340.502272969\\
74	177553.055040639\\
75	179629.305316755\\
76	218626.792330323\\
77	275624.592703108\\
78	332427.520452073\\
79	388976.137349474\\
80	445252.019358305\\
81	501267.219077023\\
82	529510.955923275\\
83	531993.708730616\\
84	532639.932464362\\
85	532841.708004896\\
86	588179.418913935\\
87	643072.428313054\\
88	698333.152556028\\
89	753566.477809372\\
90	808781.300333077\\
91	863984.659464202\\
92	919181.409256508\\
93	974374.585895939\\
94	1029565.94618655\\
95	1029566.44139552\\
96	1029566.55665229\\
97	1052043.60960024\\
98	1056436.64004249\\
99	1056436.64578391\\
100	1056436.64679334\\
101	1056436.6471885\\
102	1056436.64739993\\
103	1056436.64752003\\
104	1056436.647589\\
105	1056436.64762873\\
106	1056436.64765165\\
107	1056436.64766489\\
108	1056436.64767253\\
109	1056436.64767695\\
110	1056436.64767951\\
111	1056436.64768098\\
112	1056436.64768184\\
113	1056436.64768233\\
114	1056436.64768261\\
115	1056436.64768278\\
116	1056436.64768288\\
117	1056436.64768293\\
118	1056436.64768296\\
119	1056436.64768298\\
120	1056436.64768299\\
121	1056436.647683\\
122	1056436.647683\\
123	1056436.647683\\
124	1056436.647683\\
125	1056436.64768301\\
126	1056436.64768301\\
127	1056436.64768301\\
128	1056436.64768301\\
129	1056436.64768301\\
130	1056436.64768301\\
131	1056436.64768301\\
132	1056436.64768301\\
133	1056436.64768301\\
134	1056436.64768301\\
135	1056436.64768301\\
136	1056436.64768301\\
137	1056436.64768301\\
138	1056436.64768301\\
139	1056436.64768301\\
140	1056436.64768301\\
141	1056436.64768301\\
};
\addplot [color=mycolor2, line width=2.0pt, forget plot]
  table[row sep=crcr]{%
1	0\\
2	3.13035311771256\\
3	5.99363697996894\\
4	8.81301644771832\\
5	11.7286320997933\\
6	14.8314323952082\\
7	18.1851086922459\\
8	21.8394191967353\\
9	25.8379249437466\\
10	30.2224138428075\\
11	35.0354689152852\\
12	40.3220343379938\\
13	46.1304569486418\\
14	52.5132644113878\\
15	59.5278219235952\\
16	67.2369451701362\\
17	75.7095131997929\\
18	85.0211070106206\\
19	95.2546903059363\\
20	106.501344088687\\
21	118.861064375482\\
22	132.443631216484\\
23	147.369556815285\\
24	163.771120540924\\
25	181.793498842764\\
26	201.595998423233\\
27	223.353401437379\\
28	247.257431934985\\
29	273.518353213499\\
30	302.366706183846\\
31	334.055199242422\\
32	368.860760464201\\
33	407.086763153045\\
34	449.065435869127\\
35	495.160467955635\\
36	545.769821254805\\
37	601.328758073296\\
38	662.313094453735\\
39	729.242686343325\\
40	802.685154216735\\
41	883.259848986073\\
42	971.642058473118\\
43	1068.56744916448\\
44	1174.8367322327\\
45	1291.32053567606\\
46	1418.96445567475\\
47	1558.79424962783\\
48	1711.92112055339\\
49	1879.54702732146\\
50	2062.96993726491\\
51	2263.58891680757\\
52	2482.90893163022\\
53	2722.5452003921\\
54	2984.22691506895\\
55	3269.80010663332\\
56	3581.22939737128\\
57	3920.59834115716\\
58	4290.10801140896\\
59	4692.07345458592\\
60	5128.91758687902\\
61	5603.1620757499\\
62	6117.41471949415\\
63	6674.35282115129\\
64	7276.70205279021\\
65	7920.17996996011\\
66	8599.49067208886\\
67	9311.69095949914\\
68	10053.8795307135\\
69	10823.0577067716\\
70	11608.0025311918\\
71	12388.5356768127\\
72	13146.4968028434\\
73	13872.1116892158\\
74	14560.1363098495\\
75	15207.9295912462\\
76	15814.4340522375\\
77	16379.6111055873\\
78	16904.1113149666\\
79	17383.6503443024\\
80	17810.7286953654\\
81	18179.2904187004\\
82	18486.910461852\\
83	18735.2114477968\\
84	58091.2160215723\\
85	58239.2302122755\\
86	58347.5428684992\\
87	58640.8690192272\\
88	58698.8144473195\\
89	58740.4048417709\\
90	58769.575092253\\
91	58789.5456531122\\
92	58802.910104634\\
93	58811.6746437456\\
94	58817.3239726477\\
95	114010.913165898\\
96	169203.166530296\\
97	201917.517170812\\
98	252715.301291529\\
99	252715.750683093\\
100	252716.000424642\\
101	252716.140903478\\
102	252716.220643192\\
103	252716.266189971\\
104	252716.292315663\\
105	252716.307343337\\
106	252716.316003319\\
107	252716.320999856\\
108	252716.323884994\\
109	252716.32555182\\
110	252716.326515121\\
111	252716.327071962\\
112	252716.327393892\\
113	252716.32758003\\
114	252716.32768766\\
115	252716.327749898\\
116	252716.327785888\\
117	252716.3278067\\
118	252716.327818735\\
119	252716.327825695\\
120	252716.32782972\\
121	252716.327832047\\
122	252716.327833393\\
123	252716.327834172\\
124	252716.327834622\\
125	252716.327834882\\
126	252716.327835033\\
127	252716.32783512\\
128	252716.32783517\\
129	252716.327835199\\
130	252716.327835216\\
131	252716.327835226\\
132	252716.327835232\\
133	252716.327835235\\
134	252716.327835237\\
135	252716.327835238\\
136	252716.327835238\\
137	252716.327835239\\
138	252716.327835239\\
139	252716.327835239\\
140	252716.327835239\\
141	252716.327835239\\
};
\addplot [color=mycolor3, line width=2.0pt, forget plot]
  table[row sep=crcr]{%
1	0\\
2	2.76332081289318\\
3	4.53285380309558\\
4	5.88719090711098\\
5	7.08987683975862\\
6	8.26793475218234\\
7	9.48767244104269\\
8	10.7877399204971\\
9	12.1940512268626\\
10	13.7267865607043\\
11	15.4038243359572\\
12	17.2425132723664\\
13	19.2606512408868\\
14	21.4770771369171\\
15	23.9120724142912\\
16	26.5876705648774\\
17	29.5279254004995\\
18	32.7591655449052\\
19	36.3102506933262\\
20	40.2128390992851\\
21	44.5016725719945\\
22	49.2148836105955\\
23	54.3943284535773\\
24	60.0859494156922\\
25	66.3401697260611\\
26	73.2123240642561\\
27	80.7631280564633\\
28	89.0591901062684\\
29	98.1735690719099\\
30	108.186381448341\\
31	119.185461855576\\
32	131.267080763061\\
33	144.536723481308\\
34	159.109934513798\\
35	175.113231368963\\
36	192.685091866286\\
37	211.97701881112\\
38	233.154685634665\\
39	256.399166169156\\
40	281.908251118668\\
41	309.897852952102\\
42	340.603499839258\\
43	374.281917818236\\
44	411.212698559806\\
45	451.700047810437\\
46	496.07460677053\\
47	544.695335210158\\
48	597.951440945963\\
49	656.264335299236\\
50	720.08958822309\\
51	789.918849824606\\
52	866.281696921875\\
53	949.747353988956\\
54	1040.92622730413\\
55	1140.47117932198\\
56	1249.07845729183\\
57	1367.48817608278\\
58	1496.48424029878\\
59	1636.89357546856\\
60	1789.58452294491\\
61	1955.4642389294\\
62	2135.47492578423\\
63	2330.58871481326\\
64	2541.8010155955\\
65	2767.55696549906\\
66	3005.20043689685\\
67	3252.30688524823\\
68	3507.22619951732\\
69	3768.76753562427\\
70	4034.96587054797\\
71	4301.39523033234\\
72	4562.51418249878\\
73	4813.90583224214\\
74	5052.77013674165\\
75	5277.58631664244\\
76	5487.69717910667\\
77	5682.99567144592\\
78	5863.71981975721\\
79	6029.3398408159\\
80	6178.28076321121\\
81	6308.82679905563\\
82	33961.9538400421\\
83	87171.7039335882\\
84	102907.160613371\\
85	158150.811740558\\
86	158191.571117815\\
87	158278.898396717\\
88	158300.115077864\\
89	158315.370946672\\
90	158326.1920374\\
91	158333.733917266\\
92	158338.892422533\\
93	158342.356600856\\
94	158344.643837624\\
95	158346.131272737\\
96	158347.085819602\\
97	158347.691380542\\
98	158348.070196422\\
99	158348.303292299\\
100	158348.44439158\\
101	158348.528353094\\
102	158348.577799014\\
103	158348.606729723\\
104	158348.623587177\\
105	158348.633383688\\
106	158348.639067056\\
107	158348.642360552\\
108	158348.644267744\\
109	158348.645371638\\
110	158348.646010383\\
111	158348.646379905\\
112	158348.646593651\\
113	158348.646717279\\
114	158348.64678878\\
115	158348.646830132\\
116	158348.646854046\\
117	158348.646867876\\
118	158348.646875874\\
119	158348.6468805\\
120	158348.646883175\\
121	158348.646884721\\
122	158348.646885616\\
123	158348.646886133\\
124	158348.646886432\\
125	158348.646886605\\
126	158348.646886705\\
127	158348.646886763\\
128	158348.646886797\\
129	158348.646886816\\
130	158348.646886827\\
131	158348.646886834\\
132	158348.646886838\\
133	158348.64688684\\
134	158348.646886841\\
135	158348.646886842\\
136	158348.646886842\\
137	158348.646886842\\
138	158348.646886842\\
139	158348.646886843\\
140	158348.646886843\\
141	158348.646886843\\
};
\addplot [color=mycolor4, line width=2.0pt, forget plot]
  table[row sep=crcr]{%
1	0\\
2	0.144981564007577\\
3	1.73622969244256\\
4	3.74124399809646\\
5	5.91788388681085\\
6	8.23224826380028\\
7	10.705820809838\\
8	13.3726209055639\\
9	16.2679132934087\\
10	19.4261844469212\\
11	22.8816278222187\\
12	26.6691162535201\\
13	30.8251211676301\\
14	35.3884849657269\\
15	40.4010712273574\\
16	45.9083369387699\\
17	51.9598649176146\\
18	58.6098848336909\\
19	65.9178032129327\\
20	73.948757249055\\
21	82.7742036541361\\
22	92.4725515611032\\
23	103.129847183013\\
24	114.840517216603\\
25	127.708177640726\\
26	141.846514467332\\
27	157.380243063599\\
28	174.44615281769\\
29	193.19424412363\\
30	213.788964879216\\
31	236.410553895827\\
32	261.256498783352\\
33	288.543115969162\\
34	318.507260506374\\
35	351.408173188735\\
36	387.529472176316\\
37	427.181295800251\\
38	470.702602399393\\
39	518.463631880883\\
40	570.868532112646\\
41	628.358151158607\\
42	691.412993652765\\
43	760.5563361572\\
44	836.357492026533\\
45	919.435210956603\\
46	1010.46119186158\\
47	1110.16367982149\\
48	1219.33110838043\\
49	1338.81573725891\\
50	1469.53722237896\\
51	1612.48603981061\\
52	1768.72666768482\\
53	1939.40041018832\\
54	2125.72772544507\\
55	2329.00989450011\\
56	2550.62984201749\\
57	2792.05189115745\\
58	3054.82020615697\\
59	3340.55564748586\\
60	3650.95073758186\\
61	3987.76241205249\\
62	4352.80221437401\\
63	4747.92358459394\\
64	5175.00589799959\\
65	5626.73340344414\\
66	6090.9382929094\\
67	6554.8537217557\\
68	7013.97794717574\\
69	33229.9531066389\\
70	88870.5339701593\\
71	144479.012004805\\
72	200012.507858573\\
73	255463.781693804\\
74	310838.854253023\\
75	366147.844965924\\
76	384344.449492927\\
77	384362.684443225\\
78	384364.726520369\\
79	384364.955586557\\
80	384364.981616378\\
81	384364.984858984\\
82	384364.985493891\\
83	384364.98578276\\
84	384364.985984073\\
85	384364.986134747\\
86	384364.986247468\\
87	384393.986330501\\
88	384393.986391051\\
89	384393.985237958\\
90	384393.984302052\\
91	384393.983648971\\
92	384393.98321108\\
93	384393.982923841\\
94	384393.982738542\\
95	384393.982620629\\
96	384393.98254644\\
97	384393.9825002\\
98	384393.98247181\\
99	384393.982454724\\
100	384393.982444626\\
101	384393.982438771\\
102	384393.982435385\\
103	384393.982433428\\
104	384393.982432297\\
105	384393.982431643\\
106	384393.982431264\\
107	384393.982431046\\
108	384393.982430919\\
109	384393.982430846\\
110	384393.982430804\\
111	384393.982430779\\
112	384393.982430765\\
113	384393.982430757\\
114	384393.982430752\\
115	384393.98243075\\
116	384393.982430748\\
117	384393.982430747\\
118	384393.982430746\\
119	384393.982430746\\
120	384393.982430746\\
121	384393.982430746\\
122	384393.982430746\\
123	384393.982430746\\
124	384393.982430746\\
125	384393.982430746\\
126	384393.982430746\\
127	384393.982430746\\
128	384393.982430746\\
129	384393.982430746\\
130	384393.982430746\\
131	384393.982430746\\
132	384393.982430746\\
133	384393.982430746\\
134	384393.982430746\\
135	384393.982430746\\
136	384393.982430746\\
137	384393.982430746\\
138	384393.982430746\\
139	384393.982430746\\
140	384393.982430746\\
141	384393.982430746\\
};
\addplot [color=mycolor5, line width=2.0pt, forget plot]
  table[row sep=crcr]{%
1	0\\
2	1.22489296657866\\
3	2.68227660029449\\
4	4.29947341840747\\
5	6.01508638444028\\
6	7.82546637978117\\
7	9.75124124476728\\
8	11.8197401583686\\
9	14.0590108071114\\
10	16.4965388109992\\
11	19.1595218795925\\
12	22.0755358757182\\
13	25.2732178697597\\
14	28.7828723775511\\
15	32.6369980318831\\
16	36.8707561142531\\
17	41.5224047517269\\
18	46.6337189871589\\
19	52.2504125766382\\
20	58.4225737406134\\
21	65.2051244328696\\
22	72.6583108652577\\
23	80.8482318368696\\
24	89.8474106857598\\
25	99.7354162706203\\
26	110.599538196994\\
27	122.535521453989\\
28	135.648365669555\\
29	150.053194285241\\
30	165.87619906361\\
31	183.255665446565\\
32	202.343084356264\\
33	223.304356047664\\
34	246.321091556738\\
35	271.592017112046\\
36	299.334486555881\\
37	329.786106315813\\
38	363.206476732527\\
39	399.879052532184\\
40	440.11312386922\\
41	484.245917586925\\
42	532.64481606601\\
43	585.709688162202\\
44	643.875323167414\\
45	707.613954348502\\
46	777.437853295484\\
47	853.901969910989\\
48	937.60658525255\\
49	1029.1999354563\\
50	1129.38075448861\\
51	1238.90067137215\\
52	1358.56638372621\\
53	1489.24151391002\\
54	1631.8480367949\\
55	1787.36714935457\\
56	1956.83943212491\\
57	2141.36413160007\\
58	2342.09737147929\\
59	2560.24908031613\\
60	2797.078404833\\
61	3053.88736361116\\
62	3332.01248711863\\
63	18573.8141895934\\
64	74088.6636290527\\
65	129583.240090562\\
66	184984.993739166\\
67	202430.901147384\\
68	202463.729813003\\
69	202466.332118465\\
70	202466.538529205\\
71	202466.555027798\\
72	202466.556467509\\
73	202466.556704265\\
74	202466.556835209\\
75	202466.556947322\\
76	202466.557047988\\
77	202466.557138466\\
78	202466.557219589\\
79	202466.557292554\\
80	202466.557357307\\
81	202466.557413184\\
82	202466.557459889\\
83	202466.557497682\\
84	202466.557527315\\
85	202466.557549964\\
86	202466.557566971\\
87	202492.557579506\\
88	202492.557588637\\
89	202492.556183147\\
90	202492.55508898\\
91	202492.554330475\\
92	202492.553820644\\
93	202492.553484738\\
94	202492.553267045\\
95	202492.553127913\\
96	202492.553040026\\
97	202492.552985054\\
98	202492.552951238\\
99	202492.552930851\\
100	202492.552918773\\
101	202492.552911753\\
102	202492.552907684\\
103	202492.552905329\\
104	202492.552903965\\
105	202492.552903177\\
106	202492.55290272\\
107	202492.552902456\\
108	202492.552902304\\
109	202492.552902215\\
110	202492.552902164\\
111	202492.552902135\\
112	202492.552902118\\
113	202492.552902108\\
114	202492.552902102\\
115	202492.552902099\\
116	202492.552902097\\
117	202492.552902096\\
118	202492.552902095\\
119	202492.552902095\\
120	202492.552902095\\
121	202492.552902094\\
122	202492.552902094\\
123	202492.552902094\\
124	202492.552902094\\
125	202492.552902094\\
126	202492.552902094\\
127	202492.552902094\\
128	202492.552902094\\
129	202492.552902094\\
130	202492.552902094\\
131	202492.552902094\\
132	202492.552902094\\
133	202492.552902094\\
134	202492.552902094\\
135	202492.552902094\\
136	202492.552902094\\
137	202492.552902094\\
138	202492.552902094\\
139	202492.552902094\\
140	202492.552902094\\
141	202492.552902094\\
};
\addplot [color=mycolor6, line width=2.0pt, forget plot]
  table[row sep=crcr]{%
1	0\\
2	1.06753654407652\\
3	2.33370294413975\\
4	3.73058349518353\\
5	5.22984641015384\\
6	6.82614802805157\\
7	8.5292322102561\\
8	10.3570337636596\\
9	12.3312440408861\\
10	14.475102410168\\
11	16.8125997139099\\
12	19.3684230875351\\
13	22.1682335176085\\
14	25.239064166563\\
15	28.6097439707015\\
16	32.3113116919143\\
17	36.3774140440252\\
18	40.8446932171313\\
19	45.7531728642508\\
20	51.1466519434767\\
21	57.0731149184523\\
22	63.5851656836453\\
23	70.7404915781311\\
24	78.6023630911511\\
25	87.240174340836\\
26	96.730029081196\\
27	107.155376811026\\
28	118.607703474939\\
29	131.187281221339\\
30	145.003981681138\\
31	160.178157225051\\
32	176.841594619744\\
33	195.138545407538\\
34	215.226837153746\\
35	237.279069410184\\
36	261.48389779975\\
37	288.047408996887\\
38	317.194588518473\\
39	349.170882098255\\
40	384.243849937234\\
41	422.704911235428\\
42	464.871174041799\\
43	511.08734252375\\
44	561.727690161802\\
45	617.198083016708\\
46	677.938031986601\\
47	744.422746758259\\
48	817.165156847042\\
49	896.717856608899\\
50	983.674921304851\\
51	1078.67353014036\\
52	1182.3953196679\\
53	1295.56737707236\\
54	1418.96276778193\\
55	1553.40047580563\\
56	1699.74461858366\\
57	1858.90278152834\\
58	2031.82330164182\\
59	2219.49131569875\\
60	2422.92337786621\\
61	2643.16044601591\\
62	58071.2590374272\\
63	98577.2803654591\\
64	98722.1436468629\\
65	98749.2830848338\\
66	98754.3686474978\\
67	98755.3226866397\\
68	98755.5026963891\\
69	98755.537657331\\
70	98755.5454347316\\
71	98755.5481059315\\
72	98755.5497779045\\
73	98755.5511991429\\
74	98755.5525014376\\
75	98755.5537071857\\
76	98755.5548213458\\
77	98755.5558471829\\
78	98755.5567884856\\
79	98755.5576451088\\
80	98755.558409178\\
81	98755.5590715569\\
82	98755.5596278517\\
83	98755.5600801583\\
84	98755.5604364159\\
85	98755.5607099354\\
86	98755.5609163291\\
87	98831.5610688535\\
88	98831.561180394\\
89	98831.5558208716\\
90	98831.5510563792\\
91	98831.5475951929\\
92	98831.545229863\\
93	98831.5436609966\\
94	98831.5426410258\\
95	98831.541988097\\
96	98831.541575352\\
97	98831.5413171363\\
98	98831.5411586053\\
99	98831.5410633667\\
100	98831.5410071878\\
101	98831.5409747125\\
102	98831.5409560153\\
103	98831.5409452486\\
104	98831.5409390423\\
105	98831.5409354613\\
106	98831.5409333936\\
107	98831.540932199\\
108	98831.5409315086\\
109	98831.5409311096\\
110	98831.5409308789\\
111	98831.5409307455\\
112	98831.5409306683\\
113	98831.5409306237\\
114	98831.540930598\\
115	98831.540930583\\
116	98831.5409305744\\
117	98831.5409305694\\
118	98831.5409305665\\
119	98831.5409305649\\
120	98831.5409305639\\
121	98831.5409305634\\
122	98831.540930563\\
123	98831.5409305628\\
124	98831.5409305627\\
125	98831.5409305627\\
126	98831.5409305627\\
127	98831.5409305626\\
128	98831.5409305626\\
129	98831.5409305626\\
130	98831.5409305626\\
131	98831.5409305626\\
132	98831.5409305626\\
133	98831.5409305626\\
134	98831.5409305626\\
135	98831.5409305626\\
136	98831.5409305626\\
137	98831.5409305626\\
138	98831.5409305626\\
139	98831.5409305626\\
140	98831.5409305626\\
141	98831.5409305626\\
};

\addplot[area legend, draw=none, fill=green, fill opacity=0.1, line width=2.0pt, forget plot]
table[row sep=crcr] {%
x	y\\
98	0\\
150	0\\
150	1200000\\
98	1200000\\
}--cycle;
\addplot [color=green, dashed, line width=2.0pt, forget plot]
  table[row sep=crcr]{%
98	0\\
98	1200000\\
};
\end{axis}
\end{tikzpicture}%

%% file: Figures/MPC_state_4.tex
%
%
\definecolor{mycolor1}{rgb}{0.00000,0.44700,0.74100}%
\definecolor{mycolor2}{rgb}{0.85000,0.32500,0.09800}%
\definecolor{mycolor3}{rgb}{0.92900,0.69400,0.12500}%
\definecolor{mycolor4}{rgb}{0.49400,0.18400,0.55600}%
\definecolor{mycolor5}{rgb}{0.46600,0.67400,0.18800}%
\definecolor{mycolor6}{rgb}{0.30100,0.74500,0.93300}%
\begin{tikzpicture}

\begin{axis}[%
width=7cm,
height=3.5cm,
at={(0cm,0cm)},
scale only axis,
xmin=61,
xmax=140,
ymin=0,
ymax=800,
xmajorgrids,
ymajorgrids,
axis background/.style={fill=white},
axis x line*=bottom,
axis y line*=left
]
\addplot [color=mycolor1, line width=2.0pt, forget plot]
  table[row sep=crcr]{%
1	0\\
2	0.00205352335266638\\
3	0.00525182455328961\\
4	0.00931989200713513\\
5	0.0141180246459586\\
6	0.0195874651244966\\
7	0.0257180252629923\\
8	0.0325297359848644\\
9	0.0400627010959333\\
10	0.048371613840375\\
11	0.0575229110143233\\
12	0.0675934353886098\\
13	0.0786699827885641\\
14	0.0908493900088023\\
15	0.104238973821805\\
16	0.118957216548155\\
17	0.135134641337862\\
18	0.152914847492578\\
19	0.172455692048125\\
20	0.193930613432432\\
21	0.217530099048604\\
22	0.243463302618713\\
23	0.27195981990942\\
24	0.303271633527648\\
25	0.337675239099494\\
26	0.375473966485274\\
27	0.417000510824364\\
28	0.462619689186677\\
29	0.512731439445091\\
30	0.567774078664699\\
31	0.628227838802702\\
32	0.694618697783339\\
33	0.76752252399592\\
34	0.847569551884924\\
35	0.935449205464931\\
36	1.03191528518579\\
37	1.13779153145777\\
38	1.25397757515994\\
39	1.38145528140676\\
40	1.5212954875148\\
41	1.6746651292366\\
42	1.84283474061763\\
43	2.02718630195185\\
44	2.22922139689045\\
45	2.45056962338697\\
46	2.69299718339932\\
47	2.95841555264993\\
48	3.24889010379286\\
49	3.5666485235887\\
50	3.91408882672011\\
51	4.29378672535344\\
52	4.70850206426271\\
53	5.16118397628744\\
54	5.654974352402\\
55	6.19320915544583\\
56	6.77941703785782\\
57	7.41731465352169\\
58	8.11079798486894\\
59	8.86392894254139\\
60	9.6809164412259\\
61	10.566091118119\\
62	11.5238728476621\\
63	12.5587302269204\\
64	13.6751312707161\\
65	14.8731358159354\\
66	16.1489631494888\\
67	17.4972017031715\\
68	18.9115394748405\\
69	20.3344373293719\\
70	21.6920449261607\\
71	22.9595093188366\\
72	24.152153102895\\
73	25.2762602972864\\
74	26.3342147116034\\
75	27.3269947152898\\
76	28.2553363062141\\
77	29.1202324420116\\
78	29.8919482678918\\
79	30.5420627959626\\
80	31.0617663871655\\
81	31.4568221129757\\
82	31.7426360127987\\
83	31.9395197691993\\
84	32.0749460531434\\
85	32.1714270622874\\
86	32.2415783626832\\
87	32.2927459858135\\
88	32.3265634956812\\
89	32.3472799024019\\
90	32.3591490423889\\
91	32.3655368456322\\
92	32.3687643268792\\
93	32.3702832690524\\
94	32.3709337057251\\
95	32.371170494887\\
96	32.3712256060723\\
97	32.3712509236666\\
98	32.3712654799355\\
99	32.3712682252526\\
100	32.3712687079247\\
101	32.3712688968754\\
102	32.3712689979727\\
103	32.3712690553989\\
104	32.3712690883784\\
105	32.3712691073764\\
106	32.3712691183359\\
107	32.3712691246638\\
108	32.3712691283193\\
109	32.3712691304319\\
110	32.371269131653\\
111	32.371269132359\\
112	32.3712691327672\\
113	32.3712691330032\\
114	32.3712691331397\\
115	32.3712691332186\\
116	32.3712691332642\\
117	32.3712691332906\\
118	32.3712691333059\\
119	32.3712691333147\\
120	32.3712691333198\\
121	32.3712691333228\\
122	32.3712691333245\\
123	32.3712691333255\\
124	32.371269133326\\
125	32.3712691333264\\
126	32.3712691333266\\
127	32.3712691333267\\
128	32.3712691333267\\
129	32.3712691333268\\
130	32.3712691333268\\
131	32.3712691333268\\
132	32.3712691333268\\
133	32.3712691333268\\
134	32.3712691333268\\
135	32.3712691333268\\
136	32.3712691333268\\
137	32.3712691333268\\
138	32.3712691333268\\
139	32.3712691333268\\
140	32.3712691333268\\
141	32.3712691333268\\
};
\addplot [color=mycolor2, line width=2.0pt, forget plot]
  table[row sep=crcr]{%
1	0\\
2	0.00792471824437081\\
3	0.0151733311032985\\
4	0.0223107967711348\\
5	0.0296918913897686\\
6	0.0375468576459977\\
7	0.0460369348792874\\
8	0.0552880896329352\\
9	0.0654105998584643\\
10	0.0765102547101886\\
11	0.0886948562262939\\
12	0.102078183882955\\
13	0.116782631241678\\
14	0.13294117593241\\
15	0.150699042154799\\
16	0.17021525241692\\
17	0.191664179076784\\
18	0.215237160967964\\
19	0.241144226783254\\
20	0.269615954754126\\
21	0.300905492122249\\
22	0.335290755126997\\
23	0.373076829240376\\
24	0.414598589375783\\
25	0.460223560350273\\
26	0.510355038751714\\
27	0.56543549840999\\
28	0.625950302802667\\
29	0.69243174887093\\
30	0.765463467820007\\
31	0.845685209468571\\
32	0.933798037525921\\
33	1.03056996373564\\
34	1.13684204903658\\
35	1.25353499964457\\
36	1.38165628511757\\
37	1.52230780387181\\
38	1.67669411907706\\
39	1.84613128414785\\
40	2.03205627189918\\
41	2.23603701453813\\
42	2.45978305265654\\
43	2.70515677985952\\
44	2.97418525513976\\
45	3.26907253705632\\
46	3.59221247161288\\
47	3.94620183881129\\
48	4.33385373049752\\
49	4.75821099360553\\
50	5.2225595275264\\
51	5.73044117140642\\
52	6.28566585611891\\
53	6.89232262602924\\
54	7.55478905730304\\
55	8.27773851258592\\
56	9.06614457712155\\
57	9.92528192018206\\
58	10.8607227203739\\
59	11.8783276874067\\
60	12.9842306111187\\
61	14.1848152774243\\
62	15.4866835187535\\
63	16.8966131238879\\
64	18.42150433135\\
65	20.0505158192848\\
66	21.7702406274207\\
67	23.5732278301586\\
68	25.4521325702614\\
69	27.3993634722607\\
70	29.3865087996392\\
71	31.3624856389416\\
72	33.2813197570452\\
73	35.1182669997973\\
74	36.8600517310025\\
75	38.4999878796172\\
76	40.0353983545932\\
77	41.4661854695157\\
78	42.7939961739777\\
79	44.0079843573436\\
80	45.0891644904391\\
81	46.0222054935652\\
82	46.800968168774\\
83	47.4295603050024\\
84	47.9206970961173\\
85	48.2954058632702\\
86	48.5696072873893\\
87	48.7704272493202\\
88	48.9171203455664\\
89	49.0224094759955\\
90	49.0962561020775\\
91	49.146813039918\\
92	49.1806461280133\\
93	49.2028342008566\\
94	49.2171358908372\\
95	49.2262221964916\\
96	49.2319267535449\\
97	49.2353459989572\\
98	49.2373310579952\\
99	49.2384687256631\\
100	49.2391009646895\\
101	49.2394565971534\\
102	49.2396584640812\\
103	49.239773769089\\
104	49.2398399081929\\
105	49.2398779518509\\
106	49.2398998752301\\
107	49.2399125243279\\
108	49.2399198282665\\
109	49.2399240479598\\
110	49.2399264866274\\
111	49.2399278963089\\
112	49.2399287112995\\
113	49.2399291825214\\
114	49.2399294549954\\
115	49.2399296125542\\
116	49.2399297036653\\
117	49.2399297563528\\
118	49.2399297868211\\
119	49.2399298044406\\
120	49.2399298146298\\
121	49.2399298205221\\
122	49.2399298239296\\
123	49.2399298259002\\
124	49.2399298270397\\
125	49.2399298276987\\
126	49.2399298280798\\
127	49.2399298283002\\
128	49.2399298284277\\
129	49.2399298285014\\
130	49.239929828544\\
131	49.2399298285686\\
132	49.2399298285829\\
133	49.2399298285911\\
134	49.2399298285959\\
135	49.2399298285987\\
136	49.2399298286002\\
137	49.2399298286012\\
138	49.2399298286017\\
139	49.239929828602\\
140	49.2399298286022\\
141	49.2399298286023\\
};
\addplot [color=mycolor3, line width=2.0pt, forget plot]
  table[row sep=crcr]{%
1	0\\
2	0.112690712840141\\
3	0.184853862746458\\
4	0.240084950271744\\
5	0.289131566372381\\
6	0.337173829615426\\
7	0.386915831700138\\
8	0.439933755032252\\
9	0.497284397364492\\
10	0.559790725460741\\
11	0.628181837151906\\
12	0.703165228862507\\
13	0.785466713808157\\
14	0.875854559119352\\
15	0.975155860759522\\
16	1.08426916438193\\
17	1.20417540610817\\
18	1.33594829094\\
19	1.48076474325507\\
20	1.63991581516001\\
21	1.81481830894113\\
22	2.00702730227545\\
23	2.21824973028615\\
24	2.4503591619685\\
25	2.70541190204352\\
26	2.98566454860751\\
27	3.29359313960854\\
28	3.63191402576716\\
29	4.00360661315731\\
30	4.41193812463795\\
31	4.86049053516305\\
32	5.35318984122802\\
33	5.89433782885078\\
34	6.48864650700303\\
35	7.14127537368635\\
36	7.85787167916477\\
37	8.64461384436313\\
38	9.5082581810978\\
39	10.4561890434171\\
40	11.4964725144667\\
41	12.6379136992921\\
42	13.8901176488973\\
43	15.2635538824578\\
44	16.7696244002538\\
45	18.420734986753\\
46	20.2303694880559\\
47	22.2131666070527\\
48	24.3849985892301\\
49	26.7630509680134\\
50	29.365902296615\\
51	32.2136025094385\\
52	35.3277482263393\\
53	38.7315529343103\\
54	42.4499095514307\\
55	46.5094423969074\\
56	50.9385450609718\\
57	55.76740009492\\
58	61.0279758348949\\
59	66.7539950491193\\
60	72.9808694804421\\
61	79.7455937762219\\
62	87.0865917978288\\
63	95.0435079358437\\
64	103.656935889885\\
65	112.863466960701\\
66	122.554781870165\\
67	132.632005507097\\
68	143.027844856501\\
69	153.693736223706\\
70	164.549544199184\\
71	175.414773576662\\
72	186.063439746153\\
73	196.315417757341\\
74	206.056519340813\\
75	215.224725744069\\
76	223.793236050971\\
77	231.757684556417\\
78	239.127778179134\\
79	245.881911881823\\
80	251.955856911109\\
81	257.279641890805\\
82	261.811502761302\\
83	265.553154254181\\
84	268.508002948067\\
85	270.736725162094\\
86	272.398929295837\\
87	273.594922156919\\
88	274.460157532913\\
89	275.082305610325\\
90	275.523599441343\\
91	275.83116409845\\
92	276.041532591651\\
93	276.182804905306\\
94	276.276080481793\\
95	276.336739426075\\
96	276.375666707403\\
97	276.400362028216\\
98	276.415810481151\\
99	276.425316341073\\
100	276.431070496913\\
101	276.434494523102\\
102	276.436510972251\\
103	276.437690792659\\
104	276.438378254815\\
105	276.438777765363\\
106	276.439009538236\\
107	276.439143849951\\
108	276.439221626964\\
109	276.43926664477\\
110	276.439292693365\\
111	276.439307762818\\
112	276.439316479558\\
113	276.439321521221\\
114	276.4393244371\\
115	276.439326123456\\
116	276.439327098713\\
117	276.439327662717\\
118	276.439327988884\\
119	276.439328177508\\
120	276.439328286589\\
121	276.439328349671\\
122	276.439328386151\\
123	276.439328407247\\
124	276.439328419447\\
125	276.439328426502\\
126	276.439328430582\\
127	276.439328432941\\
128	276.439328434306\\
129	276.439328435095\\
130	276.439328435551\\
131	276.439328435815\\
132	276.439328435968\\
133	276.439328436056\\
134	276.439328436107\\
135	276.439328436137\\
136	276.439328436154\\
137	276.439328436163\\
138	276.439328436169\\
139	276.439328436172\\
140	276.439328436174\\
141	276.439328436175\\
};
\addplot [color=mycolor4, line width=2.0pt, forget plot]
  table[row sep=crcr]{%
1	0\\
2	0.00679340447442321\\
3	0.0813545545739133\\
4	0.175303555942114\\
5	0.27729442119742\\
6	0.38573864596288\\
7	0.501642891561894\\
8	0.626601204894404\\
9	0.762265986806404\\
10	0.910253170780997\\
11	1.07216496037678\\
12	1.24963539278765\\
13	1.44437341049702\\
14	1.65819905278897\\
15	1.89307392237178\\
16	2.1511279982941\\
17	2.43468458378129\\
18	2.74628471971661\\
19	3.0887120190501\\
20	3.46501861676365\\
21	3.87855276165622\\
22	4.33298847227015\\
23	4.83235761798685\\
24	5.38108475270394\\
25	5.9840250127223\\
26	6.64650538611481\\
27	7.37436966370559\\
28	8.17402738899736\\
29	9.05250713389798\\
30	10.0175144373313\\
31	11.0774947534215\\
32	12.2417017636408\\
33	13.5202714117946\\
34	14.9243020205465\\
35	16.465940841723\\
36	18.1584773779641\\
37	20.0164437881747\\
38	22.0557226510232\\
39	24.2936623063414\\
40	26.7491999200586\\
41	29.442992320031\\
42	32.3975545229293\\
43	35.6374057106406\\
44	39.1892222120978\\
45	43.0819967960091\\
46	47.3472032738224\\
47	52.0189650419779\\
48	57.134225749159\\
49	62.7329197486622\\
50	68.8581393791485\\
51	75.5562954005784\\
52	82.8772660891817\\
53	90.8745295614481\\
54	99.605272851721\\
55	109.130470115814\\
56	119.514921086473\\
57	130.827239587645\\
58	143.139780558448\\
59	156.528492695141\\
60	171.07268256046\\
61	186.854674926518\\
62	203.959353327587\\
63	222.47356444654\\
64	242.485370214378\\
65	263.651976311597\\
66	285.403235478435\\
67	307.140931382403\\
68	328.65412575739\\
69	349.879193207611\\
70	370.992071077858\\
71	390.600705958241\\
72	406.695899067113\\
73	418.938413165081\\
74	427.610362322916\\
75	433.185912890102\\
76	436.166231672653\\
77	437.020667157563\\
78	437.116352813173\\
79	437.127086172827\\
80	437.128305852724\\
81	437.128457791623\\
82	437.128487541444\\
83	437.128501077011\\
84	437.128510509914\\
85	437.128517570066\\
86	437.128522851829\\
87	437.128526742513\\
88	437.128529579692\\
89	437.128475549202\\
90	437.128431695424\\
91	437.128401093982\\
92	437.128380575727\\
93	437.128367116545\\
94	437.128358434021\\
95	437.128352908948\\
96	437.128349432671\\
97	437.128347265999\\
98	437.128345935766\\
99	437.128345135127\\
100	437.128344661986\\
101	437.12834438763\\
102	437.128344228991\\
103	437.128344137287\\
104	437.128344084272\\
105	437.128344053621\\
106	437.128344035897\\
107	437.128344025649\\
108	437.128344019723\\
109	437.128344016296\\
110	437.128344014314\\
111	437.128344013168\\
112	437.128344012506\\
113	437.128344012122\\
114	437.128344011901\\
115	437.128344011773\\
116	437.128344011698\\
117	437.128344011656\\
118	437.128344011631\\
119	437.128344011616\\
120	437.128344011608\\
121	437.128344011603\\
122	437.128344011601\\
123	437.128344011599\\
124	437.128344011598\\
125	437.128344011598\\
126	437.128344011597\\
127	437.128344011597\\
128	437.128344011597\\
129	437.128344011597\\
130	437.128344011597\\
131	437.128344011597\\
132	437.128344011597\\
133	437.128344011597\\
134	437.128344011597\\
135	437.128344011597\\
136	437.128344011597\\
137	437.128344011597\\
138	437.128344011597\\
139	437.128344011597\\
140	437.128344011597\\
141	437.128344011597\\
};
\addplot [color=mycolor5, line width=2.0pt, forget plot]
  table[row sep=crcr]{%
1	0\\
2	0.150403228269135\\
3	0.329353723796686\\
4	0.527927500303994\\
5	0.738585684808434\\
6	0.960880206144185\\
7	1.19734393360152\\
8	1.45133258629647\\
9	1.72629010807894\\
10	2.0255914272833\\
11	2.35257612004483\\
12	2.71063019551259\\
13	3.10326996731672\\
14	3.53421649283687\\
15	4.00746024260347\\
16	4.52731863078379\\
17	5.09848932972572\\
18	5.72610184990716\\
19	6.41576933196955\\
20	7.17364205210785\\
21	8.00646381519116\\
22	8.92163218574193\\
23	9.92726336088055\\
24	11.0322623996333\\
25	12.246399472573\\
26	13.5803927720856\\
27	15.0459987175852\\
28	16.6561100951696\\
29	18.42486278261\\
30	20.3677517243532\\
31	22.5017568341159\\
32	24.8454795116677\\
33	27.4192904625288\\
34	30.2454895013321\\
35	33.3484779979382\\
36	36.7549445859253\\
37	40.4940646910156\\
38	44.5977143467598\\
39	49.1006986398439\\
40	54.0409949601039\\
41	59.4600110119463\\
42	65.4028572642659\\
43	71.9186331636481\\
44	79.0607259977211\\
45	86.887120757639\\
46	95.4607186951772\\
47	104.849661484066\\
48	115.127656959509\\
49	126.374301306817\\
50	138.675391283023\\
51	152.123218569566\\
52	166.816836658866\\
53	182.862288768066\\
54	200.372783153573\\
55	219.468799887067\\
56	240.278110680968\\
57	262.93569077466\\
58	287.583499295503\\
59	314.370102008337\\
60	343.450108121548\\
61	374.983391030793\\
62	409.134061807454\\
63	446.069164184135\\
64	485.957061109996\\
65	523.355661898608\\
66	549.356653998896\\
67	563.957160777154\\
68	567.988155666947\\
69	568.30768982474\\
70	568.33303476707\\
71	568.335060610716\\
72	568.335237391255\\
73	568.335266462184\\
74	568.335282540734\\
75	568.3352963069\\
76	568.335308667537\\
77	568.335319777326\\
78	568.335329738246\\
79	568.335338697596\\
80	568.335346648576\\
81	568.335353509649\\
82	568.335359244426\\
83	568.335363885048\\
84	568.335367523577\\
85	568.335370304636\\
86	568.335372392983\\
87	568.335373932094\\
88	568.335375053221\\
89	568.33520247473\\
90	568.335068123206\\
91	568.334974987281\\
92	568.334912385624\\
93	568.334871140143\\
94	568.334844409859\\
95	568.334827326026\\
96	568.334816534505\\
97	568.334809784554\\
98	568.334805632327\\
99	568.334803128944\\
100	568.334801645954\\
101	568.334800783938\\
102	568.334800284383\\
103	568.334799995135\\
104	568.334799827731\\
105	568.334799730872\\
106	568.33479967484\\
107	568.33479964243\\
108	568.334799623685\\
109	568.334799612844\\
110	568.334799606574\\
111	568.334799602948\\
112	568.334799600851\\
113	568.334799599638\\
114	568.334799598937\\
115	568.334799598531\\
116	568.334799598297\\
117	568.334799598161\\
118	568.334799598083\\
119	568.334799598037\\
120	568.334799598011\\
121	568.334799597996\\
122	568.334799597987\\
123	568.334799597982\\
124	568.334799597979\\
125	568.334799597977\\
126	568.334799597976\\
127	568.334799597976\\
128	568.334799597975\\
129	568.334799597975\\
130	568.334799597975\\
131	568.334799597975\\
132	568.334799597975\\
133	568.334799597975\\
134	568.334799597975\\
135	568.334799597975\\
136	568.334799597975\\
137	568.334799597975\\
138	568.334799597975\\
139	568.334799597975\\
140	568.334799597975\\
141	568.334799597975\\
};
\addplot [color=mycolor6, line width=2.0pt, forget plot]
  table[row sep=crcr]{%
1	0\\
2	0.244503556456279\\
3	0.534500362278705\\
4	0.854435323353285\\
5	1.19781946023111\\
6	1.56342888589301\\
7	1.95349528858806\\
8	2.37212637225748\\
9	2.82429022243462\\
10	3.31530947487756\\
11	3.85067887945964\\
12	4.43605266173708\\
13	5.07730809355792\\
14	5.78063672348677\\
15	6.55264139570166\\
16	7.40043108245494\\
17	8.3317120690617\\
18	9.35487671065472\\
19	10.4790918367534\\
20	11.7143889550212\\
21	13.0717582016558\\
22	14.563247725596\\
23	16.2020699640599\\
24	18.0027160927585\\
25	19.9810798146047\\
26	22.1545915760018\\
27	24.54236425824\\
28	27.1653513724169\\
29	30.0465187804834\\
30	33.2110309647798\\
31	36.6864528670699\\
32	40.5029683094648\\
33	44.6936159877523\\
34	49.2945439862668\\
35	54.3452836957569\\
36	59.8890439140848\\
37	65.973025765296\\
38	72.6487588755495\\
39	79.9724589829873\\
40	88.0054068194712\\
41	96.8143476699469\\
42	106.471910472677\\
43	117.057044651287\\
44	128.655472046011\\
45	141.360150313286\\
46	155.271742965094\\
47	170.499089796311\\
48	187.159669774203\\
49	205.38004651484\\
50	225.296284225982\\
51	247.054319440386\\
52	270.810270992783\\
53	296.73066751729\\
54	324.992568288939\\
55	355.78354955887\\
56	389.301524727323\\
57	425.754362894738\\
58	465.359266714296\\
59	508.341867286253\\
60	554.934991402866\\
61	605.377055166302\\
62	659.910038332378\\
63	718.776996224931\\
64	751.95580185898\\
65	758.171691273644\\
66	759.336464662281\\
67	759.554973306779\\
68	759.59620189438\\
69	759.604209184322\\
70	759.605990483616\\
71	759.606602282706\\
72	759.606985223545\\
73	759.607310737326\\
74	759.607609008789\\
75	759.607885167667\\
76	759.608140349657\\
77	759.608375302559\\
78	759.608590894113\\
79	759.608787091079\\
80	759.608962089908\\
81	759.609113798046\\
82	759.609241209193\\
83	759.609344803376\\
84	759.609426398954\\
85	759.609489044557\\
86	759.609536316019\\
87	759.609571249485\\
88	759.60959679619\\
89	759.608369276346\\
90	759.607278039388\\
91	759.606485305532\\
92	759.605943561479\\
93	759.60558423569\\
94	759.605350626376\\
95	759.605201082622\\
96	759.605106549449\\
97	759.605047408935\\
98	759.60501109975\\
99	759.60498928674\\
100	759.604976419778\\
101	759.604968981797\\
102	759.60496469948\\
103	759.604962233537\\
104	759.604960812074\\
105	759.604959991893\\
106	759.604959518303\\
107	759.604959244703\\
108	759.604959086585\\
109	759.604958995186\\
110	759.604958942346\\
111	759.604958911794\\
112	759.604958894128\\
113	759.604958883912\\
114	759.604958878005\\
115	759.604958874589\\
116	759.604958872614\\
117	759.604958871472\\
118	759.604958870811\\
119	759.604958870429\\
120	759.604958870208\\
121	759.60495887008\\
122	759.604958870006\\
123	759.604958869964\\
124	759.604958869939\\
125	759.604958869925\\
126	759.604958869916\\
127	759.604958869912\\
128	759.604958869909\\
129	759.604958869907\\
130	759.604958869906\\
131	759.604958869906\\
132	759.604958869905\\
133	759.604958869905\\
134	759.604958869905\\
135	759.604958869905\\
136	759.604958869905\\
137	759.604958869905\\
138	759.604958869905\\
139	759.604958869905\\
140	759.604958869905\\
141	759.604958869905\\
};

\addplot[area legend, draw=none, fill=green, fill opacity=0.1, line width=2.0pt, forget plot]
table[row sep=crcr] {%
x	y\\
98	0\\
150	0\\
150	800\\
98	800\\
}--cycle;
\addplot [color=green, dashed, line width=2.0pt, forget plot]
  table[row sep=crcr]{%
98	0\\
98	800\\
};
\end{axis}
\end{tikzpicture}%

%% file: Figures/NS_ID.tex
%
%
\definecolor{mycolor1}{rgb}{0.00000,0.44700,0.74100}%
\definecolor{mycolor2}{rgb}{0.85000,0.32500,0.09800}%
\begin{tikzpicture}

\begin{axis}[%
width=14cm,
height=3.5cm,
at={(0.cm,0.cm)},
scale only axis,
xmin=61,
xmax=140,
ymin=0,
ymax=6000,
xmajorgrids,
ymajorgrids,
axis background/.style={fill=white},
axis x line*=bottom,
axis y line*=left,legend pos=north east,
legend style={legend cell align=left, align=left, draw=white!15!black}
]

\addplot [color=mycolor1, solid, line width=2pt, mark options={solid, mycolor1}]
  table[row sep=crcr]{%
1	17.1099850236\\
2	19.8856880791202\\
3	21.9513349026862\\
4	23.9524338762432\\
5	26.1160712820975\\
6	28.5260815799926\\
7	31.2186380976052\\
8	34.2164255053757\\
9	37.5407869569796\\
10	41.2158755450339\\
11	45.2699554250118\\
12	49.7357549879889\\
13	54.6505676437297\\
14	60.0563441353094\\
15	65.9998529304634\\
16	72.532925512905\\
17	79.7127842120765\\
18	87.6024458664472\\
19	96.2711952946324\\
20	105.795124572122\\
21	116.257736178939\\
22	127.750609801958\\
23	140.374133881475\\
24	154.238303926188\\
25	169.46359024494\\
26	186.181878108626\\
27	204.537483494813\\
28	224.688247495747\\
29	246.806712185885\\
30	271.081380231419\\
31	297.718059751699\\
32	326.941294868877\\
33	358.995880952776\\
34	394.148461715912\\
35	432.689202958523\\
36	474.933534811378\\
37	521.223950666633\\
38	571.931846500642\\
39	627.459378839036\\
40	688.241313040632\\
41	754.746825717205\\
42	827.481215785527\\
43	906.987467686428\\
44	993.847597526585\\
45	1088.68369814065\\
46	1192.15858220363\\
47	1304.9759034682\\
48	1427.87961496319\\
49	1561.65259969024\\
50	1707.11428428539\\
51	1865.11701978369\\
52	2036.54098684426\\
53	2222.28735674052\\
54	2423.26941573766\\
55	2640.40134136732\\
56	2874.58430741881\\
57	3126.68959377704\\
58	3397.53839191242\\
59	3687.87803200698\\
60	3998.35441922185\\
61	4329.48056080571\\
62	4681.601199082\\
63	4855.01811792483\\
64	4948.64490313376\\
65	5063.79366974142\\
66	5170.37355951942\\
67	5263.73809393022\\
68	5355.22394164076\\
69	5434.32477699097\\
70	5493.3191067962\\
71	5531.47350066723\\
72	5548.56132928754\\
73	5544.57570373068\\
74	5519.76807645445\\
75	5511.73336078135\\
76	5487.1031464827\\
77	5436.7800051905\\
78	5360.73281134243\\
79	5260.17167767056\\
80	5136.71898213148\\
81	4992.31024067477\\
82	4829.14863433956\\
83	4649.65044351635\\
84	4456.38182352613\\
85	4251.99193239183\\
86	4039.14663558777\\
87	3820.46607732706\\
88	3598.46861429048\\
89	3375.52288721956\\
90	3153.80913052922\\
91	2935.29019797003\\
92	2717.80554059024\\
93	2488.64175956481\\
94	2253.46153910791\\
95	2018.12001757157\\
96	1787.96503221157\\
97	1567.51336844029\\
98	1360.31275703679\\
99	1168.91422576299\\
100	994.918615772279\\
101	839.073662642119\\
102	701.402496432269\\
103	581.347162821509\\
104	477.913654009143\\
105	389.80817509919\\
106	315.557682781267\\
107	253.610787304112\\
108	200.982426307901\\
109	151.05993807279\\
110	107.124589742243\\
111	71.3724426096781\\
112	44.4823248921271\\
113	25.8087520647787\\
114	13.8634560491953\\
115	6.85025207386769\\
116	3.09006532079357\\
117	1.26091446666489\\
118	0.460260606931022\\
119	0.148198315730558\\
120	0.0477187476267943\\
121	0.0153651890323076\\
122	0.00494752674774652\\
123	0.0015930842143832\\
124	0.000512966460999492\\
125	0.000165172725852102\\
126	5.31846569651986e-05\\
127	1.71250784843204e-05\\
128	5.51412260137026e-06\\
129	1.77548601547984e-06\\
130	5.71681693609833e-07\\
131	1.84071482222844e-07\\
132	5.92669633407313e-08\\
133	1.90823214812003e-08\\
134	6.14384313629456e-09\\
135	1.97804834116662e-09\\
136	6.36822489506705e-10\\
137	2.05012622773163e-10\\
138	6.59961336653424e-11\\
139	2.1243482380786e-11\\
140	6.83745102305095e-12\\
141	2.20046219289577e-12\\
};
\addlegendentry{Infected}

\addplot [color=mycolor2, solid, line width=2pt, mark options={solid, mycolor2}]
  table[row sep=crcr]{%
1	0\\
2	0.524369143637016\\
3	1.15048765905235\\
4	1.82938201864941\\
5	2.54664104864507\\
6	3.30435589038599\\
7	4.1111529055939\\
8	4.97781174409841\\
9	5.91560401563885\\
10	6.93582666695316\\
11	8.04981956427376\\
12	9.26915509817139\\
13	10.6058707992101\\
14	12.0726973941732\\
15	13.683269437413\\
16	15.4523193448798\\
17	17.3958602090915\\
18	19.531363579679\\
19	21.8779378498595\\
20	24.4565120072393\\
21	27.2900286786151\\
22	30.4036497436293\\
23	33.8249773223633\\
24	37.5842926299677\\
25	41.7148150013933\\
26	46.2529832880467\\
27	51.2387617883737\\
28	56.7159728743404\\
29	62.7326584984647\\
30	69.3414727975869\\
31	76.6001080380416\\
32	84.5717561613106\\
33	93.3256081786581\\
34	102.93739361607\\
35	113.489962114214\\
36	125.073909127442\\
37	137.788247424179\\
38	151.741125748667\\
39	167.050595538144\\
40	183.845425973514\\
41	202.265966844991\\
42	222.465057702043\\
43	244.608980489845\\
44	268.878451308114\\
45	295.469645014131\\
46	324.595244077162\\
47	356.485500320869\\
48	391.389294906391\\
49	429.575178055527\\
50	471.332365539015\\
51	516.971663816729\\
52	566.826289887552\\
53	621.252545383431\\
54	680.630297255368\\
55	745.36320962669\\
56	815.878663170715\\
57	892.627289925666\\
58	976.082043108384\\
59	1066.7367136688\\
60	1165.10379861766\\
61	1271.71162129538\\
62	1387.10060163166\\
63	1511.81857614226\\
64	1616.15125632833\\
65	1692.9224822857\\
66	1754.48443281142\\
67	1804.23140310305\\
68	1843.46984730959\\
69	1877.16801343852\\
70	1907.72393642247\\
71	1935.31442076416\\
72	1959.97230551102\\
73	1981.74004140848\\
74	2000.67937504377\\
75	2016.87145837781\\
76	2032.13201918306\\
77	2046.78597022662\\
78	2060.76354986875\\
79	2073.98490494961\\
80	2086.39061943189\\
81	2097.93949529423\\
82	2108.60562752318\\
83	2118.37661923258\\
84	2127.25236280664\\
85	2135.2439768667\\
86	2142.37271130817\\
87	2148.66875631247\\
88	2154.16995345737\\
89	2158.92043696674\\
90	2162.96924583345\\
91	2166.36895060857\\
92	2169.17433611023\\
93	2171.46905183584\\
94	2173.42104454859\\
95	2175.10333491448\\
96	2176.55288218163\\
97	2177.79391559689\\
98	2178.84677632354\\
99	2179.73090844789\\
100	2180.46554458114\\
101	2181.06954770519\\
102	2181.56101268297\\
103	2181.95687182155\\
104	2182.2725950536\\
105	2182.52200770851\\
106	2182.71721895532\\
107	2182.86864158417\\
108	2182.98508000816\\
109	2183.07367511368\\
110	2183.13928161145\\
111	2183.18550757501\\
112	2183.2161894516\\
113	2183.2352523952\\
114	2183.24627751836\\
115	2183.25217807363\\
116	2183.25508065597\\
117	2183.25638258228\\
118	2183.25690991046\\
119	2183.25710045748\\
120	2183.25716092707\\
121	2183.25718023542\\
122	2183.25718642205\\
123	2183.2571884082\\
124	2183.25718904654\\
125	2183.25718925182\\
126	2183.25718931786\\
127	2183.25718933911\\
128	2183.25718934594\\
129	2183.25718934814\\
130	2183.25718934885\\
131	2183.25718934907\\
132	2183.25718934915\\
133	2183.25718934917\\
134	2183.25718934918\\
135	2183.25718934918\\
136	2183.25718934918\\
137	2183.25718934918\\
138	2183.25718934918\\
139	2183.25718934918\\
140	2183.25718934918\\
141	2183.25718934918\\
};
\addlegendentry{Deceased}

\addplot[area legend, draw=none, fill=green, fill opacity=0.1, line width=2.0pt, forget plot]
table[row sep=crcr] {%
x	y\\
118	0\\
150	0\\
150	6000\\
118	6000\\
}--cycle;

\addplot [color=green, dashed, line width=2pt, mark options={solid, green}]
  table[row sep=crcr]{%
118	0\\
118	6000\\
};
\addlegendentry{Eradication}
\end{axis}
\end{tikzpicture}%

%% file: Figures/MPC_ID.tex
%
%
\definecolor{mycolor1}{rgb}{0.00000,0.44700,0.74100}%
\definecolor{mycolor2}{rgb}{0.85000,0.32500,0.09800}%
\begin{tikzpicture}

\begin{axis}[%
width=14cm,
height=3.5cm,
at={(0.cm,0.cm)},
scale only axis,
xmin=61,
xmax=140,
ymin=0,
ymax=6000,
xmajorgrids,
ymajorgrids,
axis background/.style={fill=white},
legend pos=north east,
legend style={legend cell align=left, align=left, draw=white!15!black}
]

\addplot [color=mycolor1, solid, line width=2pt, mark options={solid, mycolor1}]
  table[row sep=crcr]{%
1	17.1099850236\\
2	19.8856880791202\\
3	21.9513349026862\\
4	23.9524338762432\\
5	26.1160712820975\\
6	28.5260815799926\\
7	31.2186380976052\\
8	34.2164255053757\\
9	37.5407869569796\\
10	41.2158755450339\\
11	45.2699554250118\\
12	49.7357549879889\\
13	54.6505676437297\\
14	60.0563441353094\\
15	65.9998529304634\\
16	72.532925512905\\
17	79.7127842120765\\
18	87.6024458664472\\
19	96.2711952946324\\
20	105.795124572122\\
21	116.257736178939\\
22	127.750609801958\\
23	140.374133881475\\
24	154.238303926188\\
25	169.46359024494\\
26	186.181878108626\\
27	204.537483494813\\
28	224.688247495747\\
29	246.806712185885\\
30	271.081380231419\\
31	297.718059751699\\
32	326.941294868877\\
33	358.995880952776\\
34	394.148461715912\\
35	432.689202958523\\
36	474.933534811378\\
37	521.223950666633\\
38	571.931846500642\\
39	627.459378839036\\
40	688.241313040632\\
41	754.746825717205\\
42	827.481215785527\\
43	906.987467686428\\
44	993.847597526585\\
45	1088.68369814065\\
46	1192.15858220363\\
47	1304.9759034682\\
48	1427.87961496319\\
49	1561.65259969024\\
50	1707.11428428539\\
51	1865.11701978369\\
52	2036.54098684426\\
53	2222.28735674052\\
54	2423.26941573766\\
55	2640.40134136732\\
56	2874.58430741881\\
57	3126.68959377704\\
58	3397.53839191242\\
59	3687.87803200698\\
60	3998.35441922185\\
61	4329.48056080571\\
62	4681.601199082\\
63	4855.02173872579\\
64	4948.84225969198\\
65	5063.97703699162\\
66	5170.54003804567\\
67	5263.90032212604\\
68	5287.92810482711\\
69	5163.96393140387\\
70	4915.6457274557\\
71	4616.939354397\\
72	4302.71715256023\\
73	3988.84376002634\\
74	3682.80212111316\\
75	3388.42486393043\\
76	3107.86770594886\\
77	2795.55277426442\\
78	2428.84734184806\\
79	2030.91825566014\\
80	1634.89587915841\\
81	1268.70556615151\\
82	950.923699539626\\
83	702.571714382669\\
84	519.395760434455\\
85	380.400706819642\\
86	277.203776678016\\
87	194.05288108619\\
88	131.24960933043\\
89	86.2276134133415\\
90	55.3231620173998\\
91	34.8415090528093\\
92	21.6354801187237\\
93	13.2950551961035\\
94	8.10644170738514\\
95	4.91338876598438\\
96	2.98605174698645\\
97	1.78098150457314\\
98	1.036017154756\\
99	0.593630887122527\\
100	0.341775127955673\\
101	0.197117712034829\\
102	0.113796041930225\\
103	0.06573397649758\\
104	0.037985861582004\\
105	0.0219565713626296\\
106	0.0126934362286452\\
107	0.00733906944409359\\
108	0.00424359128138579\\
109	0.00245383965918738\\
110	0.00141896578337002\\
111	0.000820552214323371\\
112	0.000474510804607345\\
113	0.000274403492913551\\
114	0.000158684878684443\\
115	9.1766254508132e-05\\
116	5.30678486228365e-05\\
117	3.06888503024542e-05\\
118	1.77472140608699e-05\\
119	1.02631349236283e-05\\
120	5.93512777330006e-06\\
121	3.43226038483846e-06\\
122	1.98486265580962e-06\\
123	1.14783838770864e-06\\
124	6.63790545757567e-07\\
125	3.83867550917112e-07\\
126	2.21989153676452e-07\\
127	1.28375491195338e-07\\
128	7.42390639206271e-08\\
129	4.29321719485327e-08\\
130	2.48275140044663e-08\\
131	1.43576582794569e-08\\
132	8.30297998237877e-09\\
133	4.80158221903664e-09\\
134	2.77673701248841e-09\\
135	1.60577661492062e-09\\
136	9.28614602900109e-10\\
137	5.37014347428639e-10\\
138	3.10553386194853e-10\\
139	1.79591860350095e-10\\
140	1.0385730035627e-10\\
141	6.00602879042087e-11\\
};
\addlegendentry{Infected}

\addplot [color=mycolor2, solid, line width=2pt, mark options={solid, mycolor2}]
  table[row sep=crcr]{%
1	0\\
2	0.524369143637016\\
3	1.15048765905235\\
4	1.82938201864941\\
5	2.54664104864507\\
6	3.30435589038599\\
7	4.1111529055939\\
8	4.97781174409841\\
9	5.91560401563885\\
10	6.93582666695316\\
11	8.04981956427376\\
12	9.26915509817139\\
13	10.6058707992101\\
14	12.0726973941732\\
15	13.683269437413\\
16	15.4523193448798\\
17	17.3958602090915\\
18	19.531363579679\\
19	21.8779378498595\\
20	24.4565120072393\\
21	27.2900286786151\\
22	30.4036497436293\\
23	33.8249773223633\\
24	37.5842926299677\\
25	41.7148150013933\\
26	46.2529832880467\\
27	51.2387617883737\\
28	56.7159728743404\\
29	62.7326584984647\\
30	69.3414727975869\\
31	76.6001080380416\\
32	84.5717561613106\\
33	93.3256081786581\\
34	102.93739361607\\
35	113.489962114214\\
36	125.073909127442\\
37	137.788247424179\\
38	151.741125748667\\
39	167.050595538144\\
40	183.845425973514\\
41	202.265966844991\\
42	222.465057702043\\
43	244.608980489845\\
44	268.878451308114\\
45	295.469645014131\\
46	324.595244077162\\
47	356.485500320869\\
48	391.389294906391\\
49	429.575178055527\\
50	471.332365539015\\
51	516.971663816729\\
52	566.826289887552\\
53	621.252545383431\\
54	680.630297255368\\
55	745.36320962669\\
56	815.878663170715\\
57	892.627289925666\\
58	976.082043108384\\
59	1066.7367136688\\
60	1165.10379861766\\
61	1271.71162129538\\
62	1387.10060163166\\
63	1511.81857614226\\
64	1616.1518046753\\
65	1692.96644807977\\
66	1754.57033978669\\
67	1804.35550050676\\
68	1843.63000022032\\
69	1879.21862924201\\
70	1914.55919425353\\
71	1948.2791373861\\
72	1978.13503428801\\
73	2003.59093541902\\
74	2024.80403965586\\
75	2042.18080270364\\
76	2056.19365140163\\
77	2067.30846470539\\
78	2076.87399606654\\
79	2085.50317099663\\
80	2093.17940237992\\
81	2099.83159459666\\
82	2105.42819493794\\
83	2109.99544409382\\
84	2113.57695052977\\
85	2116.27693500691\\
86	2118.28354650674\\
87	2119.73156731615\\
88	2120.77734280326\\
89	2121.524042289\\
90	2122.04978244383\\
91	2122.41337537079\\
92	2122.66017956937\\
93	2122.82474486759\\
94	2122.93270354861\\
95	2123.00251343505\\
96	2123.04709158364\\
97	2123.07516341033\\
98	2123.09256968692\\
99	2123.1031908428\\
100	2123.10956289725\\
101	2123.1133341705\\
102	2123.11554764716\\
103	2123.11683998311\\
104	2123.11759197546\\
105	2123.11802860098\\
106	2123.11828176084\\
107	2123.11842841172\\
108	2123.11851331354\\
109	2123.11856244749\\
110	2123.11859087488\\
111	2123.1186073194\\
112	2123.11861683111\\
113	2123.11862233242\\
114	2123.11862551408\\
115	2123.11862735412\\
116	2123.11862841825\\
117	2123.11862903365\\
118	2123.11862938954\\
119	2123.11862959535\\
120	2123.11862971437\\
121	2123.1186297832\\
122	2123.118629823\\
123	2123.11862984602\\
124	2123.11862985933\\
125	2123.11862986703\\
126	2123.11862987148\\
127	2123.11862987405\\
128	2123.11862987554\\
129	2123.1186298764\\
130	2123.1186298769\\
131	2123.11862987719\\
132	2123.11862987735\\
133	2123.11862987745\\
134	2123.11862987751\\
135	2123.11862987754\\
136	2123.11862987756\\
137	2123.11862987757\\
138	2123.11862987757\\
139	2123.11862987758\\
140	2123.11862987758\\
141	2123.11862987758\\
};
\addlegendentry{Deceased}

\addplot[area legend, draw=none, fill=green, fill opacity=0.1, line width=2.0pt, forget plot]
table[row sep=crcr] {%
x	y\\
98	0\\
150	0\\
150	6000\\
98	6000\\
}--cycle;

\addplot [color=green, dashed, line width=2.0pt, mark options={solid, green}]
  table[row sep=crcr]{%
98	0\\
98	6000\\
};
\addlegendentry{Eradication}
\end{axis}
\end{tikzpicture}%

%% file: Figures/NS_input.tex
%
%
\definecolor{mycolor1}{rgb}{0.00000,0.44700,0.74100}%
\definecolor{mycolor2}{rgb}{0.85000,0.32500,0.09800}%
\definecolor{mycolor3}{rgb}{0.92900,0.69400,0.12500}%
\definecolor{mycolor4}{rgb}{0.49400,0.18400,0.55600}%
\definecolor{mycolor5}{rgb}{0.46600,0.67400,0.18800}%
\definecolor{mycolor6}{rgb}{0.30100,0.74500,0.93300}%
\begin{tikzpicture}

\begin{axis}[%
width=14cm,
height=3.5cm,
at={(0cm,0cm)},
scale only axis,
xmin=61,
xmax=140,
ymin=0,
ymax=60000,
xmajorgrids,
ymajorgrids,
axis background/.style={fill=white},
axis x line*=bottom,
axis y line*=left,
legend pos=north east,
legend style={legend cell align=left, align=left, draw=white!15!black}
]

\addplot [const plot, color=mycolor1, solid, line width=2pt, mark options={solid, mycolor1}] table[row sep=crcr] {
1	0\\
2	0\\
3	0\\
4	0\\
5	0\\
6	0\\
7	0\\
8	0\\
9	0\\
10	0\\
11	0\\
12	0\\
13	0\\
14	0\\
15	0\\
16	0\\
17	0\\
18	0\\
19	0\\
20	0\\
21	0\\
22	0\\
23	0\\
24	0\\
25	0\\
26	0\\
27	0\\
28	0\\
29	0\\
30	0\\
31	0\\
32	0\\
33	0\\
34	0\\
35	0\\
36	0\\
37	0\\
38	0\\
39	0\\
40	0\\
41	0\\
42	0\\
43	0\\
44	0\\
45	0\\
46	0\\
47	0\\
48	0\\
49	0\\
50	0\\
51	0\\
52	0\\
53	0\\
54	0\\
55	0\\
56	0\\
57	0\\
58	0\\
59	0\\
60	0\\
61	0\\
62	0\\
63	0\\
64	0\\
65	0\\
66	0\\
67	0\\
68	0\\
69	0\\
70	0\\
71	0\\
72	0\\
73	0\\
74	0\\
75	0\\
76	0\\
77	0\\
78	0\\
79	0\\
80	0\\
81	0\\
82	0\\
83	0\\
84	0\\
85	0\\
86	0\\
87	0\\
88	0\\
89	0\\
90	0\\
91	0\\
92	0\\
93	0\\
94	0\\
95	0\\
96	0\\
97	0\\
98	0\\
99	0\\
100	0\\
101	0\\
102	0\\
103	0\\
104	0\\
105	0\\
106	8963.63625687419\\
107	55191\\
108	55191\\
109	55191\\
110	55191\\
111	55191\\
112	55191\\
113	55191\\
114	55191\\
115	55191\\
116	55191\\
117	55191\\
118	0\\
119	0\\
120	0\\
121	0\\
122	0\\
123	0\\
124	0\\
125	0\\
126	0\\
127	0\\
128	0\\
129	0\\
130	0\\
131	0\\
132	0\\
133	0\\
134	0\\
135	0\\
136	0\\
137	0\\
138	0\\
139	0\\
140	0\\
141	0\\
};
\addlegendentry{0-24}

\addplot [const plot, color=mycolor2, solid, line width=2pt, mark options={solid, mycolor2}]
table[row sep=crcr] {
1	0\\
2	0\\
3	0\\
4	0\\
5	0\\
6	0\\
7	0\\
8	0\\
9	0\\
10	0\\
11	0\\
12	0\\
13	0\\
14	0\\
15	0\\
16	0\\
17	0\\
18	0\\
19	0\\
20	0\\
21	0\\
22	0\\
23	0\\
24	0\\
25	0\\
26	0\\
27	0\\
28	0\\
29	0\\
30	0\\
31	0\\
32	0\\
33	0\\
34	0\\
35	0\\
36	0\\
37	0\\
38	0\\
39	0\\
40	0\\
41	0\\
42	0\\
43	0\\
44	0\\
45	0\\
46	0\\
47	0\\
48	0\\
49	0\\
50	0\\
51	0\\
52	0\\
53	0\\
54	0\\
55	0\\
56	0\\
57	0\\
58	0\\
59	0\\
60	0\\
61	0\\
62	0\\
63	0\\
64	0\\
65	0\\
66	0\\
67	0\\
68	0\\
69	0\\
70	0\\
71	0\\
72	0\\
73	0\\
74	0\\
75	0\\
76	0\\
77	0\\
78	0\\
79	0\\
80	0\\
81	0\\
82	0\\
83	0\\
84	0\\
85	0\\
86	0\\
87	0\\
88	0\\
89	0\\
90	10707.2369549301\\
91	55191\\
92	55191\\
93	55191\\
94	55191\\
95	55191\\
96	55191\\
97	55191\\
98	55191\\
99	55191\\
100	55191\\
101	55191\\
102	55191\\
103	55191\\
104	55191\\
105	55191\\
106	46227.3637431258\\
107	0\\
108	0\\
109	0\\
110	0\\
111	0\\
112	0\\
113	0\\
114	0\\
115	0\\
116	0\\
117	0\\
118	0\\
119	0\\
120	0\\
121	0\\
122	0\\
123	0\\
124	0\\
125	0\\
126	0\\
127	0\\
128	0\\
129	0\\
130	0\\
131	0\\
132	0\\
133	0\\
134	0\\
135	0\\
136	0\\
137	0\\
138	0\\
139	0\\
140	0\\
141	0\\
};
\addlegendentry{25-44}

\addplot [const plot, color=mycolor3, solid, line width=2pt, mark options={solid, mycolor3}]
table[row sep=crcr] {
1	0\\
2	0\\
3	0\\
4	0\\
5	0\\
6	0\\
7	0\\
8	0\\
9	0\\
10	0\\
11	0\\
12	0\\
13	0\\
14	0\\
15	0\\
16	0\\
17	0\\
18	0\\
19	0\\
20	0\\
21	0\\
22	0\\
23	0\\
24	0\\
25	0\\
26	0\\
27	0\\
28	0\\
29	0\\
30	0\\
31	0\\
32	0\\
33	0\\
34	0\\
35	0\\
36	0\\
37	0\\
38	0\\
39	0\\
40	0\\
41	0\\
42	0\\
43	0\\
44	0\\
45	0\\
46	0\\
47	0\\
48	0\\
49	0\\
50	0\\
51	0\\
52	0\\
53	0\\
54	0\\
55	0\\
56	0\\
57	0\\
58	0\\
59	0\\
60	0\\
61	0\\
62	0\\
63	0\\
64	0\\
65	0\\
66	0\\
67	0\\
68	0\\
69	0\\
70	0\\
71	0\\
72	0\\
73	48103.5585726969\\
74	55191\\
75	55191\\
76	55191\\
77	55191\\
78	55191\\
79	55191\\
80	55191\\
81	55191\\
82	55191\\
83	55191\\
84	55191\\
85	55191\\
86	55191\\
87	55191\\
88	55191\\
89	55191\\
90	44483.7630450699\\
91	0\\
92	0\\
93	0\\
94	0\\
95	0\\
96	0\\
97	0\\
98	0\\
99	0\\
100	0\\
101	0\\
102	0\\
103	0\\
104	0\\
105	0\\
106	0\\
107	0\\
108	0\\
109	0\\
110	0\\
111	0\\
112	0\\
113	0\\
114	0\\
115	0\\
116	0\\
117	0\\
118	0\\
119	0\\
120	0\\
121	0\\
122	0\\
123	0\\
124	0\\
125	0\\
126	0\\
127	0\\
128	0\\
129	0\\
130	0\\
131	0\\
132	0\\
133	0\\
134	0\\
135	0\\
136	0\\
137	0\\
138	0\\
139	0\\
140	0\\
141	0\\
};
\addlegendentry{45-64}

\addplot [const plot, color=mycolor4, solid, line width=2pt, mark options={solid, mycolor3}]
table[row sep=crcr] {
1	0\\
2	0\\
3	0\\
4	0\\
5	0\\
6	0\\
7	0\\
8	0\\
9	0\\
10	0\\
11	0\\
12	0\\
13	0\\
14	0\\
15	0\\
16	0\\
17	0\\
18	0\\
19	0\\
20	0\\
21	0\\
22	0\\
23	0\\
24	0\\
25	0\\
26	0\\
27	0\\
28	0\\
29	0\\
30	0\\
31	0\\
32	0\\
33	0\\
34	0\\
35	0\\
36	0\\
37	0\\
38	0\\
39	0\\
40	0\\
41	0\\
42	0\\
43	0\\
44	0\\
45	0\\
46	0\\
47	0\\
48	0\\
49	0\\
50	0\\
51	0\\
52	0\\
53	0\\
54	0\\
55	0\\
56	0\\
57	0\\
58	0\\
59	0\\
60	0\\
61	0\\
62	0\\
63	0\\
64	0\\
65	0\\
66	37693.9876062099\\
67	55191\\
68	55191\\
69	55191\\
70	55191\\
71	55191\\
72	55191\\
73	7087.44142730308\\
74	0\\
75	0\\
76	0\\
77	0\\
78	0\\
79	0\\
80	0\\
81	0\\
82	0\\
83	0\\
84	0\\
85	0\\
86	0\\
87	0\\
88	0\\
89	0\\
90	0\\
91	0\\
92	0\\
93	0\\
94	0\\
95	0\\
96	0\\
97	0\\
98	0\\
99	0\\
100	0\\
101	0\\
102	0\\
103	0\\
104	0\\
105	0\\
106	0\\
107	0\\
108	0\\
109	0\\
110	0\\
111	0\\
112	0\\
113	0\\
114	0\\
115	0\\
116	0\\
117	0\\
118	0\\
119	0\\
120	0\\
121	0\\
122	0\\
123	0\\
124	0\\
125	0\\
126	0\\
127	0\\
128	0\\
129	0\\
130	0\\
131	0\\
132	0\\
133	0\\
134	0\\
135	0\\
136	0\\
137	0\\
138	0\\
139	0\\
140	0\\
141	0\\
};
\addlegendentry{65-74}

\addplot [const plot, color=mycolor5, solid, line width=2pt, mark options={solid, mycolor5}]
table[row sep=crcr] {
1	0\\
2	0\\
3	0\\
4	0\\
5	0\\
6	0\\
7	0\\
8	0\\
9	0\\
10	0\\
11	0\\
12	0\\
13	0\\
14	0\\
15	0\\
16	0\\
17	0\\
18	0\\
19	0\\
20	0\\
21	0\\
22	0\\
23	0\\
24	0\\
25	0\\
26	0\\
27	0\\
28	0\\
29	0\\
30	0\\
31	0\\
32	0\\
33	0\\
34	0\\
35	0\\
36	0\\
37	0\\
38	0\\
39	0\\
40	0\\
41	0\\
42	0\\
43	0\\
44	0\\
45	0\\
46	0\\
47	0\\
48	0\\
49	0\\
50	0\\
51	0\\
52	0\\
53	0\\
54	0\\
55	0\\
56	0\\
57	0\\
58	0\\
59	0\\
60	0\\
61	0\\
62	14795.8746730618\\
63	55191\\
64	55191\\
65	55191\\
66	17497.0123937901\\
67	0\\
68	0\\
69	0\\
70	0\\
71	0\\
72	0\\
73	0\\
74	0\\
75	0\\
76	0\\
77	0\\
78	0\\
79	0\\
80	0\\
81	0\\
82	0\\
83	0\\
84	0\\
85	0\\
86	0\\
87	0\\
88	0\\
89	0\\
90	0\\
91	0\\
92	0\\
93	0\\
94	0\\
95	0\\
96	0\\
97	0\\
98	0\\
99	0\\
100	0\\
101	0\\
102	0\\
103	0\\
104	0\\
105	0\\
106	0\\
107	0\\
108	0\\
109	0\\
110	0\\
111	0\\
112	0\\
113	0\\
114	0\\
115	0\\
116	0\\
117	0\\
118	0\\
119	0\\
120	0\\
121	0\\
122	0\\
123	0\\
124	0\\
125	0\\
126	0\\
127	0\\
128	0\\
129	0\\
130	0\\
131	0\\
132	0\\
133	0\\
134	0\\
135	0\\
136	0\\
137	0\\
138	0\\
139	0\\
140	0\\
141	0\\
};
\addlegendentry{75-84}

\addplot [const plot, color=mycolor6, solid, line width=2pt, mark options={solid, mycolor6}]
table[row sep=crcr] {
1	0\\
2	0\\
3	0\\
4	0\\
5	0\\
6	0\\
7	0\\
8	0\\
9	0\\
10	0\\
11	0\\
12	0\\
13	0\\
14	0\\
15	0\\
16	0\\
17	0\\
18	0\\
19	0\\
20	0\\
21	0\\
22	0\\
23	0\\
24	0\\
25	0\\
26	0\\
27	0\\
28	0\\
29	0\\
30	0\\
31	0\\
32	0\\
33	0\\
34	0\\
35	0\\
36	0\\
37	0\\
38	0\\
39	0\\
40	0\\
41	0\\
42	0\\
43	0\\
44	0\\
45	0\\
46	0\\
47	0\\
48	0\\
49	0\\
50	0\\
51	0\\
52	0\\
53	0\\
54	0\\
55	0\\
56	0\\
57	0\\
58	0\\
59	0\\
60	0\\
61	55191\\
62	40395.1253269382\\
63	0\\
64	0\\
65	0\\
66	0\\
67	0\\
68	0\\
69	0\\
70	0\\
71	0\\
72	0\\
73	0\\
74	0\\
75	0\\
76	0\\
77	0\\
78	0\\
79	0\\
80	0\\
81	0\\
82	0\\
83	0\\
84	0\\
85	0\\
86	0\\
87	0\\
88	0\\
89	0\\
90	0\\
91	0\\
92	0\\
93	0\\
94	0\\
95	0\\
96	0\\
97	0\\
98	0\\
99	0\\
100	0\\
101	0\\
102	0\\
103	0\\
104	0\\
105	0\\
106	0\\
107	0\\
108	0\\
109	0\\
110	0\\
111	0\\
112	0\\
113	0\\
114	0\\
115	0\\
116	0\\
117	0\\
118	0\\
119	0\\
120	0\\
121	0\\
122	0\\
123	0\\
124	0\\
125	0\\
126	0\\
127	0\\
128	0\\
129	0\\
130	0\\
131	0\\
132	0\\
133	0\\
134	0\\
135	0\\
136	0\\
137	0\\
138	0\\
139	0\\
140	0\\
141	0\\
};
\addlegendentry{85+}

\addplot[area legend, draw=none, fill=green, fill opacity=0.1, line width=2.0pt, forget plot]
table[row sep=crcr] {%
x	y\\
118	0\\
150	0\\
150	60000\\
118	60000\\
}--cycle;

\addplot [color=green, dashed, line width=2.0pt, mark options={solid, green}]
  table[row sep=crcr]{%
118	0\\
118	60000\\
};
\addlegendentry{Erad.}

\end{axis}
\end{tikzpicture}%

%% file: Figures/MPC_input.tex
%
%
\definecolor{mycolor1}{rgb}{0.00000,0.44700,0.74100}%
\definecolor{mycolor2}{rgb}{0.85000,0.32500,0.09800}%
\definecolor{mycolor3}{rgb}{0.92900,0.69400,0.12500}%
\definecolor{mycolor4}{rgb}{0.49400,0.18400,0.55600}%
\definecolor{mycolor5}{rgb}{0.46600,0.67400,0.18800}%
\definecolor{mycolor6}{rgb}{0.30100,0.74500,0.93300}%
\begin{tikzpicture}

\begin{axis}[%
width=14cm,
height=3.5cm,
at={(0cm,0cm)},
scale only axis,
xmin=61,
xmax=140,
ymin=0,
ymax=60000,
xmajorgrids,
ymajorgrids,
axis background/.style={fill=white},
axis x line*=bottom,
axis y line*=left,
legend pos=north east,
legend style={legend cell align=left, align=left, draw=white!15!black}
]

\addplot [const plot, color=mycolor1, solid, line width=2pt, mark options={solid, mycolor1}] table[row sep=crcr] {
1	0\\
2	0\\
3	0\\
4	0\\
5	0\\
6	0\\
7	0\\
8	0\\
9	0\\
10	0\\
11	0\\
12	0\\
13	0\\
14	0\\
15	0\\
16	0\\
17	0\\
18	0\\
19	0\\
20	0\\
21	0\\
22	0\\
23	0\\
24	0\\
25	0\\
26	0\\
27	0\\
28	0\\
29	0\\
30	0\\
31	0\\
32	0\\
33	0\\
34	0\\
35	0\\
36	0\\
37	0\\
38	0\\
39	0\\
40	0\\
41	0\\
42	0\\
43	0\\
44	0\\
45	0\\
46	0\\
47	0\\
48	0\\
49	0\\
50	0\\
51	0\\
52	0\\
53	0\\
54	0\\
55	0\\
56	0\\
57	0\\
58	0\\
59	0\\
60	0\\
61	0\\
62	0\\
63	0\\
64	0\\
65	0\\
66	37863\\
67	55190\\
68	29426\\
69	0\\
70	0\\
71	0\\
72	0\\
73	0\\
74	0\\
75	37056\\
76	55189\\
77	55189\\
78	55189\\
79	55189\\
80	55189\\
81	27646\\
82	2071\\
83	363\\
84	0\\
85	55191\\
86	54786\\
87	55190\\
88	55190\\
89	55190\\
90	55190\\
91	55190\\
92	55190\\
93	55190\\
94	0\\
95	0\\
96	22477\\
97	4393\\
98	0\\
99	0\\
100	0\\
101	0\\
102	0\\
103	0\\
104	0\\
105	0\\
106	0\\
107	0\\
108	0\\
109	0\\
110	0\\
111	0\\
112	0\\
113	0\\
114	0\\
115	0\\
116	0\\
117	0\\
118	0\\
119	0\\
120	0\\
121	0\\
122	0\\
123	0\\
124	0\\
125	0\\
126	0\\
127	0\\
128	0\\
129	0\\
130	0\\
131	0\\
132	0\\
133	0\\
134	0\\
135	0\\
136	0\\
137	0\\
138	0\\
139	0\\
140	0\\
141	0\\
};
\addlegendentry{0-24}

\addplot [const plot, color=mycolor2, solid, line width=2pt, mark options={solid, mycolor2}] table[row sep=crcr] {
1	0\\
2	0\\
3	0\\
4	0\\
5	0\\
6	0\\
7	0\\
8	0\\
9	0\\
10	0\\
11	0\\
12	0\\
13	0\\
14	0\\
15	0\\
16	0\\
17	0\\
18	0\\
19	0\\
20	0\\
21	0\\
22	0\\
23	0\\
24	0\\
25	0\\
26	0\\
27	0\\
28	0\\
29	0\\
30	0\\
31	0\\
32	0\\
33	0\\
34	0\\
35	0\\
36	0\\
37	0\\
38	0\\
39	0\\
40	0\\
41	0\\
42	0\\
43	0\\
44	0\\
45	0\\
46	0\\
47	0\\
48	0\\
49	0\\
50	0\\
51	0\\
52	0\\
53	0\\
54	0\\
55	0\\
56	0\\
57	0\\
58	0\\
59	0\\
60	0\\
61	0\\
62	0\\
63	0\\
64	0\\
65	0\\
66	0\\
67	0\\
68	0\\
69	0\\
70	0\\
71	0\\
72	0\\
73	0\\
74	0\\
75	0\\
76	0\\
77	0\\
78	0\\
79	0\\
80	0\\
81	0\\
82	0\\
83	39162\\
84	0\\
85	0\\
86	214\\
87	0\\
88	0\\
89	0\\
90	0\\
91	0\\
92	0\\
93	0\\
94	55190\\
95	55190\\
96	32713\\
97	50797\\
98	0\\
99	0\\
100	0\\
101	0\\
102	0\\
103	0\\
104	0\\
105	0\\
106	0\\
107	0\\
108	0\\
109	0\\
110	0\\
111	0\\
112	0\\
113	0\\
114	0\\
115	0\\
116	0\\
117	0\\
118	0\\
119	0\\
120	0\\
121	0\\
122	0\\
123	0\\
124	0\\
125	0\\
126	0\\
127	0\\
128	0\\
129	0\\
130	0\\
131	0\\
132	0\\
133	0\\
134	0\\
135	0\\
136	0\\
137	0\\
138	0\\
139	0\\
140	0\\
141	0\\
};
\addlegendentry{25-44}

\addplot [const plot, color=mycolor3, solid, line width=2pt, mark options={solid, mycolor3}] table[row sep=crcr] {
1	0\\
2	0\\
3	0\\
4	0\\
5	0\\
6	0\\
7	0\\
8	0\\
9	0\\
10	0\\
11	0\\
12	0\\
13	0\\
14	0\\
15	0\\
16	0\\
17	0\\
18	0\\
19	0\\
20	0\\
21	0\\
22	0\\
23	0\\
24	0\\
25	0\\
26	0\\
27	0\\
28	0\\
29	0\\
30	0\\
31	0\\
32	0\\
33	0\\
34	0\\
35	0\\
36	0\\
37	0\\
38	0\\
39	0\\
40	0\\
41	0\\
42	0\\
43	0\\
44	0\\
45	0\\
46	0\\
47	0\\
48	0\\
49	0\\
50	0\\
51	0\\
52	0\\
53	0\\
54	0\\
55	0\\
56	0\\
57	0\\
58	0\\
59	0\\
60	0\\
61	0\\
62	0\\
63	0\\
64	0\\
65	0\\
66	0\\
67	0\\
68	0\\
69	0\\
70	0\\
71	0\\
72	0\\
73	0\\
74	0\\
75	0\\
76	0\\
77	0\\
78	0\\
79	0\\
80	0\\
81	27542\\
82	53118\\
83	15663\\
84	55189\\
85	0\\
86	58\\
87	0\\
88	0\\
89	0\\
90	0\\
91	0\\
92	0\\
93	0\\
94	0\\
95	0\\
96	0\\
97	0\\
98	0\\
99	0\\
100	0\\
101	0\\
102	0\\
103	0\\
104	0\\
105	0\\
106	0\\
107	0\\
108	0\\
109	0\\
110	0\\
111	0\\
112	0\\
113	0\\
114	0\\
115	0\\
116	0\\
117	0\\
118	0\\
119	0\\
120	0\\
121	0\\
122	0\\
123	0\\
124	0\\
125	0\\
126	0\\
127	0\\
128	0\\
129	0\\
130	0\\
131	0\\
132	0\\
133	0\\
134	0\\
135	0\\
136	0\\
137	0\\
138	0\\
139	0\\
140	0\\
141	0\\
};
\addlegendentry{45-64}

\addplot [const plot, color=mycolor4, solid, line width=2pt, mark options={solid, mycolor4}] table[row sep=crcr] {
1	0\\
2	0\\
3	0\\
4	0\\
5	0\\
6	0\\
7	0\\
8	0\\
9	0\\
10	0\\
11	0\\
12	0\\
13	0\\
14	0\\
15	0\\
16	0\\
17	0\\
18	0\\
19	0\\
20	0\\
21	0\\
22	0\\
23	0\\
24	0\\
25	0\\
26	0\\
27	0\\
28	0\\
29	0\\
30	0\\
31	0\\
32	0\\
33	0\\
34	0\\
35	0\\
36	0\\
37	0\\
38	0\\
39	0\\
40	0\\
41	0\\
42	0\\
43	0\\
44	0\\
45	0\\
46	0\\
47	0\\
48	0\\
49	0\\
50	0\\
51	0\\
52	0\\
53	0\\
54	0\\
55	0\\
56	0\\
57	0\\
58	0\\
59	0\\
60	0\\
61	0\\
62	0\\
63	0\\
64	0\\
65	0\\
66	0\\
67	0\\
68	25763\\
69	55190\\
70	55190\\
71	55190\\
72	55190\\
73	55190\\
74	55190\\
75	18133\\
76	0\\
77	0\\
78	0\\
79	0\\
80	0\\
81	0\\
82	0\\
83	0\\
84	0\\
85	0\\
86	29\\
87	0\\
88	0\\
89	0\\
90	0\\
91	0\\
92	0\\
93	0\\
94	0\\
95	0\\
96	0\\
97	0\\
98	0\\
99	0\\
100	0\\
101	0\\
102	0\\
103	0\\
104	0\\
105	0\\
106	0\\
107	0\\
108	0\\
109	0\\
110	0\\
111	0\\
112	0\\
113	0\\
114	0\\
115	0\\
116	0\\
117	0\\
118	0\\
119	0\\
120	0\\
121	0\\
122	0\\
123	0\\
124	0\\
125	0\\
126	0\\
127	0\\
128	0\\
129	0\\
130	0\\
131	0\\
132	0\\
133	0\\
134	0\\
135	0\\
136	0\\
137	0\\
138	0\\
139	0\\
140	0\\
141	0\\
};
\addlegendentry{65-74}

\addplot [const plot, color=mycolor5, solid, line width=2pt, mark options={solid, mycolor5}] table[row sep=crcr] {
1	0\\
2	0\\
3	0\\
4	0\\
5	0\\
6	0\\
7	0\\
8	0\\
9	0\\
10	0\\
11	0\\
12	0\\
13	0\\
14	0\\
15	0\\
16	0\\
17	0\\
18	0\\
19	0\\
20	0\\
21	0\\
22	0\\
23	0\\
24	0\\
25	0\\
26	0\\
27	0\\
28	0\\
29	0\\
30	0\\
31	0\\
32	0\\
33	0\\
34	0\\
35	0\\
36	0\\
37	0\\
38	0\\
39	0\\
40	0\\
41	0\\
42	0\\
43	0\\
44	0\\
45	0\\
46	0\\
47	0\\
48	0\\
49	0\\
50	0\\
51	0\\
52	0\\
53	0\\
54	0\\
55	0\\
56	0\\
57	0\\
58	0\\
59	0\\
60	0\\
61	0\\
62	14941\\
63	55190\\
64	55190\\
65	55190\\
66	17327\\
67	0\\
68	0\\
69	0\\
70	0\\
71	0\\
72	0\\
73	0\\
74	0\\
75	0\\
76	0\\
77	0\\
78	0\\
79	0\\
80	0\\
81	0\\
82	0\\
83	0\\
84	0\\
85	0\\
86	26\\
87	0\\
88	0\\
89	0\\
90	0\\
91	0\\
92	0\\
93	0\\
94	0\\
95	0\\
96	0\\
97	0\\
98	0\\
99	0\\
100	0\\
101	0\\
102	0\\
103	0\\
104	0\\
105	0\\
106	0\\
107	0\\
108	0\\
109	0\\
110	0\\
111	0\\
112	0\\
113	0\\
114	0\\
115	0\\
116	0\\
117	0\\
118	0\\
119	0\\
120	0\\
121	0\\
122	0\\
123	0\\
124	0\\
125	0\\
126	0\\
127	0\\
128	0\\
129	0\\
130	0\\
131	0\\
132	0\\
133	0\\
134	0\\
135	0\\
136	0\\
137	0\\
138	0\\
139	0\\
140	0\\
141	0\\
};
\addlegendentry{75-84}

\addplot [const plot, color=mycolor6, solid, line width=2pt, mark options={solid, mycolor6}] table[row sep=crcr] {
1	0\\
2	0\\
3	0\\
4	0\\
5	0\\
6	0\\
7	0\\
8	0\\
9	0\\
10	0\\
11	0\\
12	0\\
13	0\\
14	0\\
15	0\\
16	0\\
17	0\\
18	0\\
19	0\\
20	0\\
21	0\\
22	0\\
23	0\\
24	0\\
25	0\\
26	0\\
27	0\\
28	0\\
29	0\\
30	0\\
31	0\\
32	0\\
33	0\\
34	0\\
35	0\\
36	0\\
37	0\\
38	0\\
39	0\\
40	0\\
41	0\\
42	0\\
43	0\\
44	0\\
45	0\\
46	0\\
47	0\\
48	0\\
49	0\\
50	0\\
51	0\\
52	0\\
53	0\\
54	0\\
55	0\\
56	0\\
57	0\\
58	0\\
59	0\\
60	0\\
61	55190\\
62	40249\\
63	0\\
64	0\\
65	0\\
66	0\\
67	0\\
68	0\\
69	0\\
70	0\\
71	0\\
72	0\\
73	0\\
74	0\\
75	0\\
76	0\\
77	0\\
78	0\\
79	0\\
80	0\\
81	0\\
82	0\\
83	0\\
84	0\\
85	0\\
86	76\\
87	0\\
88	0\\
89	0\\
90	0\\
91	0\\
92	0\\
93	0\\
94	0\\
95	0\\
96	0\\
97	0\\
98	0\\
99	0\\
100	0\\
101	0\\
102	0\\
103	0\\
104	0\\
105	0\\
106	0\\
107	0\\
108	0\\
109	0\\
110	0\\
111	0\\
112	0\\
113	0\\
114	0\\
115	0\\
116	0\\
117	0\\
118	0\\
119	0\\
120	0\\
121	0\\
122	0\\
123	0\\
124	0\\
125	0\\
126	0\\
127	0\\
128	0\\
129	0\\
130	0\\
131	0\\
132	0\\
133	0\\
134	0\\
135	0\\
136	0\\
137	0\\
138	0\\
139	0\\
140	0\\
141	0\\
};
\addlegendentry{85+}

\addplot[area legend, draw=none, fill=green, fill opacity=0.1, line width=2.0pt, forget plot]
table[row sep=crcr] {%
x	y\\
98	0\\
150	0\\
150	60000\\
98	60000\\
}--cycle;

\addplot [color=green, dashed, line width=2.0pt, mark options={solid, green}]
  table[row sep=crcr]{%
98	0\\
98	60000\\
};
\addlegendentry{Erad.}

\end{axis}
\end{tikzpicture}%

%% file: Figures/MPC_cost.tex
%
%
\definecolor{mycolor1}{rgb}{0.00000,0.44700,0.74100}%
\definecolor{mycolor2}{rgb}{0.85000,0.32500,0.09800}%
\definecolor{mycolor3}{rgb}{0.92900,0.69400,0.12500}%
\begin{tikzpicture}

\begin{axis}[%
width=14cm,
height=4cm,
at={(0.cm,0.cm)},
scale only axis,
xmin=61,
xmax=140,
ymin=0,
ymax=1000,
xmajorgrids,
ymajorgrids,
axis background/.style={fill=white},
axis x line*=bottom,
axis y line*=left,
legend pos=south east,
legend style={legend cell align=left, align=left, draw=white!15!black}
]

\addplot [color=mycolor1, solid, line width=2pt, mark options={solid, mycolor1}]
  table[row sep=crcr]{%
1	-1271.71162129538\\
2	-1271.18725215174\\
3	-1270.56113363633\\
4	-1269.88223927673\\
5	-1269.16498024673\\
6	-1268.40726540499\\
7	-1267.60046838978\\
8	-1266.73380955128\\
9	-1265.79601727974\\
10	-1264.77579462843\\
11	-1263.6618017311\\
12	-1262.44246619721\\
13	-1261.10575049617\\
14	-1259.63892390121\\
15	-1258.02835185797\\
16	-1256.2593019505\\
17	-1254.31576108629\\
18	-1252.1802577157\\
19	-1249.83368344552\\
20	-1247.25510928814\\
21	-1244.42159261676\\
22	-1241.30797155175\\
23	-1237.88664397302\\
24	-1234.12732866541\\
25	-1229.99680629399\\
26	-1225.45863800733\\
27	-1220.472859507\\
28	-1214.99564842104\\
29	-1208.97896279691\\
30	-1202.37014849779\\
31	-1195.11151325734\\
32	-1187.13986513407\\
33	-1178.38601311672\\
34	-1168.77422767931\\
35	-1158.22165918116\\
36	-1146.63771216794\\
37	-1133.9233738712\\
38	-1119.97049554671\\
39	-1104.66102575723\\
40	-1087.86619532186\\
41	-1069.44565445039\\
42	-1049.24656359334\\
43	-1027.10264080553\\
44	-1002.83316998727\\
45	-976.241976281248\\
46	-947.116377218216\\
47	-915.226120974509\\
48	-880.322326388988\\
49	-842.136443239852\\
50	-800.379255756363\\
51	-754.73995747865\\
52	-704.885331407827\\
53	-650.459075911948\\
54	-591.08132404001\\
55	-526.348411668689\\
56	-455.832958124664\\
57	-379.084331369712\\
58	-295.629578186994\\
59	-204.97490762658\\
60	-106.607822677718\\
61	0\\
62	115.388980336286\\
63	240.106954846879\\
64	344.439635032953\\
65	421.210860990319\\
66	482.772811516042\\
67	532.519781807674\\
68	571.75822601421\\
69	605.456392143141\\
70	636.012315127096\\
71	663.602799468782\\
72	688.260684215644\\
73	710.028420113102\\
74	728.967753748391\\
75	745.159837082433\\
76	760.42039788768\\
77	775.07434893124\\
78	789.051928573373\\
79	802.273283654227\\
80	814.67899813651\\
81	826.227873998852\\
82	836.894006227799\\
83	846.664997937203\\
84	855.540741511264\\
85	863.532355571322\\
86	870.661090012792\\
87	876.957135017093\\
88	882.45833216199\\
89	887.208815671358\\
90	891.257624538076\\
91	894.657329313193\\
92	897.462714814855\\
93	899.757430540458\\
94	901.709423253212\\
95	903.391713619099\\
96	904.841260886253\\
97	906.082294301513\\
98	907.135155028164\\
99	908.019287152515\\
100	908.753923285759\\
101	909.357926409813\\
102	909.849391387589\\
103	910.245250526172\\
104	910.560973758223\\
105	910.810386413132\\
106	911.005597659943\\
107	911.157020288793\\
108	911.273458712784\\
109	911.362053818303\\
110	911.427660316069\\
111	911.473886279629\\
112	911.504568156223\\
113	911.523631099819\\
114	911.534656222981\\
115	911.540556778252\\
116	911.54345936059\\
117	911.544761286903\\
118	911.545288615083\\
119	911.545479162101\\
120	911.545539631687\\
121	911.545558940041\\
122	911.545565126667\\
123	911.545567112824\\
124	911.545567751164\\
125	911.545567956446\\
126	911.545568022482\\
127	911.545568043727\\
128	911.545568050562\\
129	911.545568052761\\
130	911.545568053468\\
131	911.545568053695\\
132	911.545568053769\\
133	911.545568053792\\
134	911.5455680538\\
135	911.545568053802\\
136	911.545568053803\\
137	911.545568053803\\
138	911.545568053803\\
139	911.545568053804\\
140	911.545568053804\\
141	911.545568053804\\
};
\addlegendentry{Deceased Nat.}

\addplot[area legend, draw=none, fill=blue, fill opacity=0.1, line width=2.0pt, forget plot]
table[row sep=crcr] {%
x	y\\
118	0\\
150	0\\
150	1000\\
118	1000\\
}--cycle;

\addplot [color=blue, dashed, line width=2.0pt, mark options={solid, blue}]
  table[row sep=crcr]{%
118	0\\
118	1000\\
};
\addlegendentry{Erad. Nat.}

\addplot [color=mycolor2, solid, line width=2pt, mark options={solid, mycolor2}]
  table[row sep=crcr]{%
1	-1271.71162129538\\
2	-1271.18725215174\\
3	-1270.56113363633\\
4	-1269.88223927673\\
5	-1269.16498024673\\
6	-1268.40726540499\\
7	-1267.60046838978\\
8	-1266.73380955128\\
9	-1265.79601727974\\
10	-1264.77579462843\\
11	-1263.6618017311\\
12	-1262.44246619721\\
13	-1261.10575049617\\
14	-1259.63892390121\\
15	-1258.02835185797\\
16	-1256.2593019505\\
17	-1254.31576108629\\
18	-1252.1802577157\\
19	-1249.83368344552\\
20	-1247.25510928814\\
21	-1244.42159261676\\
22	-1241.30797155175\\
23	-1237.88664397302\\
24	-1234.12732866541\\
25	-1229.99680629399\\
26	-1225.45863800733\\
27	-1220.472859507\\
28	-1214.99564842104\\
29	-1208.97896279691\\
30	-1202.37014849779\\
31	-1195.11151325734\\
32	-1187.13986513407\\
33	-1178.38601311672\\
34	-1168.77422767931\\
35	-1158.22165918116\\
36	-1146.63771216794\\
37	-1133.9233738712\\
38	-1119.97049554671\\
39	-1104.66102575723\\
40	-1087.86619532186\\
41	-1069.44565445039\\
42	-1049.24656359334\\
43	-1027.10264080553\\
44	-1002.83316998727\\
45	-976.241976281248\\
46	-947.116377218216\\
47	-915.226120974509\\
48	-880.322326388988\\
49	-842.136443239852\\
50	-800.379255756363\\
51	-754.73995747865\\
52	-704.885331407827\\
53	-650.459075911948\\
54	-591.08132404001\\
55	-526.348411668689\\
56	-455.832958124664\\
57	-379.084331369712\\
58	-295.629578186994\\
59	-204.97490762658\\
60	-106.607822677718\\
61	0\\
62	115.388980336286\\
63	240.106954846879\\
64	344.440183379926\\
65	421.254826784392\\
66	482.858718491308\\
67	532.643879211384\\
68	571.918378924941\\
69	607.507007946633\\
70	642.847572958149\\
71	676.567516090725\\
72	706.423412992628\\
73	731.879314123637\\
74	753.09241836048\\
75	770.469181408266\\
76	784.482030106247\\
77	795.596843410013\\
78	805.162374771158\\
79	813.791549701253\\
80	821.467781084543\\
81	828.119973301286\\
82	833.716573642559\\
83	838.283822798438\\
84	841.865329234394\\
85	844.565313711532\\
86	846.571925211361\\
87	848.019946020767\\
88	849.065721507885\\
89	849.812420993621\\
90	850.33816114845\\
91	850.701754075416\\
92	850.948558273996\\
93	851.113123572215\\
94	851.221082253232\\
95	851.29089213967\\
96	851.335470288266\\
97	851.363542114949\\
98	851.380948391545\\
99	851.391569547422\\
100	851.397941601867\\
101	851.401712875117\\
102	851.40392635178\\
103	851.405218687727\\
104	851.405970680086\\
105	851.406407305598\\
106	851.406660465465\\
107	851.406807116346\\
108	851.406892018165\\
109	851.40694115211\\
110	851.4069695795\\
111	851.406986024017\\
112	851.406995535731\\
113	851.40700103704\\
114	851.407004218699\\
115	851.407006058743\\
116	851.407007122873\\
117	851.40700773827\\
118	851.407008094157\\
119	851.407008299967\\
120	851.407008418987\\
121	851.407008487816\\
122	851.40700852762\\
123	851.407008550638\\
124	851.40700856395\\
125	851.407008571648\\
126	851.4070085761\\
127	851.407008578674\\
128	851.407008580163\\
129	851.407008581024\\
130	851.407008581522\\
131	851.40700858181\\
132	851.407008581976\\
133	851.407008582072\\
134	851.407008582128\\
135	851.407008582161\\
136	851.407008582179\\
137	851.40700858219\\
138	851.407008582196\\
139	851.4070085822\\
140	851.407008582202\\
141	851.407008582203\\
};
\addlegendentry{Deceased MPC}

\addplot[area legend, draw=none, fill=red, fill opacity=0.1, line width=2.0pt, forget plot]
table[row sep=crcr] {%
x	y\\
98	0\\
150	0\\
150	1000\\
98	1000\\
}--cycle;
\addlegendentry{Erad. MPC}

\addplot [color=red, dashed, line width=2.0pt, mark options={solid, red}]
  table[row sep=crcr]{%
98	0\\
98	1000\\
};

\addplot [const plot, color=teal, solid, line width=2pt, mark options={solid, teal}]
table[row sep=crcr] {%
1	0\\
2	0\\
3	0\\
4	0\\
5	0\\
6	0\\
7	0\\
8	0\\
9	0\\
10	0\\
11	0\\
12	0\\
13	0\\
14	0\\
15	0\\
16	0\\
17	0\\
18	0\\
19	0\\
20	0\\
21	0\\
22	0\\
23	0\\
24	0\\
25	0\\
26	0\\
27	0\\
28	0\\
29	0\\
30	0\\
31	0\\
32	0\\
33	0\\
34	0\\
35	0\\
36	0\\
37	0\\
38	0\\
39	0\\
40	0\\
41	0\\
42	0\\
43	0\\
44	0\\
45	0\\
46	0\\
47	0\\
48	0\\
49	0\\
50	0\\
51	0\\
52	0\\
53	0\\
54	0\\
55	0\\
56	0\\
57	0\\
58	0\\
59	0\\
60	0\\
61	874.632357042548\\
62	735.781560755318\\
63	611.035026349384\\
64	506.699730572134\\
65	429.884893222015\\
66	368.417562801009\\
67	318.519530558364\\
68	279.280095342241\\
69	243.685470832887\\
70	208.3496949666\\
71	174.653265152667\\
72	144.764582019473\\
73	119.460776319973\\
74	98.440511950542\\
75	80.908953308823\\
76	68.1841395469248\\
77	55.7183673095272\\
78	46.2209023080555\\
79	37.4414173717851\\
80	29.7406129983579\\
81	23.1488465011403\\
82	17.5478017806114\\
83	13.0659532534414\\
84	9.37689168932461\\
85	4.49394997294188\\
86	4.91410451416612e-05\\
87	0.239952448410848\\
88	1.38832218711208\\
89	0.990317971639865\\
90	0.71765731905579\\
91	0.706630076402807\\
92	0.459202830773315\\
93	0.294199271609687\\
94	0.185967133025142\\
95	0.115982724079709\\
96	0.0712964745078572\\
97	0.0431602994059961\\
98	0.0431602994059961\\
99	0.0431602994059961\\
100	0.0431602994059961\\
101	0.0431602994059961\\
102	0.0431602994059961\\
103	0.0431602994059961\\
104	0.0431602994059961\\
105	0.0431602994059961\\
106	0.0431602994059961\\
107	0.0431602994059961\\
108	0.0431602994059961\\
109	0.0431602994059961\\
110	0.0431602994059961\\
111	0.0431602994059961\\
112	0.0431602994059961\\
113	0.0431602994059961\\
114	0.0431602994059961\\
115	0.0431602994059961\\
116	0.0431602994059961\\
117	0.0431602994059961\\
118	0.0431602994059961\\
119	0.0431602994059961\\
120	0.0431602994059961\\
121	0.0431602994059961\\
122	0.0431602994059961\\
123	0.0431602994059961\\
124	0.0431602994059961\\
125	0.0431602994059961\\
126	0.0431602994059961\\
127	0.0431602994059961\\
128	0.0431602994059961\\
129	0.0431602994059961\\
130	0.0431602994059961\\
131	0.0431602994059961\\
132	0.0431602994059961\\
133	0.0431602994059961\\
134	0.0431602994059961\\
135	0.0431602994059961\\
136	0.0431602994059961\\
137	0.0431602994059961\\
138	0.0431602994059961\\
139	0.0431602994059961\\
140	0.0431602994059961\\
141	0\\
};
\addlegendentry{MPC cost}

\addplot [color=teal, dashed, line width=2.0pt, mark options={solid, teal}]
  table[row sep=crcr]{%
61	874.632357042548\\
150	874.632357042548\\
};
\addlegendentry{$V_N^0(x(60))$}

\end{axis}
\end{tikzpicture}%

%% file: sections/05_Conclusion.tex
\section{Conclusion}
\label{sec: conclusion}
In this work, a novel Model Predictive Control (MPC) approach is proposed to develop an optimal age-structured vaccination strategy for the first wave of COVID-19, with the primary objective of minimizing deaths. The key contribution of this work lies in providing a detailed proof of the recursive feasibility and asymptotic stability of the proposed MPC algorithm. Through comprehensive simulations, it was demonstrated that the MPC approach outperforms the decreasing age vaccination strategy employed by the Belgian government. Specifically, the MPC strategy results in fewer deaths, fewer infections, faster disease eradication, and more efficient vaccine distribution.

The strength of the MPC approach comes from its predictive capability, which allows it to dynamically adjust to changing conditions, and its model-based nature, which integrates different factors such as contact rates, death rates, and recovery rates across different age groups. Moreover, the proposed MPC framework is not limited to COVID-19. Different models can be used to adapt the proposed strategy to different epidemics. This flexibility enables rapid deployment as soon as vaccines become available. In addition, advances in modeling and data science can benefit the proposed MPC approach by improving the predictive accuracy of the integrated model.

Furthermore, it is assumed that the vaccine exhibits perfect efficacy in this study. In reality, the effectiveness of vaccination is not necessarily absolute \cite{Braeye_2022}. Future work will account for this issue by introducing a parameter to model the efficacy of the vaccine for each age group. It will also incorporate additional constraints, such as the capacity of the daily ICU (Intensive Care Unit), to assess how the MPC adjusts its decisions to satisfy different operational conditions. 



%% file: bibliography.bib
@article{rahmandad2022behavioral,
  title={Behavioral responses to risk promote vaccinating high-contact individuals first},
  author={Rahmandad, Hazhir},
  journal={System Dynamics Review},
  volume={38},
  number={3},
  pages={246--263},
  year={2022},
  publisher={Wiley Online Library}
}

@misc{who_covid_dashboard,
  author       = {{World Health Organization}},
  title        = {World Health Organization Coronavirus {(COVID-19)} Dashboard},
  year         = {2025},
  url          = {https://covid19.who.int},
  note         = {Accessed: 2025-01-22}
}

@article{brauer2008compartmental,
  title={Compartmental models in epidemiology},
  author={Brauer, Fred},
  journal={Mathematical epidemiology},
  pages={19--79},
  year={2008},
  publisher={Springer}
}

@article{tolles2020modeling,
  title={Modeling epidemics with compartmental models},
  author={Tolles, Juliana and Luong, ThaiBinh},
  journal={Jama},
  volume={323},
  number={24},
  pages={2515--2516},
  year={2020},
  publisher={American Medical Association}
}

@article{kermack1991contributions,
  title={Contributions to the mathematical theory of epidemics—{II}. {The }problem of endemicity},
  author={Kermack, William O and McKendrick, Anderson G},
  journal={Bulletin of mathematical biology},
  volume={53},
  number={1-2},
  pages={57--87},
  year={1991},
  publisher={Elsevier}
}

@article{MORATO2020417,
title = {An optimal predictive control strategy for COVID-19 (SARS-CoV-2) social distancing policies in Brazil},
author = {Marcelo M. Morato and Saulo B. Bastos and Daniel O. Cajueiro and Julio E. Normey-Rico},
journal = {Annual Reviews in Control},
volume = {50},
pages = {417-431},
year = {2020}
}

@article{davies2020age,
  title={Age-dependent effects in the transmission and control of {COVID-19} epidemics},
  author={Davies, Nicholas G and Klepac, Petra and Liu, Yang and Prem, Kiesha and Jit, Mark and Eggo, Rosalind M},
  journal={Nature medicine},
  volume={26},
  number={8},
  pages={1205--1211},
  year={2020},
  publisher={Nature Publishing Group US New York}
}

@article{grundel2022much,
  title={How much testing and social distancing is required to control {COVID-19}? {Some} insight based on an age-differentiated compartmental model},
  author={Grundel, Sara and Heyder, Stefan and Hotz, Thomas and Ritschel, Tobias KS and Sauerteig, Philipp and Worthmann, Karl},
  journal={SIAM Journal on Control and Optimization},
  volume={60},
  number={2},
  pages={S145--S169},
  year={2022},
  publisher={SIAM}
}

@article{franco2021covid,
  title={{COVID-19} {Belgium}: Extended {SEIR-QD} model with nursing homes and long-term scenarios-based forecasts},
  author={Franco, Nicolas},
  journal={Epidemics},
  volume={37},
  pages={100490},
  year={2021},
  publisher={Elsevier}
}

@article{alleman2021assessing,
  title={Assessing the effects of non-pharmaceutical interventions on {SARS-CoV-2} transmission in {Belgium} by means of an extended {SEIQRD} model and public mobility data},
  author={Alleman, Tijs W and Vergeynst, Jenna and De Visscher, Lander and Rollier, Michiel and Torfs, Elena and Nopens, Ingmar and Baetens, Jan M and others},
  journal={Epidemics},
  volume={37},
  pages={100505},
  year={2021},
  publisher={Elsevier}
}

@article{abrams2021modelling,
  title={Modelling the early phase of the {Belgian} {COVID-19} epidemic using a stochastic compartmental model and studying its implied future trajectories},
  author={Abrams, Steven and Wambua, James and Santermans, Eva and Willem, Lander and Kuylen, Elise and Coletti, Pietro and Libin, Pieter and Faes, Christel and Petrof, Oana and Herzog, Sereina A and others},
  journal={Epidemics},
  volume={35},
  pages={100449},
  year={2021},
  publisher={Elsevier}
}

@article{cartocci2021compartment,
  title={A compartment modeling approach to reconstruct and analyze gender and age-grouped {COVID-19} Italian data for decision-making strategies},
  author={Cartocci, Alessandra and Cevenini, Gabriele and Barbini, Paolo},
  journal={Journal of Biomedical Informatics},
  volume={118},
  pages={103793},
  year={2021},
  publisher={Elsevier}
}

@article{parino2023model,
  title={A model predictive control approach to optimally devise a two-dose vaccination rollout: a case study on {COVID-19} in {Italy}},
  author={Parino, Francesco and Zino, Lorenzo and Calafiore, Giuseppe C and Rizzo, Alessandro},
  journal={International journal of robust and nonlinear control},
  volume={33},
  number={9},
  pages={4808--4823},
  year={2023},
  publisher={Wiley Online Library}
}

@inproceedings{sauerteig2022model,
  title={Model predictive control tailored to epidemic models},
  author={Sauerteig, Philipp and Esterhuizen, Willem and Wilson, Mitsuru and Ritschel, Tobias KS and Worthmann, Karl and Streif, Stefan},
  booktitle={2022 European Control Conference (ECC)},
  pages={743--748},
  year={2022},
  organization={IEEE}
}

@misc{Sciensano2020, 
  author = {Sciensano},
  year = {2020},
  title = {{COVID-19}. Epistat},
  url = {https://epistat.sciensano.be/covid/}
}

@article{willem2020socrates,
  title={SOCRATES: an online tool leveraging a social contact data sharing initiative to assess mitigation strategies for {COVID-19}},
  author={Willem, Lander and Van Hoang, Thang and Funk, Sebastian and Coletti, Pietro and Beutels, Philippe and Hens, Niel},
  journal={BMC research notes},
  volume={13},
  number={1},
  pages={293},
  year={2020},
  publisher={Springer}
}

@article{statbel2018structuur,
  title={Structuur van de Bevolking},
  author={Statbel},
  year={2018},
  journal={}
}

@book{rawlings2017model,
  title={Model predictive control: theory, computation, and design},
  author={Rawlings, James Blake and Mayne, David Q and Diehl, Moritz and others},
  volume={2},
  year={2017},
  publisher={Nob Hill Publishing Madison, WI}
}

@phdthesis{Sonveaux2023,
    author = {Sonveaux, C.}, 
    title = {Dynamical analysis and feedback control of age-structured epidemic models}, 
    year = {2023}, 
    school = {UNamur}, 
    month = {December}, 
    day = {19}, 
    type = {PhD Thesis}}

@article{FIACCHINI2021500,
title = {The {Ockham’s} razor applied to {COVID-19} model fitting {French} data},
author = {Mirko Fiacchini and Mazen Alamir},
journal = {Annual Reviews in Control},
volume = {51},
pages = {500-510},
year = {2021}
}

@book{seborg2016process,
  title={Process dynamics and control},
  author={Seborg, Dale E and Edgar, Thomas F and Mellichamp, Duncan A and Doyle III, Francis J},
  year={2016},
  publisher={John Wiley \& Sons}
}

@book{world2014infection,
  title={Infection prevention and control of epidemic-and pandemic-prone acute respiratory infections in health care},
  author={{World Health Organization}},
  year={2014},
  publisher={World Health Organization}
}

@article{Braeye_2022, title={{COVID-19} vaccine effectiveness against symptomatic infection and hospitalization in {Belgium, July 2021-April 2022}}, author={Braeye, Toon and van Loenhout, Joris and Brondeel, Ruben and Stouten, Veerle and Hubin, Pierre and Billuart}, journal={Euro Surveill.}, year={2022}}
